\setlist[enumerate]{itemsep=5pt, topsep=5pt}
\setlist[itemize]{itemsep=5pt, topsep=5pt}
\numberwithin{equation}{section}
\theoremstyle{plain}
\newtheorem{thm}{Theorem}[section]
\newtheorem{lem}[thm]{Lemma}
\newtheorem{prop}[thm]{Proposition}
\newtheorem{cor}[thm]{Corollary}
\newtheorem*{GPconj}{Gross--Prasad conjecture}
\theoremstyle{definition}
\theoremstyle{remark}
\newtheorem{rem}[thm]{Remark}
\def\Aut{\operatorname{Aut}}
\def\disc{\operatorname{disc}}
\def\End{\operatorname{End}}
\def\ev{\operatorname{ev}}
\def\Herm{\operatorname{Herm}}
\def\Hom{\operatorname{Hom}}
\def\Ind{\operatorname{Ind}}
\def\Irr{\operatorname{Irr}}
\def\Isom{\operatorname{Isom}}
\def\Ker{\operatorname{Ker}}
\def\Lie{\operatorname{Lie}}
\def\Norm{\operatorname{Norm}}
\def\N{\operatorname{N}}
\def\Rat{\operatorname{Rat}}
\def\Re{\operatorname{Re}}
\def\Res{\operatorname{Res}}
\def\supp{\operatorname{supp}}
\def\Tr{\operatorname{Tr}}
\DeclareMathOperator*{\ord}{ord}
\def\an{\mathrm{an}}
\def\Ad{\mathrm{Ad}}
\def\As{\mathrm{As}}
\def\GL{\mathrm{GL}}
\def\LS{\mathrm{LS}}
\def\Mp{\mathrm{Mp}}
\def\Sh{\mathrm{Sh}}
\def\SL{\mathrm{SL}}
\def\Sp{\mathrm{Sp}}
\def\U{\mathrm{U}}
\def\WD{\mathit{WD}}
\def\Chi{\boldsymbol{\chi}}
\def\spl{\boldsymbol{\mathit{spl}}}
\def\AA{\mathbb{A}}
\def\CC{\mathbb{C}}
\def\EE{\mathbb{E}}
\def\FF{\mathbb{F}}
\def\RR{\mathbb{R}}
\def\VV{\mathbb{V}}
\def\WW{\mathbb{W}}
\def\XX{\mathbb{X}}
\def\ZZ{\mathbb{Z}}
\def\A{\mathcal{A}}
\def\M{\mathcal{M}}
\def\R{\mathcal{R}}
\def\T{\mathcal{T}}
\def\H{\mathscr{H}}
\def\s{\mathscr{S}}
\def\V{\mathscr{V}}
\def\aa{\mathfrak{a}}
\def\ff{\mathfrak{f}}
\DeclareSymbolFont{bbold}{U}{bbold}{m}{n}
\DeclareMathSymbol{\bbmu}{\mathord}{bbold}{"16}
\DeclareMathSymbol{\Eins}{\mathord}{bbold}{"31}
\def\1{\Eins}
\def\varddots{\mathinner{\mkern1mu
    \raise\p@\hbox{.}\mkern2mu\raise4\p@\hbox{.}\mkern2mu
    \raise7\p@\vbox{\kern7\p@\hbox{.}}\mkern1mu}}
\newcommand{\BIGOP}[1]{\mathop{\mathchoice%
{\raise-0.22em\hbox{\huge $#1$}}%
{\raise-0.05em\hbox{\Large $#1$}}{\hbox{\large $#1$}}{#1}}}
\newcommand{\BIGboxplus}{\mathop{\mathchoice%
{\raise-0.35em\hbox{\huge $\boxplus$}}%
{\raise-0.15em\hbox{\Large $\boxplus$}}{\hbox{\large $\boxplus$}}{\boxplus}}}
\title{The Gross--Prasad conjecture and local theta correspondence}
\author{Wee Teck Gan}
\address{Department of Mathematics, National University of Singapore, 10 Lower Kent Ridge Road, Singapore 119076}
\email{matgwt@nus.edu.sg}
\author{Atsushi Ichino}
\address{Department of Mathematics, Kyoto University, Kitashirakawa Oiwake-cho, Sakyo-ku, Kyoto 606-8502, Japan}
\email{ichino@math.kyoto-u.ac.jp}
\date{\today}
\subjclass[2010]{11F70, 22E50}
\begin{document}

\begin{abstract}
We establish the Fourier--Jacobi case of the local Gross--Prasad conjecture for unitary groups, by using local theta correspondence to relate the Fourier--Jacobi case with the Bessel case established by Beuzart-Plessis.
To achieve this, we prove two conjectures of D.~Prasad on the precise description of the local theta correspondence for (almost) equal rank unitary dual pairs in terms of the local Langlands correspondence.
The proof uses Arthur's multiplicity formula and thus is one of the first examples of a concrete application of this ``global reciprocity law''.
\end{abstract}

\maketitle

\section{\textbf{Introduction}}

In \cite{gp1}, \cite{gp2}, \cite{ggp1}, \cite{ggp2}, a restriction problem in the representation theory of classical groups was studied and a precise conjecture was formulated for this restriction problem.
This so-called \emph{Gross--Prasad} (GP) conjecture has generated much interest in recent years. 

\subsection{Restriction problem}

In this paper, we shall focus on the restriction problem for unitary groups.
Thus, let $F$ be a nonarchimedean local field of characteristic $0$ and residue characteristic $p$, and let $E$ be a quadratic field extension of $F$.
Let $V_{n+1}$ be a Hermitian space of dimension $n+1$ over $E$ and $W_n$ a skew-Hermitian space of dimension $n$ over $E$.
Let $V_n \subset V_{n+1}$  be a nondegenerate subspace of codimension $1$,  so that we have a natural inclusion of their corresponding unitary groups $\U(V_{n}) \hookrightarrow \U(V_{n+1})$. 
In particular, if we set 
\[  G_n =  \U(V_n) \times \U(V_{n+1}) \quad \text{or} \quad \U(W_n) \times \U(W_n) \]
and
\[   H_n = \U(V_n) \quad \text{or} \quad \U(W_n), \]
then we have a diagonal embedding
\[ \Delta:  H_n \hookrightarrow G_n. \]

Let $\pi$ be an irreducible smooth representation of $G_n$. In the Hermitian case, one is interested in determining 
\[  \dim_\CC \Hom_{\Delta H_n} ( \pi, \CC). \]
We shall call this the \emph{Bessel} case (B) of the GP conjecture.
In the skew-Hermitian case, the restriction problem requires another piece of data: a Weil representation $\omega_{\psi, \chi, W_n}$, where $\psi$ is a nontrivial additive character of $F$ and $\chi$ is a character of $E^{\times}$ whose restriction to $F^{\times}$ is the quadratic character $\omega_{E/F}$ associated to $E/F$ by local class field theory.
Then  one is interested in determining
\[  \dim_\CC \Hom_{\Delta H_n} ( \pi, \omega_{\psi,\chi, W_n}). \]
We shall call this the \emph{Fourier--Jacobi} case (FJ) of the GP conjecture. To unify notation, we shall let $\nu = \CC$ or $\omega_{\psi,\chi, W_n}$ in the respective cases.

By surprisingly recent results of Aizenbud--Gourevitch--Rallis--Schiffmann \cite{agrs} and Sun \cite{sun}, it is known that the above Hom spaces have dimension at most $1$. Thus the main issue is to determine when the Hom space is nonzero. 
In \cite{ggp1}, an answer for this issue is formulated  in the framework of the local Langlands correspondence,  in its enhanced form due to Vogan \cite{v} which takes into account all pure inner forms.  

\subsection{Local Langlands correspondence}

More precisely, a pure inner form of $\U(V_n)$ is simply a group of the form $\U(V_n')$, where $V_n'$ is a  Hermitian space of dimension $n$ over $E$; likewise in the skew-Hermitian case.  
Thus, a pure inner form of $G_n$ is a group of the form
\[  G_n' = \U(V_n') \times \U(V'_{n+1}) \quad \text{or} \quad \U(W'_n) \times \U(W''_n). \]
We say that such a pure inner form is \emph{relevant} if
\[  V'_n \subset V'_{n+1} \quad \text{or} \quad   W'_n = W''_n, \] 
and 
\[   V_{n+1}'/V_{n}' \cong V_{n+1}/V_{n} \]
in the Hermitian case.
If $G_n'$ is relevant, we set
\[   H'_n = \U(V'_n) \quad \text{or} \quad \U(W'_n), \]
so that we have a diagonal embedding
\[ \Delta:  H'_n \hookrightarrow G'_n. \]

Now suppose that $\phi$ is an $L$-parameter for the group $G_n$.
Then $\phi$ gives rise to a Vogan $L$-packet $\Pi_{\phi}$ consisting of certain irreducible smooth representations of $G_n$ and its (not necessarily relevant) pure inner forms $G_n'$.
Moreover, after fixing a Whittaker datum for $G_n$, there is a natural bijection
\[  \Pi_{\phi} \longleftrightarrow \Irr(S_{\phi}), \]
where  $S_{\phi}$ is the component group associated to $\phi$. 
Thus an irreducible smooth representation of $G_n$ is labelled by a pair $(\phi, \eta)$, where $\phi$ is an $L$-parameter for $G_n$ and $\eta$ is an irreducible character of $S_{\phi}$. 

By the recent work of Arthur \cite{a}, Mok \cite{mok}, and Kaletha--M\'inguez--Shin--White \cite{kmsw}, together with the stabilization of the twisted trace formula established by Waldspurger and M{\oe}glin--Waldspurger \cite{mw-2014}, the local Langlands correspondence for unitary groups is now unconditional, expect that the general case of the weighted fundamental lemma has not been written; the work of Chaudouard and Laumon \cite{cl} is limited to the case of split groups.

\subsection{Gross--Prasad conjecture}

With this short preparation, the GP conjecture can be loosely stated as follows:

\begin{GPconj}
\hfill
\begin{enumerate}
\item Given a generic $L$-parameter $\phi$ for $G_n$, there is a unique representation $\pi(\phi,\eta)$ in the Vogan $L$-packet $\Pi_{\phi}$ such that $\pi(\phi,\eta)$ is a representation of a relevant pure inner form $G_n'$ and such that $\Hom_{\Delta H'_n} (\pi(\phi,\eta), \nu)  \ne 0$.
\item There is a precise recipe for the distinguished character $\eta$ (which we will recall in \S \ref{SS:eta} below).
\end{enumerate}
\end{GPconj}

In a stunning series of papers \cite{w1}, \cite{w2}, \cite{w3}, \cite{w4}, Waldspurger has established the Bessel case of the GP conjecture for special orthogonal groups in the case of tempered $L$-parameters; the case of general generic $L$-parameters is then dealt with by M{\oe}glin--Waldspurger \cite{mw}. Beuzart-Plessis \cite{bp1}, \cite{bp2}, \cite{bp3} has since extended Waldspurger's techniques to settle the Bessel case of the GP conjecture for unitary groups in the tempered case.

\subsection{Purpose of this paper}

The purpose of this paper is to establish the Fourier--Jacobi case of the GP conjecture, as well as two conjectures of D.~Prasad concerning local theta correspondence in the (almost) equal rank case.

Let us describe the main idea of the proof.
For simplicity, we restrict ourselves to the case of tempered $L$-parameters here.
The Bessel and Fourier--Jacobi cases of the GP conjecture are related by the local theta correspondence. More precisely, there is a see-saw diagram
\[
 \xymatrix{
  \U(W_n)  \times \U(W_n)  \ar@{-}[dr] \ar@{-}[d] & \U(V_{n+1}) 
     \ar@{-}[d] \\
  \U(W_n) \ar@{-}[ur] &  \U(V_n) \times \U(V_1)}
\]  
and the associated see-saw identity reads:
\[  \Hom_{\U(W_n)}( \Theta_{\psi, \chi, V_n, W_n}(\sigma) \otimes \omega_{\psi, \chi, V_1, W_n},  \pi) \cong \Hom_{\U(V_n)}(\Theta_{\psi, \chi, V_{n+1}, W_n}(\pi),  \sigma) \]
for irreducible smooth representations $\pi$ of $\U(W_n)$ and $\sigma$ of $\U(V_n)$. Hence the left-hand side of the see-saw identity concerns the Fourier--Jacobi case (FJ) whereas the right-hand side concerns the Bessel case (B). It is thus apparent that precise knowledge of the local theta correspondence for unitary groups of (almost) equal rank will give the precise relation of (FJ) to (B).  

More precisely, one would need to know:
\begin{itemize}
\item[($\Theta$)] 
For irreducible tempered representations $\pi$ and $\sigma$, the big theta lifts $\Theta_{\psi, \chi, V_{n+1}, W_n}(\pi)$ and $\Theta_{\psi, \chi, V_n, W_n}(\sigma)$ are irreducible (if nonzero).

\item[(P1)] If $\sigma$ has parameter $(\phi, \eta)$ and $\Theta_{\psi, \chi, V_n, W_n}(\sigma)$ has parameter $(\phi', \eta')$, then $(\phi',\eta')$ can be precisely described in terms of $(\phi,\eta)$.

\item[(P2)]  Likewise, if $\pi$ has parameter $(\phi, \eta)$ and $\Theta_{\psi, \chi, V_{n+1}, W_n}(\pi)$ has parameter $(\phi', \eta')$, then $(\phi',\eta')$ can be precisely described in terms of $(\phi,\eta)$.
\end{itemize}

In fact, in \cite{p1}, \cite{p2}, D.~Prasad has formulated precise conjectures regarding (P1) and (P2) for the theta correspondence for $\U(V_n) \times \U(W_n)$ and $\U(V_{n+1}) \times \U(W_n)$ respectively; we shall recall his conjectures precisely in \S \ref{S:Pconj}.  We shall also denote by (weak P1) the part of the conjecture (P1) concerning only the correspondence of $L$-parameters $\phi \mapsto \phi'$; likewise we have (weak P2). Then we recall that in our earlier paper \cite{gi}, we have shown:

\begin{prop} \label{P:1.1}
The statements $(\Theta)$, $(\emph{weak P1})$ and $(\emph{weak P2})$ hold.
\end{prop}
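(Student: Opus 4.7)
The plan is to treat $(\Theta)$ separately from (weak P1) and (weak P2): the irreducibility statement admits a purely local proof, while the parameter statements are established by a local-global argument based on Arthur's endoscopic classification.

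For $(\Theta)$, I would combine two ingredients. First, the Howe duality conjecture for unitary dual pairs in the $p$-adic case gives that the maximal semisimple quotient $\theta(\sigma)$ of $\Theta_{\psi,\chi,V_n,W_n}(\sigma)$ is, if nonzero, irreducible (and similarly for $\Theta_{\psi,\chi,V_{n+1},W_n}(\pi)$). Second, in the equal and almost equal rank cases, the big theta lift of a tempered representation is itself tempered whenever it is nonzero; this may be deduced from a matrix coefficient estimate, or equivalently from the analysis of the doubling zeta integral in this range. Temperedness forces unitarity and hence semisimplicity, so the big and small theta lifts coincide and $\Theta(\sigma)$, $\Theta(\pi)$ are irreducible.

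For (weak P1) and (weak P2), I would proceed by globalization. Starting from a local tempered parameter $\phi$ at a place $v_0$ of some number field $\FF$, I would globalize the quadratic extension $E/F$ to a CM extension $\EE/\FF$ and the relevant Hermitian or skew-Hermitian space, then produce a global generic tempered $A$-parameter $\Phi$ whose component at $v_0$ is $\phi$ and whose other local components are tame (e.g.\ unramified, or supercuspidal at a controlled auxiliary place). Arthur's multiplicity formula, in the form established by Arthur--Mok--KMSW, then descends $\Phi$ to a cuspidal automorphic representation $\Sigma$ on the appropriate unitary group with $\Sigma_{v_0}$ the prescribed local representation. I would then form the global theta lift $\Theta(\Sigma)$ to the companion group in the see-saw and arrange the auxiliary data so that the central value of the doubling $L$-function is nonzero; via the Rallis inner product formula this guarantees that $\Theta(\Sigma)$ is nonzero and cuspidal. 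Applying Arthur's classification on the target side determines the global $A$-parameter $\Phi'$ of $\Theta(\Sigma)$; matching Satake parameters at the unramified places pins down $\Phi'$ as the predicted global transfer of $\Phi$, and localizing at $v_0$ yields the local correspondence $\phi \mapsto \phi'$ predicted by Prasad.

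The main obstacle is the existence of a suitable globalization in the preceding step: one must simultaneously recover $\phi$ at $v_0$, ensure genericity of the global $A$-parameter, and choose the remaining auxiliary data so that the relevant central $L$-value does not vanish. This competing set of constraints is the standard difficulty in local-global arguments for the theta correspondence and is handled by a sufficiently flexible globalization lemma, using an auxiliary place (or places) to furnish the freedom needed to kill the $L$-value obstruction while leaving the behaviour at $v_0$ undisturbed. Note that passing from (weak P1), (weak P2) to the full conjectures of Prasad, which additionally track the component group characters $\eta$ through the see-saw and therefore require care with Whittaker normalizations and the endoscopic transfer factors, is a strictly more delicate problem and is not needed for the present proposition.
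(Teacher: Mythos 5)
The paper does not actually give a proof of Proposition~\ref{P:1.1}; it is stated as a recall of results established in the earlier paper \cite{gi} (with addenda in Appendix~A of the present paper). The proof there is purely local, resting on Kudla's filtration, the Howe duality theorem of \cite{w, gt1, gt2}, Mui\'c-type results on supercuspidal supports, and a careful analysis of Plancherel measures. Your proposal takes a genuinely different route, and each half of it has an issue worth flagging.

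For $(\Theta)$, the step ``Temperedness forces unitarity and hence semisimplicity'' is not valid as stated. What one can show is that every \emph{irreducible subquotient} of $\Theta(\pi)$ is tempered (hence unitarizable). But for a finite-length smooth representation, having all irreducible subquotients unitarizable does not imply the representation itself is unitarizable or semisimple --- there exist nonsplit self-extensions of discrete series representations. So one cannot conclude $\Theta(\pi)=\theta(\pi)$ on these grounds alone. The actual argument in \cite{gi} and the appendix here is more intricate: one shows via Kudla's filtration that $\theta(\pi)$ sits inside an explicit degenerate induced representation, uses Plancherel measures to pin down the supercuspidal support of every subquotient, invokes Waldspurger's result (Corollaire~III.7.2 of \cite{w0}) to rule out square-integrable subquotients, and then combines this with Howe duality to force irreducibility.

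For (weak P1) and (weak P2), your global strategy does not treat the two cases symmetrically, and this is precisely where it breaks down for (weak P1). In the almost equal rank case (weak P2), the nonvanishing of the global theta lift is governed by $L(1,\Sigma)$, which is automatically nonzero for a tempered cuspidal $\Sigma$; this is why the paper's later global argument for (P2) works. But in the equal rank case (weak P1), nonvanishing of the global theta lift is controlled by the central value $L(\tfrac{1}{2},\Sigma)$ (or $L(\tfrac{1}{2},\Sigma\times\chi)$), and one cannot ``arrange the auxiliary data'' to force this to be nonzero --- it is a deep arithmetic question, as the paper itself emphasizes in \S~1.5 when explaining why it proves (P2) rather than (P1). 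Your proposal as written simply assumes this obstruction away. A global proof of (weak P1) would need either a tower argument (lift first to $\U(V_{n+2})$ where a noncentral $L$-value controls nonvanishing, then descend), or a local deduction of (weak P1) from (weak P2) via a see-saw; neither is spelled out. By contrast, the local Plancherel-measure approach of \cite{gi} sidesteps the $L(\tfrac{1}{2})$ issue entirely and handles both cases uniformly, which is why it is the method of choice for the weak statements.
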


Using Proposition \ref{P:1.1}, the first observation of this paper is:

\begin{prop} \label{P:1.2}
Assume $(\emph{B})$ and $(\emph{P2})$. Then $(\emph{FJ})$ and $(\emph{P1})$ follow. 
\end{prop}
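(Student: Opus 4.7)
The plan is to run the see-saw identity displayed above in reverse: start with a tempered parameter on the Fourier--Jacobi side, use theta lifting to convert the (FJ) Hom space into a Bessel Hom space, and then apply (B) and (P2) to extract both the (FJ) multiplicity and the recipe in (P1). Concretely, given irreducible tempered representations $\pi_1$ and $\pi_2$ of $\U(W_n)$ (or of a pure inner form), I first invoke Proposition~\ref{P:1.1}: by $(\Theta)$ and (weak P1), the theta lift from the almost equal rank dual pair $(\U(V_n'), \U(W_n))$ gives, for a unique Hermitian space $V_n'$ and a unique irreducible tempered $\sigma$ on $\U(V_n')$, an identification $\pi_1 = \Theta_{\psi, \chi, V_n', W_n}(\sigma)$. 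Taking $\pi = \pi_2$ in the see-saw, the (FJ) Hom space for $(\pi_1 \otimes \pi_2, \omega_{\psi, \chi, V_1, W_n})$ is identified with
\[ \Hom_{\U(V_n')}\bigl(\Theta_{\psi, \chi, V_{n+1}', W_n}(\pi_2),\, \sigma\bigr), \]
which is a Bessel Hom space on $\U(V_n') \times \U(V_{n+1}')$.

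Step two: by the assumed (P2), the full parameter $(\phi_2', \eta_2')$ of $\Theta_{\psi, \chi, V_{n+1}', W_n}(\pi_2)$ is determined explicitly from the parameter $(\phi_2, \eta_2)$ of $\pi_2$. Combined with (B), this tells us precisely when the Bessel Hom space above is nonzero: there is a unique pair $(V_n', \sigma)$, lying on a relevant pure inner form, with $\sigma$ corresponding to a specific character $\eta_\sigma$ of $S_{\phi_\sigma}$ given by the Gross--Prasad recipe in terms of root numbers of $\phi_\sigma \otimes \phi_2'$. Transporting this back through the see-saw gives the existence-and-uniqueness assertion of (FJ) on the corresponding relevant pure inner form of $G_n$. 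Moreover, since (weak P1) identifies $S_{\phi_\sigma}$ with $S_{\phi_1}$, the Bessel character recipe for $\eta_\sigma$ induces a recipe for the distinguished $\eta_1$ paired with $(\phi_2, \eta_2)$ on the Fourier--Jacobi side.

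The final and most delicate step is to check that the resulting recipe factors into two independent pieces: one depending only on the correspondence $(\phi_1, \eta_1) \leftrightarrow (\phi_\sigma, \eta_\sigma)$, the other only on $(\phi_2, \eta_2) \leftrightarrow (\phi_2', \eta_2')$, and that these two pieces reproduce, respectively, Prasad's conjectural recipe in (P1) and the Gross--Prasad recipe predicted for (FJ). This is the main obstacle and is essentially a character-theoretic bookkeeping on component groups: one has to expand $\epsilon(1/2, \phi_\sigma \otimes \phi_2', \psi)$ using (P2) and check that the factors rearrange into $\epsilon(1/2, \phi_1 \otimes \phi_2 \otimes \chi^{-1}, \psi)$ times a ``Prasad factor'' that depends only on $\eta_\sigma$ versus $\eta_1$ and on the splitting characters in Prasad's formulation. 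Once this identity is verified, (FJ) and (P1) drop out of the see-saw simultaneously, with no further input beyond (B), (P2), and Proposition~\ref{P:1.1}.
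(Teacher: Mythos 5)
Your overall idea---run the see-saw, combine with (B) and (P2) to pin down the characters, then identify the (FJ) recipe and extract (P1) as a byproduct---is the strategy the paper follows. But there is a genuine circularity in how you propose to extract the (P1) recipe, and this is precisely the subtle point the paper's argument is designed to avoid.

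The issue is in your step where ``the Bessel character recipe for $\eta_\sigma$ induces a recipe for the distinguished $\eta_1$'' via (weak P1). Yes, (weak P1) gives the $L$-parameter of $\sigma$ in terms of that of $\pi_1$, and so identifies $S_{\phi_\sigma}$ with $S_{\phi_1}$. But it does \emph{not} tell you how $\eta_\sigma$ transports to $\eta_1$ under this identification---that is exactly the content of (P1), which is one of the things you are trying to prove. So arguing ``(B) gives $\eta_\sigma$, hence we get $\eta_1$'' is circular. The paper sidesteps this by applying the Bessel recipe only to the \emph{other} factor $\eta_\tau$: the formula $\eta_\tau(b_j) = \epsilon(\tfrac{1}{2},\phi_\sigma^\vee\otimes\phi_{\tau,j},\psi^E_{-2})$ depends on $\phi_\sigma$ (the $L$-parameter, supplied by weak~P1) but \emph{not} on the character $\eta_\sigma$. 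Combining this with (P2), which equates $\eta_\tau|_{S_{\phi^\heartsuit}}$ with $\eta^\heartsuit$, pins down $\eta^\heartsuit=\eta^\clubsuit$ without ever invoking $\eta_\sigma$. One then recovers $\eta^\diamondsuit=\eta^\clubsuit$ by interchanging the roles of $\pi^\diamondsuit$ and $\pi^\heartsuit$ and running the same argument.

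Once (FJ) is in hand, (P1) is proved by a \emph{second}, separate see-saw argument: one takes an arbitrary $\pi\in\Pi_\phi^{\epsilon'}$, forms the equal-rank theta lift $\Theta_{V_n,W_n}(\pi)$, finds (via the unitary analogue of \cite[Lemma~12.5]{gs}) an auxiliary $\sigma$ on $\U(V_{n-1}^{\epsilon})$ with a nonzero Bessel functional, and then applies (B) and the newly-proven (FJ) to the resulting see-saw diagram to compare $\theta(\eta)(a_i)$ with $\eta(a_i)$. This additional step requires its own existence input and root-number manipulation and is not captured by the ``factorization of $\epsilon(\tfrac12,\phi_\sigma\otimes\phi_2')$'' you sketch; root numbers do not depend on the characters $\eta_\sigma$, $\eta_1$, so the ``Prasad factor'' you describe cannot literally be extracted from the Bessel $\epsilon$-factor---it is the conclusion, not an intermediate step. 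A few further gaps: you have not addressed the existence part of (FJ) (showing the $\Hom$ space is nonzero for some pair), which the paper proves separately by running the see-saw backwards from a known Bessel pair; you have glossed over the duality and parity issues (the paper works with $(\pi^\diamondsuit)^\vee=\Theta(\sigma)$ and splits the argument into even and odd $n$ with different character normalizations); and the dual pair $(\U(V_n'),\U(W_n))$ is of equal rank, not almost equal rank.
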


In view of Proposition \ref{P:1.2} and the work of Beuzart-Plessis \cite{bp1}, \cite{bp2}, \cite{bp3}, it remains to show the statement (P2), and our main result is:

\begin{thm}
\label{t:main}
The conjecture $(\emph{P2})$, and hence $(\emph{FJ})$ and $(\emph{P1})$, holds.
\end{thm}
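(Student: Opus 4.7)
In view of Proposition \ref{P:1.2} together with the Bessel case of the GP conjecture (now known by Beuzart-Plessis), it suffices to prove (P2): given an irreducible tempered $\pi$ of $\U(V_{n+1})$ with parameter $(\phi,\eta)$, we must identify the character $\eta'$ attached to $\Theta_{\psi,\chi,V_{n+1},W_n}(\pi)$. Since (weak P2) is part of Proposition \ref{P:1.1}, the $L$-parameter $\phi'$ is already known, and the task reduces to pinning down $\eta'$ among the irreducible characters of $S_{\phi'}$. The strategy, as flagged in the abstract, is to attack this via Arthur's multiplicity formula applied to a carefully chosen globalization; the extension from tempered to general generic parameters should then be standard, via parabolic induction together with the compatibility of theta lifting with Langlands quotients already built into Proposition \ref{P:1.1}.

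The globalization step proceeds as follows. Given the local datum at a place $v_0$, I would construct a totally real number field $\FF$ with a CM quadratic extension $\EE$, and global Hermitian and skew-Hermitian spaces $\VV_{n+1}$, $\WW_n$ localizing to the prescribed data at $v_0$. Then globalize $\pi$ to a cuspidal $\Pi$ on $\U(\VV_{n+1})(\AA_\FF)$ with tempered global $A$-parameter $\Phi$, so that $\Pi_{v_0}\cong\pi$ while at every other finite place $\Pi_v$ is either unramified or a carefully chosen supercuspidal for which Prasad's recipe is already verified by Proposition \ref{P:1.1}. Globalizations of this flavor are now essentially routine thanks to Arthur's classification and the simple trace formula.

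Next, form the global theta lift $\Theta(\Pi)$ on $\U(\WW_n)(\AA_\FF)$. Non-vanishing of $\Theta(\Pi)$ is controlled by the Rallis inner product formula: it amounts to non-vanishing of a central critical value of $L(s,\Pi\otimes\chi^{-1})$ together with non-vanishing of every local theta lift. The local conditions are supplied by ($\Theta$); the $L$-value can be arranged to be non-zero by a suitable choice of auxiliary local components during globalization. The resulting $\Theta(\Pi)$ is a cuspidal automorphic representation whose global parameter $\Phi'$ is the one predicted by (weak P2). Applying Arthur's multiplicity formula to both $\Pi$ and $\Theta(\Pi)$, each multiplicity is a product over all places of local pairings of $\eta_v$ (respectively $\eta'_v$) against a distinguished element of the global component group, and must equal a specific sign character. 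Because at every $v\ne v_0$ the map $\eta_v\mapsto\eta'_v$ has been engineered to match Prasad's recipe, comparing the two product formulas forces $\eta'_{v_0}$ to coincide with Prasad's prediction at $v_0$, proving (P2).

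The principal obstacle is the globalization. One must globalize $\pi$ to a cuspidal $\Pi$ with prescribed tempered global parameter, while simultaneously (i) constraining the local components at every auxiliary place so that Prasad's conjecture is already known there, and (ii) guaranteeing non-vanishing of the central critical $L$-value driving the Rallis formula. These conditions pull in opposite directions: forcing auxiliary components to be easy can force the $L$-value to vanish. The likely remedy is to use auxiliary split places, where unitary groups become products of general linear groups and both Prasad's recipe and non-vanishing of local $L$-factors can be controlled by hand, combined with iterated applications of the trace formula to produce the desired globalization.
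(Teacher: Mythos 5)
Your proposal correctly identifies the overall architecture (use Proposition \ref{P:1.2} plus (B) to reduce to (P2), then prove (P2) by a globalization and Arthur's multiplicity formula), but it misidentifies the key analytic input and in doing so manufactures a false obstacle. You write that non-vanishing of the global theta lift ``amounts to non-vanishing of a central critical value'' and then worry that arranging this non-vanishing conflicts with controlling the auxiliary local components. In fact, for the almost-equal-rank dual pair $\U(V_{n+1})\times\U(W_n)$ relevant to (P2), the Rallis inner product formula governs the lift by the value $L(1,\Pi)$, not $L(\tfrac12,\Pi)$. This is the entire reason the paper proves (P2) rather than (P1): once $\Pi$ has a tempered cuspidal $A$-parameter $\Sigma=\boxplus_i\Sigma_i$, the partial $L$-function $L^{S\cup\{v_0\}}(1,\Sigma)=\prod_iL^{S\cup\{v_0\}}(1,\Sigma_i)$ is automatically nonzero (each $\Sigma_i$ is unitary cuspidal), and the finitely many local factors at $S\cup\{v_0\}$ are nonzero at $s=1$ by temperedness. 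So the tension you flag as the principal obstacle does not arise; the non-vanishing comes essentially for free.

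There are two further substantive gaps. First, the global step in the paper applies only to \emph{square-integrable} $L$-parameters, and a large portion of the proof (Theorem \ref{T:ind}) is a separate, purely local argument reducing the tempered non-square-integrable case to the square-integrable case on smaller unitary groups, via a delicate comparison with the normalized intertwining operators appearing in the local intertwining relation. Your proposal treats all tempered parameters at once by globalization, which cannot work directly: you need the globalized $\Pi_v$ at auxiliary places to have $L$-parameters for which (P2) is already known, and the paper arranges this precisely by making those local parameters \emph{non}-square-integrable, feeding an induction on $n$. Second, you claim the auxiliary supercuspidal components are handled ``by Proposition \ref{P:1.1}''; but that proposition only gives (weak P2), i.e.\ the correspondence of $L$-parameters, not the refined correspondence of characters of component groups, so it does not supply what you need at those places. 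The paper instead carefully engineers the globalization so that (a) the global component group $S_\Sigma$ injects diagonally into $\prod_{v\ne v_0}S_{\Sigma_v}$, (b) the local parameters outside $v_0$ are either split or non-square-integrable, and (c) $\Pi_{v_0}$ equals the given representation, using results of Shin and Wallach; those constraints are what makes the product formula of Arthur's multiplicity formula pin down $\eta'_{v_0}$. (Minor: the paper takes $\FF$ totally complex, not totally real, and globalizes a representation of $\U(W_n)$, not of $\U(V_{n+1})$; (P2)$_n$ as formulated concerns $L$-parameters of $\U(W_n^\pm)$ and the lift to $\U(V_{n+1}^\pm)$.)
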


Let us make a few comments about the results:

\begin{itemize}
\item
In fact, we prove (P1) and (P2) for all (not necessarily tempered nor generic) $L$-parameters.

\item
We mention a related result of M{\oe}glin \cite{m} about the local theta correspondence for symplectic-orthogonal dual pairs of arbitrary rank.
She considered $A$-packets for a large class of $A$-parameters, including all tempered $L$-parameters,
and then determined the analog of the correspondence $(\phi, \eta) \mapsto (\phi', \eta')$ in the sense of Arthur,
assuming that the correspondence is known for supercuspidal (and slightly more general) representations.

\item 
It is interesting to note that in Proposition \ref{P:1.2}, the roles of (P1) and (P2) can be switched. In other words, it is also sufficient to prove (P1) in order to prove (FJ). We shall explain in the next subsection why we prefer to prove (P2). 

\item
In \cite{ggp1}, both the Bessel (B) and Fourier--Jacobi (FJ) cases of the GP conjecture were formulated for pairs of spaces $V_n \subset V_{n+2k+1}$ or $W_n \subset W_{n+2k}$ for any nonnegative integer $k$ and for any generic $L$-parameters for $\U(V_n) \times \U(V_{n+2k+1})$ or $\U(W_n) \times \U(W_{n+2k})$.
Beuzart-Plessis \cite{bp1}, \cite{bp2}, \cite{bp3} has in fact verified (B) for all tempered $L$-parameters for $\U(V_n) \times \U(V_{n+2k+1})$.
In \S \ref{s:generic}, we check that the argument as in \cite{mw} gives (B) for all generic $L$-parameters for $\U(V_n) \times \U(V_{n+2k+1})$ and then show that Theorem \ref{t:main} continues to hold for all generic $L$-parameters for $\U(W_n) \times \U(W_n)$.

\item
On the other hand, it was shown in \cite[Theorem 19.1]{ggp1} that the GP conjecture in the case of generic $L$-parameters for $\U(W_n) \times \U(W_{n+2k})$ (for all $k>0$) follows from that for $\U(W_n) \times \U(W_n)$.
Namely, we can deduce from Theorem \ref{t:main} the following:
\end{itemize}

\begin{cor}
The Fourier--Jacobi case of the GP conjecture holds for all generic $L$-parameters for $\U(W_n) \times \U(W_{n+2k})$ for any $k \ge 0$.
\end{cor}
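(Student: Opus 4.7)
The plan is to reduce everything to Theorem \ref{t:main} via \cite[Theorem 19.1]{ggp1}.

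The equal-rank case $k = 0$ is precisely the Fourier--Jacobi (FJ) case of the GP conjecture for $\U(W_n) \times \U(W_n)$ with generic $L$-parameters. Theorem \ref{t:main} establishes this for tempered $L$-parameters, and the extension to all generic $L$-parameters is carried out in Section \ref{s:generic} by adapting the argument of \cite{mw}: one expresses a generic representation as the Langlands quotient of a standard module parabolically induced from tempered data, reduces the Hom-space computation to its tempered piece via Frobenius reciprocity and Jacquet-module analysis on the $\GL$-factors, and invokes the uniqueness theorem of \cite{sun} to extract the unique distinguished summand on each side.

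For $k \ge 1$, I would cite \cite[Theorem 19.1]{ggp1} directly: granted the (FJ) case for generic $L$-parameters on $\U(W_n) \times \U(W_n)$, that theorem produces the (FJ) case for generic $L$-parameters on $\U(W_n) \times \U(W_{n+2k})$ for all $k \ge 1$. The underlying mechanism is that any generic $L$-parameter $\phi_2$ for $\U(W_{n+2k})$ admits a decomposition $\phi_2 = \phi_2' \oplus \tau \oplus \tau^{\vee,c}$, where $\phi_2'$ is a generic $L$-parameter for $\U(W_n)$ and $\tau$ is a tempered $\GL_k$-parameter, and the corresponding generic representation is realized as a full parabolic induction $\Ind(\tau \otimes \pi_2')$ from a Siegel-type Levi $\GL_k(E) \times \U(W_n)$. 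A Mackey-type argument then identifies the Fourier--Jacobi Hom space attached to $\pi_1 \otimes \Ind(\tau \otimes \pi_2')$ with that attached to $\pi_1 \otimes \pi_2'$; on the parameter side, the $\GL_k$-factor contributes trivially to the root-number recipe of \S\ref{SS:eta}, so the distinguished characters match up.

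Combining these two steps yields the corollary. Since \cite[Theorem 19.1]{ggp1} is already in the literature and Theorem \ref{t:main} together with Section \ref{s:generic} supplies the equal-rank input in precisely the required generality, I do not expect any substantive obstacle. The only delicate point is to confirm that the hypothesis of \cite[Theorem 19.1]{ggp1} is genuinely formulated for arbitrary generic (rather than merely tempered) parameters on the smaller group, so that the generic extension of Section \ref{s:generic} can be fed in without loss; this is ultimately a bookkeeping matter rather than a new theorem.
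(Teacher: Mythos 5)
Your proposal matches the paper's route exactly: establish (FJ) for generic $L$-parameters on $\U(W_n) \times \U(W_n)$ (Proposition \ref{P:FJ-generic}, the content of \S\ref{s:generic}), then invoke \cite[Theorem 19.1]{ggp1} to handle $k \ge 1$. One clarification on the inner workings of \S\ref{s:generic}: the M{\oe}glin--Waldspurger Jacquet-module argument is applied to extend the \emph{Bessel} case (B) from tempered to generic parameters (Proposition \ref{P:generic-B}); the generic (FJ) is then obtained by re-running the see-saw argument of \S\ref{s:see-saw} with generic (B), Proposition \ref{P:generic-theta}, and (P1)/(P2) (already established for all $L$-parameters) as inputs, rather than by a direct Jacquet-module reduction of the (FJ) Hom space as your description suggests.
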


\subsection{Prasad's conjectures}

Given Proposition \ref{P:1.1}, the main work is to determine how $\eta'$ depends on $(\phi, \eta)$ in (P1) and (P2). In fact, the precise determination of $\eta'$ in (P1) is a very subtle issue, as it depends on certain local roots numbers.
In the case of (P2), the dependence of $\eta'$ on $(\phi,\eta)$ is more simplistic. 

The proof of (P2) proceeds by the following steps:

\begin{itemize}

\item First, by our results in \cite{gi}, the nontempered case can be reduced to the tempered case on smaller unitary groups.

\item Next, we show that the tempered case can be reduced to the square-integrable case on smaller unitary groups. This is achieved by a nontrivial extension of  the techniques in the PhD thesis of the second author \cite{i} and uses the delicate details of the normalization of the intertwining operators involved in the local intertwining relation \cite{a}, \cite{mok}, \cite{kmsw}.

\item Finally, we show the square-integrable case by a global argument. More precisely, we shall globalize an irreducible square-integrable representation $\pi$ of $\U(W_n)$ to an irreducible cuspidal automorphic representation $\Pi = \otimes_v \Pi_v$ such that

\begin{itemize}

\item $\Pi_v$ is not square-integrable for all places outside the place of interest, so that (P2) is known for $\Pi_v$ outside the place of interest,

\item $\Pi$ has tempered $A$-parameter whose global component group is equal to  the local component group of the $L$-parameter of $\pi$,

\item $\Pi$ has nonzero global theta lift to a unitary group which globalizes $\U(V_{n+1})$.
 
\end{itemize}
The desired result then follows for the place of interest by applying Arthur's multiplicity formula for the automorphic discrete spectrum, which can be viewed as a sort of product formula (see \eqref{eq:arthur}).
\end{itemize}

We can now explain why we prefer to prove (P2) rather than (P1). Note that one could attempt to follow the same strategy of proof for the statement (P1). However, in the globalization step above, we need to ensure that $\Pi$ has nonzero global theta lift to a certain unitary group. For the case of (P1), the nonvanishing of this global theta lift is controlled by the nonvanishing of $L(\frac{1}{2}, \Pi)$, and it is well-known that the nonvanishing of this central critical value is a very subtle issue with arithmetic implications.  On the other hand, for the statement (P2), the nonvanishing of the global theta lift of $\Pi$ is governed by the nonvanishing of $L(1,\Pi)$. Now it is certainly much easier to ensure the nonvanishing of $L(1, \Pi)$ compared to $L(\frac{1}{2},\Pi)$. For example, if $\Pi$ has tempered $A$-parameter, then one knows that $L(1, \Pi) \ne 0$. It is for this reason that we prove (P2) rather than (P1).

\subsection{3 birds and 2 stones}

To summarise, in proving our main theorem, we have killed ``3 birds'' (i.e.~(FJ), (P1) and (P2)) with ``2 stones'' (i.e.~(B) and Arthur's multiplicity formula), though it is probably more accurate to describe the latter as 2 cannon balls. We stress however that no animals (besides the two authors) have suffered in the preparation of this article. 

\subsection*{Acknowledgements}

We would like to thank Tasho Kaletha for useful discussions.
The first author is partially supported by a Singapore government MOE Tier 2 grant R-146-000-175-112.
The second author is partially supported by JSPS Grant-in-Aid for Scientific Research (B) 26287003.
This material is based upon work supported by the National Science Foundation under Grant No.~0932078 000 while the authors were in residence at the Mathematical Sciences Research Institute in Berkeley, California, during the Fall 2014 semester.

\subsection*{Notation}

Let $F$ be a nonarchimedean local field of characteristic $0$ and residue characteristic $p$.
We fix an algebraic closure $\bar{F}$ of $F$.
Let $\Gamma = \operatorname{Gal}(\bar{F}/F)$ be the absolute Galois group of $F$ and $W_F$ the Weil group of $F$.
Let $|\cdot|_F$ be the normalized absolute value on $F$.
We fix a nontrivial additive character $\psi$ of $F$.

Let $E$ be a quadratic field extension of $F$ and $\omega_{E/F}$ the quadratic character of $F^{\times}$ associated to $E/F$ by local class field theory.
Let $c$ denote the nontrivial Galois automorphism of $E$ over $F$.
Let $\Tr_{E/F}$ and $\N_{E/F}$ be the trace and norm maps from $E$ to $F$.
We choose an element $\delta \in E^{\times}$ such that $\Tr_{E/F}(\delta) = 0$.
We write $| \cdot | = |\cdot|_E$ for the normalized absolute value on $E$.
Let $\psi_E$ be the nontrivial additive character of $E$ defined by $\psi_E = \psi \circ \Tr_{E/F}$.

If $G$ is a linear algebraic group over $F$, we identify $G$ with its group of $F$-rational points $G(F)$.
For any totally disconnected locally compact group $G$, let $\1_G$ be the trivial representation of $G$ and $\Irr(G)$ the set of equivalence classes of irreducible smooth representations of $G$.
For any set $X$, let $1_X$ be the identity map of $X$.
For any positive integer $n$, let $1_n$ be the identity matrix in $\GL_n$.

\section{\textbf{Local Langlands correspondence}}

In this section, we summarize some properties of the local Langlands correspondence for unitary groups.

\subsection{Hermitian and skew-Hermitian spaces}

Fix $\varepsilon = \pm 1$.
Let $V$ be a finite dimensional vector space over $E$ equipped with a nondegenerate $\varepsilon$-Hermitian $c$-sesquilinear form $\langle \cdot, \cdot \rangle_V : V \times V \rightarrow E$.
Thus we have
\[
 \langle a v, b w \rangle_V = a b^c \langle v, w \rangle_V, \quad
 \langle w, v \rangle_V = \varepsilon \cdot \langle v, w \rangle_V^c
\]
for $v, w \in V$ and $a, b \in E$.
Put $n = \dim V$ and $\disc V = (-1)^{(n-1)n/2} \cdot \det V$, so that 
\[
 \disc V \in
 \begin{cases}
  F^{\times} / \mathrm{N}_{E/F}(E^{\times})
  & \text{if $\varepsilon = +1$;} \\
  \delta^n \cdot F^{\times} / \mathrm{N}_{E/F}(E^{\times})
  & \text{if $\varepsilon = -1$.}
 \end{cases}
\]
We define $\epsilon(V) = \pm 1$ by 
\[
 \epsilon(V) = 
 \begin{cases}
  \omega_{E/F}(\disc V) & \text{if $\varepsilon = +1$;} \\
  \omega_{E/F}(\delta^{-n} \cdot \disc V) & \text{if $\varepsilon = -1$.}
 \end{cases}
\]
Given a positive integer $n$, there are precisely two isometry classes of $n$-dimensional $\varepsilon$-Hermitian spaces $V$, 
which are distinguished from each other by their signs $\epsilon(V)$.
Note that $\epsilon(V)$ depends on the choice of $\delta$ if $\varepsilon = -1$ and $n$ is odd.
Let $\U(V)$ be the unitary group of $V$, i.e.~the connected reductive linear algebraic group over $F$ defined by
\[
  \U(V) = \{ g \in \GL(V) \, | \,
 \text{$\langle g v, g w \rangle_V =  \langle v, w \rangle_V$ for $v, w \in V$}
 \}.
\]
If $n=0$, we interpret $\U(V)$ as the trivial group $\{ 1 \}$.

\subsection{$L$-parameters and component groups}

Let $W_E$ be the Weil group of $E$ and $\WD_E = W_E \times \SL_2(\CC)$ the Weil-Deligne group of $E$.
We say that a continuous homomorphism $\phi: \WD_E \rightarrow \GL_n(\CC)$ is a representation of $\WD_E$ if 
\begin{itemize}
 \item $\phi$ is semisimple, 
 \item the restriction of $\phi$ to $\SL_2(\CC)$ is algebraic.
\end{itemize}
We say that $\phi$ is tempered if the image of $W_E$ is bounded.
Let $\phi^{\vee}$ be the contragredient representation of $\phi$ defined by $\phi^\vee(w) = {}^t\phi(w)^{-1}$.
Fix $s \in W_F \smallsetminus W_E$ and define a representation $\phi^c$ of $\WD_E$ by $\phi^c(w) = \phi(sws^{-1})$.
Then the equivalence class of $\phi^c$ is independent of the choice of $s$.
We say that $\phi$ is conjugate self-dual if there is a nondegenerate bilinear form $B : \CC^n \times \CC^n \rightarrow \CC$ which satisfies
\[
 B(\phi(w) x, \phi^c(w) y) = B(x, y)
\]
for all $w \in \WD_E$ and $x, y \in \CC^n$.
Namely, $\phi$ is conjugate self-dual if and only if $\phi^c$ is equivalent to $\phi^\vee$.
For $b = \pm 1$, we say that $\phi$ is conjugate self-dual with sign $b$ if there is a nondegenerate bilinear form $B : \CC^n \times \CC^n \rightarrow \CC$ which satisfies the above condition and the condition that 
\[
 B(y, x) = b \cdot B(x, \phi(s^2) y)
\]
for all $w \in \WD_E$ and $x, y \in \CC^n$.
Note that the sign $b$ depends not only on $\phi$ but also on $B$.
If $\phi$ is conjugate self-dual with sign $b$ (with respect to a bilinear form $B$), 
then $\det \phi$ is conjugate self-dual with sign $b^n$.

By \cite[\S 8]{ggp1}, an $L$-parameter for the unitary group $\U(V)$ is an $n$-dimensional conjugate self-dual representation $\phi$ of $\WD_E$ with sign $(-1)^{n-1}$.
We may decompose $\phi$ into a direct sum
\[  \phi = \bigoplus_i m_i \phi_i \]
with pairwise inequivalent irreducible representations $\phi_i$ of $\WD_E$ and multiplicities $m_i$.
We say that $\phi$ is square-integrable if it is multiplicity-free (so that $m_i=1$ for all $i$) and $\phi_i$ is conjugate self-dual with sign $(-1)^{n-1}$ for all $i$.

For an $L$-parameter $\phi$ for $\U(V)$, fix a bilinear form $B$ as above and let $\Aut(\phi,B)$ be the group of elements in $\GL_n(\CC)$ which centralize the image of $\phi$ and preserve $B$.
Let
\[  S_{\phi}  = \Aut(\phi, B) / \Aut(\phi, B)^0 \]
be the component group of $\phi$, 
where $\Aut(\phi, B)^0$ is the identity component of $\Aut(\phi, B)$.
As shown in \cite[\S 8]{ggp1}, $S_{\phi}$ has an explicit description of the form
\[  S_{\phi}  = \prod_j  (\ZZ / 2\ZZ) a_j \]
with a canonical basis $\{ a_j \}$, where the product ranges over all $j$ such that $\phi_j$ is conjugate self-dual with sign $(-1)^{n-1}$.
In particular, $S_{\phi}$ is an elementary abelian $2$-group.
We shall let $z_\phi$ denote the image of $-1 \in \GL_n(\CC)$ in $S_\phi$.
More explicitly, we have
\[
 z_{\phi} = (m_j a_j) \in \prod_j  (\ZZ / 2\ZZ) a_j.
\]

\subsection{Local Langlands correspondence}

The local Langlands correspondence for general linear groups, which was established by Harris--Taylor \cite{ht}, Henniart \cite{he}, and Scholze \cite{scholze}, is a certain bijection between $\Irr(\GL_n(E))$ and equivalence classes of $n$-dimensional representations of $\WD_E$.
This bijection satisfies natural properties which determine it uniquely.
For example, if $\pi$ is an irreducible smooth representation of $\GL_n(E)$ with central character $\omega_{\pi}$ and $\phi$ is the $n$-dimensional representation of $\WD_E$ associated to $\pi$, then 
\begin{itemize}
 \item $\omega_\pi = \det \phi$,
 \item $\pi$ is essentially square-integrable if and only if $\phi$ is irreducible,
 \item $\pi$ is tempered if and only if $\phi$ is tempered.
\end{itemize}

The local Langlands correspondence (as enhanced by Vogan \cite{v}) for unitary groups says that there is a canonical partition
\[  \Irr(\U(V^+)) \sqcup \Irr(\U(V^-))   =  \bigsqcup_{\phi}  \Pi_{\phi}, \]
where $V^+$ and $V^-$ are the $n$-dimensional $\varepsilon$-Hermitian spaces
with $\epsilon(V^+) = +1$ and $\epsilon(V^-) = -1$,
the disjoint union on the right-hand side runs over all equivalence classes of $L$-parameters $\phi$ for $\U(V^\pm)$, and $\Pi_{\phi}$ is a finite set of representations known as a Vogan $L$-packet.
We may decompose $\Pi_{\phi}$ as 
\[  \Pi_{\phi}  = \Pi_{\phi}^+\sqcup   \Pi_{\phi}^-, \]
where for $\epsilon = \pm 1$, $\Pi_{\phi}^{\epsilon}$ consists of the representations of $\U(V^{\epsilon})$ in $\Pi_{\phi}$.

\subsection{Whittaker data}

To describe the $L$-packet $\Pi_{\phi}$ more precisely, it is necessary to choose a Whittaker datum, which is a conjugacy class of pairs $(N, \psi_N)$, where
\begin{itemize}
\item $N$ is the unipotent radical of a Borel subgroup of the quasi-split unitary group $\U(V^+)$,
\item $\psi_N$ is a generic character of $N$.
\end{itemize}
Then relative to this datum, there is a canonical bijection
\[  J_{\psi_N}:   \Pi_{\phi}  \longleftrightarrow \Irr( S_{\phi}). \]
When $n$ is odd, such a datum is canonical.
When $n$ is even, as explained in \cite[\S 12]{ggp1}, it is determined by the choice of an $\N_{E/F}(E^{\times})$-orbit of nontrivial additive characters
\[
\begin{cases}
 \psi^E:E/F \rightarrow \CC^{\times} & \text{if $\varepsilon = +1$;} \\
 \psi:F \rightarrow \CC^{\times} & \text{if $\varepsilon = -1$.}
\end{cases}
\]
According to this choice, we write
\[
\begin{cases}
 J_{\psi^E} & \text{if $\varepsilon = +1$;} \\
 J_{\psi} & \text{if $\varepsilon = -1$}
\end{cases} 
\]
for $J_{\psi_N}$.
We formally adopt the same notation when $n$ is odd.
Suppose that $\varepsilon = +1$, so that $V^+$ and $V^-$ are Hermitian spaces.
Let $W^+ = \delta \cdot V^+$ be the space $V^+$ equipped with the skew-Hermitian form $\delta \cdot \langle \cdot, \cdot \rangle_{V^+}$.
Similarly, we define the skew-Hermitian space $W^- = \delta \cdot V^-$.
Then for $\epsilon = \pm 1$, $\U(V^\epsilon)$ and $\U(W^\epsilon)$ are physically equal.
For a given $\phi$, let $J_{\psi^E}$ and $J_{\psi}$ be the above bijections for $\U(V^{\pm})$ and $\U(W^{\pm})$ respectively.
One has:
\begin{itemize}
\item if $n$ is even, then
\[
 J_{\psi^E}  =  J_{\psi} \Longleftrightarrow \psi^E(x)  = \psi( \tfrac{1}{2}  \Tr_{E/F}(\delta x)), 
\]
\item if $n$ is odd, then $J_{\psi^E}  = J_{\psi}$. 
\end{itemize}

Having fixed the Whittaker datum $(N, \psi_N)$, we shall write $\pi(\phi, \eta)$ or simply $\pi(\eta)$ for the irreducible smooth representation in $\Pi_{\phi}$ corresponding to $\eta \in \Irr(S_{\phi})$ under the bijection $J_{\psi_N}$.
If $\phi$ is tempered, then for any Whittaker datum $(N, \psi'_N)$,
there is a unique $(N,\psi'_N)$-generic representation of $\U(V^+)$ in $\Pi_{\phi}$ by \cite[Lemme 7.10.1]{bp2},
and the irreducible characters of $S_{\phi}$ associated to these generic representations under the bijection $J_{\psi_N}$ are described as follows:
\begin{itemize}
 \item The unique $(N,\psi_N)$-generic representation of $\U(V^+)$ in $\Pi_\phi$ corresponds to the trivial character of $S_{\phi}$.
 \item When $n$ is even, there are precisely two Whittaker datum.
 If $(N, \psi_N')$ is not conjugate to $(N, \psi_N)$, then by \cite[\S 3]{ka},
 the unique $(N,\psi_N')$-generic representation of $\U(V^+)$ in $\Pi_\phi$ corresponds to the character $\eta_-$ of $S_{\phi}$ given by
\[
 \eta_-(a_j) = (-1)^{\dim \phi_j}.
\]
\end{itemize}
The character $\eta_-$ has a role even when $n$ is odd.
Indeed, if $n$ is odd, we may take $V^- = a \cdot V^+$, 
i.e.~the space $V^+$ equipped with the Hermitian form $a \cdot \langle \cdot, \cdot \rangle_{V^+}$,
where $a \in F^{\times} \smallsetminus \N_{E/F}(E^{\times})$.
Then $\U(V^+)$ and $\U(V^-)$ are physically equal.
Under this identification, we have
\[
 \Pi^+_{\phi} = \Pi^-_{\phi}
\]
for any $\phi$.
Let $\pi = \pi(\phi, \eta)$ be a representation of $\U(V^+)$ in $\Pi_{\phi}$.
If we regard $\pi$ as a representation of $\U(V^-)$ via the above identification,
then it has associated character $\eta \cdot \eta_-$.
In particular, if $\phi$ is tempered, then the unique $(N,\psi_N)$-generic representation of $\U(V^-)$ in $\Pi_\phi$ corresponds to $\eta_-$.

\subsection{Properties of the local Langlands correspondence}
\label{SS:llc}

We highlight some properties of the local Langlands correspondence which are used in this paper:

\begin{itemize}
\item
$\pi(\phi,\eta)$ is a representation of $\U(V^{\epsilon})$ if and only if $\eta(z_\phi)  = \epsilon$.

\item $\pi(\phi,\eta)$ is square-integrable if and only if $\phi$ is square-integrable.

\item $\pi(\phi,\eta)$ is tempered if and only if $\phi$ is tempered.

\item
If $\phi$ is tempered but not square-integrable, then we can write
\[
 \phi = \phi_1 \oplus \phi_0 \oplus (\phi_1^c)^\vee,
\]
where
\begin{itemize}
 \item $\phi_1$ is a $k$-dimensional irreducible representation of $\WD_E$ for some positive integer $k$,
 \item $\phi_0$ is a tempered $L$-parameter for $\U(V_0^{\pm})$,
 where $V_0^{\pm}$ are the $\varepsilon$-Hermitian spaces of dimension $n-2k$ over $E$.
\end{itemize}
Note that there is a natural embedding $S_{\phi_0} \hookrightarrow S_\phi$.
Let $\eta_0 \in \Irr(S_{\phi_0})$ and put $\epsilon = \eta_0(z_{\phi_0})$.
We can write
\[
 V^\epsilon = X \oplus V_0^\epsilon \oplus X^*,
\]
where $X$ and $X^*$ are $k$-dimensional totally isotropic subspaces of $V^\epsilon$ such that $X \oplus X^*$ is nondegenerate and orthogonal to $V_0^\epsilon$.
Let $P$ be the maximal parabolic subgroup of $\U(V^\epsilon)$ stabilizing $X$ and $M$ its Levi component stabilizing $X^*$, so that 
\[
 M \cong \GL(X) \times \U(V_0^\epsilon).
\]
Let $\tau$ be the irreducible (unitary) square-integrable representation of $\GL(X)$ associated to $\phi_1$, 
and let $\pi_0 = \pi(\phi_0, \eta_0)$ be the irreducible tempered representation of $\U(V_0^\epsilon)$ in $\Pi_{\phi_0}$ corresponding to $\eta_0$.
Then the induced representation $\Ind^{\U(V^\epsilon)}_P(\tau \otimes \pi_0)$ has a decomposition 
\[
 \Ind^{\U(V^\epsilon)}_P(\tau \otimes \pi_0) = \bigoplus_\eta \pi(\phi, \eta), 
\]
where the sum ranges over all $\eta \in \Irr(S_{\phi})$ such that $\eta|_{S_{\phi_0}} = \eta_0$.
Moreover, if $\phi_1$ is conjugate self-dual, let 
\[
 R(w, \tau \otimes \pi_0) \in \End_{\U(V^\epsilon)}(\Ind^{\U(V^\epsilon)}_P(\tau \otimes \pi_0))
\]
be the normalized intertwining operator defined in \S \ref{SS:normalization} below, where $w$ is the unique nontrivial element in the relative Weyl group for $M$.
Then the restriction of $R(w, \tau \otimes \pi_0)$ to $\pi(\phi, \eta)$ is the scalar multiplication by 
\begin{equation}
\label{eq:lir}
 \begin{cases}
 \epsilon^k \cdot \eta(a_1) & \text{if $\phi_1$ has sign $(-1)^{n-1}$;} \\
 \epsilon^k & \text{if $\phi_1$ has sign $(-1)^n$.}
 \end{cases}
\end{equation}
These properties follow from the definition of $\eta$, induction in stages \cite[\S 2.7]{kmsw}, and the local intertwining relation \cite[Theorem 3.4.3]{mok}, \cite[Theorem 2.6.2]{kmsw}.
We also remark that the factor $\epsilon^k$ arises from the splitting $s' : W_\psi(M,G) \rightarrow \pi_0(N_\psi(M,G))$ defined in \cite[\S 2.4.1]{kmsw}, which can be explicated by using an analog of Lemma \ref{lem:weyl} below for the dual group.

\item 
In general, we can write
\[
 \phi = \phi_1 \oplus \cdots \oplus \phi_r \oplus \phi_0
 \oplus (\phi_r^c)^\vee \oplus \cdots \oplus (\phi_1^c)^\vee,
\]
where
\begin{itemize}
 \item for $i = 1, \ldots, r$, $\phi_i$ is a $k_i$-dimensional representation of $\WD_E$ of the form $\phi_i = \phi_i' \otimes |\cdot|^{e_i}$ for some tempered representation $\phi_i'$ of $\WD_E$ and real number $e_i$ such that
\[
 e_1 > \cdots > e_r > 0,
\]
 \item $\phi_0$ is a tempered $L$-parameter for $\U(V_0^{\pm})$,
 where $V_0^{\pm}$ are the $\varepsilon$-Hermitian spaces of dimension $n-2(k_1+\cdots+k_r)$ over $E$.
\end{itemize}
Note that the natural map $S_{\phi_0} \rightarrow S_\phi$ is an isomorphism.
Let $\eta \in \Irr(S_{\phi})$ and put $\epsilon = \eta(z_\phi)$.
We can write
\[
 V^\epsilon = X_1 \oplus \cdots \oplus X_r \oplus V_0^\epsilon \oplus X^*_r \oplus \cdots \oplus X^*_1,
\]
where $X_i$ and $X_i^*$ are $k_i$-dimensional totally isotropic subspaces of $V^\epsilon$ such that $X_i \oplus X_i^*$ are nondegenerate, mutually orthogonal,
 and orthogonal to $V_0^\epsilon$.
Let $P$ be the parabolic subgroup of $\U(V^\epsilon)$ stabilizing the flag
\[
 X_1 \subset X_1 \oplus X_2 \subset \dots \subset X_1 \oplus \cdots \oplus X_r
\]
and $M$ its Levi component stabilizing the flag
\[
 X^*_1 \subset X^*_1 \oplus X^*_2 \subset \dots \subset X^*_1 \oplus \cdots \oplus X^*_r,
\]
so that 
\[
 M \cong \GL(X_1) \times \cdots \times \GL(X_r) \times \U(V_0^\epsilon).
\]
Then $\pi(\phi, \eta)$ is the unique irreducible quotient of the standard module
\[
 \Ind^{\U(V^\epsilon)}_P(\tau_1 \otimes \cdots \otimes \tau_r \otimes \pi_0), 
\]
where for $i = 1, \ldots, r$, $\tau_i$ is the irreducible essentially tempered representation of $\GL(X_i)$ associated to $\phi_i$,
and $\pi_0 = \pi(\phi_0, \eta_0)$ is the irreducible tempered representation of $\U(V_0^\epsilon)$ in $\Pi_{\phi_0}$ corresponding to $\eta_0 := \eta|_{S_{\phi_0}} \in \Irr(S_{\phi_0})$.

\item 
If $\pi = \pi(\phi, \eta)$, then the contragredient representation $\pi^\vee$ of $\pi$ has $L$-parameter $\phi^{\vee}$ and associated character $\eta_{\pi^\vee} = \eta \cdot \nu$, where 
\[  
\nu(a_j)  = \begin{cases}
\omega_{E/F}(-1)^{\dim \phi_j} & \text{if $n$ is even;}  \\
1 & \text{if $n$ is odd.} \end{cases} \]
Note that the component groups $S_{\phi}$ and $S_{\phi^\vee}$ are canonically identified.
In the case of unitary groups, this property follows from a result of Kaletha \cite[\S 4]{ka}.

\end{itemize}

\section{\textbf{Gross--Prasad conjecture}}

In this section, we explicate the statement of the Gross--Prasad conjecture for unitary groups. In particular, we recall the definition of the distinguished character $\eta$ of the component group.

\subsection{Pairs of spaces}

For $\epsilon = \pm 1$, let $V_n^\epsilon$ denote the $n$-dimensional Hermitian space with $\epsilon(V_n^\epsilon) = \epsilon$ and $W_n^\epsilon$ the $n$-dimensional skew-Hermitian space with $\epsilon(W_n^\epsilon) = \epsilon$, so that $W_n^\epsilon = \delta \cdot V_n^\epsilon$.
For the Gross--Prasad conjecture, we consider the pair of spaces:
\[  V_n^+ \subset V_{n+1}^+ \quad \text{or} \quad W_n^+ = W_n^+. \]
Then the relevant pure inner form (other than itself) is 
\[  V_n^- \subset V_{n+1}^-  \quad \text{or} \quad W_n^- = W_n^- \]
and observe that
\[  V_{n+1}^{\epsilon}/V_n^{\epsilon}   \cong L_{(-1)^n}, \]
where for $a \in F^{\times}$, $L_a$ denotes the Hermitian line with form $a \cdot \N_{E/F}$.
We have the groups
\[  G_n^\epsilon =  \U(V_n^\epsilon) \times \U(V_{n+1}^\epsilon) \quad \text{or} \quad \U(W_n^\epsilon) \times \U(W_n^\epsilon) \]
and
\[   H_n^\epsilon = \U(V_n^\epsilon) \quad \text{or} \quad \U(W_n^\epsilon), \]
and the embedding
\[ \Delta : H_n^\epsilon \hookrightarrow G_n^\epsilon.\]

We also have the Langlands--Vogan parametrization (depending on the choice of the Whittaker datum) relative to the fixed pair of spaces.
For an $L$-parameter $\phi = \phi^{\diamondsuit} \times \phi^{\heartsuit}$ for $G_n^\pm$, the component group is:
\[  S_{\phi}  = S_{\phi^{\diamondsuit}}  \times S_{\phi^{\heartsuit}}. \]
In particular, under the local Langlands correspondence, the representation $\pi(\eta) \in \Pi_{\phi}$ is a representation of a relevant pure inner form if and only if
\[  \eta(z_{\phi^{\diamondsuit}}, z_{\phi^{\heartsuit}})  = 1, \]
and $\pi(\eta)$ is a representation of $G_n^{\epsilon}$ if and only if 
\[  \eta(z_{\phi^{\diamondsuit}}, 1)  = \eta(1,z_{\phi^{\heartsuit}})  = \epsilon. \]

\subsection{The distinguished character $\eta$} 
\label{SS:eta}

We shall now define a distinguished character $\eta \in \Irr(S_\phi)$ when $\phi =  \phi^{\diamondsuit} \times \phi^{\heartsuit}$.
Writing 
\[  S_{\phi^{\diamondsuit}}  = \prod_i  (\ZZ /2 \ZZ) a_i \quad \text{and} \quad S_{\phi^{\heartsuit}} = \prod_j  (\ZZ / 2\ZZ) b_j, \]
we thus need to specify the signs $\eta(a_i) = \pm 1$ and $\eta(b_j) = \pm 1$.
We consider the Bessel and Fourier--Jacobi cases separately.

\begin{itemize}

\item \textbf{Bessel case.}
We fix a nontrivial character $\psi^E$ of $E/F$ which determines the local Langlands correspondence for the even unitary group in $G_n^\epsilon = \U(V_n^\epsilon) \times \U(V_{n+1}^\epsilon)$.
We set $\psi^E_{-2}(x)  = \psi^E(-2 x)$ and define:
\[
\begin{cases}
 \eta^{\spadesuit}(a_i)  = \epsilon(\frac{1}{2}, \phi^{\diamondsuit}_i \otimes \phi^{\heartsuit}, \psi^E_{-2}); \\
 \eta^{\spadesuit}(b_j)  = \epsilon(\frac{1}{2},  \phi^{\diamondsuit} \otimes \phi^{\heartsuit}_{j}, \psi^E_{-2}).
\end{cases}
\]

\item \textbf{Fourier--Jacobi case.}
In this case, we need to fix a nontrivial character $\psi$ of $F$ and a character $\chi$ of $E^{\times}$ with $\chi|_{F^{\times}} = \omega_{E/F}$ to specify the Weil representation $\nu = \omega_{\psi,\chi, W_n^\epsilon}$ of $\U(W_n^\epsilon)$.
The recipe for the distinguished character $\eta^{\clubsuit}$ of $S_{\phi}$ depends on the parity of $n = \dim W_n^\epsilon$.
\begin{itemize}
\item If $n$ is odd, recall that $\det W^+_n  \in  \delta \cdot \N_{E/F}(E^{\times})$ and define
\[ \begin{cases}
 \eta^{\clubsuit}(a_i) = \epsilon(\frac{1}{2}, \phi^{\diamondsuit}_{i} \otimes \phi^{\heartsuit} \otimes \chi^{-1},  \psi_2^E);  \\
  \eta^{\clubsuit}(b_j)  = \epsilon(\frac{1}{2}, \phi^{\diamondsuit} \otimes \phi^{\heartsuit}_{j} \otimes \chi^{-1}, \psi_2^E), \end{cases} \]
  where
  \[  \psi^E_2(x)  = \psi(\Tr_{E/F}(\delta x)).  \]
\item If $n$ is even, the fixed character $\psi$  is used to fix the local Langlands correspondence for $\U(W_n^\epsilon)$.
 We set
\[  \begin{cases}
\eta^{\clubsuit}(a_i) = \epsilon(\frac{1}{2}, \phi^{\diamondsuit}_i \otimes \phi^{\heartsuit} \otimes \chi^{-1},  \psi^E); \\
  \eta^{\clubsuit}(b_j)  = \epsilon(\frac{1}{2}, \phi^{\diamondsuit} \otimes \phi^{\heartsuit}_{j} \otimes \chi^{-1}, \psi^E), \end{cases} \]
where the $\epsilon$-factors are defined using any nontrivial additive character $\psi^E$ of $E/F$.
(The result is independent of this choice.)
\end{itemize}
\end{itemize}

We refer the reader to \cite[\S 18]{ggp1} for a discussion of the various subtleties in the definition of $\eta^{\spadesuit}$ or $\eta^{\clubsuit}$.

\subsection{Conjectures (B) and (FJ)}

Let us formally state the statements (B)$_n$ and (FJ)$_n$:

\begin{itemize}

\item[(B)$_n$]
Given a \emph{tempered} $L$-parameter $\phi$ for $G_n^\pm = \U(V_n^\pm) \times \U(V_{n+1}^\pm)$ and a representation $\pi(\eta) \in \Pi_{\phi}$ of a relevant pure inner form $G_n^\epsilon$,
\[  \Hom_{\Delta H_n^\epsilon}(\pi(\eta), \CC) \ne 0 \Longleftrightarrow  \eta = \eta^{\spadesuit}. \]

\item[(FJ)$_n$]
Given a \emph{tempered} $L$-parameter $\phi$ for $G_n^\pm = \U(W_n^\pm) \times \U(W_n^\pm)$ and a representation $\pi(\eta) \in \Pi_{\phi}$ of a relevant pure inner form $G_n^\epsilon$, 
\[ \Hom_{\Delta H_n^\epsilon}(\pi(\eta), \nu) \ne 0 \Longleftrightarrow \eta = \eta^{\clubsuit}. \]

\end{itemize}

We shall denote by (B) the collection of statements (B)$_n$ for all $n \ge 0$, and by (FJ) the collection of statements (FJ)$_n$ for all $n \ge 0$.
We stress that both (B) and (FJ) are considered only for tempered representations in this paper (except in \S \ref{s:generic} where we treat the case of generic $L$-parameters).

\section{\textbf{Local theta correspondence and Prasad's conjectures}}
\label{S:Pconj}

In this section, we explicate the statement of Prasad's conjectures on the local theta correspondence for unitary groups of (almost) equal rank.

\subsection{Weil representations}
\label{SS:Weil}

Let $V$ be a Hermitian space and $W$ a skew-Hermitian space. To consider the theta correspondence for the reductive dual pair $\U(V) \times \U(W)$, one requires certain additional data:

\begin{enumerate}
\item a nontrivial additive character $\psi$ of $F$;

\item a pair of characters $\chi_V$ and $\chi_W$ of $E^{\times}$ such that
\[  \chi_V |_{F^{\times}}  = \omega_{E/F}^{\dim V} \quad \text{and} \quad \chi_W|_{F^{\times}} = \omega_{E/F}^{\dim W}. \]
One way to fix such a pair is simply to fix a character $\chi$ of $E^{\times}$ such that $\chi|_{F^{\times}} = \omega_{E/F}$ and then set 
\[  \chi_V = \chi^{\dim V} \quad \text{and} \quad \chi_W = \chi^{\dim W}. \]

\item a trace zero element $\delta \in E^{\times}$.
\end{enumerate}
To elaborate, the tensor product $V \otimes W$ has a natural symplectic form defined by 
\[  \langle v_1 \otimes w_1,  v_2 \otimes w_2 \rangle =  \Tr_{E/F}(\langle v_1,v_2 \rangle_V \cdot \langle w_1, w_2 \rangle_W). \]
Then there is a natural map
\[   \U(V) \times \U(W) \longrightarrow \Sp(V \otimes W). \]
One has the metaplectic $S^1$-cover $\Mp(V \otimes W)$ of $\Sp(V \otimes W)$, and the character $\psi$ (together with the form $\langle \cdot, \cdot \rangle$ on $V \otimes W$) determines a Weil representation $\omega_{\psi}$ of $\Mp(V \otimes W)$.
The data $(\psi, \chi_V,\chi_W, \delta)$ then allows one to specify a splitting of the metaplectic cover over $\U(V) \times \U(W)$, as shown in \cite{k}, \cite{hks}.
In fact, it does not depend on the choice of $\delta$.

Hence, we have a Weil representation $\omega_{\psi, \chi_V, \chi_W, V,W}$ of $\U(V) \times \U(W)$. The Weil representation $\omega_{\psi, \chi_V, \chi_W, V,W}$ depends only on the orbit of $\psi$ under $\N_{E/F}(E^{\times})$.

\subsection{Local theta correspondence}

Given an irreducible smooth representation $\pi$ of $\U(W)$, the maximal $\pi$-isotypic quotient of $\omega_{\psi, \chi_V, \chi_W, V,W}$ is of the form
\[
 \Theta_{\psi, \chi_V, \chi_W, V,W}(\pi) \boxtimes \pi
\]
for some smooth representation $\Theta_{\psi, \chi_V, \chi_W, V,W}(\pi)$ of $\U(V)$ of finite length.
By the Howe duality, which was proved by Waldspurger \cite{w} for $p \ne 2$ and by the first author and Takeda \cite{gt1}, \cite{gt2} for any $p$ (so that the assumption $p \ne 2$ can be removed from the results of \cite{gi} stated below),
the maximal semisimple quotient $\theta_{\psi, \chi_V, \chi_W, V,W}(\pi)$ of $\Theta_{\psi, \chi_V, \chi_W, V,W}(\pi)$ is either zero or irreducible.
If $\chi_V$ and $\chi_W$ are clear from the context, we simply write $\Theta_{\psi, V,W}(\pi) = \Theta_{\psi, \chi_V, \chi_W, V,W}(\pi)$ and $\theta_{\psi, V,W}(\pi) = \theta_{\psi, \chi_V, \chi_W, V,W}(\pi)$.

In this paper, we consider the theta correspondence for $\U(V) \times \U(W)$ with $|\dim V - \dim W| \le 1$. We will state two conjectures of D.~Prasad which describe the local theta correspondence in terms of the local Langlands correspondence. 

\subsection{Equal rank case} 

We first consider the case $\dim V = \dim W = n$. We shall consider the theta correspondence for $\U(V_n^\epsilon) \times \U(W_n^{\epsilon'})$.
The following summarises some results of \cite{gi}:

\begin{thm}  \label{T:equal}
Let $\phi$ be an $L$-parameter for $\U(W_n^\pm)$.
Then we have:
\begin{enumerate}
\item For any fixed $\pi \in \Pi_{\phi}^{\epsilon'}$, exactly one of $\Theta_{\psi, V_n^+,W_n^{\epsilon'}}(\pi)$ or $\Theta_{\psi, V_n^-,W_n^{\epsilon'}}(\pi)$ is nonzero.

\item $\Theta_{\psi, V_n^\epsilon, W_n^{\epsilon'}}(\pi) \ne 0$ if and only if
\[  \epsilon(\tfrac{1}{2}, \phi  \otimes  \chi_V^{-1}, \psi^E_2)  = \epsilon \cdot \epsilon', \]
where
\[  \psi^E_2(x)  = \psi(\Tr_{E/F}(\delta x)).  \]

\item If $\Theta_{\psi, V_n^{\epsilon},W_n^{\epsilon'}}(\pi)$ is nonzero, then $\theta_{\psi, V_n^{\epsilon},W_n^{\epsilon'}}(\pi)$ has $L$-parameter
\[   \theta(\phi)  =  \phi \otimes \chi_V^{-1} \chi_W. \]

\item The theta correspondence $\pi \mapsto   \theta_{\psi, V_n^{\epsilon}, W_n^{\epsilon'}}(\pi)$ gives a bijection
\[  \Pi_{\phi}  \longleftrightarrow   \Pi_{\theta(\phi)}. \]

\item If $\phi$ is tempered and $\Theta_{\psi, V_n^{\epsilon}, W_n^{\epsilon'}}(\pi)$ is nonzero, then $\Theta_{\psi, V_n^{\epsilon}, W_n^{\epsilon'}}(\pi)$ is irreducible.

\end{enumerate}
\end{thm}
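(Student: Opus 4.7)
The plan is to establish all five statements together by induction on $n$, reducing the general tempered case to square-integrable parameters via parabolic compatibility, and then handling the square-integrable case via a global argument based on Arthur's multiplicity formula.

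For the reduction step, one uses Kudla's induction principle: if $\phi$ is tempered but not square-integrable, write $\phi = \phi_1 \oplus \phi_0 \oplus (\phi_1^c)^\vee$ as in \S \ref{SS:llc} and realize $\pi = \pi(\phi, \eta)$ as a summand of $\Ind_P^{\U(W_n^{\epsilon'})}(\tau \otimes \pi_0)$. Then $\Theta_{\psi, V_n^\epsilon, W_n^{\epsilon'}}(\pi)$ embeds into the induced representation of $(\tau \otimes \chi_V^{-1}\chi_W) \otimes \Theta(\pi_0)$ from the corresponding parabolic of $\U(V_n^\epsilon)$. The epsilon factor in (2) is multiplicative in direct sums, and the contribution of the conjugate self-dual summand $\phi_1 \oplus (\phi_1^c)^\vee$ to $\epsilon(\tfrac{1}{2}, \phi \otimes \chi_V^{-1}, \psi^E_2)$ simplifies to $+1$, so (2) and (3) for $\phi$ reduce to the analogous statements for $\phi_0$ on a smaller group. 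Irreducibility (5) in this induced case follows from a Jacquet module computation via the mixed model of the Weil representation, combined with the known structure of the Jacquet modules of $\pi$ recorded in \S \ref{SS:llc}.

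For the square-integrable case, one globalizes $\pi$ to an irreducible cuspidal automorphic representation $\Pi$ of a global unitary group whose local component at a chosen place $v_0$ is $\pi$, whose $A$-parameter is tempered with shape matching $\phi$ at $v_0$, and whose remaining local components have known theta behavior (for instance, unramified principal series, for which the theta lift is computed via the Satake isomorphism, or components to which the inductive hypothesis applies). Since $\Pi$ has tempered $A$-parameter, the relevant $L$-value at $s=1$ is nonvanishing; combined with the Rallis inner product formula this yields nonvanishing of the global theta lift $\Theta(\Pi)$ to a suitable global unitary group. The local parameters of $\Theta(\Pi)$ are then known at all $v \ne v_0$; Arthur's multiplicity formula, viewed as a product formula over all places, identifies the local parameter at $v_0$ as $\phi \otimes \chi_V^{-1}\chi_W$, proving (3), and similarly extracts the sign identity (2) from the global root number. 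Parts (1) and (4) follow formally from (2), (3), and Howe duality: (1) because the epsilon factor takes exactly one of the two possible signs, while (4) follows because $\Pi_\phi$ and $\Pi_{\theta(\phi)}$ have canonically identified component groups.

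The main obstacle is the globalization: producing $\Pi$ with a prescribed local component at $v_0$, a tempered $A$-parameter with the correct global component group, nonvanishing global theta lift, and controlled local components elsewhere is a delicate construction at the heart of the argument. A secondary technical issue is the precise matching of normalizations of epsilon factors, Weil representations, and Whittaker data at each place when invoking the product formula.
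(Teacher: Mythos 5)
The theorem you are asked to prove is not proved in this paper at all: it is explicitly presented as a \emph{summary of results from \cite{gi}} (see the sentence preceding the theorem and Proposition~\ref{P:1.1}, which records that $(\Theta)$, (weak~P1), (weak~P2) were established there). Your attempt to re-derive it from scratch is therefore a genuinely different route from what the paper does, and it is worth checking whether it actually goes through.

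It does not, and the obstruction is one the paper itself flags. Theorem~\ref{T:equal} concerns the \emph{equal rank} dual pair $\U(V_n) \times \U(W_n)$. For this pair, the nonvanishing criterion coming from the Rallis inner product formula involves the \emph{central} value $L(\tfrac12, \Pi)$ of the standard $L$-function, not $L(1, \Pi)$. Temperedness of the $A$-parameter of $\Pi$ gives you $L(1, \Pi) \ne 0$, but says nothing about $L(\tfrac12, \Pi)$, whose nonvanishing is a deep arithmetic question that one cannot simply arrange by choice of globalization. The paper is explicit that this is precisely the difference between (P1) (equal rank, requiring $L(\tfrac12)\ne 0$) and (P2) (almost equal rank, requiring only $L(1)\ne 0$), and that this asymmetry is the reason the authors prove (P2) globally and then recover (P1) locally via the see-saw. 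Your square-integrable base case silently applies the favorable $L(1)$ criterion in the unfavorable equal-rank situation, so the claimed nonvanishing of the global theta lift $\Theta(\Pi)$ — which everything downstream (the identification of the parameter at $v_0$, the extraction of the sign in (ii)) depends on — does not follow. The equal-rank dichotomy (parts (i), (ii)) is in fact established in \cite{gi} by local means (doubling $\gamma$-factors and the conservation relation \cite{sz}), which avoids the central $L$-value entirely; the parameter identification (iii) and irreducibility (v) there likewise rest on Kudla's filtration, Plancherel measure, and Howe duality rather than a global argument. Your reduction of the tempered non-square-integrable case via Kudla's filtration is in the right spirit, but without a correct base case the induction is not anchored.
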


\subsection{Conjecture (P1)}

After the above theorem, the remaining question is to specify the bijection of Vogan $L$-packets given in (iv).
We shall do this using the bijections
\[  J_{\psi}:  \Pi_{\phi} \longleftrightarrow \Irr(S_{\phi})   \quad \text{and} \quad J_{\psi^E}:  \Pi_{\theta(\phi)} \longleftrightarrow \Irr(S_{\theta(\phi)}), \]
where
\begin{equation}  \label{E:psipsiE2}
  \psi^E(x)  = \psi(\tfrac{1}{2} \Tr_{E/F}(\delta x)). \end{equation}
Note that the bijections $J_{\psi}$ and $J_{\psi^E}$ are independent of $\psi$ and $\psi^E$ when $n$ is odd, but when $n$ is even, they do depend on these additive characters and it is crucial for $\psi$ and $\psi^E$ to be related as in \eqref{E:psipsiE2} for what follows to hold. 

Having fixed the bijections $J_{\psi}$ and $J_{\psi^E}$, we need to describe the bijection
\begin{align*}
 \Irr(S_{\phi}) & \longleftrightarrow \Irr(S_{\theta(\phi)}) \\
 \eta & \longleftrightarrow \theta(\eta)
\end{align*}
induced by the theta correspondence.
Note that the component groups $S_{\phi}$ and $S_{\theta(\phi)}$ are canonically identified, since $\theta(\phi)$ is simply a twist of $\phi$ by a conjugate orthogonal character. 

Now the first conjecture of Prasad states the following.

\begin{itemize}

\item[(P1)$_n$]
Let $\phi$ be an $L$-parameter for $\U(W_n^\pm)$ and let $\eta \in \Irr(S_\phi)$. Suppose that
\[  S_{\phi} = S_{\theta(\phi)} = \prod_i (\ZZ /2\ZZ) a_i. \]
Then, relative to $J_{\psi}$ and $J_{\psi^E}$ as above, 
\[  \theta(\eta)(a_i) / \eta(a_i)  = \epsilon(\tfrac{1}{2}, \phi_i \otimes  \chi_V^{-1},  \psi^E_2), \]
where 
\[  \psi^E_2(x)  = \psi(\Tr_{E/F}(\delta x)).  \]
\end{itemize}

We shall denote by (P1) the collection of all statements (P1)$_n$ for all $n \ge 0$. Note that we consider (P1) for all $L$-parameters, and not just tempered ones. However, we note:

\begin{prop}  \label{P:P1nt}
Suppose that $(\emph{P1})_k$ holds for all tempered $L$-parameters for all $k < n$. Then $(\emph{P1})_k$ holds for all nontempered $L$-parameters for all $k \le n$.
\end{prop}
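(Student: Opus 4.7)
The plan is to reduce the statement (P1)$_k$ for a nontempered parameter $\phi$ on $\U(W_k^\pm)$ to the statement (P1)$_{k_0}$ for the tempered Langlands part $\phi_0$ of $\phi$, which lives on a unitary group of strictly smaller rank $k_0 < k \le n$. First, I write $\phi$ in the form
\[
 \phi = \phi_1 \oplus \cdots \oplus \phi_r \oplus \phi_0 \oplus (\phi_r^c)^\vee \oplus \cdots \oplus (\phi_1^c)^\vee,
\]
as recalled in \S \ref{SS:llc}, where each $\phi_i = \phi_i' \otimes |\cdot|^{e_i}$ has strictly positive exponent $e_i$ and $\phi_0$ is a tempered $L$-parameter for $\U(W_{k_0}^\pm)$ with $k_0 = k - 2(k_1 + \cdots + k_r) < k$. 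None of the components $\phi_i$ with $i \ge 1$ (nor $(\phi_i^c)^\vee$) is conjugate self-dual with sign $(-1)^{k-1}$, so the natural inclusion $S_{\phi_0} \hookrightarrow S_\phi$ is an isomorphism, and its canonical basis $\{a_i\}$ is simply the canonical basis of $S_{\phi_0}$. The same remarks apply to $\theta(\phi) = \phi \otimes \chi_V^{-1}\chi_W$, whose Langlands decomposition has tempered part $\theta(\phi_0) = \phi_0 \otimes \chi_V^{-1}\chi_W$.

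Next, I use the compatibility of the theta correspondence with parabolic induction in the equal rank case. By the induction principle of Kudla and its refinement in \cite{gi}, if $\pi_0 = \pi(\phi_0, \eta_0)$ is the tempered representation attached to $\eta_0 := \eta|_{S_{\phi_0}}$, then $\pi(\phi, \eta)$ is the Langlands quotient of the standard module $\Ind_P^{\U(W_k^\epsilon)}(\tau_1 \otimes \cdots \otimes \tau_r \otimes \pi_0)$, and its theta lift to $\U(V_k^{\epsilon'})$ (which is nonzero by Theorem \ref{T:equal} applied in rank $k_0$, since the $\epsilon$-factor condition depends only on $\phi_0$) is the Langlands quotient of
\[
 \Ind_P^{\U(V_k^{\epsilon'})}\bigl( (\tau_1 \otimes \chi_V^{-1}\chi_W) \otimes \cdots \otimes (\tau_r \otimes \chi_V^{-1}\chi_W) \otimes \theta_{\psi,V_{k_0}^{\epsilon'},W_{k_0}^\epsilon}(\pi_0) \bigr).
\]
This representation has $L$-parameter $\theta(\phi)$, and by the recipe of \S \ref{SS:llc} its associated character $\theta(\eta) \in \Irr(S_{\theta(\phi)})$ is determined by its restriction to $S_{\theta(\phi_0)}$, which is exactly the character $\theta(\eta_0)$ attached to the tempered theta lift $\theta(\pi_0)$.

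Finally, I invoke the hypothesis (P1)$_{k_0}$, which applies since $\phi_0$ is tempered and $k_0 < k \le n$: it gives
\[
 \theta(\eta_0)(a_i)/\eta_0(a_i) = \epsilon(\tfrac{1}{2}, \phi_{0,i} \otimes \chi_V^{-1}, \psi_2^E)
\]
for every basis element $a_i$ of $S_{\phi_0}$, where $\phi_{0,i}$ runs over the irreducible summands of $\phi_0$ that are conjugate self-dual with sign $(-1)^{k_0-1} = (-1)^{k-1}$ (parities match because $k - k_0$ is even). Under the isomorphisms $S_\phi \cong S_{\phi_0}$ and $S_{\theta(\phi)} \cong S_{\theta(\phi_0)}$ these $\phi_{0,i}$ are precisely the components $\phi_i$ appearing in the statement (P1)$_k$ for $\phi$, and $\eta \leftrightarrow \eta_0$, $\theta(\eta) \leftrightarrow \theta(\eta_0)$; hence the desired formula (P1)$_k$ for $\phi$ follows.

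The main obstacle is the second step: one must verify that the theta lift really does preserve the Langlands quotient structure, and, more delicately, that the bijection $J_\psi$ used to attach $\eta$ to a nontempered representation on $\U(W_k^\epsilon)$ is compatible (via the isomorphism $S_{\phi_0} \cong S_\phi$) with the bijection $J_{\psi^E}$ on $\U(V_k^{\epsilon'})$ for the parameter $\theta(\phi)$. This compatibility is contained in the analysis of the equal rank theta correspondence carried out in \cite{gi}.
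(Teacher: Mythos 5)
Your proof is correct and follows essentially the same route as the paper's, which simply cites the analog of Gan--Savin \cite[Theorem 8.1(iii)]{gs} for unitary groups; that result encodes precisely what you spell out, namely that the equal-rank theta correspondence respects the Langlands quotient construction and hence sends $\pi(\phi,\eta)$ (with $\eta|_{S_{\phi_0}}=\eta_0$) to $\pi(\theta(\phi),\theta(\eta))$ with $\theta(\eta)|_{S_{\theta(\phi_0)}}=\theta(\eta_0)$. The remaining ingredients you identify — that $S_{\phi}\cong S_{\phi_0}$ under the Langlands decomposition, that the $\epsilon$-factor in Theorem \ref{T:equal}(ii) for $\phi$ reduces to that for $\phi_0$ (since the contribution of $\rho\oplus(\rho^c)^\vee$ to the root number is trivial), and that $k_0<n$ so the hypothesis applies — are all correct and complete the reduction.
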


\begin{proof}
This follows from the analog of \cite[Theorem 8.1(iii)]{gs} for unitary groups.
\end{proof}

Moreover, the following is a corollary of Theorem \ref{T:equal}(ii):

\begin{cor}  \label{C:stable}
The statement $(\emph{P1})_n$ holds if $\phi$ is irreducible.
\end{cor}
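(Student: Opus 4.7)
The plan is to observe that when $\phi$ is irreducible, the statement of (P1)$_n$ collapses to a direct translation of Theorem \ref{T:equal}(ii). The work is essentially bookkeeping of signs.

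Concretely, irreducibility of $\phi$ means that in the decomposition $\phi = \bigoplus_i m_i \phi_i$ there is a single summand $\phi_1 = \phi$ with $m_1 = 1$. Hence
\[
 S_\phi = (\ZZ/2\ZZ) a_1, \qquad z_\phi = m_1 a_1 = a_1,
\]
so a character $\eta \in \Irr(S_\phi)$ is determined by the single value $\eta(a_1)$. By the Langlands--Vogan property recalled in \S\ref{SS:llc}, if $\pi = \pi(\phi, \eta) \in \Pi_\phi^{\epsilon'}$, then $\eta(a_1) = \eta(z_\phi) = \epsilon'$. Similarly, under the canonical identification $S_\phi = S_{\theta(\phi)}$, if $\theta_{\psi, V_n^\epsilon, W_n^{\epsilon'}}(\pi)$ is the unique nonzero theta lift (cf.\ Theorem \ref{T:equal}(i)) with $L$-parameter $\theta(\phi)$ and character $\theta(\eta)$, then $\theta(\eta)(a_1) = \theta(\eta)(z_{\theta(\phi)}) = \epsilon$.

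I would then simply compute
\[
 \theta(\eta)(a_1)/\eta(a_1) = \epsilon \cdot \epsilon',
\]
and invoke Theorem \ref{T:equal}(ii), which characterizes the unique $\epsilon$ for which $\Theta_{\psi, V_n^\epsilon, W_n^{\epsilon'}}(\pi) \ne 0$ by the identity $\epsilon \cdot \epsilon' = \epsilon(\tfrac{1}{2}, \phi \otimes \chi_V^{-1}, \psi^E_2)$. Since $\phi_1 = \phi$, this is exactly the desired formula $\theta(\eta)(a_1)/\eta(a_1) = \epsilon(\tfrac{1}{2}, \phi_1 \otimes \chi_V^{-1}, \psi^E_2)$ appearing in (P1)$_n$.

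There is no real obstacle here, since no subtle local intertwining or global input is needed. The only point requiring a brief sanity check is that the normalizations $J_\psi$ and $J_{\psi^E}$ specified in the statement of (P1)$_n$ are consistent with the convention $\eta(z_\phi) = \epsilon'$ on the ambient inner form; but this is one of the defining properties of the Langlands--Vogan parametrization recorded in \S\ref{SS:llc} and is insensitive to the particular additive characters chosen. In short, the corollary is a one-line consequence of the epsilon dichotomy (ii) in Theorem \ref{T:equal} combined with the triviality of the component group in the irreducible case.
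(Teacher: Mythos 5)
Your proof is correct and is precisely the computation the paper has in mind when it labels this a corollary of Theorem \ref{T:equal}(ii): for irreducible $\phi$ one has $S_\phi = (\ZZ/2\ZZ)a_1$ with $z_\phi = a_1$, so $\eta(a_1) = \epsilon'$ and $\theta(\eta)(a_1) = \epsilon$, and the ratio $\epsilon\cdot\epsilon'$ is exactly the epsilon factor supplied by Theorem \ref{T:equal}(ii). The remark about the Whittaker normalizations is the right sanity check and the paper's property $\eta(z_\phi)=\epsilon$ in \S\ref{SS:llc} covers it.
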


\subsection{Almost equal rank case}

Now we consider the case $\dim V = n+1$ and $\dim W = n$. We shall consider the theta correspondence for $\U(V_{n+1}^\epsilon) \times \U(W_n^{\epsilon'})$. The following summarises some results of \cite{gi}:

\begin{thm}  \label{T:al-equal}

Let $\phi$ be an $L$-parameter for $\U(W_n^{\pm})$. Then we have:

\begin{enumerate}

\item Suppose that $\phi$ does not contain $\chi_V$.

\begin{enumerate}

 \item For any $\pi \in \Pi_{\phi}^{\epsilon'}$, 
 $\Theta_{\psi, V_{n+1}^{\epsilon}, W_n^{\epsilon'}}(\pi)$ is nonzero and $\theta_{\psi, V_{n+1}^{\epsilon}, W_n^{\epsilon'}}(\pi)$ has $L$-parameter
 \[  \theta(\phi) = (\phi \otimes \chi_V^{-1} \chi_W) \oplus  \chi_W. \]

 \item For each $\epsilon = \pm 1$, the theta correspondence $\pi \mapsto \theta_{\psi, V_{n+1}^{\epsilon}, W_n^{\epsilon'}}(\pi)$ gives a bijection
\[  \Pi_{\phi} \longleftrightarrow \Pi_{\theta(\phi)}^{\epsilon}.  \]  

\end{enumerate} 

\item Suppose that $\phi$ contains $\chi_V$.

\begin{enumerate}

\item For any fixed $\pi \in \Pi_{\phi}^{\epsilon'}$, exactly one of $\Theta_{\psi, V_{n+1}^+, W_n^{\epsilon'}}(\pi)$ or $\Theta_{\psi, V_{n+1}^-, W_n^{\epsilon'}}(\pi)$ is nonzero.

\item If $\Theta_{\psi, V_{n+1}^{\epsilon}, W_n^{\epsilon'}}(\pi)$ is nonzero, then $\theta_{\psi, V_{n+1}^{\epsilon}, W_n^{\epsilon'}}(\pi)$ has $L$-parameter
 \[  \theta(\phi) = (\phi \otimes \chi_V^{-1} \chi_W) \oplus  \chi_W. \]

\item The theta correspondence $\pi \mapsto \theta_{\psi, V_{n+1}^{\epsilon}, W_n^{\epsilon'}}(\pi)$ gives a bijection
 \[  \Pi_{\phi} \longleftrightarrow \Pi_{\theta(\phi)}. \]

\end{enumerate} 

\item If $\phi$ is tempered and $\Theta_{\psi, V_{n+1}^{\epsilon}, W_n^{\epsilon'}}(\pi)$ is nonzero, then $\Theta_{\psi, V_{n+1}^{\epsilon}, W_n^{\epsilon'}}(\pi)$ is irreducible.

\end{enumerate}

\end{thm}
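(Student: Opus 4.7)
The plan is to establish the three parts of Theorem~\ref{T:al-equal} by combining the equal rank case (Theorem~\ref{T:equal}) with the Sun--Zhu conservation relation, the Kudla--Rallis tower property, and compatibility of the theta correspondence with parabolic induction. The central dichotomy is whether $\chi_V$ occurs as a summand of $\phi$, since this governs the behavior of the $\epsilon$-factor $\epsilon(\tfrac{1}{2}, \phi \otimes \chi_V^{-1}, \psi^E_2)$ appearing in Theorem~\ref{T:equal}(ii) and determines whether a ``new'' copy of $\chi_W$ appears in the target parameter $\theta(\phi) = (\phi \otimes \chi_V^{-1}\chi_W) \oplus \chi_W$.

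For the non-vanishing statements (i)(a) and (ii)(a), I would invoke the conservation relation, which in the unitary setting constrains the sum of the first occurrence indices in the two Witt towers of Hermitian spaces of a given parity. Combined with Theorem~\ref{T:equal} (which locates the first occurrences in the opposite-parity towers at $V_n^{\pm}$ and $V_{n+2}^{\mp}$), the conservation relation pins down the first occurrences in the $V_{n+1}^{\pm}$ towers: when $\chi_V$ is not a summand of $\phi$, both occur at dimension $n+1$, so $\Theta_{V_{n+1}^\pm}(\pi)$ is nonzero on both sides, whereas when $\chi_V$ is a summand the two first occurrences split, so exactly one of $\Theta_{V_{n+1}^\pm}(\pi)$ is nonzero. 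For the $L$-parameter identity, I would reduce to the square-integrable case using compatibility of theta with parabolic induction: if $\pi$ is a constituent of $\Ind_P^{\U(W_n)}(\tau \boxtimes \pi_0)$ with $\tau$ an essentially tempered representation of a $\GL$-block of rank $k$, then $\Theta_{V_{n+1}}(\pi)$ is a constituent of $\Ind(\tau \chi_W \chi_V^{-1} \boxtimes \Theta_{V_{n+1-2k}}(\pi_0))$, reducing the claim to smaller rank groups; for square-integrable $\pi$ the $L$-parameter can then be pinned down via the doubling zeta integrals of Piatetski-Shapiro--Rallis together with the equal rank formula.

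For the bijectivity statements (i)(b) and (ii)(c), once the $L$-parameter map and non-vanishing are established, one matches component groups: in case (i), since $\chi_V \not\subset \phi$ forces $\chi_W \not\subset \phi \otimes \chi_V^{-1}\chi_W$, the summand $\chi_W$ of $\theta(\phi)$ is new, so $S_{\theta(\phi)}$ gains an extra $\ZZ/2\ZZ$ factor that encodes the sign $\epsilon$ of $V_{n+1}^\epsilon$ and yields a bijection onto the subpacket $\Pi_{\theta(\phi)}^\epsilon$; in case (ii) the summand $\chi_W$ is absorbed into an existing factor, so $S_\phi$ and $S_{\theta(\phi)}$ have the same order and the bijection matches full packets. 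Part (iii) on irreducibility of $\Theta(\pi)$ for tempered $\pi$ follows from Howe duality (Waldspurger, Gan--Takeda) combined with the unitarity of tempered representations, which forces $\Theta(\pi) = \theta(\pi)$ via a semisimplicity argument. The main obstacle I foresee is the precise handling of the conservation relation in the unitary setting, where the two parities of $V$ must be tracked separately, and the sign bookkeeping in the bijectivity statement, both of which require delicate see-saw manipulations to push the equal rank information across to the almost equal rank case.
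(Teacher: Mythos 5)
Your overall roadmap -- conservation relation and tower property for the nonvanishing, Kudla's filtration and parabolic compatibility for the reduction to the square-integrable case, component group counting for the bijectivity -- does track the argument in \cite{gi} (which the paper only cites for this theorem; its own contribution is the addendum in Appendix A). However, there is a genuine gap in your treatment of part (iii). You assert that irreducibility of $\Theta_{\psi, V_{n+1}^\epsilon, W_n^{\epsilon'}}(\pi)$ for tempered $\pi$ ``follows from Howe duality combined with the unitarity of tempered representations, which forces $\Theta(\pi) = \theta(\pi)$ via a semisimplicity argument.'' This is not a valid argument: Howe duality only controls the maximal semisimple quotient $\theta(\pi)$ of the big theta lift, and unitarity of $\pi$ does not transfer to unitarity or semisimplicity of $\Theta(\pi)$. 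In the almost-equal-rank going-up direction there is no a priori reason for $\Theta(\pi)$ to be semisimple, and one cannot conclude $\Theta(\pi) = \theta(\pi)$ by such a soft argument. The actual proof (elaborated in Appendix A of this paper) requires a substantially more intricate chain of reasoning: first one shows via Kudla's filtration that every irreducible subquotient of $\Theta(\pi)$ is tempered; then, using Howe duality together with Waldspurger's finiteness result \cite[Corollaire III.7.2]{w0} and the observation that $\theta(\pi)$ itself is a constituent of $I^{H(V)}_{Q(Y_1)}(\chi_W \otimes \sigma_0)$, one shows that no subquotient can be square-integrable; then, using Kudla's supercuspidal support theorem (or Mui\'c's Lemma on supercuspidal supports) together with a Plancherel-measure computation, one shows that every irreducible subquotient is actually a \emph{subrepresentation} of $I^{H(V)}_{Q(Y_1)}(\chi_W \otimes \sigma_0')$ for the same $\sigma_0'$; and only then can one conclude irreducibility. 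Without this Plancherel measure/supercuspidal support machinery your proposal has no way to rule out that $\Theta(\pi)$ has length greater than one.

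A secondary, smaller imprecision: you claim that Theorem \ref{T:equal} ``locates the first occurrences in the opposite-parity towers at $V_n^\pm$ and $V_{n+2}^\mp$.'' But Theorem \ref{T:equal}(ii) only tells you that $\Theta_{\psi, V_n^\epsilon, W_n^{\epsilon'}}(\pi) \ne 0$ for exactly one $\epsilon$; by itself it gives an upper bound for the first occurrence in the ``lucky'' tower, not the exact value, so the dichotomy between the two cases (i) and (ii) of the theorem does not drop out of conservation and Theorem \ref{T:equal} alone. You need an additional input tying $\Theta_{\psi,V_{n-1}^\epsilon,W_n^{\epsilon'}}(\pi) \ne 0$ to the condition $\chi_V \subset \phi$; this is again a Jacquet module computation via Kudla's filtration, not a formal consequence of what you have cited. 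The reduction via parabolic induction to the square-integrable case and the bijectivity-by-counting arguments, on the other hand, are essentially correct as you have sketched them.
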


\subsection{Conjecture (P2)} After the above theorem, it remains to specify the bijections given in (i)(b) and (ii)(c). 
As in the case of (P1), we shall do this using the bijections
 \[  J_{\psi}:  \Pi_{\phi} \longleftrightarrow \Irr(S_{\phi})   \quad \text{and} \quad J_{\psi^E}:  \Pi_{\theta(\phi)} \longleftrightarrow \Irr(S_{\theta(\phi)}), \]
where 
\[  \psi^E(x)  = \psi(\tfrac{1}{2} \Tr_{E/F}(\delta x)). \]
Note that $J_{\psi}$ is independent of $\psi$ when $n$ is odd, whereas $J_{\psi^E}$ is independent of $\psi^E$ when $n$ is even. 

Observe that:

\begin{itemize}

\item If $\phi$ does not contain $\chi_V$, then 
\[  S_{\theta(\phi)}  = S_{\phi} \times (\ZZ /2 \ZZ) a_0, \]
where the extra copy of $\ZZ/2\ZZ$ arises from the summand $\chi_W$ in $\theta(\phi)$. Thus, for each $\epsilon$, one has a canonical bijection
\begin{align*}
 \Irr(S_{\phi}) & \longleftrightarrow \Irr^{\epsilon}(S_{\theta(\phi)}) \\
 \eta & \longleftrightarrow \theta(\eta)
\end{align*}
induced by the theta correspondence, where $\Irr^{\epsilon}(S_{\theta(\phi)})$ is the set of irreducible characters $\eta'$ of $S_{\theta(\phi)}$ such that $\eta'(z_{\theta(\phi)}) = \epsilon$.

\item On the other hand, if $\phi$ contains $\chi_V$, then $\phi \otimes \chi_V^{-1} \chi_W$ contains $\chi_W$, so that
 \[
   S_{\theta(\phi)}  =  S_{\phi}. \]
Thus, one has a canonical bijection
\begin{align*}
  \Irr(S_{\phi}) & \longleftrightarrow \Irr(S_{\theta(\phi)}) \\
  \eta & \longleftrightarrow \theta(\eta)
\end{align*}
induced by the theta correspondence.

\end{itemize}

Now we can state the second conjecture of Prasad. 

\begin{itemize}
\item[(P2)$_n$]  

Let $\phi$ be an $L$-parameter for $\U(W_n^\pm)$ and let $\eta \in \Irr(S_\phi)$.
Fix the bijections $J_{\psi}$ and $J_{\psi^E}$ as above.

\begin{itemize}
\item If $\phi$ does not contain $\chi_V$, then $\theta(\eta)$ is the unique irreducible character in $\Irr^{\epsilon}(S_{\theta(\phi)})$ such that
 \[  \theta(\eta)|_{S_{\phi}}  =\eta. \]

\item On the other hand, if $\phi$ contains $\chi_V$, then
\[
 \theta(\eta) = \eta.
\]

\end{itemize}

\end{itemize}

We shall denote by (P2) the collection of all the statements (P2)$_n$ for all $n \ge 0$. Note that we consider (P2) for all $L$-parameters, and not just tempered ones. However, we note:

\begin{prop}  \label{P:P2nt}
Suppose that $(\emph{P2})_k$ holds for all tempered $L$-parameters for all $k < n$. Then $(\emph{P2})_k$ holds for all nontempered $L$-parameters for all $k \le n$.
\end{prop}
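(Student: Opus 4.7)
The plan is to mirror the proof of Proposition \ref{P:P1nt}: reduce the nontempered case to the tempered case on smaller unitary groups using the Langlands classification together with the ``induction principle'' for the theta correspondence (i.e., the analog of \cite[Theorem 8.1(iii)]{gs} for unitary groups that was already invoked for $(\text{P1})$).

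Given a nontempered $L$-parameter $\phi$ for $\U(W_n^\pm)$, I would first write it in its Langlands form
\[
 \phi = \phi_1 \oplus \cdots \oplus \phi_r \oplus \phi_0 \oplus (\phi_r^c)^\vee \oplus \cdots \oplus (\phi_1^c)^\vee,
\]
with $\phi_i = \phi_i' \otimes |\cdot|^{e_i}$ for $e_1 > \cdots > e_r > 0$ and $\phi_0$ a tempered $L$-parameter for $\U(W_{n_0}^\pm)$, where $n_0 = n - 2(k_1 + \cdots + k_r) < n$. Recall from \S\ref{SS:llc} that the natural map $S_{\phi_0} \to S_\phi$ is an isomorphism; for $\eta \in \Irr(S_\phi)$, let $\eta_0 \in \Irr(S_{\phi_0})$ be its image, and set $\pi_0 = \pi(\phi_0, \eta_0)$. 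Then $\pi(\phi, \eta)$ is the Langlands quotient of the standard module $\Ind^{\U(W_n^\epsilon)}_P(\tau_1 \otimes \cdots \otimes \tau_r \otimes \pi_0)$.

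Next I would apply the induction principle to conclude that $\theta_{\psi, V_{n+1}^\epsilon, W_n^{\epsilon'}}(\pi(\phi, \eta))$ is the Langlands quotient of the standard module
\[
 \Ind^{\U(V_{n+1}^\epsilon)}_{P'}\bigl( (\tau_1 \chi_V^{-1} \chi_W) \otimes \cdots \otimes (\tau_r \chi_V^{-1} \chi_W) \otimes \theta_{\psi, V_{n_0+1}^\epsilon, W_{n_0}^{\epsilon'}}(\pi_0) \bigr).
\]
Since $\phi_0$ is tempered, Theorem \ref{T:al-equal}(iii) guarantees that the inner theta lift $\theta(\pi_0)$ is irreducible (when nonzero), and by the inductive hypothesis $(\text{P2})_{n_0}$ applied to $\phi_0$ we know exactly which character $\theta(\eta_0) \in \Irr(S_{\theta(\phi_0)})$ it corresponds to. Matching Langlands data on the two sides, the tempered parameter $\theta(\phi_0) \oplus \chi_W$ of the inducing datum on $\U(V_{n_0+1}^\epsilon)$ together with the segments $\phi_i \otimes \chi_V^{-1} \chi_W$ reassemble precisely to $\theta(\phi) = (\phi \otimes \chi_V^{-1} \chi_W) \oplus \chi_W$, and the canonical identifications $S_\phi \cong S_{\phi_0}$ and $S_{\theta(\phi)} \cong S_{\theta(\phi_0)}$ (possibly extended by the extra copy $(\ZZ/2\ZZ) a_0$ coming from the $\chi_W$ summand) are compatible. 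Since $\chi_V$ is unitary, it appears as a summand of $\phi$ if and only if it appears in $\phi_0$, so the two cases of the statement $(\text{P2})$ for $\phi$ and $\phi_0$ match. The desired description of $\theta(\eta)$ follows at once.

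The only nontrivial ingredient is the induction principle describing $\Theta_{\psi, V_{n+1}, W_n}(\pi(\phi, \eta))$ as the Langlands quotient of the induced standard module above; this is exactly what is used in the proof of Proposition \ref{P:P1nt}, so the main obstacle reduces to invoking the same compatibility. Given it, the rest of the argument is a bookkeeping exercise on component groups and Langlands data, entirely parallel to the $(\text{P1})$ case.
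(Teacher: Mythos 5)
Your proposal takes essentially the same route as the paper: reduce the nontempered case to the tempered case on the smaller group $\U(W_{n_0}^\pm)$ via the Langlands classification, using an ``induction principle'' to identify the theta lift of a Langlands quotient as the Langlands quotient of the lift of the inducing data. The one discrepancy is the citation: for $(\text{P2})$ (the almost equal rank case $\dim V = \dim W + 1$) the paper invokes \cite[Proposition C.4(ii)]{gi}, whereas you reuse the analog of \cite[Theorem 8.1(iii)]{gs} that the paper cites only for $(\text{P1})$ (the equal rank case $\dim V = \dim W$). These are cousin statements but the two dual-pair settings require slightly different formulations, so you should cite the almost-equal-rank version. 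Also a small slip: the $L$-parameter of $\theta(\pi_0)$ on $\U(V_{n_0+1}^\epsilon)$ is $\theta(\phi_0) = (\phi_0 \otimes \chi_V^{-1}\chi_W) \oplus \chi_W$, which already contains the $\chi_W$ summand, so writing ``$\theta(\phi_0) \oplus \chi_W$'' double-counts it. The remaining bookkeeping on component groups and the observation that $\chi_V$ occurs in $\phi$ iff it occurs in $\phi_0$ (by unitarity) are correct.
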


\begin{proof}
This follows from \cite[Proposition C.4(ii)]{gi}.
\end{proof}

\section{\textbf{(B) $+$ (P2) $\Longrightarrow$ (FJ) $+$ (P1)}}
\label{s:see-saw}

In this section, we shall show that Conjectures (FJ) and (P1) follow from Conjectures (B) and (P2), together with Theorems \ref{T:equal} and \ref{T:al-equal}.

Suppose that we are given tempered $L$-parameters $\phi^{\diamondsuit}$ and $\phi^{\heartsuit}$ for $\U(W_n^\pm)$. Let
\[ \pi^{\diamondsuit} = \pi(\eta^{\diamondsuit}) \in \Pi_{\phi^{\diamondsuit}}^{\epsilon'} \quad \text{and} \quad 
\pi^{\heartsuit} = \pi(\eta^{\heartsuit}) \in \Pi_{\phi^{\heartsuit}}^{\epsilon'} \]
be representations such that
\[  \Hom_{\U(W_n^{\epsilon'})}(\pi^{\diamondsuit} \otimes \pi^{\heartsuit}, \omega_{\psi, \chi, W_n^{\epsilon'}}) \ne 0. \]
We first show that
\[ \eta^{\diamondsuit} \otimes \eta^{\heartsuit} = \eta^{\clubsuit}. \]
Since the representations involved are unitary (as $\phi^{\diamondsuit}$ and $\phi^{\heartsuit}$ are tempered), 
\[ \Hom_{\U(W_n^{\epsilon'})}(\pi^{\diamondsuit} \otimes \pi^{\heartsuit}, \omega_{\psi, \chi, W_n^{\epsilon'}}) \ne 0 \]
if and only if
\[ \Hom_{\U(W_n^{\epsilon'})}( (\pi^{\diamondsuit})^{\vee} \otimes \omega_{\psi, \chi, W_n^{\epsilon'}},  \pi^{\heartsuit}) \ne 0. \]

\subsection{See-Saw}

Now we consider the see-saw diagram (for an $\epsilon$ to be determined soon):
\[
 \xymatrix{
  \U(W_n^{\epsilon'})  \times \U(W_n^{\epsilon'})  \ar@{-}[dr] \ar@{-}[d] & \U(V^{\epsilon}_{n+1}) \ar@{-}[d] \\
  \U(W_n^{\epsilon'}) \ar@{-}[ur] &  \U(V^{\epsilon}_n) \times \U(L_{(-1)^n})}.
\]  
We shall consider the local theta correspondence for the above see-saw diagram. For this, we need to specify precisely the data used in setting up the theta correspondence. More precisely, for the dual pair $\U(V_{n+1}^\epsilon) \times \U(W_n^{\epsilon'})$, we shall use the characters
\[  \chi_{V_{n+1}^\epsilon} = \chi^{n+(-1)^n} \quad \text{and} \quad \chi_{W_n^{\epsilon'}} = \chi^n, \]
and for the dual pair $\U(V_n^\epsilon) \times \U(W_n^{\epsilon'})$, we use
\[  \chi_{V_n^\epsilon}  = \chi_{W_n^{\epsilon'}}  = \chi^n. \]
Then for the dual pair $\U(L_{(-1)^n}) \times \U(W_n^{\epsilon'})$, we have no choice but to use
\[  \chi_{L_{(-1)^n}}  = \chi^{(-1)^n} \quad \text{and} \quad \chi_{W_n^{\epsilon'}}  = \chi^n. \]
In particular, the restriction of $\omega_{\psi, \chi_{L_{(-1)^n}}, \chi_{W_n^{\epsilon'}}, L_{(-1)^n}, W_n^{\epsilon'}}$ to $\U(W_n^{\epsilon'})$ is equal to
\[  \begin{cases}
\omega_{\psi, \chi, W_n^{\epsilon'}} & \text{if $n$ is even;} \\
\omega_{\psi, \chi, W_n^{\epsilon'}}^{\vee} & \text{if $n$ is odd.} \end{cases} \]
In any case, having fixed these normalizations, we shall suppress them from the notation for simplicity.

Because of the above differences for even and odd $n$, it will now be convenient to treat the even and odd cases separately. 

\subsection{Even case}

Assume first that $n$ is even. 
By Theorem \ref{T:equal}, we may choose $\sigma \in \Irr(\U(V_n^{\epsilon}))$ such that 
\[  \Theta_{\psi, V_n^{\epsilon}, W_n^{\epsilon'}}(\sigma) =  (\pi^{\diamondsuit})^{\vee}. \] 
This uniquely determines $\epsilon$. Moreover, by Theorem \ref{T:equal}, we know that $\sigma$ has $L$-parameter
\[  \phi_{\sigma}  =  (\phi^{\diamondsuit})^{\vee}, \]
since the $L$-parameter of $ (\pi^{\diamondsuit})^{\vee}$ is $(\phi^{\diamondsuit})^{\vee}$.

Taking the representation $\pi^{\heartsuit}$ on $\U(W_n^{\epsilon'})$ and the representation $\sigma$ on $\U(V_n^{\epsilon})$, the resulting see-saw identity reads:
\[  
 0 \ne \Hom_{\U(W_n^{\epsilon'})}( (\pi^{\diamondsuit})^{\vee} \otimes \omega_{\psi, \chi, W_n^{\epsilon'}}, \pi^{\heartsuit}) 
   = \Hom_{\U(V_n^{\epsilon})}( \Theta_{\psi, V_{n+1}^{\epsilon}, W_n^{\epsilon'}}(\pi^{\heartsuit}),  \sigma). \]
By Theorem \ref{T:al-equal}, 
\[ \tau := \Theta_{\psi, V_{n+1}^{\epsilon}, W_n^{\epsilon'}}(\pi^{\heartsuit}) \]
has $L$-parameter
\[  \phi_{\tau} =  (\phi^{\heartsuit} \otimes \chi^{-1}) \oplus \chi^n. \]

Recall that we have used the character $\psi$ to fix the local Langlands correspondence for $\U(W_n^{\epsilon'})$.
The component group $S_{\phi^{\heartsuit}}$ is of the form 
\[  S_{\phi^{\heartsuit}} = \prod_j  (\ZZ / 2\ZZ) b_j \]
and there is a natural embedding $S_{\phi^{\heartsuit}} \hookrightarrow S_{\phi_\tau}$.
Now, by (P2), the representation $\tau$ has associated character $\eta_{\tau} \in \Irr(S_{\phi_{\tau}})$ which satisfies:
\[  \eta_{\tau}   = \eta^{\heartsuit} \quad \text{on} \quad S_{\phi^{\heartsuit}}. \]
On the other hand, by (B), one knows exactly what $\eta_{\tau}$ is. Namely, (B) gives: 
\[  \eta_{\tau}(b_j)  = \epsilon(\tfrac{1}{2}, \phi_{\sigma}^{\vee} \otimes \phi_j^{\heartsuit} \otimes \chi^{-1}, \psi^E) = 
\epsilon(\tfrac{1}{2}, \phi^{\diamondsuit} \otimes \phi_j^{\heartsuit} \otimes \chi^{-1}, \psi^E) = \eta^{\clubsuit}(b_j), \]
where $\psi^E$ is any nontrivial character of $E/F$. Thus, we deduce that
\[  \eta^{\heartsuit}   =   \eta^{\clubsuit} \quad \text{on} \quad S_{\phi^{\heartsuit}}. \]
Now of course we could reverse the role of $\pi^{\diamondsuit}$ and $\pi^{\heartsuit}$ in the above argument. Then we conclude that
\[  \eta^{\diamondsuit} \otimes \eta^{\heartsuit}  =  \eta^{\clubsuit} \]
as desired.

\subsection{Odd case}

Now suppose that $n$ is odd. Then we use the character  
\[  \psi^E(x) = \psi(\tfrac{1}{2} \Tr_{E/F}(\delta x)) \]
of $E/F$ to specify the local Langlands correspondence for $\U(V_{n+1}^{\epsilon})$.
By Theorem \ref{T:equal}, we may choose $\sigma \in \Irr(\U(V_n^{\epsilon}))$ such that 
\[  \Theta_{\psi, V_n^{\epsilon},  W_n^{\epsilon'}}(\sigma)  =  \pi^{\diamondsuit}. \] 
This uniquely determines $\epsilon$. Moreover, by Theorem \ref{T:equal}, we know that $\sigma$ has $L$-parameter
\[  \phi_{\sigma} =  \phi^{\diamondsuit}. \]

Taking the representation $(\pi^{\heartsuit})^{\vee}$ on $\U(W_n^{\epsilon'})$ and the representation $\sigma$ on $\U(V_n^{\epsilon})$, the resulting see-saw identity reads:
 \[  
 0 \ne \Hom_{\U(W_n^{\epsilon'})}( \pi^{\diamondsuit} \otimes \omega_{\psi, \chi, W_n^{\epsilon'}}^{\vee}, (\pi^{\heartsuit})^{\vee}) 
  = \Hom_{\U(V_n^{\epsilon})}( \Theta_{\psi, V_{n+1}^{\epsilon}, W_n^{\epsilon'}}((\pi^{\heartsuit})^{\vee}),  \sigma). \]
By Theorem \ref{T:al-equal}, 
 \[ \tau := \Theta_{\psi, V_{n+1}^{\epsilon}, W_n^{\epsilon'}}((\pi^{\heartsuit})^{\vee}) \]
has $L$-parameter
\[  \phi_{\tau} = ((\phi^{\heartsuit})^{\vee} \otimes  \chi) \oplus \chi^n. \]

Now by (P2), the representation $\tau$ has associated character $\eta_{\tau} \in \Irr(S_{\phi_{\tau}})$ satisfying:
\[  \eta_{\tau}  = 
 \eta^{\heartsuit} \quad \text{on} \quad S_{\phi^{\heartsuit}} = S_{(\phi^{\heartsuit})^{\vee} \otimes \chi}. \]
On the other hand, by (B), we know that
\[  \eta_{\tau}(b_j)  = \epsilon(\tfrac{1}{2}, \phi_\sigma^\vee \otimes (\phi_j^{\heartsuit})^{\vee} \otimes \chi, \psi^E_{-2})
=  \epsilon(\tfrac{1}{2}, \phi^{\diamondsuit} \otimes \phi^{\heartsuit}_j \otimes \chi^{-1}, \psi^E_2) = \eta^{\clubsuit}(b_j). \]
Hence, we conclude that
\[  \eta^{\heartsuit}  = \eta^{\clubsuit}  \quad \text{on} \quad S_{\phi^{\heartsuit}}. \]
Reversing the role of $\pi^{\diamondsuit}$ and $\pi^{\heartsuit}$ in the above argument, we conclude that
\[  \eta^{\diamondsuit} \otimes \eta^{\heartsuit}  =  \eta^{\clubsuit} \]
as desired.

\subsection{Proof of (FJ)} 

At this point, we have shown that if 
\[  \Hom_{\U(W_n^{\epsilon'})}(\pi^{\diamondsuit} \otimes \pi^{\heartsuit}, \omega_{\psi, \chi, W_n^{\epsilon'}}) \ne 0, \]
then  $\eta^{\diamondsuit} \otimes \eta^{\heartsuit} $ is equal to the distinguished character $\eta^{\clubsuit}$. To complete the proof of (FJ), it remains to show that the above Hom space is nonzero for some $\epsilon'$ and pair of representations $(\pi^{\diamondsuit}, \pi^{\heartsuit}) \in \Pi^{\epsilon'}_{\phi^{\diamondsuit}}  \times \Pi^{\epsilon'}_{\phi^{\heartsuit}}$.
This will follow from the above see-saw diagram, Theorems \ref{T:equal} and \ref{T:al-equal}. Let us illustrate this in the case when $n$ is even; the case when $n$ is odd is similar. 

Consider the tempered $L$-parameters $\phi:= (\phi^{\heartsuit} \otimes \chi^{-1}) \oplus \chi^n$ for $\U(V_{n+1}^{\pm})$ and $\phi' := (\phi^{\diamondsuit})^{\vee}$ for $\U(V_n^{\pm})$. By (B), there is a pair of representations
\[  (\tau  , \tau') \in \Pi^{\epsilon}_{\phi} \times \Pi^{\epsilon}_{\phi'}  \]
such that
\[  \Hom_{\U(V_n^{\epsilon})}( \tau, \tau')  \ne 0. \]
By Theorem \ref{T:al-equal}, we can find a unique $\pi^{\heartsuit} \in \Pi^{\epsilon'}_{\phi^{\heartsuit}}$ (which determines $\epsilon'$) such that
\[ \tau   = \Theta_{\psi, V^{\epsilon}_{n+1}, W^{\epsilon'}_n}( \pi^{\heartsuit}  ). \]
Now the see-saw identity gives
\[   0 \ne \Hom_{\U(V_n^{\epsilon})}(  \Theta_{\psi, V^{\epsilon}_{n+1},  W^{\epsilon'}_n}( \pi^{\heartsuit}),  \tau')
 = \Hom_{\U(W_n^{\epsilon'})} (\Theta_{\psi, V_n^{\epsilon}, W_n^{\epsilon'}}(\tau') \otimes \omega_{\psi, \chi, W_n^{\epsilon'}}, \pi^{\heartsuit}). \]
In particular, 
 \[  \pi^{\diamondsuit} := \Theta_{\psi, V_n^{\epsilon}, W_n^{\epsilon'}}(\tau')^{\vee} \ne 0 \]
and by Theorem \ref{T:equal}, it has $L$-parameter $(\phi')^\vee = \phi^{\diamondsuit}$. Thus we see that for some $(\pi^{\diamondsuit}, \pi^{\heartsuit})  \in \Pi^{\epsilon'}_{\phi^{\diamondsuit}} \times \Pi^{\epsilon'}_{\phi^{\heartsuit}}$, we have
\[
 \Hom_{\U(W^{\epsilon'}_n)} ( (\pi^{\diamondsuit})^{\vee} \otimes \omega_{\psi,\chi, W_n^{\epsilon'}} ,  \pi^{\heartsuit}) \ne 0 
\]
as desired. This completes the proof of (FJ).
 
\subsection{Proof of (P1)}

Now we come to the proof of (P1). In particular, we consider the theta correspondence for $\U(V_n^\epsilon) \times \U(W_n^{\epsilon'})$ relative to the Weil representation $\omega_{\psi, \chi_V, \chi_W, V_n^{\epsilon}, W_n^{\epsilon'}}$. Given an $L$-parameter $\phi$ for $\U(W_n^{\pm})$, we would like to explicate the bijection
\[  \theta:  \Irr(S_{\phi}) \longleftrightarrow \Irr(S_{\theta(\phi)}) \]
furnished by Theorem \ref{T:equal}, with $\theta(\phi) = \phi \otimes \chi_V^{-1} \chi_W$.
Here, recall that
\[  S_{\phi}  = S_{\theta(\phi)}    = \prod_i  (\ZZ / 2 \ZZ) a_i. \]
Since we now have (B), (FJ) and (P2) at our disposal, we shall be able to determine $\theta$ using the see-saw diagram.

More precisely, we start with a tempered $L$-parameter $\phi$ and consider an irreducible tempered representation $\pi = \pi(\eta) \in \Pi_{\phi}^{\epsilon'}$.
One knows by Theorem \ref{T:equal} that $\Theta_{\psi, V^{\epsilon}_n, W_n^{\epsilon'}}(\pi) \in \Pi_{\theta(\phi)}^{\epsilon}$ is a nonzero irreducible tempered representation of $\U(V_n^\epsilon)$ for a unique $\epsilon$. 
By the analog of \cite[Lemma 12.5]{gs} for unitary groups, one can find an irreducible tempered representation $\sigma$ of $\U(V_{n-1}^{\epsilon})$ such that
\[  \Hom_{\U(V_{n-1}^{\epsilon})}(\Theta_{\psi, V^{\epsilon}_n, W_n^{\epsilon'}}(\pi), \sigma)  \ne 0. \]
By (B),  one has
\[   \theta(\eta)(a_i)  =  \epsilon(\tfrac{1}{2}, \phi_{\sigma}^{\vee} \otimes  \phi_i \otimes \chi_V^{-1} \chi_W,  \psi^E_{-2}),   \]
where $\phi_\sigma$ is the $L$-parameter of $\sigma$.

On the other hand, one has the see-saw diagram
\[
 \xymatrix{
  \U(W_n^{\epsilon'})  \times \U(W_n^{\epsilon'})  \ar@{-}[dr] \ar@{-}[d] & \U(V^{\epsilon}_n) \ar@{-}[d] \\
  \U(W_n^{\epsilon'}) \ar@{-}[ur] &  \U(V^{\epsilon}_{n-1}) \times \U(L_{(-1)^{n-1}})}.
\] 
We consider the theta correspondence for $\U(L_{(-1)^{n-1}}) \times \U(W_n^{\epsilon'})$ relative to the pair of characters $(\chi^{(-1)^{n-1}}, \chi_W)$, so that the theta correspondence for $\U(V^{\epsilon}_{n-1}) \times \U(W_n^{\epsilon'})$ is with respect to the pair $(\chi_V \chi^{(-1)^n}, \chi_W)$. We shall suppress these pairs of characters from the notation in the following.
By Theorem \ref{T:al-equal}, the representation
\[  \tau := \Theta_{\psi, V^{\epsilon}_{n-1}, W_n^{\epsilon'}}(\sigma) \ne 0 \]
is irreducible and tempered.
Moreover, $\tau$ has $L$-parameter
\begin{equation} \label{E:tau}
  \phi_{\tau}  =  (\phi_{\sigma} \otimes \chi_V \chi_W^{-1} \chi^{(-1)^n} )
  \oplus \chi_V \chi^{(-1)^n}. \end{equation}

It will now be convenient to consider the even and odd cases separately.

\subsection{Even case}

Assume first that $n$ is even. By the see-saw identity, one has
\[ 0 \ne \Hom_{\U(W_n^{\epsilon'})}( \Theta_{\psi, V^{\epsilon}_{n-1}, W_n^{\epsilon'}}(\sigma) \otimes \omega_{\psi , \chi,  W_n^{\epsilon'}}^{\vee}, \pi) = \Hom_{\U(W_n^{\epsilon'})} (\tau \otimes \pi^{\vee}, \omega_{\psi,\chi, W_n^{\epsilon'}}). \]
It follows by (FJ) that
\[ 
 \eta(a_i) \cdot \omega_{E/F}(-1)^{\dim \phi_i} = \eta_{\pi^{\vee}}(a_i)  =
 \epsilon(\tfrac{1}{2}, \phi_{\tau} \otimes \phi_i^{\vee} \otimes \chi^{-1}, \psi_2^E), \]
where the local root number appearing here is independent of the choice of the additive character of $E/F$ used since $\dim \phi_\tau = n$ is even. Hence, by \eqref{E:tau}, one has
\[  \eta(a_i)  = 
  \epsilon(\tfrac{1}{2}, \phi_{\sigma} \otimes \phi_i^{\vee} \otimes \otimes \chi_V \chi_W^{-1}, \psi_2^E)
  \cdot \epsilon(\tfrac{1}{2}, \phi_i^{\vee} \otimes \chi_V, \psi_2^E)
  \cdot  \omega_{E/F}(-1)^{\dim \phi_i}.  
\] 
Noting that $\phi_i$ is conjugate symplectic, we may compute:
\begin{align*}
  \theta(\eta)(a_i) / \eta(a_i) & = \epsilon(\tfrac{1}{2}, \phi_i^{\vee} \otimes \chi_V, \psi_2^E) \cdot \omega_{E/F}(-1)^{\dim \phi_i}   \\
  & =  \epsilon(\tfrac{1}{2}, \phi_i^{\vee} \otimes \chi_V, \psi_{-2}^E) \\
  & =  \epsilon(\tfrac{1}{2}, \phi_i \otimes \chi_V^{-1}, \psi_2^E) \\
\end{align*}
 as desired.

\subsection{Odd case}

Now suppose that $n$ is odd. By the see-saw identity, one has
 \[  \Hom_{\U(W_n^{\epsilon'})}(\tau \otimes \omega_{\psi,\chi, W_n^{\epsilon'}},  \pi) \ne  0, \]
so that
\[  \Hom_{\U(W_n^{\epsilon'})}(\tau^{\vee} \otimes \pi, \omega_{\psi, \chi, W_n^{\epsilon'}}) \ne 0. \]
By (FJ), one has
\begin{align*}
  \eta(a_i)  & = \epsilon(\tfrac{1}{2}, \phi_{\tau}^{\vee} \otimes \phi_i \otimes \chi^{-1},  \psi^E_2) \\
  & =  \epsilon(\tfrac{1}{2}, \phi_{\sigma}^{\vee} \otimes \phi_i \otimes \chi_V^{-1} \chi_W, \psi^E_2) \cdot \epsilon(\tfrac{1}{2}, \phi_i \otimes \chi_V^{-1}, \psi^E_2),
  \end{align*}
where the second equality follows from \eqref{E:tau}. On the other hand, we have seen that
  \[  
 \theta(\eta)(a_i)  =  \epsilon(\tfrac{1}{2}, \phi_{\sigma}^{\vee} \otimes \phi_i \otimes \chi_V^{-1} \chi_W, \psi^E_{-2}) =
  \epsilon(\tfrac{1}{2}, \phi_{\sigma}^{\vee} \otimes \phi_i \otimes \chi_V^{-1} \chi_W , \psi^E_2), \]
where the second equality follows because $\dim \phi_{\sigma}^{\vee}  = n-1$ is even. 
Hence, we conclude that
\[   \theta(\eta)(a_i) / \eta(a_i) = \epsilon(\tfrac{1}{2},  \phi_i \otimes \chi_V^{-1},  \psi^E_2) \]
as desired.

We have thus shown Conjecture (P1) for tempered $L$-parameters. For nontempered $L$-parameters, (P1) follows from the tempered case by Proposition \ref{P:P1nt}.

To summarise, we have shown the following proposition:

\begin{prop} \label{P:BP2-FJP1}
Assume that $(\emph{B})_k$ and $(\emph{P2})_k$ hold for all tempered $L$-parameters for all $k \le n$. Then $(\emph{FJ})_k$ and $(\emph{P1})_k$ also hold for all tempered $L$-parameters for all $k \le n$.
\end{prop}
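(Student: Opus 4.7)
The plan is to exploit the see-saw diagram
\[
 \xymatrix{
  \U(W_n^{\epsilon'})  \times \U(W_n^{\epsilon'})  \ar@{-}[dr] \ar@{-}[d] & \U(V^{\epsilon}_{n+1}) \ar@{-}[d] \\
  \U(W_n^{\epsilon'}) \ar@{-}[ur] &  \U(V^{\epsilon}_n) \times \U(L_{(-1)^n})}
\]
which relates a Fourier--Jacobi pairing on the left vertical side to a Bessel pairing on the right vertical side via the theta correspondence. The key observation is that the see-saw identity converts $\Hom_{\U(W_n^{\epsilon'})}(\pi^{\diamondsuit} \otimes \pi^{\heartsuit}, \omega_{\psi,\chi,W_n^{\epsilon'}})$ into a Bessel Hom space for $\U(V_n^\epsilon) \subset \U(V_{n+1}^\epsilon)$ after replacing $\pi^{\diamondsuit}$ (or its contragredient) by a theta lift $\sigma$ to $\U(V_n^\epsilon)$, and replacing $\pi^{\heartsuit}$ by its theta lift $\tau$ to $\U(V_{n+1}^\epsilon)$.

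For the proof of $(\mathrm{FJ})_n$, I would first use Theorem~\ref{T:equal} to choose $\sigma$ on $\U(V_n^\epsilon)$ whose theta lift to $\U(W_n^{\epsilon'})$ recovers (a twist of) $\pi^{\diamondsuit}$; this also determines $\epsilon$. The resulting Bessel Hom space is nonzero, so $(\mathrm{B})_n$ identifies the character $\eta_\tau \in \Irr(S_{\phi_\tau})$ associated to $\tau = \Theta_{\psi,V_{n+1}^\epsilon,W_n^{\epsilon'}}(\pi^{\heartsuit})$ as a product of local root numbers. On the other hand, $(\mathrm{P2})_n$ tells us that $\eta_\tau$ restricted to $S_{\phi^{\heartsuit}} \subset S_{\phi_\tau}$ is just $\eta^{\heartsuit}$. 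Comparing these two descriptions forces $\eta^{\heartsuit}|_{S_{\phi^{\heartsuit}}} = \eta^{\clubsuit}|_{S_{\phi^{\heartsuit}}}$. By symmetry (swapping the roles of $\pi^{\diamondsuit}$ and $\pi^{\heartsuit}$), we also recover $\eta^{\diamondsuit}$, yielding $\eta^{\diamondsuit} \otimes \eta^{\heartsuit} = \eta^{\clubsuit}$. Existence of a nonzero distinguished pair is then obtained by running the see-saw argument backward: start from a pair of Bessel-distinguished representations supplied by $(\mathrm{B})_n$, then theta-lift back to $\U(W_n^{\epsilon'})$ using Theorems~\ref{T:equal} and \ref{T:al-equal}.

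For the proof of $(\mathrm{P1})_n$, I would start with a tempered parameter $\phi$ for $\U(W_n^\pm)$, pick $\pi = \pi(\eta) \in \Pi_\phi^{\epsilon'}$, and consider its theta lift $\Theta_{\psi,V_n^\epsilon,W_n^{\epsilon'}}(\pi) \in \Pi_{\theta(\phi)}^\epsilon$ on $\U(V_n^\epsilon)$. By the analog of \cite[Lemma 12.5]{gs}, one can find an irreducible tempered $\sigma$ on $\U(V_{n-1}^\epsilon)$ that is Bessel-distinguished with respect to this lift; $(\mathrm{B})_n$ then evaluates $\theta(\eta)(a_i)$ explicitly as an $\epsilon$-factor. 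Now the see-saw diagram with $\U(V_{n-1}^\epsilon) \times \U(L_{(-1)^{n-1}})$ on the lower right converts this Bessel pairing into a Fourier--Jacobi pairing on $\U(W_n^{\epsilon'})$ between $\pi$ and $\tau := \Theta_{\psi,V_{n-1}^\epsilon,W_n^{\epsilon'}}(\sigma)$, whose parameter is computed via Theorem~\ref{T:al-equal}. Applying the already-established $(\mathrm{FJ})_n$ then expresses $\eta(a_i)$ as a product of $\epsilon$-factors, and comparing the two formulas gives the desired ratio $\theta(\eta)(a_i)/\eta(a_i) = \epsilon(\tfrac12, \phi_i \otimes \chi_V^{-1}, \psi_2^E)$. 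The nontempered case is then reduced via Proposition~\ref{P:P1nt}.

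The main obstacle I expect is the bookkeeping associated with the additive-character dependence of the Whittaker data and of the $\epsilon$-factors, which bifurcates according to the parity of $n$. In particular, the pair of characters $(\chi_V, \chi_W)$ on each leg of the see-saw must be chosen coherently so that the restriction of $\omega_{\psi,\chi_{L},\chi_{W_n},L_{(-1)^n},W_n^{\epsilon'}}$ to $\U(W_n^{\epsilon'})$ matches $\omega_{\psi,\chi,W_n^{\epsilon'}}$ (or its dual, in the odd case), and the bijections $J_\psi$ and $J_{\psi^E}$ must be correlated via $\psi^E(x) = \psi(\tfrac12 \Tr_{E/F}(\delta x))$. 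The various dualities and sign twists introduced by the identification of $\U(V_n^{\epsilon})$ with $\U(W_n^{\epsilon})$ (through $W = \delta \cdot V$) and by the contragredient character $\nu$ of \S\ref{SS:llc} will need to be tracked carefully so that the final $\epsilon$-factor comparison matches the predicted formula in the definition of $\eta^{\clubsuit}$; handling the even and odd cases in parallel is what makes this technical rather than conceptually hard.
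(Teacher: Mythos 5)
Your proposal matches the paper's proof in all essentials: the same see-saw diagram and see-saw identity, the same use of Theorems~\ref{T:equal} and \ref{T:al-equal} to set up the lifts, the same deployment of (B) to compute $\eta_\tau$ explicitly and (P2) to constrain its restriction to $S_{\phi^\heartsuit}$, the same symmetry argument, the same reverse see-saw for existence, and the same second see-saw plus the analog of \cite[Lemma 12.5]{gs} for (P1), with Proposition~\ref{P:P1nt} handling the nontempered case. Your assessment that the even/odd parity bookkeeping of the characters $(\chi_V,\chi_W)$, the dual twists, and the compatibility $\psi^E(x) = \psi(\tfrac12\Tr_{E/F}(\delta x))$ is where the technical content lies is exactly what the paper's case-by-case analysis bears out.
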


\subsection{(B) $+$ (P1) $\Longrightarrow$ (FJ) $+$ (P2)}

Instead of assuming (B) and (P2) as we have done above, one may assume (B) and (P1). Using the same arguments as above, together with Theorems \ref{T:equal} and \ref{T:al-equal}, one can then deduce (FJ) and (P2). We state this formally as a proposition and leave the details of the proof to the reader.

\begin{prop}  \label{P:BP1-FJP2}
Assume that $(\emph{B})_k$ and $(\emph{P1})_k$ hold for all tempered $L$-parameters for all $k \le n$. Then $(\emph{FJ})_k$ and $(\emph{P2})_k$ also hold for all tempered $L$-parameters for all $k \le n$.
\end{prop}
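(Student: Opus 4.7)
The plan is to mirror the proof of Proposition \ref{P:BP2-FJP1} with the roles of $(\mathrm{P1})$ and $(\mathrm{P2})$ interchanged. The see-saw diagram and Theorems \ref{T:equal} and \ref{T:al-equal} are symmetric enough between the equal rank and the almost equal rank sides that either $(\mathrm{P1})$ or $(\mathrm{P2})$, combined with $(\mathrm{B})$, suffices to drive the argument. I would first establish $(\mathrm{FJ})_k$ for $k \le n$ and then deduce $(\mathrm{P2})_k$ from it.

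For the proof of $(\mathrm{FJ})$, I would set up the same see-saw as in the preceding section. Starting from tempered $\pi^\diamondsuit = \pi(\eta^\diamondsuit)$ and $\pi^\heartsuit = \pi(\eta^\heartsuit)$ on $\U(W_n^{\epsilon'})$ with nonzero Hom into $\omega_{\psi,\chi,W_n^{\epsilon'}}$, I would choose $\sigma \in \Irr(\U(V_n^\epsilon))$ with $\Theta_{\psi, V_n^\epsilon, W_n^{\epsilon'}}(\sigma)$ equal to $(\pi^\diamondsuit)^\vee$ in the even case and to $\pi^\diamondsuit$ in the odd case. Where the original proof invokes $(\mathrm{P2})$ on the almost equal rank side to extract $\eta_\tau|_{S_{\phi^\heartsuit}} = \eta^\heartsuit$, I would instead invoke $(\mathrm{P1})$ on the equal rank side to express $\eta_\sigma$ explicitly in terms of $\eta^\diamondsuit$ up to a product of local root numbers of the form $\epsilon(\tfrac{1}{2}, \phi_i^\diamondsuit \otimes \chi^{-n}, \psi_2^E)$. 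The see-saw identity and Theorem \ref{T:al-equal} turn the hypothesis into a Bessel Hom $\Hom_{\U(V_n^\epsilon)}(\tau,\sigma) \ne 0$ with $\phi_\tau = (\phi^\heartsuit \otimes \chi^{-1}) \oplus \chi^n$, and $(\mathrm{B})$ furnishes a second explicit formula for $\eta_\sigma$. Equating the two expressions for $\eta_\sigma$ and manipulating $\epsilon$-factors, using the conjugate self-dual sign of $\phi_i^\diamondsuit$, multiplicativity, and the standard change-of-additive-character formula, I would obtain $\eta^\diamondsuit(a_i) = \eta^\clubsuit(a_i)$; reversing the roles of $\diamondsuit$ and $\heartsuit$ yields $\eta^\heartsuit(b_j) = \eta^\clubsuit(b_j)$. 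The existence direction of $(\mathrm{FJ})$ is unchanged from the original argument, since it uses only $(\mathrm{B})$ and the two theta theorems.

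Once $(\mathrm{FJ})_k$ is in hand, I would deduce $(\mathrm{P2})_k$ by an argument exactly dual to the proof of $(\mathrm{P1})$ from $(\mathrm{B})+(\mathrm{FJ})+(\mathrm{P2})$ in Proposition \ref{P:BP2-FJP1}. Starting from a tempered $\pi = \pi(\eta)$ on $\U(W_n^{\epsilon'})$, set $\tau = \Theta_{\psi, V_{n+1}^\epsilon, W_n^{\epsilon'}}(\pi)$ and use the analog of \cite[Lemma 12.5]{gs} for unitary groups to choose a tempered $\sigma$ on $\U(V_n^\epsilon)$ with $\Hom_{\U(V_n^\epsilon)}(\tau,\sigma) \ne 0$. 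The see-saw identity converts this Bessel Hom nonvanishing into a Fourier--Jacobi Hom nonvanishing relating $\pi$ and $\Theta_{\psi, V_n^\epsilon, W_n^{\epsilon'}}(\sigma)$. Three inputs then pin down the unknown $\eta_\tau|_{S_{\phi^\heartsuit}}$: the already-proved $(\mathrm{FJ})$ controls the character product on the Fourier--Jacobi side, $(\mathrm{B})$ controls $\eta_\sigma \otimes \eta_\tau$ on the Bessel side, and the assumed $(\mathrm{P1})$ translates the character of $\Theta_{\psi, V_n^\epsilon, W_n^{\epsilon'}}(\sigma)$ back to $\eta_\sigma$. The resulting $\epsilon$-factor identity forces $\eta_\tau(b_j) = \eta(b_j)$, which is the content of $(\mathrm{P2})$.

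The main technical burden, as in the original proof, lies in the $\epsilon$-factor bookkeeping: tracking $\psi_2^E$ versus $\psi_{-2}^E$, the contragredient twists by $\omega_{E/F}(-1)^{\dim \phi_i}$, and the careful separation of the even and odd $n$ cases caused by the different normalizations of $J_\psi$ and $J_{\psi^E}$. Since each step here is a mirror image of a calculation already carried out in the proof of Proposition \ref{P:BP2-FJP1}, no genuinely new obstacle arises, and I would simply leave the details to the reader.
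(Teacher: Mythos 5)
Your proposal is correct and follows the same see-saw strategy that the paper itself explicitly prescribes (the paper only remarks that "the same arguments as above" carry over and leaves the details to the reader, so a sketch at your level of detail is exactly what is expected). Two small remarks. First, where the original proof of $(\mathrm{FJ})$ from $(\mathrm{B})+(\mathrm{P2})$ gets $\eta_\tau|_{S_{\phi^\heartsuit}} = \eta^\heartsuit$ for free (since $(\mathrm{P2})$ says the theta correspondence preserves the character on the almost equal rank side with no $\epsilon$-factor twist), your replacement input $(\mathrm{P1})$ gives $\theta(\eta_\sigma)/\eta_\sigma$ as a nontrivial product of local root numbers, so the identity you must verify has one extra $\epsilon$-factor to absorb; you acknowledge this and it does cancel correctly against the $\epsilon$-factor in the contragredient twist and the $\psi_2^E$ versus $\psi_{-2}^E$ discrepancy, but it is worth flagging that the bookkeeping is slightly heavier on your side than on the paper's. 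Second, in the final step deducing $(\mathrm{P2})$ from $(\mathrm{B})+(\mathrm{FJ})$, the invocation of $(\mathrm{P1})$ to "translate the character of $\Theta_{\psi,V_n,W_n}(\sigma)$ back to $\eta_\sigma$" is actually superfluous: both $(\mathrm{B})$'s formula for $\eta_\tau$ and $(\mathrm{FJ})$'s formula for $\eta$ depend only on the $L$-parameter $\phi_\sigma$, which is pinned down by Theorem~\ref{T:equal} without knowing $\eta_\sigma$. The same redundancy is present in the paper's own phrasing "(B), (FJ) and (P2) at our disposal" in the parallel $(\mathrm{P1})$ argument, so you are simply mirroring it; neither affects correctness.
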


\section{\textbf{Proof of (P2)}}

After the previous section, and in view of the results of Beuzart-Plessis \cite{bp1}, \cite{bp2}, \cite{bp3} (who proves (B)), it remains to prove (P2)$_n$. We shall prove (P2)$_n$ by using induction on $n$. 

\subsection{The base cases}

For (P2)$_0$, there is nothing to prove. By \cite{hks}, \cite{ggp2} and \cite{bp2}, we know that (B)$_1$ and (P1)$_1$ hold. Hence it follows by Proposition \ref{P:BP1-FJP2} that (P2)$_1$ holds.

For (P2)$_2$, the nontempered case follows from the tempered case by Proposition \ref{P:P2nt}. To show (P2)$_2$ for tempered $L$-parameters, it follows by Proposition \ref{P:BP1-FJP2} that it suffices to  show (P1)$_2$ for tempered $L$-parameters. Now (P1)$_2$ was shown in \cite[Theorem 11.2]{ggp2} by a global argument, appealing to the analog of (P1)$_2$ at archimedean places. However, we can also give a purely local proof here.

Suppose that $\phi$ is a tempered $L$-parameter for $\U(W_2^{\pm})$ and we are considering the theta correspondence for $\U(V_2^{\epsilon}) \times \U(W_2^{\epsilon'})$ with respect to a pair of characters $(\chi_V, \chi_W)$. If $\phi$ is irreducible, then Corollary \ref{C:stable} guarantees that (P1)$_2$ holds. Hence we shall assume that $\phi = \phi_1 \oplus \phi_2$ with $1$-dimensional characters $\phi_i$. If $\phi_1$ or $\phi_2$ is not conjugate symplectic, then $S_\phi$ is trivial and (P1)$_2$ follows from Theorem \ref{T:equal}. Thus, we shall further assume that both $\phi_1$ and $\phi_2$ are conjugate symplectic, so that
\[  S_{\phi}  = \begin{cases}
 (\ZZ/2\ZZ)a_1 \times (\ZZ/2\ZZ) a_2  &  \text{if $\phi_1 \ne \phi_2$;} \\ 
 ((\ZZ/2\ZZ) a_1 \times (\ZZ/2\ZZ) a_2 ) / \Delta \ZZ/2\ZZ  & \text{if $\phi_1  = \phi_2$.} \end{cases} \]
To unify notation in the two cases, we shall regard $\Irr(S_{\phi})$ as a subset of the irreducible characters of $(\ZZ/2\ZZ) a_1 \times (\ZZ /2\ZZ) a_2$ even when $\phi_1 = \phi_2$.

Let $\pi = \pi(\eta) \in \Pi_{\phi}^{\epsilon'}$. By Theorem \ref{T:equal}, we know that the theta lift of $\pi$ to $\U(V_2^{\epsilon})$ is nonzero for a uniquely determined $\epsilon$ given by
\[  \epsilon  = \epsilon(\tfrac{1}{2}, \phi \otimes \chi_V^{-1},  \psi^E_2) \cdot \epsilon',\]
and has $L$-parameter
\[  \theta(\phi)  =  \phi \otimes \chi_V^{-1} \chi_W. \]
Set
\[  \sigma = \Theta_{\psi, V_2^{\epsilon}, W_2^{\epsilon'}}(\pi) \in \Pi_{\theta(\phi)}^{\epsilon}  \]
and let $\theta(\eta) \in \Irr(S_{\theta(\phi)})$ be the irreducible character associated to $\sigma$. Then we need to compute $\theta(\eta)(a_i) / \eta(a_i)$. 
  
Consider the decomposition
\[  V_2^{\epsilon}  = V_1^{\epsilon}  \oplus L_{-1}, \]
and choose a character $\mu \in \Irr(\U(V_1^{\epsilon}))$ such that
\[   \Hom_{\U(V_1^{\epsilon})}(\sigma,  \mu) \ne 0. \]
Then by (B)$_1$, one sees that
\begin{equation}  \label{E:eta'}
  \theta(\eta)(a_i)  = \epsilon(\tfrac{1}{2}, \mu_E^{-1} \phi_i \chi_V^{-1} \chi_W, \psi^E_{-2})
= \epsilon(\tfrac{1}{2}, \mu_E^{-1} \phi_i \chi_V^{-1} \chi_W, \psi^E_2) \cdot \omega_{E/F}(-1),
\end{equation}
where $\mu_E$ is the character of $E^{\times}$ given by $\mu_E(x) = \mu(x/x^c)$.

On the other hand, consider the see-saw diagram
\[
 \xymatrix{
  \U(W_2^{\epsilon'})  \times \U(W_2^{\epsilon'})  \ar@{-}[dr] \ar@{-}[d] & \U(V^{\epsilon}_2) \ar@{-}[d] \\
  \U(W_2^{\epsilon'}) \ar@{-}[ur] &  \U(V^{\epsilon}_1) \times \U(L_{-1})}.
\]
For a conjugate symplectic character $\chi$ of $E^{\times}$, we consider the theta correspondences for
\[  \U(V_1^{\epsilon} )\times \U(W^{\epsilon'}_2)  \quad \text{with respect to} \quad (\chi_V \chi, \chi_W) \]
and
\[  \U(L_{-1}) \times \U(W_2^{\epsilon'}) \quad \text{with respect to} \quad (\chi^{-1},  \chi_W). \]
Set
\[  \tau :=  \Theta_{\psi, \chi_V \chi, \chi_W, V_1^{\epsilon}, W_2^{\epsilon'}}(\mu) \quad \text{on} \quad \U(W_2^{\epsilon'}). \]
Then Theorem \ref{T:al-equal} implies that $\tau$ has $L$-parameter
\[  \phi_{\tau}  = \mu_E \chi_V \chi_W^{-1}  \chi \oplus \chi_V \chi. \]
Now the see-saw identity then gives
\[  
 0 \ne \Hom_{\U(V_1^{\epsilon})}(\sigma,  \mu)  = \Hom_{\U(W_2^{\epsilon'})}( \tau \otimes \omega_{\psi, \chi, W_2^{\epsilon'}}^{\vee},  \pi). \]
Since we do not know (FJ)$_2$ at this point, this nonvanishing does not give us the desired information about $\eta$. However, we note that
\[   \Hom_{\U(W_2^{\epsilon'})}( \tau \otimes \omega_{\psi, \chi, W_2^{\epsilon'}}^{\vee},  \pi) 
=  \Hom_{\U(W_2^{\epsilon'})}(\pi^{\vee}  \otimes \omega_{\psi, \chi, W_2^{\epsilon'}}^{\vee},  \tau^{\vee}). \]
This allows one to exchange the roles of $\pi$ and $\tau$ in a variant of the above see-saw diagram.

More precisely, since $\phi  = \phi_1 \oplus \phi_2$ with conjugate symplectic characters $\phi_i$, it follows by (P2)$_1$ (which we have shown) that the $L$-packet $\Pi_{\phi^{\vee}}$ can be constructed via theta lifts from $\U(V_1^{\pm})$.
Namely, if we start with the $L$-parameter 
\[  \phi' := \phi_1^{-1} \phi_2 \chi_W \quad \text{for} \quad \U(V_1^{\pm}) \]
and consider the theta correspondence  for $\U(V_1^{\epsilon''}) \times \U(W_2^{\epsilon'})$ with respect to the pair $(\phi_2^{-1},  \chi_W)$, then the theta lifts of $\Pi_{\phi'}$ give the $L$-packet $\Pi_{\phi^{\vee}}$. In particular, we see that
\[   \pi^{\vee}  =   \Theta_{\psi, \phi_2^{-1}, \chi_W, V_1^{\epsilon''}, W_2^{\epsilon'}}(\mu')  \]
for a unique $\mu' \in \Pi_{\phi'}^{\epsilon''}$ (which determines $\epsilon''$).
Indeed, (P2)$_1$ says that 
\begin{equation}  \label{E:e''}
 \epsilon'' = \eta_{\pi^{\vee}}(a_1) = \eta(a_1)  \cdot \omega_{E/F}(-1).
\end{equation}

Thus, we may consider the see-saw diagram
\[
 \xymatrix{
  \U(W_2^{\epsilon'})  \times \U(W_2^{\epsilon'})  \ar@{-}[dr] \ar@{-}[d] & \U(V^{\epsilon''}_2) \ar@{-}[d] \\
\U(W_2^{\epsilon'}) \ar@{-}[ur] &  \U(V^{\epsilon''}_1) \times \U(L_{-1})},
\]
and the theta correspondences for
\[ \U(V_1^{\epsilon''}) \times \U(W_2^{\epsilon'}) \quad \text{with respect to} \quad (\phi_2^{-1}, \chi_W) \]
and
\[  \U(L_{-1}) \times \U(W_2^{\epsilon'}) \quad \text{with respect to} \quad (\chi^{-1}, \chi_W), \]
so that the theta correspondence for
\[  \U(V_2^{\epsilon''}) \times \U(W_2^{\epsilon'})\]
is  with respect to $(\phi_2^{-1} \chi^{-1},  \chi_W)$. The see-saw identity then reads:
\[
  0 \ne \Hom_{\U(W_2^{\epsilon'})}(  \pi^{\vee} \otimes  \omega_{\psi, \chi, W_2^{\epsilon'}}^{\vee},  \tau^{\vee})
   = \Hom_{\U(V_1^{\epsilon''})} (\Theta_{\psi, \phi_2^{-1} \chi^{-1},  \chi_W, V_2^{\epsilon''}, W_2^{\epsilon'}}(\tau^{\vee}),  \mu'). \]
In particular, $\Theta_{\psi, \phi_2^{-1} \chi^{-1},  \chi_W, V_2^{\epsilon''}, W_2^{\epsilon'}}(\tau^{\vee}) \ne 0$ on $\U(V_2^{\epsilon''})$.
By Theorem \ref{T:equal}(ii), one deduces that
\begin{align*}
  \epsilon''  \cdot  \epsilon'  & =  \epsilon(\tfrac{1}{2}, \phi_{\tau}^\vee \otimes \phi_2 \chi, \psi^E_2) \\
 & =  \epsilon(\tfrac{1}{2}, \mu_E^{-1} \phi_2 \chi_V^{-1} \chi_W, \psi^E_2)  \cdot \epsilon(\tfrac{1}{2}, \phi_2 \chi_V^{-1} , \psi^E_2).
\end{align*}
By \eqref{E:eta'} and \eqref{E:e''}, and noting that $\epsilon' = \eta(a_1) \cdot \eta(a_2)$, we see that
 \[  \eta (a_2)  =  \theta(\eta)(a_2)  \cdot \epsilon(\tfrac{1}{2},  \phi_2 \chi_V^{-1}, \psi^E_2)  \]
as desired. It then follows by Theorem \ref{T:equal}(ii) that 
\[  \eta (a_1)  =  \theta(\eta)(a_1)  \cdot \epsilon(\tfrac{1}{2}, \phi_1 \chi_V^{-1} , \psi^E_2)  \] 
as well. 

Thus, we have demonstrated (P1)$_2$, and hence (P2)$_2$. 

\subsection{Inductive step}

Now we assume that $n \ge 3$ and (P2)$_k$ holds for all $k < n$. Proposition \ref{P:P2nt} implies that (P2)$_n$ holds for all nontempered $L$-parameters. We are thus reduced to the case of tempered $L$-parameters. Then we have the following theorem whose proof will be given in the next two sections:

\begin{thm}
\label{T:ind}
If $(\emph{P2})_k$ holds for all tempered $L$-parameters for all $k < n$, then $(\emph{P2})_n$ holds for all tempered but non-square-integrable $L$-parameters.
\end{thm}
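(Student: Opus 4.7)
The plan is to reduce the problem to the square-integrable case on smaller groups via parabolic induction. Given a tempered non-square-integrable $L$-parameter $\phi$ for $\U(W_n^{\pm})$, we invoke the decomposition recalled in \S \ref{SS:llc}:
\[
 \phi = \phi_1 \oplus \phi_0 \oplus (\phi_1^c)^\vee,
\]
where $\phi_1$ is an irreducible $k$-dimensional representation of $\WD_E$ and $\phi_0$ is a tempered $L$-parameter for $\U(W_{n-2k}^{\pm})$. Given $\eta \in \Irr(S_{\phi})$, set $\eta_0 = \eta|_{S_{\phi_0}}$ and $\epsilon' = \eta(z_\phi) = \eta_0(z_{\phi_0})$, and let $\pi_0 = \pi(\phi_0, \eta_0)$, with $\tau$ the essentially square-integrable representation of $\GL_k(E)$ attached to $\phi_1$. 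Then $\pi := \pi(\phi, \eta)$ is one of the irreducible summands of $\Ind_P^{\U(W_n^{\epsilon'})}(\tau \otimes \pi_0)$.

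Next, I would transport this decomposition across the theta correspondence. On the $\U(V_{n+1}^{\epsilon})$ side, the corresponding $L$-parameter is
\[
 \theta(\phi) = (\phi_1 \otimes \chi_V^{-1}\chi_W) \oplus \theta(\phi_0) \oplus ((\phi_1 \otimes \chi_V^{-1}\chi_W)^c)^\vee,
\]
and the inductive hypothesis $(\text{P2})_{n-2k}$ together with Theorem \ref{T:al-equal} identifies $\Theta_{\psi, V_{n-2k+1}^\epsilon, W_{n-2k}^{\epsilon'}}(\pi_0)$ as the explicit tempered representation $\pi(\theta(\phi_0), \theta(\eta_0))$. The key input is then the compatibility of the theta correspondence with parabolic induction (Kudla's induction principle): the theta lift of $\Ind_P^{\U(W_n^{\epsilon'})}(\tau \otimes \pi_0)$ is an induced representation on $\U(V_{n+1}^{\epsilon})$ of the form $\Ind_{P'}^{\U(V_{n+1}^{\epsilon})}(\tilde\tau \otimes \Theta(\pi_0))$, where $\tilde\tau$ is $\tau$ twisted by $\chi_V^{-1}\chi_W$. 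By Theorem \ref{T:al-equal}(iii) the lift $\Theta(\pi)$ is an irreducible summand of this induced representation, and the summands are parametrized by characters of $S_{\theta(\phi)}$ whose restriction to $S_{\theta(\phi_0)}$ equals $\theta(\eta_0)$.

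To single out the right summand, I would compare the action of the normalized intertwining operators on the two sides. By the local intertwining relation \eqref{eq:lir}, on the $\U(W_n^{\epsilon'})$ side the operator $R(w, \tau \otimes \pi_0)$ acts on $\pi$ by the explicit scalar $\epsilon'^k \cdot \eta(a_1)$ or $\epsilon'^k$ depending on the sign of $\phi_1$, and dually on the $\U(V_{n+1}^{\epsilon})$ side $R(w, \tilde\tau \otimes \Theta(\pi_0))$ acts on each summand $\pi(\theta(\phi), \eta')$ by the analogous explicit scalar determined by $\eta'$. The strategy is to show that the theta-correspondence isomorphism intertwines these two operators up to a scalar one can compute; this scalar then translates the known eigenvalue on the $\U(W_n^{\epsilon'})$ side into a constraint that pins down $\theta(\eta)$, giving precisely the recipe of $(\text{P2})_n$ in both the case where $\phi$ does contain $\chi_V$ and where it does not. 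Note that the component group $S_{\theta(\phi_0)} \to S_{\theta(\phi)}$ has index $2$ exactly when $\phi_1$ has sign $(-1)^n$ (so that $\tilde\phi_1$ is conjugate self-dual with sign $(-1)^n$), which matches the two branches of \eqref{eq:lir}.

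The main obstacle is the explicit matching of the normalized intertwining operators through the theta correspondence. The normalizations involve products of local $L$- and $\epsilon$-factors, and one must verify that the Weil representation, viewed as a $\U(V_{n+1}^\epsilon) \times \U(W_n^{\epsilon'})$-module, really does intertwine $R(w, \tau \otimes \pi_0)$ and $R(w, \tilde\tau \otimes \Theta(\pi_0))$ with a scalar that can be controlled. This requires adapting and substantially extending the calculations of \cite{i} — in particular keeping careful track of the splittings $s'$ of \cite[\S 2.4.1]{kmsw} that produce the factor $\epsilon'^k$ in \eqref{eq:lir} — and is the technical heart of the reduction; once it is in place, the remaining step is bookkeeping with the explicit formulas of Section \ref{SS:llc}.
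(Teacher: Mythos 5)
Your proposal follows essentially the same strategy as the paper: parabolic reduction to square-integrable data on smaller groups, Kudla's filtration to transport this decomposition across the theta lift, the inductive hypothesis $(\text{P2})_{n_0}$ to match $\eta'|_{S_{\phi_0}}$ with $\eta|_{S_{\phi_0}}$, and comparison of the normalized intertwining operators $R(w,\tau\otimes\pi_0)$ and $R(w',\cdot)$ via an explicit equivariant map $\T_s$ whose precise constant of proportionality is the technical heart — this is exactly what the paper's Proposition \ref{P:io} and Corollary \ref{C:io} accomplish, working in a mixed model and invoking the Godement--Jacquet local functional equation. One small slip to note: the index-two case $S_{\phi_0}\neq S_\phi$ (the branch of \eqref{eq:lir} where $\eta(a_1)$ appears) occurs when $\phi_1$ has sign $(-1)^{n-1}$, not $(-1)^n$, though this does not affect the overall plan.
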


The proof of this theorem is an elaborate extension of the techniques developed in the PhD thesis of the second author \cite{i}. Assuming this theorem for the moment, we are thus reduced to the case of square-integrable $L$-parameters. 

\subsection{Square-integrable case}

We now consider (P2)$_n$ for a square-integrable $L$-parameter  
\[  \phi  = \phi_1 \oplus  \cdots \oplus \phi_r \]
for $\U(W_n^\pm)$.
Thus $\phi$ is multiplicity-free and each $\phi_i$ is an $n_i$-dimensional irreducible conjugate self-dual representation of $\WD_E$ with sign $(-1)^{n-1}$.
Recall that the component group $S_{\phi}$ is of the form
\[   S_{\phi}  = \prod_{i=1}^r  (\ZZ / 2\ZZ) a_i. \]

We shall first assume that $r > 1$. Then either $r \ge 3$ or else $r =2$ in which case we may assume that $n_1 = \dim \phi_1 \ge 2$. 

Let $\pi = \pi(\eta) \in \Pi_{\phi}^{\epsilon'}$ be an irreducible square-integrable representation of $\U(W_n^{\epsilon'})$ with associated character $\eta \in \Irr(S_{\phi})$. We consider the theta correspondence for $\U(V_{n+1}^\epsilon) \times \U(W_n^{\epsilon'})$ with respect to the data $(\psi, \chi_V, \chi_W)$, and suppose that $\pi' := \Theta_{\psi, V_{n+1}^{\epsilon}, W_n^{\epsilon'}}(\pi) \ne 0$. Then by Theorem \ref{T:al-equal}, $\pi'  = \pi'(\eta')  \in \Pi_{\theta(\phi)}^\epsilon$ is an irreducible tempered representation of $\U(V_{n+1}^\epsilon)$ with associated character $\eta' \in \Irr(S_{\theta(\phi)})$. We want to determine $\eta'$ in terms of $\eta$. Indeed, recall that there is a natural embedding
\[  S_{\phi}  \hookrightarrow S_{\theta(\phi)}  \]
and we need to show that $\eta'(a_i)  = \eta(a_i)$. We shall do so by a global argument.

\subsection{Globalization}

Let us begin the process of globalization which is the most delicate part of the argument. Choose a number field $\FF$ and a quadratic field extension $\EE$ of $\FF$ such that 
\begin{itemize}
\item $\FF$ is totally complex;
\item $\EE_{v_0} / \FF_{v_0}  = E/F$ for a finite place $v_0$ of $\FF$;
\item there is a fixed finite place $w$ of $\FF$ which is split in $\EE$. 
\end{itemize}
Fix:
\begin{itemize}
\item a nontrivial additive character $\Psi$ of $\AA / F$ such that $\Psi_{v_0}  = \psi$ (in its $\N_{E/F}(E^{\times})$-orbit);
\item a conjugate symplectic Hecke character $\chi$ of $\AA_{\EE}^{\times}$; 
\item a trace zero element $\delta \in \EE^{\times}$ so that the signs of the skew-Hermitian spaces $W_n^{\pm}$ at the place $v_0$ are defined using $\delta$.
\end{itemize}
Let $S$ be a sufficiently large finite set of inert finite places of $\FF$, not containing $v_0$, such that for all $v \notin S \cup \{ v_0 \}$, either $v$ is split in $\EE$ or else $\EE_v/\FF_v$, $\Psi_v$ and $\chi_v$ are all unramified. Moreover, $S$ can be made arbitrarily large.

We shall globalize the $L$-parameter $\phi$ to a tempered $A$-parameter $\Sigma$.

\begin{enumerate}

\item 
At $v_0$, consider the given irreducible representation $\phi_i$ of $\WD_E$. Since $\phi_i$ is conjugate self-dual with sign $(-1)^{n-1}$, it may not be an $L$-parameter for $\U(W_{n_i}^{\pm})$. Instead,  the representation 
\[  \phi_{i,v_0}' := \phi_i \otimes \chi_{v_0}^{n_i-n} \]
is conjugate self-dual with sign $(-1)^{n_i-1}$, and thus defines an $L$-parameter for $\U(W_{n_i}^{\pm})$.

\item
At $v \in S$, choose a representation $\phi_{i,v}$ of $\WD_E$ which is the multiplicity-free sum of $1$-dimensional conjugate self-dual characters with sign $(-1)^{n-1}$. As above, $\phi_{i,v}$ is conjugate self-dual with sign $(-1)^{n-1}$ and thus may not be an $L$-parameter for $\U(W_{n_i,v}^{\pm})$, where $W_{n_i,v}^{\pm}$ are the $n_i$-dimensional skew-Hermitian spaces over $E_v$. We set 
\[  \phi_{i,v}' := \phi_{i,v} \otimes \chi_v^{n_i-n}, \]
so that $\phi_{i,v}'$ is an $L$-parameter for $\U(W_{n_i,v}^\pm)$. The local component group $S_{\phi'_{i,v}}$ of $\phi'_{i,v}$ is of the form
\[  S_{\phi'_{i,v}}  =  (\ZZ / 2\ZZ)^{n_i} \]
and the Vogan $L$-packet $\Pi_{\phi'_{i,v}}$ consists of $2^{n_i}$ irreducible square-integrable representations of $\U(W_{n_i,v}^{\pm})$.

\item  We require in addition that, for all $v \in S$, 
\[  \phi_v := \phi_{1,v} \oplus \cdots \oplus \phi_{r,v} \] 
is not multiplicity-free, i.e.~$\phi_v$ is not a square-integrable $L$-parameter for $\U(W_{n,v}^{\pm})$. To achieve this, we pick a character $\mu_v$ contained in $\phi_{1,v}$ and then ensure that $\mu_v$ is also contained in $\phi_{i_v,v}$ for some $i_v \ge 2$. It is here that we use the assumption that $r > 1$. Moreover, we may ensure that
\[
 i_v \ne i_{v'} 
\]
for some distinct $v, v' \in S$ if $r > 2$.

\item For each $v \in S$, there is a natural map
\[  (\ZZ / 2 \ZZ)^r  = \prod_{i=1}^r (\ZZ / 2\ZZ) a_i \longrightarrow S_{\phi_v}  \]
which sends $a_i$ to the image of the element $-1_{\phi_{i,v}}$ in $S_{\phi_v}$. In view of (iii), for $\# S$ large enough (indeed, for $\# S \geq 2$), the induced diagonal map
\[  (\ZZ / 2\ZZ)^r  \longrightarrow  \prod_{v \in S} S_{\phi_v}  \]
is injective.  

\item Now for each $i=1, \ldots, r$, we have a collection of square-integrable $L$-parameters $\phi'_{i,v}$ for $v \in S \cup \{ v_0 \}$.
For each $v \in S \cup \{ v_0 \}$, pick an irreducible square-integrable representation $\pi_v \in \Pi_{\phi'_{i,v}}^+$.
Let $\WW_{n_i}^+$ be the $n_i$-dimensional skew-Hermitian space over $\EE$ whose localization at each inert $v$ is $W_{n_i,v}^+$, where we have used the trace zero element $\delta \in \EE^{\times}$ to define the sign of a skew-Hermitian space over $E_v$.
Then by a result of Shin \cite[Theorem 5.13]{sh} (proved using the trace formula), one can find an irreducible cuspidal automorphic representation $\Pi'_i$ of $\U(\WW_{n_i}^+)(\AA)$ such that
\begin{itemize}
\item $\Pi'_{i,v}  = \pi_v$ for all $v \in S\cup \{ v_0 \}$;
\item $\Pi'_{i,v}$ is unramified for all inert $v \notin S \cup \{ v_0 \}$;
\item $\Pi'_{i,w}$ is an irreducible supercuspidal representation of $\U(\WW_{n_i,w}^+) \cong \GL_{n_i}(\FF_w)$. 
\end{itemize}

\item By results of Mok \cite{mok}, the representation $\Pi'_i$ has tempered $A$-parameter $\Sigma'_i$, which is an irreducible conjugate self-dual cuspidal automorphic representation of $\GL_{n_i}(\AA_{\EE})$ with sign $(-1)^{n_i-1}$.
The cuspidality of $\Sigma'_i$ is a consequence of the fact that $\Pi'_{i,w}$ is supercuspidal at the split place $w$. If we set
\[  \Sigma_i  = \Sigma_i' \otimes \chi^{n-n_i}, \]
then $\Sigma_i$ is an irreducible conjugate self-dual cuspidal automorphic representation of $\GL_{n_i}(\AA_{\EE})$ with sign $(-1)^{n-1}$.
In particular, setting
\[  \Sigma  = \BIGboxplus_{i=1}^r  \Sigma_i, \]
we see that $\Sigma$ is a tempered $A$-parameter for $\U(\WW_n)$, where $\WW_n$ is an $n$-dimensional skew-Hermitian space over $\EE$. 
\end{enumerate}
 
\subsection{Properties of $\Sigma$}
\label{SS:Sigma}

We have completed the construction of a global tempered $A$-parameter $\Sigma$. Let us examine some crucial properties of $\Sigma$. 

\begin{itemize}

\item (Local components)
It follows by construction that the local components of the $A$-parameter $\Sigma$ are given as follows:
\begin{itemize}
\item at the place $v_0$, $\Sigma_{v_0}$ has $L$-parameter $\phi$;
\item at all places $v \in S$, $\Sigma_v$ has $L$-parameter $\phi_v$;
\item at all inert places $v \notin S \cup \{ v_0 \}$, $\Sigma_v$ is unramified.
\end{itemize}
In particular, we have found a globalization $\Sigma$ of the given local $L$-parameter $\phi$ so that at all inert places $v \ne v_0$ of $\FF$, $\Sigma_v$ defines a non-square-integrable $L$-parameter for $\U(W_{n,v}^{\pm})$.

\item (Whittaker data)
We shall use the additive character $\Psi = \otimes_v \Psi_v$ to fix the Whittaker datum at each place $v$. Together with the fixed trace zero element $\delta \in \EE^{\times}$, we have thus fixed the local Langlands correspondence for $\U(W_{n,v}^{\pm})$ for each $v$. 

\item (Component groups)
The global component group $S_{\Sigma}$ of the $A$-parameter $\Sigma$ admits a natural map $S_{\Sigma} \rightarrow S_{\Sigma_v}$ for each place $v$. For $v = v_0$, this natural map is an isomorphism, so that we have a canonical identification:
\[  S_{\Sigma}  =  S_{\Sigma_{v_0}} =  \prod_{i=1}^r (\ZZ /2 \ZZ) a_i. \]
On the other hand, in view of (iv) above, we see that the diagonal map
\[   S_{\Sigma}  \longrightarrow \prod_{v \ne v_0}  S_{\Sigma_v} \]
is injective.
Thus, given any $\eta \in \Irr(S_{\phi}) = \Irr(S_{\Sigma_{v_0}})$, one can find $\eta_v \in \Irr(S_{\Sigma_v})$ for $v \ne v_0$ so that
\[  \Bigl( \eta \otimes \Bigl( \bigotimes_{v \ne v_0} \eta_v \Bigr) \Bigr) \circ \Delta =  \1_{S_{\Sigma}}, \]
where 
\[  \Delta:  S_{\Sigma} \longrightarrow \prod_v S_{\Sigma_v} \]
is the diagonal map.

\item (Arthur's multiplicity formula) 
Consider the global $A$-packet associated to $\Sigma$.
For any collection $\eta_v \in \Irr(S_{\Sigma_v})$ of irreducible characters with associated representations $\pi(\eta_v)$ of local unitary groups $\U(W_{n,v}^{\epsilon_v'})$, consider the representation
\[  \Pi : = \bigotimes_v \pi(\eta_v) \]
of the adelic unitary group $\prod'_v \U(W_{n,v}^{\epsilon_v'})$.
Arthur's multiplicity formula \cite[Theorem 1.7.1]{kmsw} then states that the following are equivalent:
\begin{itemize}
\item the adelic unitary group $\prod'_v \U(W_{n,v}^{\epsilon_v'})$ is equal to $\U(\WW_n)(\AA)$ for a skew-Hermitian space $\WW_n$ over $\EE$ and $\Pi$ occurs in the automorphic discrete spectrum of $\U(\WW_n)(\AA)$;
\item the character $(\otimes_v \eta_v) \circ \Delta$ of $S_{\Sigma}$ is trivial.
\end{itemize}
\end{itemize}

By the above discussion combined with a result of Wallach \cite{wallach}, \cite[Proposition 4.10]{clozel}, we may find an $n$-dimensional skew-Hermitian space $\WW_n$ over $\EE$ and an irreducible cuspidal automorphic representation $\Pi$ of $\U(\WW_n)(\AA)$ in the global $A$-packet associated to $\Sigma$ such that $\Pi_{v_0}  = \pi(\eta)$. For each $v$, we shall write the local component $\Pi_v$ as $\pi(\eta_v)$.

\subsection{Global theta correspondence}

Now we shall construct a Hermitian space $\VV_{n+1}$ of dimension $n+1$ over $\EE$, and consider the global theta correspondence for $\U(\VV_{n+1}) \times \U(\WW_n)$. To define such a global theta correspondence, we shall use the fixed additive character $\Psi$ of $\AA / \FF$, and we also need to fix a pair of Hecke characters $\chi_{\VV}$ and $\chi_{\WW}$ of $\AA_{\EE}^{\times}$ such that
\[  \chi_{\VV}|_{\AA^{\times}}  = \omega_{\EE/ \FF}^{n+1} \quad \text{and} \quad \chi_{\WW}|_{\AA^{\times}}  = \omega_{\EE/ \FF}^n, \]
where $\omega_{\EE/ \FF}$ is the quadratic Hecke character of $\AA^{\times}$ associated to $\EE/\FF$ by global class field theory. We pick these so that, in addition:
\begin{enumerate}[label=(\alph*)]
\item at the place $v_0$, we have
\[  \chi_{\VV,v_0}  = \chi_V \quad \text{and} \quad  \chi_{\WW,v_0} = \chi_W; \]
\item at some place $v_1 \in S$, $\chi_{\VV,v_1}$ is not contained in the $L$-parameter associated to $\Sigma_{v_1}$. 
\end{enumerate}
Indeed, since $\EE^{\times} / \FF^{\times} \cong \Ker(\N_{\EE/\FF})$ is anisotropic,
for given conjugate orthogonal characters $\mu_i$ of $\EE_{v_i}^{\times}$,
there is a conjugate orthogonal Hecke character $\mu$ of $\AA_{\EE}^{\times}$ such that $\mu_{v_i} = \mu_i$ for $i=0,1$.
Thus, we can achieve (a) and (b) by replacing $\chi_{\VV}$ and $\chi_{\WW}$ by their twists by conjugate orthogonal Hecke characters of $\AA_\EE^{\times}$ if necessary.
The condition (b) guarantees that at the place $v_1$, the representation $\Pi_{v_1}$ has nonzero local theta lift to both $\U(V_{n+1,v_1}^+)$ and $\U(V_{n+1, v_1}^-)$ by Theorem \ref{T:al-equal}(i)(a).
Moreover, the conservation relation (proved by Sun--Zhu \cite{sz}) implies that the theta lifts of $\Pi_{v_1}$ to $\U(V_{n-1,v_1}^+)$ and $\U(V_{n-1, v_1}^-)$ are both zero. 

Now we note:

\begin{lem}
There is a Hermitian space $\VV_{n+1}$ of dimension $n+1$ over $\EE$ such that:
\begin{itemize}
\item at the place $v_0$, $\VV_{n+1,v_0}$ is equal to the given Hermitian space $V_{n+1}^\epsilon$;
\item for all places $v$, the representation $\Pi_v$ has nonzero local theta lift to $\U(\VV_{n+1,v})$ with respect to the theta lift defined by the data $(\Psi_v, \chi_{\VV_v}, \chi_{\WW_v})$.
\end{itemize}
\end{lem}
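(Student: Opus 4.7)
The plan is to construct $\VV_{n+1}$ via the Hasse principle for Hermitian spaces, specifying the local completions $\VV_{n+1,v}$ at each place $v$ of $\FF$. Since $\FF$ is totally complex, all archimedean places of $\FF$ split in $\EE$, so they pose no constraint; at split finite places the unitary group is a general linear group, so again no constraint. The only nontrivial data is the sign $\epsilon_v := \epsilon(\VV_{n+1,v}) \in \{\pm 1\}$ at each inert finite place, and the Hasse principle asserts that a sequence $(\epsilon_v)_v$ with $\epsilon_v = +1$ at almost all $v$ arises from a global Hermitian space over $\EE$ of dimension $n+1$ if and only if $\prod_v \epsilon_v = +1$.

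At each inert finite place $v$, Theorem \ref{T:al-equal} dichotomizes the possibilities. If the $L$-parameter of $\Sigma_v$ does not contain $\chi_{\VV,v}$, then both choices $\epsilon_v = \pm 1$ give nonzero local theta lifts, so I call $v$ \emph{free}; if it does contain $\chi_{\VV,v}$, then the nonvanishing forces a unique sign $\epsilon_v$, and I call $v$ \emph{forced}. Condition (b) above guarantees that $\chi_\VV$ is not a global summand of the isobaric sum $\Sigma = \BIGboxplus_i \Sigma_i$, since $\chi_{\VV,v_1}$ is not in $\Sigma_{v_1}$. Hence by strong multiplicity one for $\GL$, the set of forced places is finite, and at almost all free places the natural choice $\epsilon_v = +1$ suffices.

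With this in hand, I would assign $\epsilon_{v_0} = \epsilon$ (which is compatible with the hypothesis $\Theta_{\psi, V_{n+1}^\epsilon, W_n^{\epsilon'}}(\pi) \neq 0$ by the definition of $\pi'$), take the forced sign at each forced place, and set $\epsilon_v = +1$ at every remaining free place except $v_1$. The freedom to set $\epsilon_{v_1} \in \{\pm 1\}$, since $v_1$ is free by construction, then allows me to adjust the global product $\prod_v \epsilon_v$ to equal $+1$, yielding a collection of local signs that globalizes to the desired $\VV_{n+1}$. The main technical point is the finiteness of forced places, which rests on the rigidity of isobaric sums of cuspidal representations of $\GL$; the role of the distinguished free place $v_1$ is precisely to absorb the single parity correction required by the Hasse principle.
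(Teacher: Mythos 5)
Your overall plan coincides with the paper's: choose local Hermitian spaces $\VV_{n+1,v}$ at all places, use the product formula (Hasse principle) to globalize, and exploit the place $v_1$ --- made ``free'' by condition (b) --- to absorb the one parity constraint. That much is exactly right.

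The gap is in your justification for why $\epsilon_v = +1$ is an admissible choice at almost all places. You argue that because $\chi_\VV$ is not a global isobaric summand of $\Sigma$ (true, by condition (b)), ``strong multiplicity one'' implies the set of forced places is finite. This inference does not hold: strong multiplicity one controls when two cuspidal representations are \emph{globally} equal from agreement at almost all places, but it does not bound the set of places $v$ at which $\chi_{\VV,v}$ appears as a local constituent of $\Sigma_v$. (Already for $n_i = 1$, two distinct Hecke characters can agree at a positive-density set of places, so the set of places where $\chi_{\VV,v}$ appears in $\Sigma_v$ need not be finite.) The correct --- and simpler --- reason is the unramified local theta correspondence: at almost all inert $v$, the data $\Pi_v$, $\Psi_v$, $\chi_{\VV,v}$, $\chi_{\WW,v}$ are all unramified, and the theta lift of an unramified representation to the quasi-split unitary group $\U(\VV_{n+1,v}^+)$ is nonzero (and unramified). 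Thus $\epsilon_v = +1$ is always an admissible choice at almost all places, regardless of whether $v$ is forced or free in your terminology, and the finiteness of forced places is not needed. With that substitution your argument closes up and agrees with the proof in the paper.
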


\begin{proof}
For all $v \ne v_0, v_1$, we may pick $\VV_{n+1, v}$ so that the local theta lift of $\Pi_v$ to $\U(\VV_{n+1, v})$ is nonzero, and then complete these to a coherent collection of Hermitian spaces by picking $V_{n+1}^\epsilon$ at $v_0$ and the uniquely determined Hermitian space at $v_1$. 
\end{proof}

\subsection{Completion of the proof}

Consider the global theta lift $\Pi' := \Theta_{\Psi, \VV_{n+1}, \WW_n}(\Pi)$ to $\U(\VV_{n+1})(\AA)$. The condition (b) above ensures that $\Pi'$ is cuspidal.
To show that $\Pi'$ is nonzero,
we consider the standard $L$-function $L(s, \Pi)$ of $\Pi$ defined using the doubling zeta integral of Piatetski-Shapiro--Rallis \cite{psr}, \cite{lr}.
Observe that the partial $L$-function $L^{S \cup \{v_0\}}(s, \Pi)$ agrees with the partial standard $L$-function $L^{S \cup \{v_0\}}(s, \Sigma)$ of $\Sigma$, so that 
\[  L^{S \cup \{v_0\}}(1, \Pi) = L^{S \cup \{v_0\}}(1, \Sigma) =  \prod_{i=1}^r  L^{S \cup \{v_0\}}(1, \Sigma_i) \ne 0 \]
since $\Sigma_i$ is unitary and cuspidal.
By \cite[Proposition 5]{lr}, the local standard $L$-factor $L(s, \Pi_v)$ at $v \in S \cup \{ v_0\}$ is holomorphic and nonzero at $s=1$ since $\Pi_v$ is tempered.
Hence
\[
 L(1, \Pi) \ne 0
\]
and it follows by \cite[Theorem 1.4]{gqt} that $\Pi'$ is nonzero.
Thus $\Pi'$ is an irreducible cuspidal automorphic representation of $\U(\VV_{n+1})(\AA)$ such that $\Pi'_{v_0}   = \pi'(\eta')$.

Recall that we have fixed the local Langlands correspondence for $\U(\WW_{n,v})$ for each $v$ using the Whittaker datum determined by the additive character $\Psi_v$ together with the trace zero element $\delta$.
To fix the local Langlands correspondence for $\U(\VV_{n+1,v})$ for each $v$, we shall use the Whittaker datum determined by the additive character $\Psi^{\EE_v}_v = \Psi_v(\frac{1}{2} \Tr_{\EE_v/\FF_v}(\delta \, \cdot \,))$.
Then we may write 
\[
 \Pi  = \bigotimes_v \pi(\eta_v) \quad \text{and} \quad
 \Pi' = \bigotimes_v \pi'(\eta'_v)
\]
with associated irreducible characters $\eta_v$ and $\eta'_v$ of the local component groups.

Recall that $\Pi$ has tempered $A$-parameter.
By Theorem \ref{T:al-equal}, $\Pi'$ also has tempered $A$-parameter.
Hence, applying Arthur's multiplicity formula \cite[Theorem 1.7.1]{kmsw} to $\Pi$ and $\Pi'$, we see that
\begin{equation}
\label{eq:arthur}
 \prod_v \eta_v(a_{i,v}) = 1 \quad \text{and} \quad \prod_v \eta_v'(a_{i,v}) = 1
\end{equation}
for all $i$, where $a_{i,v}$ is the image of $a_i$ in $S_{\Sigma_v}$.
However, for all places $v \ne v_0$, either $v$ is split, or else the $L$-parameter of $\Pi_v$ is not square-integrable. Thus, for all inert $v \ne v_0$, one knows that (P2)$_n$ holds. In particular,
\[  \eta'_v(a_{i,v})  =  \eta_v(a_{i,v})  \]
for all $v \ne v_0$.
Thus, we conclude that at the place $v_0$, we have
\[  \eta' (a_i)  = \eta(a_i)  \]
as desired. 

We have thus completed the proof of (P2)$_n$ when $r >1$, i.e.~when $\phi$ is reducible. To deal with the case when $\phi$ is irreducible, with $r = 1$, we can again appeal to a variation of the global argument as above.
Namely, in the globalization step above, we may now take the $L$-parameter $\phi_v$ for $v \in S$ to be square-integrable $L$-parameters which are \emph{reducible}. Then the rest of the argument is the same, using the fact that we have shown (P2)$_n$ for every place $v \ne v_0$. This completes the proof of (P2)$_n$.

\section{\textbf{Preparations for the proof of Theorem \ref{T:ind}}}

To finish the proof of (P2), it now remains to prove Theorem \ref{T:ind}.
For this, we need to introduce more notation.
Fix $\varepsilon = \pm 1$. 
In this and next sections, we shall let $V$ and $W$ be an $\varepsilon$-Hermitian space and a $(-\varepsilon)$-Hermitian space respectively.
Put
\[
 m = \dim V \quad \text{and} \quad n = \dim W. 
\]

\subsection{Parabolic subgroups}

Let $r$ be the Witt index of $V$ and $V_\an$ an anisotropic kernel of $V$.
Choose a basis $\{ v_i, v_i^* \, | \, i = 1, \ldots, r \}$ of the orthogonal complement of $V_\an$ such that 
\[
 \langle v_i, v_j \rangle_V = \langle v^*_i, v^*_j \rangle_V = 0, \quad 
 \langle v_i, v^*_j \rangle _V = \delta_{i,j}
\]
for $1 \le i, j \le r$.
Let $k$ be a positive integer with $k \le r$ and set 
\[
 X = E v_1 \oplus \cdots \oplus E v_k, \quad X^* = E v^*_1 \oplus \cdots \oplus E v^*_k.
\]
Let $V_0$ be the orthogonal complement of $X \oplus X^*$ in $V$, so that $V_0$ is an $\varepsilon$-Hermitian space of dimension $m_0 = m -2k$ over $E$.
We shall write an element in the unitary group $\U(V)$ as a block matrix relative to the decomposition $V = X \oplus V_0 \oplus X^*$.
Let $P = M_P U_P$ be the maximal parabolic subgroup of $\U(V)$ stabilizing $X$,
where $M_P$ is the Levi component of $P$ stabilizing $X^*$ and $U_P$ is the unipotent radical of $P$. We have
\begin{align*}
 M_P & = \{ m_P(a) \cdot h_0 \, | \, a \in \GL(X), \, h_0 \in \U(V_0) \}, \\
 U_P & = \{ u_P(b) \cdot u_P(c) \, | \, b \in \Hom(V_0, X), \,  c \in \Herm(X^*,X) \}, 
\end{align*}
where 
\[
 m_P(a) = 
 \begin{pmatrix}
  a & & \\
  & 1_{V_0} & \\
  & & (a^*)^{-1}
 \end{pmatrix}, \quad
 u_P(b) = 
 \begin{pmatrix}
  1_X & b & - \tfrac{1}{2} b b^* \\
  & 1_{V_0} & -b^* \\
  & & 1_{X^*}
 \end{pmatrix}, \quad
 u_P(c) = 
 \begin{pmatrix}
  1_X & & c \\
  & 1_{V_0} & \\
  & & 1_{X^*}
 \end{pmatrix},
\]
and 
\[
 \Herm(X^*, X) = \{ c \in \Hom(X^*,X) \, | \, c^* = -c \}.
\]
Here, the elements $a^* \in \GL(X^*)$, $b^* \in \Hom(X^*, V_0)$, and $c^* \in \Hom(X^*, X)$ are defined by requiring that
\[
 \langle a x, x' \rangle_V = \langle x, a^* x' \rangle_V, \quad
 \langle b v, x' \rangle_V = \langle v, b^* x' \rangle_V, \quad
 \langle c x', x'' \rangle_V  = \langle x', c^* x'' \rangle_V
\]
for $x \in X$, $x', x'' \in X^*$, and $v \in V_0$.
In particular, $M_P \cong \GL(X) \times \U(V_0)$ and 
\[
 1 \longrightarrow \Herm(X^*,X) \longrightarrow U_P \longrightarrow 
 \Hom(V_0, X) \longrightarrow 1.
\]
Put
\[
 \rho_P = \frac{m_0+k}{2}, \quad 
 w_P =
 \begin{pmatrix}
  & & -I_X \\
  & 1_{V_0} & \\
  -\varepsilon I_X^{-1} & & 
 \end{pmatrix},
\]
where $I_X \in \Isom(X^*, X)$ is defined by $I_X v_i^* = v_i$ for $1 \le i \le k$.

Similarly, let $r'$ be the Witt index of $W$ and choose a basis $\{ w_i, w_i^* \, | \, i = 1, \ldots, r' \}$ of the orthogonal complement of an anisotropic kernel of $W$ such that 
\[
 \langle w_i, w_j \rangle_W = \langle w^*_i, w^*_j \rangle_W = 0, \quad \langle w_i, w^*_j \rangle_W = \delta_{i,j}
\]
for $1 \le i, j \le r'$.
We assume that $k \le r'$ and set 
\[
 Y = E w_1  \oplus \cdots \oplus E w_k, \quad Y^* = E w^*_1 \oplus \cdots \oplus E w^*_k.
\]
Let $W_0$ be the orthogonal complement of $Y \oplus Y^*$ in $W$, so that $W_0$ is a $(- \varepsilon)$-Hermitian space of dimension $n_0 = n -2k$ over $E$.
Let $Q = M_Q U_Q$ be the maximal parabolic subgroup of $\U(W)$ stabilizing $Y$, where $M_Q$ is the Levi component of $Q$ stabilizing $Y^*$ and $U_Q$ is the unipotent radical of $Q$.
Then $M_Q \cong \GL(Y) \times \U(W_0)$ and 
\[
 1 \longrightarrow \Herm(Y^*,Y) \longrightarrow U_Q \longrightarrow 
 \Hom(W_0, Y) \longrightarrow 1.
\]
For $a \in \GL(Y)$, $b \in \Hom(W_0, Y)$, and $c \in \Herm(Y^*,Y)$,
we define elements $m_Q(a) \in M_Q$ and $u_Q(b), u_Q(c) \in U_Q$ as above.
Put
\[
 \rho_Q = \frac{n_0+k}{2}, \quad
 w_Q =
 \begin{pmatrix}
  & & -I_Y \\
  & 1_{W_0} & \\
  \varepsilon I_Y^{-1} & & 
 \end{pmatrix},
\]
where $I_Y \in \Isom(Y^*, Y)$ is defined by $I_Y w_i^* = w_i$ for $1 \le i \le k$.

\subsection{Haar measures}

We need to choose Haar measures on various groups.
In particular, we shall define Haar measures on $U_P$ and $U_Q$ in the following.

Recall the symplectic form
$\langle \cdot, \cdot \rangle = \Tr_{E/F}(\langle \cdot, \cdot \rangle_V \otimes \langle \cdot, \cdot \rangle_W)$ on $V \otimes W$ over $F$.
We consider the following spaces and pairings:
\begin{itemize}
 \item $(x, y) \mapsto \psi(\langle x, I_Y^{-1} y \rangle)$
 for $x, y \in V \otimes Y$;
 \item $(x, y) \mapsto \psi(\langle x, I_Y y \rangle)$
 for $x, y \in V_0 \otimes Y^*$;
 \item $(x, y) \mapsto \psi(\langle I_X^{-1} x, y \rangle)$
 for $x, y \in X \otimes W_0$;
 \item $(x, y) \mapsto \psi(\langle I_X x, y \rangle)$
 for $x, y \in X^* \otimes W_0$; 
 \item $(x, y) \mapsto \psi(\langle I_X^{-1} x, I_Y y \rangle)$
 for $x, y \in X \otimes Y^*$; 
 \item $(x, y) \mapsto \psi(\langle I_X x, I_Y^{-1} y \rangle)$
 for $x, y \in X^* \otimes Y$; 
 \item $(x, y) \mapsto \psi(\langle I_X x, I_Y y \rangle)$
 for $x, y \in X^* \otimes Y^*$. 
\end{itemize}
On these spaces, we take the self-dual Haar measures with respect to these pairings.
Put
\[
  e^{**} = v^*_1 \otimes w^*_1 + \cdots + v_k^* \otimes w^*_k \in X^* \otimes Y^*.
\]
\begin{itemize}
 \item We transfer the Haar measure on $V_0 \otimes Y^*$ to $\Hom(X^*, V_0)$ via the isomorphism $x \mapsto x e^{**}$ for $x \in \Hom(X^*, V_0)$.
 \item We transfer the Haar measure on $\Hom(X^*,V_0)$ to $\Hom(V_0, X)$ via the isomorphism $x \mapsto x^*$ for $x \in \Hom(V_0, X)$.
 \item Similarly, we define the Haar measure on $\Hom(W_0, Y)$.
\end{itemize}
Furthermore:
\begin{itemize}
 \item We transfer the Haar measure on $X \otimes Y^*$ to $\Hom(X^*, X)$ via the isomorphism $x \mapsto x e^{**}$ for $x \in \Hom(X^*, X)$. This Haar measure on $\Hom(X^*, X)$ is self-dual with respect to the pairing
$(x, y) \mapsto \psi(\langle I_X^{-1} x e^{**}, I_Y y e^{**} \rangle)$.
 \item We take the Haar measure $|2|_F^{-k^2/2} \, dx$ on $\Herm(X^*,X)$, where $dx$ is the self-dual Haar measure on $\Herm(X^*,X)$ with respect to the pairing
$(x, y) \mapsto \psi(\langle I_X^{-1} x e^{**}, I_Y y e^{**} \rangle)$.
 \item Similarly, we define the Haar measure on $\Herm(Y^*, Y)$.
\end{itemize}
Then: 
\begin{itemize}
 \item We take the Haar measure $du = db \, dc$ on $U_P$ for $u = u_P(b) u_P(c)$ with $b \in \Hom(V_0,X)$ and $c \in \Herm(X^*,X)$.
 \item Similarly, we define the Haar measure on $U_Q$.
\end{itemize}

We note the following Fourier inversion formula:

\begin{lem}
\label{L:fi}
For $\varphi \in \s(X \otimes Y^*)$, we have
\[
 \int_{\Herm(Y^*,Y)} \left( \int_{\Hom(X^*,X)}
 \varphi(x e^{**}) \psi( \langle x e^{**}, c e^{**} \rangle) \, d x \right) d c
 = \int_{\Herm(X^*,X)} \varphi(c e^{**}) \, d c.
\]
\end{lem}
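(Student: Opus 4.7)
The plan is to recognize the inner integral as a Fourier transform and then apply Poisson summation (i.e., Fourier inversion) to a pair of mutually annihilating $F$-subspaces of $X \otimes Y^*$ and $X^* \otimes Y$.

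First I would rewrite the inner integral via the substitution $z = xe^{**}$, which is a measure-preserving isomorphism $\Hom(X^*, X) \cong X \otimes Y^*$ by construction of the Haar measures. This converts the inner integral into
\[
 \int_{X \otimes Y^*} \varphi(z) \, \psi(\langle z, ce^{**} \rangle) \, dz = \hat\varphi(ce^{**}),
\]
where $\hat\varphi : X^* \otimes Y \to \CC$ is the Fourier transform of $\varphi$ with respect to the perfect pairing $\psi \circ \langle \cdot, \cdot \rangle : (X \otimes Y^*) \times (X^* \otimes Y) \to \CC^{\times}$. The next step is to identify the $F$-subspaces
\[
 \Lambda := \Herm(X^*, X) \cdot e^{**} \subset X \otimes Y^* \quad \text{and} \quad \Lambda' := \Herm(Y^*, Y) \cdot e^{**} \subset X^* \otimes Y
\]
as mutual annihilators under this pairing. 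Expressing $x \in \Hom(X^*, X)$ and $c \in \Hom(Y^*, Y)$ as $k \times k$ matrices $A$ and $B$ over $E$ in the chosen bases, a direct computation using $\langle v_j, v_{i'}^* \rangle_V = \delta_{j,i'}$ and $\langle w_i^*, w_l \rangle_W = -\varepsilon \delta_{i,l}$ gives
\[
 \langle xe^{**}, ce^{**} \rangle = -\varepsilon \, \Tr_{E/F}(\mathrm{tr}(\bar B \, A)),
\]
where $\mathrm{tr}$ denotes matrix trace. The defining identities $\bar A = -\varepsilon A^T$ and $\bar B = \varepsilon B^T$ for $x \in \Herm(X^*, X)$ and $c \in \Herm(Y^*, Y)$ combine with cyclicity of the trace to give $\overline{\mathrm{tr}(\bar B A)} = -\mathrm{tr}(\bar B A)$, forcing $\mathrm{tr}(\bar B A) \in F \delta$ and hence $\Tr_{E/F}(\mathrm{tr}(\bar B A)) = 0$ by $\Tr_{E/F}(\delta) = 0$. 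Since $\Lambda$ and $\Lambda'$ each have $F$-dimension $k^2 = \tfrac{1}{2} \dim_F(X \otimes Y^*)$, a dimension count then forces $\Lambda' = \Lambda^{\perp}$.

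Granting these two observations, Poisson summation for the inclusion $\Lambda \hookrightarrow X \otimes Y^*$ yields
\[
 \int_\Lambda \varphi(z) \, dz = \int_{\Lambda'} \hat\varphi(\xi) \, d\xi,
\]
which, pulled back to $\Herm(X^*, X)$ and $\Herm(Y^*, Y)$, is exactly the identity claimed in the lemma. The main obstacle in this plan will be verifying Haar measure compatibility: I must check that the stipulated measures on $\Herm(X^*, X)$ and $\Herm(Y^*, Y)$---notably the normalization factor $|2|_F^{-k^2/2}$---are precisely the measures demanded by Poisson summation relative to the self-dual measures on $X \otimes Y^*$ and $X^* \otimes Y$. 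I expect this to succeed because the restricted pairing on $\Herm(X^*, X)$ (and analogously on $\Herm(Y^*, Y)$) simplifies under the Hermitian constraints to $\psi(2\varepsilon \, \mathrm{tr}(AB))$, and the self-dual Haar measure for a pairing rescaled by $2$ differs from the unscaled self-dual measure by a factor of $|2|_F^{k^2/2}$, which is exactly what the stated normalization supplies.
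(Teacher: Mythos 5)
Your proposal is correct and follows essentially the same route as the paper: both arguments hinge on the observation that $\Herm(X^*,X)\,e^{**}$ and $\Herm(Y^*,Y)\,e^{**}$ annihilate each other under the symplectic pairing (a dimension count then pinning down the annihilator exactly), with the identity following by Fourier duality once the stipulated measures are seen to be compatible. The paper establishes this orthogonality abstractly, using $I_X^* = \varepsilon I_X$ together with $x^* = -x$, $y^* = -y$, and phrases it as an orthogonal direct-sum decomposition of $X \otimes Y^*$ under the transported symmetric form $(x,y) \mapsto \langle I_X^{-1}x, I_Y y\rangle$, whereas you verify the same vanishing by a matrix-coefficient computation and keep the two subspaces inside $X \otimes Y^*$ and $X^* \otimes Y$ respectively — a difference of bookkeeping, not of substance.
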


\begin{proof}
We consider the nondegenerate symmetric bilinear form $(x, y) \mapsto \langle I_X^{-1} x, I_Y y \rangle$ on $X \otimes Y^*$ over $F$, and the subspaces 
\[
 \Herm(X^*, X) e^{**} \quad \text{and} \quad I_X I_Y^{-1} \Herm(Y^*, Y) e^{**}
\]
of $X \otimes Y^* = \Hom(X^*, X) e^{**}$.
For $x \in \Hom(X^*, X)$ and $y \in \Herm(Y^*, Y)$, we have
\[
 \langle I_X^{-1} x e^{**}, I_Y I_X I_Y^{-1} y e^{**} \rangle
 = \langle I_X^{-1} x e^{**}, I_X y e^{**} \rangle
 = \langle I_X^* I_X^{-1} x e^{**}, y e^{**} \rangle
 = \varepsilon \cdot \langle x e^{**}, y e^{**} \rangle
\]
since $I_X^* = \varepsilon I_X$.
For $x \in \Herm(X^*, X)$ and $y \in \Herm(Y^*, Y)$, 
noting that $x^* = -x$, $y^* = -y$, and $x$ commutes with $y$, we have
\[
 \langle x e^{**}, y e^{**} \rangle
 = \langle y^* e^{**}, x^* e^{**} \rangle
 = \langle y e^{**}, x e^{**} \rangle
 = - \langle x e^{**}, y e^{**} \rangle, 
\]
so that
\[
\langle x e^{**}, y e^{**} \rangle = 0. 
\]
Since $\Hom(X^*, X) e^{**}$ is nondegenerate with respect to the above bilinear form,
we see that $X \otimes Y^*$ decomposes as the orthogonal direct sum
\[
 X \otimes Y^* = \Herm(X^*, X) e^{**} \oplus I_X I_Y^{-1} \Herm(Y^*, Y) e^{**}.
\]
These yield the desired Fourier inversion formula.
\end{proof}

\subsection{Normalized intertwining operators}
\label{SS:normalization}

In this subsection, we define the normalized intertwining operator which is used to describe the local Langlands correspondence.

Let $\tau$ be an irreducible (unitary) square-integrable representation of $\GL(X)$ on a space $\V_{\tau}$ with central character $\omega_{\tau}$.
For any $s \in \CC$, we realize the representation $\tau_s := \tau \otimes |\det|^s$ on $\V_{\tau}$ by setting $\tau_s(a) v := |\det a|^s \tau(a) v$ for $a \in \GL(X)$ and $v \in \V_{\tau}$.
Let $\sigma_0$ be an irreducible tempered representation of $\U(V_0)$ on a space $\V_{\sigma_0}$.
We consider the induced representation
\[
 \Ind^{\U(V)}_P(\tau_s \otimes \sigma_0)
\]
of $\U(V)$, which is realized on the space of smooth functions $\Phi_s : \U(V) \rightarrow \V_\tau \otimes \V_{\sigma_0}$ such that 
\[
 \Phi_s(u m_P(a) h_0 h) = |\det a|^{s+\rho_P} \tau(a) \sigma_0(h_0) \Phi_s(h)
\]
for all $u \in U_P$, $a \in \GL(X)$, $h_0 \in \U(V_0)$, and $h \in \U(V)$.
Let $A_P$ be the split component of the center of $M_P$ and $W(M_P) = \Norm_{\U(V)}(A_P)/M_P$ the relative Weyl group for $M_P$. Noting that $W(M_P) \cong \ZZ/2 \ZZ$, we denote by $w$ the nontrivial element in $W(M_P)$. For any representative $\tilde{w} \in \U(V)$ of $w$, we define an unnormalized intertwining operator
\[
 \M(\tilde{w}, \tau_s \otimes \sigma_0) : \Ind^{\U(V)}_P(\tau_s \otimes \sigma_0) \longrightarrow \Ind^{\U(V)}_P(w(\tau_s \otimes \sigma_0))
\]
by (the meromorphic continuation of) the integral
\[
 \M(\tilde{w}, \tau_s \otimes \sigma_0) \Phi_s(h)
 = \int_{U_P} \Phi_s(\tilde{w}^{-1} u h) \, du,
\]
where $w(\tau_s \otimes \sigma_0)$ is the representation of $M_P$ on $\V_{\tau} \otimes \V_{\sigma_0}$ given by $(w(\tau_s \otimes \sigma_0))(m) = (\tau_s \otimes \sigma_0)(\tilde{w}^{-1} m \tilde{w})$ for $m \in M_P$.

Now, following \cite{a}, \cite{mok}, \cite{kmsw}, we shall normalize the intertwining operator $\M(\tilde{w}, \tau_s \otimes \sigma_0)$, depending on the choice of the Whittaker datum.
Having fixed the additive character $\psi$ and the trace zero element $\delta$, we define the sign $\epsilon(V)$ and use the Whittaker datum relative to 
\[
 \begin{cases}
  \psi^E = \psi(\frac{1}{2} \Tr_{E/F}(\delta \, \cdot \,)) & \text{if $\varepsilon = +1$;} \\
  \psi & \text{if $\varepsilon = -1$.}
 \end{cases}
\]
The definition of the normalized intertwining operator is very subtle because one has to choose the following data appropriately:
\begin{itemize}
 \item a representative $\tilde{w}$;
 \item a normalizing factor $r(w, \tau_s \otimes \sigma_0)$;
 \item an intertwining isomorphism $\A_w$.
\end{itemize}

Following the procedure of \cite[\S 2.1]{ls}, \cite[\S 2.3]{a}, \cite[\S 3.3]{mok}, \cite[\S 2.3]{kmsw}, we take the representative $\tilde{w} \in \U(V)$ of $w$ defined by
\[
 \tilde{w} = w_P \cdot m_P((-1)^{m'} \cdot \kappa_V \cdot J) \cdot (- 1_{V_0})^k,  
\]
where $m' = [\frac{m}{2}]$,
\[
 \kappa_V = 
 \begin{cases}
  -\delta & \text{if $m$ is even and $\varepsilon = +1$;} \\
  1 & \text{if $m$ is even and $\varepsilon = -1$;} \\
  -1 & \text{if $m$ is odd and $\varepsilon = +1$;} \\
  -\delta & \text{if $m$ is odd and $\varepsilon = -1$,}
 \end{cases}
\]
and
\[
 J = 
 \begin{pmatrix}
  & & & (-1)^{k-1} \\
  & & \varddots & \\
  & -1 & & \\
  1 & & & 
 \end{pmatrix}
 \in \GL_k(E).
\]
Here, we have identified $\GL(X)$ with $\GL_k(E)$ using the basis $\{ v_1, \ldots, v_k \}$.
This element $\tilde{w}$ arises as follows.

First assume that $\epsilon(V) = +1$.
In particular, $\U(V)$ is quasi-split.
We have $V_\an = \{ 0 \}$ if $m$ is even and $V_\an = E v_\an$ for some $v_\an \in V_\an$ such that 
\[
 \langle v_\an, v_\an \rangle_V = 
 \begin{cases}
  1 & \text{if $\varepsilon = +1$;} \\
  \delta & \text{if $\varepsilon = -1$}
 \end{cases}
\]
if $m$ is odd.
Via the decomposition
\[
 V = E v_1 \oplus \dots  \oplus E v_r \oplus V_\an \oplus E v_r^* \oplus \dots  \oplus E v_1^*,
\]
we regard $\U(V)$ as a subgroup of $\GL_m(E)$, which induces an isomorphism $\U(V)(\bar{F}) \cong \GL_m(\bar{F})$.
Let $\spl = (B,T,\{ X_i \})$ be the $F$-splitting of $\U(V)$ consisting of the Borel subgroup $B$ stabilizing the flag
\[
 E v_1 \subset E v_1 \oplus E v_2 \subset \dots \subset E v_1 \oplus \dots  \oplus E v_r,
\]
the maximal torus $T$ of diagonal matrices, and the set $\{ X_i \, | \, i = 1, \ldots, m-1 \}$ of simple root vectors given as follows:
\begin{itemize}
 \item $X_i = E_{i,i+1}$ for $1 \le i \le r-1$;
 \item $X_i = -E_{i,i+1}$ for $m-r+1 \le i \le m-1$;
 \item if $m$ is even, then
 \[
  X_r = 
  \begin{cases}
   \delta^{-1} \cdot E_{r,r+1} & \text{if $\varepsilon = +1$,} \\
   E_{r,r+1} & \text{if $\varepsilon = -1$;}
  \end{cases}
 \]
 \item if $m$ is odd, then $X_r = E_{r,r+1}$ and 
 \[
  X_{r+1} = 
  \begin{cases}
   -E_{r+1,r+2} & \text{if $\varepsilon = +1$,} \\
   \delta^{-1} \cdot E_{r+1,r+2} & \text{if $\varepsilon = -1$.}
  \end{cases}
 \]
\end{itemize}
Here, $E_{i,j} \in \operatorname{Lie} \U(V)(\bar{F}) \cong \mathrm{M}_m(\bar{F})$ is the matrix with one at the $(i,j)$-th entry and zero elsewhere.
Then $\spl$ and $\psi$ give rise to the above Whittaker datum.
Let $\tilde{w}^{\LS}$ be the representative of $w$ defined in \cite[\S 2.1]{ls}, \cite[\S 2.3]{a}, \cite[\S 3.3]{mok} with respect to $\spl$.

\begin{lem}
\label{lem:weyl}
We have $\tilde{w}^{\LS} = \tilde{w}$.
\end{lem}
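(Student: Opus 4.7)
The plan is to expand $\tilde{w}^{\LS}$ according to the Langlands--Shelstad recipe \cite[\S 2.1]{ls} and match it term-by-term with the explicit formula for $\tilde{w}$. Recall that for any reduced expression $w = s_{\alpha_{i_1}} \cdots s_{\alpha_{i_l}}$ in the simple reflections of the pinning $\spl$, one has
\[
 \tilde{w}^{\LS} = n(s_{\alpha_{i_1}}) \cdots n(s_{\alpha_{i_l}}), \qquad n(s_\alpha) = \exp(X_\alpha) \exp(-X_{-\alpha}) \exp(X_\alpha),
\]
where $X_{-\alpha}$ is the unique root vector in the $-\alpha$ space satisfying $[X_\alpha, X_{-\alpha}] = H_\alpha$.

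First I would exhibit a convenient reduced expression for $w$. Since $w \in W(M_P)$ is the unique nontrivial element, it acts on the diagonal torus $T$ by sending $\epsilon_i \leftrightarrow -\epsilon_{k+1-i}$ for $1 \le i \le k$ while fixing $\epsilon_{k+1}, \dots, \epsilon_r$. A natural ``staircase'' decomposition writes $w$ as a product of $k$ blocks, each block pushing one coordinate from position $i$ through the middle of the Dynkin diagram to position $m+1-i$ with an accumulated sign.

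Next, I would compute $n(s_{\alpha_j})$ explicitly for each simple reflection. For the pure type-$A$ reflections at the two ends of the diagram, with $X_j = \pm E_{j,j+1}$, a short computation gives the signed transposition
\[
 n(s_{\alpha_j}) = 1_V - E_{j,j} - E_{j+1,j+1} + E_{j,j+1} - E_{j+1,j},
\]
possibly multiplied by an overall $-1$ coming from the choice $X_j = -E_{j,j+1}$ at the indices $j \ge m-r+1$. For the middle simple reflection(s), the prescribed $X_r$ (or $X_r$ and $X_{r+1}$ in the odd case) involves $\delta^{\pm 1}$, so $n(s_{\alpha_r})$ differs from the naive signed transposition by a diagonal factor that contributes precisely the scalar $\kappa_V$ from the statement of the lemma (with the four cases of $\kappa_V$ matching the four cases of $(\varepsilon, m \bmod 2)$ in the definition of $X_r$).

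Multiplying the $n(s_{\alpha_j})$ in the reduced order and regrouping, the type-$A$ factors at the two ends combine to give $w_P \cdot m_P(J)$ (with $J$ recording the induced order reversal inside $X^*$), while the $k$ successive passes through the middle of the diagram produce the factor $(-1_{V_0})^k$ on $V_0$ and multiply by $\kappa_V$ on $X \oplus X^*$. The remaining accumulated sign from the $A$-type transpositions collects to $(-1)^{m'}$. Combining these contributions gives
\[
 \tilde{w}^{\LS} = w_P \cdot m_P\bigl((-1)^{m'} \cdot \kappa_V \cdot J\bigr) \cdot (-1_{V_0})^k = \tilde{w},
\]
as required. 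The main obstacle is the bookkeeping of signs and of the $\delta^{\pm 1}$ in the middle root vector(s), which must be handled separately in the four cases $(\varepsilon, m \bmod 2)$; this case analysis is exactly what is encoded in the definition of $\kappa_V$.
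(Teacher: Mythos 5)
Your plan adopts the same strategy as the paper: write $\tilde{w}^{\LS}$ as a product of the elementary representatives $n(s_\alpha)$ over a reduced word for the element $w_T$ of the absolute Weyl group preserving the $B\cap M_P$-positive roots, and then match the resulting matrix against $w_P\cdot m_P((-1)^{m'}\kappa_V J)\cdot(-1_{V_0})^k$. However, your last paragraph --- ``Multiplying the $n(s_{\alpha_j})$ in the reduced order and regrouping~\dots'' --- is exactly where the content of the lemma lives, and there you assert the result of the matrix multiplication rather than carry it out. The paper's proof needs several explicit intermediate identities to make the regrouping work: it fixes the reduced expression $w_T = y_k x_1 y_k x_2 \cdots y_k x_{k-1} y_k$ in terms of auxiliary Weyl elements $x_i$, $y_j$; computes each $\tilde{y}_j$ and $\tilde{x}'_i$ as a concrete matrix; notes the commutativity $\tilde{x}'_i \tilde{y}_j = \tilde{y}_j \tilde{x}'_i$ for $i>j$; and verifies $\tilde{x}'_1\cdots\tilde{x}'_{k-1}=m_P(J)$ and the corresponding product formula for the $\tilde{y}_j$'s before combining. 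Without something of this kind, your ``regrouping'' step is a restatement of the conclusion rather than an argument.

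Some of your attributions also appear to be off. You claim the sign $(-1)^{m'}$ is the ``accumulated sign from the $A$-type transpositions,'' but in the computation both $(-1)^{r-1}$ and $(-1_{V_0})^k$ emerge from the repeated passes through the middle of the diagram: each middle representative $\tilde{y}_j$ carries the diagonal factor $m_P(\diag(1_{j-1},(-1)^{r-j},-1_{k-j}))\cdot(-1_{V_0})$ on top of the $\kappa_V$-twist, and the remaining $-1$ that promotes $(-1)^{r-1}$ to $(-1)^{m'}=(-1)^r$ is absorbed from the sign incurred in rewriting the product of the $\iota'_j$-blocks as $w_P$ times a diagonal --- none of it from the type-$A$ transpositions. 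Likewise, for $X_j=-E_{j,j+1}$ the representative $n(s_{\alpha_j})$ differs from the $X_j=+E_{j,j+1}$ one only on the $2\times 2$ block at positions $j,j+1$; it is not a global $-1$ multiple, so the ``overall $-1$'' phrasing is imprecise. To turn the outline into a proof you would need to specify the reduced word, compute the $n(s_\alpha)$'s explicitly, and track the conjugation relations between $w_P$, the permutation factor $J$, and the scalar $\kappa_V$ as they are commuted past one another.
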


\begin{proof}
First, we review the case of $\SL_2$.
We take an $F$-splitting of $\SL_2$ consisting of the Borel subgroup of upper triangular matrices, the maximal torus of diagonal matrices, and a simple root vector
\[
 X = \begin{pmatrix} 0 & a \\ 0 & 0 \end{pmatrix}.
\]
Let $\{ H, X, Y \}$ be the $\mathfrak{sl}_2$-triple containing $X$, so that
\[
 Y = \begin{pmatrix} 0 & 0 \\ a^{-1} & 0 \end{pmatrix}.
\]
If $s$ is the simple reflection with respect to $X$, then the representative of $s$ defined in \cite[\S 2.1]{ls} is
\[
 \exp(X) \exp(-Y) \exp(X) = \begin{pmatrix} & a \\ -a^{-1} & \end{pmatrix}.
\]

Now we compute $\tilde{w}^\LS$.
Let $\iota_i : \GL(E v_i \oplus E v_{i+1}) \hookrightarrow \GL(X)$ and $\iota'_j : \U(E v_j \oplus E v_j^*) \hookrightarrow \U(V)$ be the natural embeddings.
Let $s_i$ be the simple reflection with respect to $X_i$ and $\tilde{s}_i$ the representative of $s_i$ as above.
Put $w_i = s_i s_{m-i}$ and $\tilde{w}_i = \tilde{s}_i \tilde{s}_{m-i}$ for $1 \le i \le r-1$, and 
\[
 w_r = 
 \begin{cases}
  s_r & \text{if $m$ is even,} \\
  s_r s_{r+1} s_r & \text{if $m$ is odd}
 \end{cases}
 \quad \text{and} \quad
 \tilde{w}_r = 
 \begin{cases}
  \tilde{s}_r & \text{if $m$ is even,} \\
  \tilde{s}_r \tilde{s}_{r+1} \tilde{s}_r & \text{if $m$ is odd.}
 \end{cases}
\]
More explicitly, we have
\[
 \tilde{w}_i = m_P \left( \iota_i \begin{pmatrix} & 1 \\ -1 & \end{pmatrix}
 \right)
\]
for $1 \le i \le r-1$ and 
\[
 \tilde{w}_r = \iota'_r \left(
 \begin{pmatrix}
 & 1 \\
 \varepsilon &
 \end{pmatrix}
 \begin{pmatrix}
 \kappa_V & \\
 & (\kappa_V^c)^{-1}
 \end{pmatrix}
 \right)
 \cdot (-1_{V_{\an}}).
\]
Put
\begin{align*}
 x_i & = w_{k-1} \cdots w_{i+1} w_i, &
 y_j & = w_j w_{j+1} \cdots w_{r-1} w_r w_{r-1} \cdots w_{j+1} w_j, \\
 \tilde{x}_i & = \tilde{w}_{k-1} \cdots \tilde{w}_{i+1} \tilde{w}_i, &
 \tilde{y}_j & = \tilde{w}_j \tilde{w}_{j+1} \cdots \tilde{w}_{r-1} \tilde{w}_r \tilde{w}_{r-1} \cdots w_{j+1} \tilde{w}_j
\end{align*}
for $1 \le i \le k-1$ and $1 \le j \le k$.
Let $w_T$ be the representative of $w$ in the Weyl group for $T$ which preserves the set of roots of $T$ in $B \cap M_P$.
Then $w_T$ has a reduced expression
\[
 w_T = y_k x_1 y_k x_2 \cdots y_k x_{k-1} y_k
\]
and hence $\tilde{w}^\LS$ is defined by
\[
 \tilde{w}^\LS = \tilde{y}_k \tilde{x}_1 \tilde{y}_k \tilde{x}_2 \cdots \tilde{y}_k \tilde{x}_{k-1} \tilde{y}_k.
\]
If we put $\tilde{x}_i' = \tilde{w}_{k-1}^{-1} \cdots \tilde{w}_{i+1}^{-1} \tilde{w}_i^{-1}$, then we have $\tilde{y}_k \tilde{x}_i = \tilde{x}'_i \tilde{y}_i$, so that 
\[
 \tilde{w}^\LS = \tilde{x}'_1 \tilde{y}_1 \tilde{x}'_2 \tilde{y}_2 \cdots \tilde{x}'_{k-1} \tilde{y}_{k-1} \tilde{y}_k.
\]
On the other hand, we have
\[
 \tilde{x}'_i = m_P
 \begin{pmatrix}
  1_{i-1} & & \\
  & & -1_{k-i} \\
  & 1 & 
 \end{pmatrix}
\]
and
\[
 \tilde{y}_j =
 \iota'_j \left(
 \begin{pmatrix}
 & 1 \\
 \varepsilon &
 \end{pmatrix}
 \begin{pmatrix}
 \kappa_V & \\
 & (\kappa_V^c)^{-1}
 \end{pmatrix}
 \right)
 \cdot m_P \begin{pmatrix} 1_{j-1} & & \\ & (-1)^{r-j} & \\ & & - 1_{k-j}
 \end{pmatrix}
 \cdot (-1_{V_0}).
\]
In particular, $\tilde{x}_i'$ commutes with $\tilde{y}_j$ if $i > j$, so that
\[
 \tilde{w}^\LS = \tilde{x}'_1 \tilde{x}'_2 \cdots \tilde{x}'_{k-1} \tilde{y}_1 \tilde{y}_2 \cdots \tilde{y}_{k-1} \tilde{y}_k.
\]
Since $\tilde{x}'_1 \cdots \tilde{x}'_{k-1} = m_P(J)$ and 
\begin{align*}
 \tilde{y}_1 \cdots \tilde{y}_k
 & = \prod_{j=1}^k
 \iota'_j \left(
 \begin{pmatrix}
 & 1 \\
 \varepsilon &
 \end{pmatrix}
 \begin{pmatrix}
 \kappa_V & \\
 & (\kappa_V^c)^{-1}
 \end{pmatrix}
 \right)
 \cdot m_P((-1)^{r-1} \cdot 1_k) \cdot (-1_{V_0})^k \\
 & = \prod_{j=1}^k
 \iota'_j 
 \begin{pmatrix}
 & 1 \\
 \varepsilon &
 \end{pmatrix}
 \cdot m_P((-1)^{r-1} \cdot \kappa_V \cdot 1_k) \cdot (-1_{V_0})^k,
\end{align*}
the assertion follows.
\end{proof}

Next, we consider the case $\epsilon(V) = -1$.
Let $V^+$ be the $m$-dimensional $\varepsilon$-Hermitian space with $\epsilon(V^+) = +1$.
We may assume that $V^+ = X \oplus V_0^+ \oplus X^*$ for some $m_0$-dimensional $\varepsilon$-Hermitian space $V_0^+$ with $\epsilon(V_0^+) = +1$.
Let $P^+$ be the maximal parabolic subgroup of $\U(V^+)$ stabilizing $X$ and $M_{P^+}$ its Levi component stabilizing $X^*$, so that $M_{P^+} \cong \GL(X) \times \U(V_0^+)$.
Fix an isomorphism $V_0^+ \otimes_F \bar{F} \cong V_0 \otimes_F \bar{F}$ as $\varepsilon$-Hermitian spaces over $E \otimes_F \bar{F}$ and extend it to an isomorphism $V^+ \otimes_F \bar{F} \cong V \otimes_F \bar{F}$ whose restriction to $(X \otimes_F \bar{F}) \oplus (X^*\otimes_F \bar{F})$ is the identity map.
This induces a pure inner twist $(\xi, z)$, i.e.~$\xi : \U(V^+) \rightarrow \U(V)$ is an inner twist and $z \in Z^1(\Gamma, \U(V^+))$ is a $1$-cocyle such that $\xi^{-1} \circ \sigma \circ \xi \circ \sigma^{-1} = \Ad(z(\sigma))$ for all $\sigma \in \Gamma$.
Then $P^+ = \xi^{-1}(P)$ and $\xi$ induces an inner twist $\xi : M_{P^+} \rightarrow M_P$ whose restriction to $\GL(X)$ is the identity map.
Moreover, $z$ satisfies the assumption in \cite[\S 2.4.1]{kmsw}.
Let $w^+$ be the nontrivial element in the relative Weyl group for $M_{P^+}$ and $\tilde{w}^+ \in \U(V^+)$ the representative of $w^+$ as above.
Then the representative of $w$ defined in \cite[\S 2.3]{kmsw} is $\xi(\tilde{w}^+)$, which is equal to $\tilde{w}$.

We use the normalizing factor $r(w, \tau_s \otimes \sigma_0)$ defined as follows.
Let $\lambda(E/F, \psi)$ be the Langlands $\lambda$-factor (see \cite[\S 5]{deligne}) and put
\[
 \lambda(w, \psi) = 
 \begin{cases}
 \lambda(E/F, \psi)^{(k-1)k/2} & \text{if $m$ is even;} \\
 \lambda(E/F, \psi)^{(k+1)k/2} & \text{if $m$ is odd.}
 \end{cases}
\]
Let $\phi_\tau$ and $\phi_0$ be the $L$-parameters of $\tau$ and $\sigma_0$ respectively.
Let $\As^+$ be the Asai representation of the $L$-group of $\Res_{E/F} \GL_k$ and $\As^- = \As^+ \otimes \omega_{E/F}$ its twist (see \cite[\S 7]{ggp1}).
If we set 
\[
 r(w, \tau_s \otimes \sigma_0) = 
 \lambda(w, \psi) \cdot \gamma(s, \phi_\tau \otimes \phi_0^\vee, \psi_E)^{-1} \cdot \gamma(2s, \As^{(-1)^m} \circ \phi_\tau, \psi)^{-1},
\]
then by \cite[Lemmas 2.2.3 and 2.3.1]{kmsw}, the normalized intertwining operator
\[
 \R(w, \tau_s \otimes \sigma_0) := |\kappa_V|^{k \rho_P} \cdot r(w,\tau_s \otimes \sigma_0)^{-1} \cdot \M(\tilde{w}, \tau_s \otimes \sigma_0)
\]
is holomorphic at $s=0$ and satisfies
\[
 \R(w, w(\tau_s \otimes \sigma_0)) \circ \R(w, \tau_s \otimes \sigma_0) = 1.
\]
Here, the factor $|\kappa_V|^{k \rho_P}$ arises because the Haar measure on $U_P$ defined in \cite[\S 2.2]{kmsw} with respect to $\spl$ is equal to $|\kappa_V|^{k \rho_P} \, du$.

Now assume that $w(\tau \otimes \sigma_0) \cong \tau \otimes \sigma_0$, which is equivalent to $(\tau^c)^\vee \cong \tau$.
We take the unique isomorphism
\[
 \A_w : \V_{\tau} \otimes \V_{\sigma_0} \longrightarrow \V_{\tau} \otimes \V_{\sigma_0}
\]
such that:
\begin{itemize}
 \item $\A_w \circ (w(\tau \otimes \sigma_0))(m) = (\tau \otimes \sigma_0)(m) \circ \A_w$ for all $m \in M_P$;
 \item $\A_w = \A'_w \otimes 1_{\V_{\sigma_0}}$ with an isomorphism $\A'_w : \V_\tau \rightarrow \V_\tau$ such that $\Lambda \circ \A_w = \Lambda$. Here, $\Lambda : \V_\tau \rightarrow \CC$ is the unique (up to a scalar) Whittaker functional with respect to the Whittaker datum $(N_k, \psi_{N_k})$, 
where $N_k$ is the group of unipotent upper triangular matrices in $\GL_k(E)$ and $\psi_{N_k}$ is the generic character of $N_k$ given by $\psi_{N_k}(x) = \psi_E(x_{1,2} + \cdots + x_{k-1,k})$.
\end{itemize}
Note that $\A_w^2 = 1_{\V_{\tau} \otimes \V_{\sigma_0}}$.
We define a self-intertwining operator 
\[
  R(w, \tau \otimes \sigma_0) :
 \Ind^{\U(V)}_P(\tau \otimes \sigma_0) \longrightarrow 
 \Ind^{\U(V)}_P(\tau \otimes \sigma_0)
\]
by
\[
 R(w,\tau \otimes \sigma_0) \Phi(h) = \A_w(\R(w,\tau \otimes \sigma_0) \Phi(h)).
\]
By construction,
\[
 R(w,\tau \otimes \sigma_0)^2 = 1.
\]

\subsection{Weil representations}

In this subsection, we recall some explicit formulas for the Weil representations. 

Let $\WW$ be a finite dimensional vector space over $F$ equipped with a nondegenerate symplectic form $\langle \cdot , \cdot \rangle_\WW : \WW \times \WW \rightarrow F$.
Let $\H(\WW) = \WW \oplus F$ be the associated Heisenberg group, i.e.~the multiplication law is given by 
\[
 (w, t) \cdot (w', t') = \left( w+w', t+t' + \frac{1}{2} \langle w, w' \rangle_\WW \right)
\]
for $w, w' \in \WW$ and $t, t' \in F$.
Fix maximal totally isotropic subspaces $\XX$ and $\XX^*$ of $\WW$ such that $\WW = \XX \oplus \XX^*$.
Let $\rho$ be the Heisenberg representation of $\H(\WW)$ on $\s(\XX^*)$ with central character $\psi$.
Namely, 
\[
 \rho((x+x', t)) \varphi(x'_0)
 = \psi(t + \langle x'_0, x \rangle_\WW + \tfrac{1}{2} \langle x', x \rangle_\WW)
 \varphi(x'_0+x')
\]
for $\varphi \in \s(\XX^*)$, $x \in \XX$, $x', x'_0 \in \XX^*$, and $t \in F$.

In \S \ref{SS:Weil}, we have introduced the Weil representations for unitary groups.
To define these representations, we have fixed the additive character $\psi$ and the pair of characters $(\chi_V, \chi_W)$.
For simplicity, we write:
\begin{itemize}
 \item $\omega$ for the Weil representation $\omega_{\psi, \chi_V, \chi_W, V, W}$ of $\U(V) \times \U(W)$ on a space $\s$;
 \item $\omega_0$ for the Weil representation $\omega_{\psi, \chi_V, \chi_W, V, W_0}$ of $\U(V) \times \U(W_0)$ on a space $\s_0$;
 \item $\omega_{00}$ for the Weil representation $\omega_{\psi, \chi_V, \chi_W, V_0, W_0}$ of $\U(V_0) \times \U(W_0)$ on a space $\s_{00}$.
\end{itemize}
We take a mixed model
\[
 \s = \s(V \otimes Y^*) \otimes \s_0
\]
of $\omega$, where we regard $\s$ as a space of functions on $V \otimes Y^*$ with values in $\s_0$.
Similarly, we take a mixed model
\[
 \s_0 = \s(X^* \otimes W_0) \otimes \s_{00}
\]
of $\omega_0$, where we regard $\s_0$ as a space of functions on $X^* \otimes W_0$ with values in $\s_{00}$.
Also, we write:
\begin{itemize}
\item $\rho_0$ for the Heisenberg representation of $\H(V \otimes W_0)$ on $\s_0$ with central character $\psi$;
\item $\rho_{00}$ for the Heisenberg representation of $\H(V_0 \otimes W_0)$ on $\s_{00}$ with central character $\psi$.
\end{itemize}

Using \cite[Theorem 3.1]{k}, we can derive the following formulas for the Weil representations $\omega$ and $\omega_0$.
Put $\Delta = \delta^2 \in F^{\times}$.
As in \cite[Appendix]{rangarao}, let $\gamma_F(\psi)$ be the Weil index of the character $x \mapsto \psi(x^2)$ of second degree and set 
\[  \gamma_F(a,\psi) = \frac{\gamma_F(\psi_a)}{\gamma_F(\psi)} \]
for $a \in F^{\times}$, where $\psi_a(x) = \psi(ax)$.
Note that $\gamma_F(\Delta, \psi) = \lambda(E/F, \psi)^{-1}$.
For $\varphi \in \s$ and $x \in V \otimes Y^*$, we have 
\begin{align*}
 (\omega(h) \varphi)(x) & = \omega_0(h) \varphi(h^{-1} x), &
 h & \in \U(V), \\
 (\omega(g_0) \varphi)(x) & = \omega_0(g_0) \varphi(x), &
 g_0 & \in \U(W_0), \\
 (\omega(m_Q(a)) \varphi)(x) & = \chi_V(\det a) |\det a|^{m/2} \varphi(a^*x), &
 a & \in \GL(Y), \\ 
 (\omega(u_Q(b)) \varphi)(x) & = \rho_0((b^* x, 0)) \varphi(x), &
 b & \in \Hom(W_0, Y), \\
 (\omega(u_Q(c)) \varphi)(x) & = \psi(\tfrac{1}{2} \langle cx, x \rangle) \varphi(x), &
 c & \in \Herm(Y^*,Y), \\
 (\omega(w_Q) \varphi)(x) & = \gamma_V^{-k} 
 \int_{V \otimes Y} \varphi(-I_Y^{-1}y) \psi(\langle y, x\rangle) \, dy, & &
\end{align*}
where 
\[
 \gamma_V = 
 \begin{cases}
  \omega_{E/F}(\det V) \cdot \gamma_F(-\Delta, \psi)^m \cdot \gamma_F(-1,\psi)^{-m}
  & \text{if $\varepsilon = +1$;} \\
  \chi_V(\delta)^{-1} \cdot \omega_{E/F}(\delta^{-m} \cdot \det V) \cdot \gamma_F(-\Delta, \psi)^m \cdot \gamma_F(-1,\psi)^{-m}
  & \text{if $\varepsilon = -1$.}
 \end{cases}
\]
Also, for $\varphi_0 \in \s_0$ and $x \in X^* \otimes W_0$, we have
\begin{align*}
 (\omega_0(g_0) \varphi_0)(x) & = \omega_{00}(g_0) \varphi_0 (g_0^{-1} x), &
 g_0 & \in \U(W_0), \\
 (\omega_0(h_0) \varphi_0)(x) & = \omega_{00}(h_0) \varphi_0(x), & 
 h_0 & \in \U(V_0), \\
 (\omega_0(m_P(a)) \varphi_0)(x)
 & = \chi_W(\det a) |\det a|^{n_0/2} \varphi_0(a^*x), &
 a & \in \GL(X), \\
 (\omega_0(u_P(b)) \varphi_0)(x) & = \rho_{00}((b^*x, 0)) \varphi_0(x), &
 b & \in \Hom(V_0, X), \\
 (\omega_0(u_P(c)) \varphi_0)(x) & = \psi(\tfrac{1}{2} \langle cx, x \rangle ) \varphi_0(x), &
 c & \in \Herm(X^*, X), \\
 (\omega_0(w_P) \varphi_0)(x)
 & = \gamma_{W}^{-k} \int_{X \otimes W_0} \varphi_0(-I_X^{-1}y) \psi(\langle y,x \rangle) \, dy, & & \\
 (\rho_0((y+y',0)) \varphi_0)(x) & = \psi(\langle x, y \rangle + \tfrac{1}{2}\langle y', y \rangle)
 \varphi_0(x + y'), &
 y & \in X \otimes W_0, \, y' \in X^* \otimes W_0, \\
 (\rho_0((y_0,0)) \varphi_0)(x) & = \rho_{00}((y_0,0)) \varphi_0(x), & y_0 & \in V_0 \otimes W_0,
\end{align*}
where 
\[
 \gamma_W = 
 \begin{cases}
  \chi_W(\delta)^{-1} \cdot \omega_{E/F}(\delta^{-n} \cdot \det W) \cdot \gamma_F(-\Delta, \psi)^n \cdot \gamma_F(-1,\psi)^{-n}
  & \text{if $\varepsilon = +1$;} \\
  \omega_{E/F}(\det W) \cdot \gamma_F(-\Delta, \psi)^n \cdot \gamma_F(-1,\psi)^{-n}
  & \text{if $\varepsilon = -1$.}
 \end{cases}
\]

\subsection{Zeta integrals of Godement--Jacquet}

In this subsection, we review the theory of local factors for $\GL_k$ developed by Godement--Jacquet \cite{gj}.

Let $\tau$ be an irreducible smooth representation of $\GL_k(E)$ on a space $\V_{\tau}$ with central character $\omega_{\tau}$.
For any character $\chi$ of $E^{\times}$, we realize the representation $\tau \chi := \tau \otimes (\chi \circ \det)$ on $\V_{\tau}$ by setting $(\tau \chi)(a) v := \chi(\det a) \tau(a) v$ for $a \in \GL_k(E)$ and $v \in \V_{\tau}$. Put $\tau_s := \tau |\cdot|^s$ for $s \in \CC$.
Let $\tau^c$ be the representation of $\GL_k(E)$ on $\V_{\tau}$ defined by $\tau^c(a) = \tau(a^c)$.
We write 
\[
 L(s, \tau) = L(s, \phi_\tau) \quad \text{and} \quad
 \epsilon(s, \tau, \psi_E) =  \epsilon(s, \phi_\tau, \psi_E)
\]
for the standard $L$-factor and $\epsilon$-factor of $\tau$, where $\phi_\tau$ is the $k$-dimensional representation of $\WD_E$ associated to $\tau$ and $\psi_E$ is the nontrivial additive character of $E$ defined by $\psi_E = \psi \circ \Tr_{E/F}$.
Then the standard $\gamma$-factor of $\tau$ is defined by
\[
 \gamma(s, \tau, \psi_E) = \epsilon(s, \tau, \psi_E) \cdot \frac{L(1-s, \tau^\vee)}{L(s, \tau)},
\]
where $\tau^\vee$ is the contragredient representation of $\tau$.

For $s \in \CC$, $\phi \in \s(\mathrm{M}_k(E))$, and a matrix coefficient $f$ of $\tau$, put
\[
 Z(s, \phi, f) =  \int_{\GL_k(E)} \phi(a) f(a) |\det a|^s \, da,
\]
where we have fixed a Haar measure $da$ on $\GL_k(E)$.
This integral is absolutely convergent for $\Re(s) \gg 0$ and admits a meromorphic continuation to $\CC$.
Moreover, 
\[
 \frac{Z(s+\frac{k-1}{2}, \phi, f)}{L(s,\tau)}
\]
is an entire function of $s$.
If $\tau$ is square-integrable, then $Z(s, \phi, f)$ is absolutely convergent for $\Re(s) > \frac{k-1}{2}$ by \cite[Proposition 1.3]{gj}.

Let $\hat{\phi} \in \s(\mathrm{M}_k(E))$ be the Fourier transform of $\phi$ defined by 
\[
 \hat{\phi}(x) = \int_{\mathrm{M}_k(E)} \phi(y) \psi_E(\Tr(xy)) \, dy,
\]
where $dy$ is the self-dual Haar measure on ${\mathrm{M}_k(E)}$ with respect to the pairing $(x, y) \mapsto \psi_E(\Tr(xy))$.
Let $\check{f}$ be the matrix coefficient of $\tau^\vee$ given by $\check{f}(a) = f(a^{-1})$.
Then the local functional equation asserts that
\[
 Z(-s+\tfrac{k+1}{2}, \hat{\phi}, \check{f}) = \gamma(s, \tau, \psi_E) \cdot Z(s+\tfrac{k-1}{2}, \phi, f). 
\]

\section{\textbf{Proof of Theorem \ref{T:ind}}}

Now we can begin the proof of Theorem \ref{T:ind}.
This will be proved by an explicit construction of an equivariant map which realizes the theta correspondence. 

\subsection{Construction of equivariant maps}

Recall that we have identified $\GL(X)$ with $\GL_k(E)$ using the basis $\{ v_1, \ldots, v_k \}$.
Similarly, we identify $\GL(Y)$ with $\GL_k(E)$ using the basis $\{ w_1, \ldots, w_k \}$.
Thus we can define an isomorphism $i:\GL(Y) \rightarrow \GL(X)$ via these identifications.
Put 
\[
 e = v_1 \otimes w_1^* + \cdots + v_k \otimes w_k^* \in X \otimes Y^*, \quad
 e^* = v^*_1 \otimes w_1 + \cdots + v_k^* \otimes w_k \in X^* \otimes Y.
\]
Then $i(a)^c e = a^* e$ and $(i(a)^c)^* e^* = a e^*$ for $a \in \GL(Y)$.

For $\varphi \in \s = \s(V \otimes Y^*) \otimes \s_0$, we define functions $\ff(\varphi)$, $\hat{\ff}(\varphi)$ on $\U(W) \times \U(V)$ with values in $\s_0$ by
\[
 \ff(\varphi)(gh)
 = (\omega(gh)\varphi)
 \begin{pmatrix}
  e \\
  0 \\
  0 
 \end{pmatrix}, \quad
 \hat{\ff}(\varphi)(gh) = \int_{X \otimes Y^*} (\omega(gh)\varphi)
 \begin{pmatrix}
  x \\
  0 \\
  0
 \end{pmatrix}
 \psi(\varepsilon \langle x, e^* \rangle) \, dx
\]
for $g \in \U(W)$ and $h \in \U(V)$.
Here, we write an element in $V \otimes Y^*$ as a block matrix
\[
 \begin{pmatrix}
  y_1 \\
  y_2 \\
  y_3
 \end{pmatrix}
\]
with $y_1 \in X \otimes Y^*$, $y_2 \in V_0 \otimes Y^*$, and $y_3 \in X^* \otimes Y^*$.
We also define functions $f(\varphi)$, $\hat{f}(\varphi)$ on $\U(W) \times \U(V)$ with values in $\s_{00}$ by
\[
 f(\varphi)(gh) = \ev(\ff(\varphi)(gh)), \quad
 \hat{f}(\varphi)(gh) = \ev(\hat{\ff}(\varphi)(gh)),
\]
where $\ev : \s_0 = \s(X^* \otimes W_0) \otimes \s_{00} \rightarrow \s_{00}$ is the evaluation at $0 \in X^* \otimes W_0$.
If $f = f(\varphi)$ or $\hat{f}(\varphi)$, then
\begin{align*}
 f(u u' gh) & = f(gh), & u & \in U_Q, \, u' \in U_P, \\ 
 f(g_0h_0gh) & = \omega_{00}(g_0h_0) f(gh), & g_0 & \in \U(W_0), \, h_0 \in \U(V_0), \\ 
 f(m_Q(a) m_P(i(a)^c) gh) & = (\chi_V \chi_W^c)(\det a) |\det a|^{\rho_P+\rho_Q} f(gh), & a & \in \GL(Y).
\end{align*}
Note that this realizes the bottom piece of Kudla's filtration \cite{kudla}.
(See also \cite[Lemma C.2]{gi}, but in which $\operatorname{Isom}(Y_a', X_a)$ is a typo and should be read as the set of invertible conjugate linear maps from $Y_a'$ to $X_a$.)

Let $\tau$ be an irreducible (unitary) square-integrable representation of $\GL_k(E)$ on a space $\V_{\tau}$.
We may regard $\tau$ as a representation of $\GL(X)$ or $\GL(Y)$ via the above identifications.
Let $\pi_0$ and $\sigma_0$ be irreducible tempered representations of $\U(W_0)$ and $\U(V_0)$ on spaces $\V_{\pi_0}$ and $\V_{\sigma_0}$ respectively.
Fix nonzero invariant nondegenerate bilinear forms $\langle \cdot , \cdot \rangle$ on $\V_{\tau} \times \V_{\tau^\vee}$, $\V_{\pi_0} \times \V_{\pi_0^\vee}$, and $\V_{\sigma_0} \times \V_{\sigma_0^\vee}$.
Let 
\[
 \langle \cdot, \cdot \rangle : (\V_{\tau} \otimes \V_{\sigma_0^\vee}) \times \V_{\tau^\vee} \longrightarrow \V_{\sigma_0^\vee}
\]
be the induced map.

Now assume that 
\[
 \sigma_0 =\Theta_{\psi, V_0, W_0}(\pi_0).
\]
We fix a nonzero $\U(V_0) \times \U(W_0)$-equivariant map
\[
 \T_{00}: \omega_{00} \otimes \sigma_0^\vee \longrightarrow \pi_0.
\]
For $\varphi \in \s$, $\Phi_s \in \Ind^{\U(V)}_P(\tau^c_s \chi_W^c \otimes \sigma_0^\vee)$, $g \in \U(W)$,
$\check{v} \in \V_{\tau^\vee}$, and $\check{v}_0 \in \V_{\pi_0^\vee}$, put
\[
 \langle \T_s(\varphi, \Phi_s)(g), \check{v} \otimes \check{v}_0 \rangle = 
 L(s-s_0 + \tfrac{1}{2}, \tau)^{-1}
 \cdot \int_{U_P \U(V_0) \backslash \U(V)}
 \langle \T_{00}(\hat{f}(\varphi)(gh), \langle \Phi_s(h), \check{v} \rangle ), \check{v}_0 \rangle \, dh,
\]
where we have fixed Haar measures on $\U(V)$ and $\U(V_0)$, and set
\[
 s_0 = \frac{m-n}{2} = \frac{m_0-n_0}{2}.
\]
Note that $\langle \Phi_s(h), \check{v} \rangle \in \V_{\sigma_0^\vee}$.

\begin{lem}
\label{lem:def}
The integral $\langle \T_s(\varphi, \Phi_s)(g), \check{v} \otimes \check{v}_0 \rangle$ is absolutely convergent for $\Re(s) > s_0 - \frac{1}{2}$ and admits a holomorphic continuation to $\CC$.
\end{lem}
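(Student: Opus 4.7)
The plan is to unwind the integral via Iwasawa decomposition and identify it, after an explicit Weil-representation computation, with a finite sum of Godement--Jacquet zeta integrals for $\tau$. Choose a good maximal compact subgroup $K \subset \U(V)$ so that $\U(V) = P K$; then the integration over $U_P \U(V_0) \backslash \U(V)$ decomposes as
\[
 \int_K \int_{\GL(X)} F(g, a, k) \, |\det a|^{-2\rho_P} \, da \, dk,
\]
where $F(g,a,k)$ is the integrand evaluated at $h = m_P(a) k$. The integrand is well-defined on this quotient: $U_P$-invariance on the left follows from $\hat{f}(\varphi)(u h) = \hat{f}(\varphi)(h)$ for $u \in U_P$ together with the left $U_P$-invariance of $\Phi_s$; $\U(V_0)$-invariance follows from combining the $\omega_{00}$-equivariance of $\hat{f}(\varphi)$, the $\sigma_0^\vee$-equivariance of $\Phi_s$, and the $\U(V_0)$-equivariance of $\T_{00}$ (on which $\pi_0$ acts trivially).

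The key computation is to extract the $a$-dependence. The induced representation formula gives
\[
 \langle \Phi_s(m_P(a) k), \check{v} \rangle = |\det a|^{s+\rho_P} \chi_W^c(\det a) \cdot \langle \tau(a^c) \Phi_s(k), \check{v} \rangle,
\]
which is a power of $|\det a|$ times a $\V_{\sigma_0^\vee}$-valued matrix coefficient of $\tau^c$. For $\hat{f}(\varphi)(g \cdot m_P(a) k)$, one applies $(\omega(m_P(a)) \varphi)(y) = \omega_0(m_P(a)) \varphi(m_P(a)^{-1} y)$ together with the explicit formula $(\omega_0(m_P(a)) \varphi_0)(x) = \chi_W(\det a) |\det a|^{n_0/2} \varphi_0(a^* x)$ on $\s_0$, then substitutes $x_1 \mapsto a^{-1} x_1$ in the Fourier integral over $X \otimes Y^*$ (with jacobian $|\det a|^k$), and uses $\langle a x_1, e^*\rangle = \langle x_1, a^* e^*\rangle$ to absorb the $a$-dependence into the Fourier dual variable. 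The result takes the shape
\[
 \hat{f}(\varphi)(g \cdot m_P(a) k) = \chi_W(\det a) \, |\det a|^{k + n_0/2} \cdot \Psi_{g,k}(a^c),
\]
where $\Psi_{g,k}$ extends to a Schwartz function on $\mathrm{M}_k(E)$ with values in $\s_{00}$ depending continuously on $g$ and $k$.

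Combining these with the measure factor $|\det a|^{-2\rho_P}$ and using $\rho_P = (m_0+k)/2$, $s_0 = (m_0-n_0)/2$, the net exponent of $|\det a|$ simplifies to $s - s_0 + k/2$. Noting that $(\chi_W \chi_W^c)(\det a) = \chi_W(N_{E/F}(\det a))$ is trivial (since $\chi_W|_{F^\times} = \omega_{E/F}^n$ kills norms), and expanding $\Phi_s(k) = \sum_i v_i \otimes w_i$ in $\V_\tau \otimes \V_{\sigma_0^\vee}$ as a finite tensor, the bilinearity of $\T_{00}$ decomposes the integrand as a finite linear combination of products $\phi_{i,g,k}(a^c) \cdot m_i(a^c)$ with $\phi_{i,g,k} \in \s(\mathrm{M}_k(E))$ and $m_i(b) = \langle \tau(b) v_i, \check{v}\rangle$ a scalar matrix coefficient of $\tau$. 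After the change of variable $a \mapsto a^c$, the inner integral becomes a finite sum of Godement--Jacquet zeta integrals $Z\bigl(s - s_0 + \tfrac{k}{2}, \phi_{i,g,k}, m_i\bigr)$. Since $\tau$ is square-integrable, \cite[Proposition 1.3]{gj} gives absolute convergence for $\Re(s - s_0 + k/2) > (k-1)/2$, i.e.~$\Re(s) > s_0 - \tfrac{1}{2}$, while the fact that $Z(s' + (k-1)/2, \phi, f)/L(s', \tau)$ is entire in $s'$, applied with $s' = s - s_0 + \tfrac{1}{2}$, yields holomorphic continuation to all of $\CC$ after division by $L(s - s_0 + \tfrac{1}{2}, \tau)$. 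The outer integration over the compact $K$ preserves both properties by the continuous dependence of the data on $k$ and the uniformity of the Godement--Jacquet bounds in the relevant Schwartz seminorms.

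The main obstacle is the explicit Weil-representation calculation in the second paragraph. One must correctly track the three contributions to the $|\det a|$-exponent --- $|\det a|^{n_0/2}$ from $\omega_0(m_P(a))$ acting on values in $\s_0$, $|\det a|^k$ from the jacobian of the Fourier substitution on $X \otimes Y^*$, and $|\det a|^{s+\rho_P - 2\rho_P}$ from the induction together with the Haar measure --- so that their sum equals precisely $|\det a|^{s - s_0 + k/2} = |\det a|^{(s - s_0 + 1/2) + (k-1)/2}$. This numerical matching with the Godement--Jacquet normalization is exactly what forces the normalizing factor $L(s - s_0 + \tfrac{1}{2}, \tau)$ to appear in the definition of $\T_s$, and simultaneously delivers both the convergence threshold and the entire continuation claimed in the lemma.
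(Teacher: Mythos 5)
Your proposal is correct and follows essentially the same route as the paper: reduce via the Iwasawa decomposition to an integral over $\GL(X)$, use the explicit Weil representation formulas to show $\hat{f}(\varphi)(m_P(a))$ scales as $\chi_W(\det a)\,|\det a|^{k+n_0/2}$ times a Schwartz function of $a$, combine with the induction exponent and measure factor to land on the exponent $s-s_0+k/2$, and then invoke the Godement--Jacquet bounds (\cite[Proposition 1.3]{gj} for convergence when $\tau$ is square-integrable, and the entirety of $Z/L$ for the continuation). The only difference is that you spell out the $K$-integration and the finite-tensor decomposition of $\Phi_s(k)$, which the paper compresses into the phrase ``by the Iwasawa decomposition, it suffices to consider'' together with the reduction to $\Phi_s(1) = v \otimes v_0$.
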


\begin{proof}
We may assume that $\varphi = \varphi' \otimes \varphi_0$ and $\Phi_s(1) = v \otimes v_0$, where $\varphi' \in \s(V \otimes Y^*)$, $\varphi_0 \in \s_0$, $v \in \V_{\tau}$, and $v_0 \in \V_{\sigma_0^\vee}$.
By the Iwasawa decomposition, it suffices to consider the integral
\begin{equation}
\label{eq1}
 \int_{\GL(X)} \langle \T_{00}(\hat{f}(\varphi)(m_P(a)), \langle \Phi_s(m_P(a)), \check{v} \rangle ), \check{v}_0 \rangle
 |\det a|^{-2 \rho_P} \, da.
\end{equation}
Put
\[
 \phi(y)
 = \int_{X \otimes Y^*} \varphi'
 \begin{pmatrix}
  x \\
  0 \\
  0
 \end{pmatrix}
 \psi(\varepsilon \langle x, y \rangle ) \, dx
\]
for $y \in X^* \otimes Y$.
Then we have
\[
 \hat{f}(\varphi)(m_P(a))
 = \chi_W(\det a) |\det a|^{k + n_0/2} \phi(a^* e^*) \cdot \ev(\varphi_0)
\]
for $a \in \GL(X)$.
Hence we have
\begin{align*}
 \eqref{eq1} & = \langle \T_{00}(\ev(\varphi_0), v_0), \check{v}_0 \rangle
 \cdot \int_{\GL(X)} \phi(a^* e^*) \langle \tau(a^c) v, \check{v} \rangle |\det a|^{s-s_0+k/2} \, da.
\end{align*}
This completes the proof.
\end{proof}

Thus we obtain a $\U(V) \times \U(W)$-equivariant map
\[
 \T_s:\omega \otimes \Ind^{\U(V)}_P(\tau^c_s \chi_W^c \otimes \sigma_0^\vee) \longrightarrow
 \Ind^{\U(W)}_Q(\tau_s \chi_V \otimes \pi_0).
\]

\begin{lem}
\label{lem:def2}
If $\Re (s) < s_0 + \frac{1}{2}$, then we have
\begin{align*}
 & \langle \T_s(\varphi, \Phi_s)(g), \check{v} \otimes \check{v}_0 \rangle \\
 & = L(s-s_0+\tfrac{1}{2}, \tau)^{-1} \cdot \gamma(s-s_0+\tfrac{1}{2}, \tau, \psi_E)^{-1} \cdot \int_{U_P \U(V_0) \backslash \U(V)}
 \langle \T_{00}(f(\varphi)(gh), \langle \Phi_s(h), \check{v} \rangle ), \check{v}_0 \rangle \, dh.
\end{align*}
\end{lem}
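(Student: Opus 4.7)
The plan is to recognize that both integrals are essentially Godement--Jacquet zeta integrals for $\tau$ on $\GL_k(E) \cong \GL(X)$, so that the claim becomes an instance of the Godement--Jacquet local functional equation with $\gamma$-factor $\gamma(s - s_0 + \tfrac{1}{2}, \tau, \psi_E)$.

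First I would reduce, by equivariance, to $g = 1$ and to $\varphi = \varphi' \otimes \varphi_0$ with $\varphi' \in \s(V \otimes Y^*)$, $\varphi_0 \in \s_0$, and $\Phi_s(1) = v \otimes v_0$ with $v \in \V_{\tau}$ and $v_0 \in \V_{\sigma_0^\vee}$. By the Iwasawa decomposition $\U(V) = P K_V$, the integral over $U_P \U(V_0) \backslash \U(V)$ reduces to an integral over $\GL(X) \cong \GL_k(E)$ against $|\det a|^{-2\rho_P}\, da$ (up to integration over $K_V$, which can be absorbed by varying $\varphi', v, v_0$). Writing $\psi'(x) = \varphi'(x,0,0)$ for $x \in X \otimes Y^*$ and repeating the computation in the proof of Lemma \ref{lem:def} verbatim, one finds
\begin{align*}
 f(\varphi)(m_P(a)) & = \chi_W(\det a)\, |\det a|^{n_0/2}\, \psi'(a^{-1}e)\, \ev(\varphi_0), \\
 \hat{f}(\varphi)(m_P(a)) & = \chi_W(\det a)\, |\det a|^{k + n_0/2}\, \phi(a^* e^*)\, \ev(\varphi_0),
\end{align*}
where $\phi(y) = \int_{X \otimes Y^*} \psi'(x)\, \psi(\varepsilon \langle x, y \rangle)\, dx$, while $\langle \Phi_s(m_P(a)), \check v \rangle = |\det a|^{s+\rho_P}\, \chi_W^c(\det a)\, \langle \tau(a^c) v, \check v \rangle\, v_0$. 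Note that $\chi_W \cdot \chi_W^c = 1$ on $\det a$ since $\chi_W|_{F^\times} = \omega_{E/F}^n$.

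Next I would use the bases $\{v_i\}$ and $\{w_j^*\}$ to identify $X \otimes_E Y^* \cong \mathrm{M}_k(E)$, sending $e$ to $1_k$, and similarly $X^* \otimes_E Y \cong \mathrm{M}_k(E)$ sending $e^*$ to $1_k$. Under these identifications, $a^{-1}e$ and $a^* e^*$ correspond to elements of $\mathrm{M}_k(E)$ obtained from $a$ by inversion and by a transpose-conjugate operation, respectively; the pairing $\psi(\varepsilon\langle \cdot, \cdot \rangle)$ matches $\psi_E(\Tr(\cdot\,\cdot))$ up to an explicit constant, and the Haar measure chosen on $X \otimes Y^*$ in \S 6.2 is exactly the corresponding self-dual one. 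After the change of variable $a \mapsto a^{-1}$ (respectively $a \mapsto (a^*)^{-1}$) in the two integrals, and collecting factors of $|\det a|^{-2\rho_P}$ with the exponents computed above (yielding overall $|\det a|^{s' + (k-1)/2}$ with $s' = s - s_0 + \tfrac{1}{2}$), both sides become, up to the common constant $\langle \T_{00}(\ev(\varphi_0), v_0), \check v_0 \rangle$ and the prefactor $L(s', \tau)^{-1}$, standard Godement--Jacquet zeta integrals
\[
 Z\bigl(-s' + \tfrac{k+1}{2},\, \hat \psi',\, \check f_{v,\check v}\bigr) \quad \text{and} \quad Z\bigl(s' + \tfrac{k-1}{2},\, \psi',\, f_{v,\check v}\bigr),
\]
where $f_{v,\check v}$ is the matrix coefficient $a \mapsto \langle \tau(a) v, \check v \rangle$ (possibly after a $c$-twist to account for the conjugate-linear factor).

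The Godement--Jacquet local functional equation
\[
 Z\bigl(-s' + \tfrac{k+1}{2},\, \hat\psi',\, \check f_{v,\check v}\bigr) = \gamma(s', \tau, \psi_E) \cdot Z\bigl(s' + \tfrac{k-1}{2},\, \psi',\, f_{v,\check v}\bigr)
\]
then yields the identity at once. Convergence of the $f$-integral on $\Re(s) < s_0 + \tfrac{1}{2}$, i.e.\ $\Re(s') < 1$, is precisely the Godement--Jacquet convergence statement \cite[Proposition 1.3]{gj} applied to the square-integrable representation $\tau$. The main obstacle is the bookkeeping: one must verify that the various Haar measures from \S 6.2, the characters $\chi_V$ and $\chi_W$ together with their conjugates, the shifts $\rho_P = (m_0 + k)/2$ and $s_0 = (m-n)/2$, the conjugate-linear factor $a \mapsto a^c$ in $\tau^c$, and the explicit constant comparing $\psi(\varepsilon\langle\cdot,\cdot\rangle)$ on $X \otimes Y^*$ with $\psi_E(\Tr(\cdot\,\cdot))$ on $\mathrm{M}_k(E)$ all conspire to produce exactly the factor $\gamma(s - s_0 + \tfrac{1}{2}, \tau, \psi_E)^{-1}$ with no spurious scalars or sign discrepancies.
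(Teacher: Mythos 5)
Your proposal is correct and follows essentially the same route as the paper's proof: reduce by the Iwasawa decomposition to an integral over $\GL(X)$, express $f(\varphi)(m_P(a))$ and $\hat f(\varphi)(m_P(a))$ via a Schwartz function $\phi$ on $X^* \otimes Y$ and its Fourier transform $\hat\phi(x) = \varphi'(x,0,0)$, and then apply the Godement--Jacquet local functional equation (after the obvious changes of variable). The only point worth making explicit is that the paper first derives the identity in the common convergence band $s_0 - \tfrac12 < \Re(s) < s_0 + \tfrac12$ and then extends to all $\Re(s) < s_0 + \tfrac12$ by holomorphy of both sides.
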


\begin{proof}
We may assume that $\varphi = \varphi' \otimes \varphi_0$ and $\Phi_s(1) = v \otimes v_0$, where $\varphi' \in \s(V \otimes Y^*)$, $\varphi_0 \in \s_0$, $v \in \V_{\tau}$, and $v_0 \in \V_{\sigma_0^\vee}$.
Put $f(a) = \langle \tau(a) v, \check{v} \rangle$ for $a \in \GL(X)$.
Let $\phi \in \s(X^* \otimes Y)$ be as in the proof of Lemma \ref{lem:def}.
We define its Fourier transform $\hat{\phi} \in \s(X \otimes Y^*)$ by
\[
 \hat{\phi}(x)
 = \int_{X^* \otimes Y} \phi(y) \psi(-\varepsilon\langle x, y \rangle ) \, dy.
\]
By the Fourier inversion formula, we have
\[
 \hat{\phi}(x) = \varphi'
 \begin{pmatrix}
  x \\
  0 \\
  0
 \end{pmatrix}.
\]
Hence we have
\[
  f(\varphi)(m_P(a))
 = \chi_W(\det a) |\det a|^{n_0/2} \hat{\phi}(a^{-1} e) \cdot \ev(\varphi_0)
\]
for $a \in \GL(X)$.
If $s_0-\frac{1}{2} < \Re (s) < s_0+\frac{1}{2}$, then by the local functional equation, we have
\begin{align*}
 & \int_{\GL(X)} \langle \T_{00}(\hat{f}(\varphi)(m_P(a)), \langle \Phi_s(m_P(a)), \check{v} \rangle ), \check{v}_0 \rangle
 |\det a|^{-2 \rho_P} \, da \\
 & = \langle \T_{00}(\ev(\varphi_0), v_0), \check{v}_0 \rangle
 \cdot \int_{\GL(X)} \phi(a^* e^*) f(a^c) |\det a|^{s-s_0+k/2} \, da \\
 & = \langle \T_{00}(\ev(\varphi_0), v_0), \check{v}_0 \rangle
 \cdot \gamma(s-s_0+\tfrac{1}{2}, \tau, \psi_E)^{-1} \cdot 
 \int_{\GL(X)} \hat{\phi}(ae) \check{f}(a^c) |\det a|^{-s+s_0+k/2} \, da \\
 & = \langle \T_{00}(\ev(\varphi_0), v_0), \check{v}_0 \rangle
 \cdot \gamma(s-s_0+\tfrac{1}{2}, \tau, \psi_E)^{-1} \cdot
 \int_{\GL(X)} \hat{\phi}(a^{-1} e) f(a^c) |\det a|^{s-s_0-k/2} \, da \\
 & = \gamma(s-s_0+\tfrac{1}{2}, \tau, \psi_E)^{-1} \cdot
 \int_{\GL(X)} \langle \T_{00}(f(\varphi)(m_P(a)), \langle \Phi_s(m_P(a)), \check{v} \rangle ), \check{v}_0 \rangle
 |\det a|^{-2\rho_P} \, da.
\end{align*}
This completes the proof.
\end{proof}

\begin{lem}
\label{lem:nonzero}
Assume that $m \ge n$.
Let $\Phi \in \Ind^{\U(V)}_P(\tau^c \chi_W^c \otimes \sigma_0^\vee)$.
If $\Phi \ne 0$, then there exists $\varphi \in \s$ such that
\[
 \T_0(\varphi, \Phi) \ne 0.
\]
\end{lem}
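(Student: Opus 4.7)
The strategy is to reduce the integral defining $\T_0(\varphi, \Phi)$ to a Godement--Jacquet zeta integral for $\tau$, so that the non-vanishing follows from the standard theory recalled above. By the $\U(V)$-equivariance of $\T_s$, translating $\Phi$ on the right allows us to assume $\Phi(1) \ne 0$. Choose $\check{v} \in \V_{\tau^\vee}$ so that $v_0 := \langle \Phi(1), \check{v} \rangle \in \V_{\sigma_0^\vee}$ is nonzero. Since $\sigma_0^\vee$ is irreducible and $\T_{00}$ is a nonzero $\U(V_0) \times \U(W_0)$-equivariant map, the restricted map $\T_{00}(-, v_0) : \omega_{00} \to \pi_0$ is nonzero, so there exist $\varphi_{00} \in \s_{00}$ and $\check{v}_0 \in \V_{\pi_0^\vee}$ with $\langle \T_{00}(\varphi_{00}, v_0), \check{v}_0 \rangle \ne 0$.

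Now take $\varphi = \varphi_1 \otimes \varphi_0$, where $\varphi_0 \in \s_0$ satisfies $\ev(\varphi_0) = \varphi_{00}$ and $\varphi_1 \in \s(V \otimes Y^*)$ is to be specified. Evaluating $\T_s(\varphi, \Phi)$ at $g = 1$ and carrying out the Iwasawa decomposition $\U(V) = PK$ exactly as in the proof of Lemma~\ref{lem:def}, the $\GL(X)$-portion of the integrand at $k = 1$ becomes, up to a fixed nonzero scalar,
\[
 \langle \T_{00}(\varphi_{00}, v_0), \check{v}_0 \rangle \cdot \int_{\GL(X)} \phi(a^* e^*) \langle \tau^c(a) v, \check{v} \rangle |\det a|^{s - s_0 + k/2} \, da,
\]
where $\phi \in \s(X^* \otimes Y) \cong \s(\mathrm{M}_k(E))$ is the partial Fourier transform of $\varphi_1|_{X \otimes Y^*}$ appearing in the proof of Lemma~\ref{lem:def} and $v$ is a component of $\Phi(1)$. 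As $\varphi_1$ varies, $\phi$ ranges freely over $\s(\mathrm{M}_k(E))$. After the substitution $a \leftrightarrow a^c$, this is a Godement--Jacquet zeta integral $Z(s - s_0 + k/2, \phi', f)$ for $\tau$, where $f$ is a matrix coefficient of $\tau$. By \cite{gj}, the normalized quantity
\[
 \frac{Z(s - s_0 + k/2, \phi', f)}{L(s - s_0 + \tfrac{1}{2}, \tau)}
\]
is entire in $s$, and its value at any prescribed $s$ (in particular at $s = 0$) can be made any nonzero complex number by suitable choice of $\phi'$ and $f$. This is precisely what the $L$-factor normalization $L(s - s_0 + \tfrac{1}{2}, \tau)^{-1}$ built into $\T_s$ is designed to accommodate, making the argument uniform in $s_0 \ge 0$, including the range $s_0 \ge \tfrac{1}{2}$ where the naive integral diverges at $s = 0$.

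Finally, the outer integration over $K$ coming from the Iwasawa decomposition does not produce cancellation: since the integrand is smooth and nonzero at $k = 1$ for our choice of data, we may exploit the freedom in $\varphi_1$ to localize its Weil-translates near the identity fibre and thus reduce the full integral to (a positive multiple of) the pointwise value already shown to be nonzero. The main obstacle in this argument is the identification of the $\GL(X)$-integral with a Godement--Jacquet zeta integral after tracking all the normalizing factors involving $\chi_V$, $\chi_W$, and the Weil representation, together with the exact matching of the $L$-factor normalization; once these bookkeeping points are in place, the non-vanishing follows from the standard non-vanishing property of normalized Godement--Jacquet zeta integrals.
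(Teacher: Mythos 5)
Your reduction of the $\GL(X)$-integral at $k=1$ to a Godement--Jacquet zeta integral is correct, as is the observation that the $L$-factor normalization built into $\T_s$ (together with $\tau$ being square-integrable and $s_0 \ge 0$) keeps that piece holomorphic at $s=0$. But the final paragraph, where you claim the outer integration over $K$ ``does not produce cancellation'' by ``localizing Weil-translates near the identity fibre,'' is precisely where the genuine difficulty lies, and the argument as written does not close it.

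You work throughout with the $\hat{f}(\varphi)$-form, which involves a partial Fourier transform of $\varphi'$ in the $X \otimes Y^*$-variable. As $h=k$ varies in $K$, $\hat{f}(\varphi)(k)$ is the Fourier transform of the translate $\varphi'(k^{-1}\,\cdot\,)$ restricted to the slice $X\otimes Y^*$ and evaluated at $e^*$; this Fourier transform destroys the support control you would get by localizing $\varphi'$ itself, and even setting that aside the $K$-integrand is complex-valued, so cancellation cannot be ruled out by the reasoning offered. The paper's proof avoids both problems by first invoking Lemma~\ref{lem:def2} --- the local functional equation --- to pass from the $\hat{f}(\varphi)$-form to the $f(\varphi)$-form, valid for $\Re(s)\ll 0$, with an extra $\gamma$-factor absorbed into the normalization, which (again because $\tau$ is square-integrable and $s_0 \ge 0$) remains holomorphic and nonzero at $s=0$. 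Since $f(\varphi)(h) = \varphi'(h^{-1}x_0) \cdot \ev(\omega_0(h)\varphi_0)$ involves no Fourier transform, the support of $\varphi'$ now directly carves out the integration domain via the homeomorphism $h\mapsto h^{-1}x_0$ onto the locally closed orbit $\U(V)x_0 \subset V \otimes Y^*$. Choosing $\varphi'$ supported exactly on $Kx_0$ with $\varphi'(k^{-1}x_0)=\overline{\Psi_s(k)}$ converts the entire integral into $\int_K|\Psi_s(k)|^2\,dk$, manifestly nonnegative, nonzero, and independent of $s$. Without the functional-equation step and the complex-conjugate device, there is no handle on the outer $K$-integral, and this leaves a real gap in your proposal.
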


\begin{proof}
Fix a special maximal compact subgroup $K$ of $\U(V)$.
We extend $\Phi$ to a holomorphic section $\Phi_s$ of $\Ind^{\U(V)}_P(\tau_s^c \chi_W^c \otimes \sigma_0^\vee)$ so that $\Phi_s|_K$ is independent of $s$.
We have 
\[
 L(s-s_0+\tfrac{1}{2}, \tau)^{-1} \cdot 
 \gamma(s-s_0+\tfrac{1}{2}, \tau, \psi_E)^{-1} 
 = L(-s+s_0+\tfrac{1}{2}, \tau^\vee)^{-1}
\]
up to an invertible function.
Since $\tau$ is square-integrable and $s_0 \ge 0$, the right-hand side is holomorphic and nonzero at $s=0$.
By Lemma \ref{lem:def2}, 
it suffices to show that there exist $\varphi \in \s$,
$\check{v} \in \V_{\tau^\vee}$, and $\check{v}_0 \in \V_{\pi_0^\vee}$ such that 
\begin{equation}
\label{eq:nonzero}
 \int_{U_P \U(V_0) \backslash \U(V)}
 \langle \T_{00}(f(\varphi)(h), \langle \Phi_s(h), \check{v} \rangle ), \check{v}_0 \rangle \, dh
\end{equation}
is nonzero and independent of $s$ for $\Re(s) \ll 0$.

Let $\varphi = \varphi' \otimes \varphi_0$,
where $\varphi' \in \s(V \otimes Y^*)$ and $\varphi_0 \in \s_0$.
Then we have
\[
 \eqref{eq:nonzero} = 
 \int_{U_P \U(V_0) \backslash \U(V)} \varphi'(h^{-1} x_0) \Psi_s(h) \, dh,
\]
where
\[
 x_0 = 
 \begin{pmatrix}
  e \\
  0 \\
  0
 \end{pmatrix}, \quad
 \Psi_s(h) = \langle \T_{00}(\ev(\omega_0(h) \varphi_0), \langle \Phi_s(h), \check{v} \rangle ), \check{v}_0 \rangle.
\]
We can choose $\varphi_0$, $\check{v}$, and $\check{v}_0$ so that 
$\Psi_s|_K$ is nonzero and independent of $s$.
Since $h \mapsto h^{-1} x_0$ induces a homeomorphism
\[
 U_P \U(V_0) \backslash \U(V) \overset{\sim}{\longrightarrow} \U(V) x_0
\]
and $\U(V) x_0$ is locally closed in $V \otimes Y^*$,
there exists $\varphi'$ such that $\supp \varphi' \cap \U(V) x_0 = K x_0$
and such that $\varphi'(k^{-1} x_0) = \overline{\Psi_s(k)}$ for all $k \in K$.
Hence we have
\[
 \eqref{eq:nonzero} =
 \int_{U_P \U(V_0) \backslash U_P \U(V_0) K} \varphi'(h^{-1} x_0) \Psi_s(h) \, dh
 = \int_{(U_P \U(V_0) \cap K) \backslash K} |\Psi_s(k)|^2 \, dk \ne 0.
\]
Since $\Psi_s|_K$ is independent of $s$, so is this integral.
This completes the proof.
\end{proof}

\subsection{Compatibilities with intertwining operators}

Now we shall prove a key property of the equivariant map we have constructed.

Let $w \in W(M_P)$ and $w' \in W(M_Q)$ be the nontrivial elements in the relative Weyl groups.
As in \S \ref{SS:normalization}, we take the representatives $\tilde{w} \in \U(V)$ of $w$ and $\tilde{w}' \in \U(W)$ of $w'$ defined by 
\begin{align*}
 \tilde{w}  & = w_P \cdot m_P((-1)^{m'} \cdot \kappa_V \cdot J) \cdot (- 1_{V_0})^k, \\
 \tilde{w}' & = w_Q \cdot m_Q((-1)^{n'} \cdot \kappa_W \cdot J) \cdot (- 1_{W_0})^k,
\end{align*}
where $m' = [\frac{m}{2}]$ and $n' = [\frac{n}{2}]$.
Having fixed $\tau$, $\pi_0$, and $\sigma_0$, we shall write 
\[
 \M(\tilde{w},s) = \M(\tilde{w},\tau^c_s \chi_W^c \otimes \sigma_0^\vee)
 \quad \text{and} \quad
 \M(\tilde{w}',s) = \M(\tilde{w}',\tau_s \chi_V \otimes \pi_0)
\]
for the unnormalized intertwining operators, which are defined by the integrals
\[
 \M(\tilde{w},s) \Phi_s(h) = \int_{U_P} \Phi_s(\tilde{w}^{-1} u h) \, du, \quad
 \M(\tilde{w}',s) \Psi_s(g) = \int_{U_Q} \Psi_s(\tilde{w}'^{-1} u g) \, du 
\]
for $\Phi_s \in \Ind^{\U(V)}_P(\tau^c_s \chi_W^c \otimes \sigma_0^\vee)$ and $\Psi_s \in \Ind^{\U(W)}_Q(\tau_s \chi_V \otimes \pi_0)$.
By the Howe duality, the diagram
\[
  \xymatrix{
  \omega \otimes 
  \Ind^{\U(V)}_P(\tau^c_s \chi_W^c \otimes \sigma_0^\vee)
  \ar@{->}[rr]^{\T_s} \ar@{->}[d]_{1 \otimes \M(\tilde{w},s)} & &
  \Ind^{\U(W)}_Q(\tau_s \chi_V \otimes \pi_0) \ar@{->}[d]^{\M(\tilde{w}',s)} \\
  \omega \otimes 
  \Ind^{\U(V)}_P(w(\tau^c_s \chi_W^c \otimes \sigma_0^\vee))
   \ar@{->}[rr]^{\T_{-s}}
  & & \Ind^{\U(W)}_Q(w'(\tau_s \chi_V \otimes \pi_0))}
\]
commutes up to a scalar.
The following proposition determines this constant of proportionality explicitly.

\begin{prop} \label{P:io}
For $\varphi \in \s$ and $\Phi_s \in \Ind^{\U(V)}_P(\tau^c_s \chi_W^c \otimes \sigma_0^\vee)$, we have
\begin{align*}
 & \M(\tilde{w}', s) \T_s(\varphi, \Phi_s) \\
 & = ( \gamma_V^{-1} \cdot \gamma_W
 \cdot \chi_V((-1)^{n'} \cdot \varepsilon \cdot \kappa_W^{-1})
 \cdot \chi_W((-1)^{m'-1} \cdot \kappa_V^{-1})
 \cdot (\chi_V^{-n} \chi_W^m)(\delta))^k
 \cdot \omega_{\tau}((-1)^{m'+n'-1} \cdot \kappa_V^c \kappa_W^{-1}) \\
 & \quad \times |\kappa_V|^{k(s+\rho_P)} \cdot |\kappa_W|^{-k(s+\rho_Q)}
 \cdot L(s-s_0+\tfrac{1}{2}, \tau)^{-1}
 \cdot L(-s-s_0+\tfrac{1}{2}, (\tau^c)^\vee) \cdot \gamma(-s-s_0+\tfrac{1}{2}, (\tau^c)^\vee, \psi_E) \\
 & \quad \times \T_{-s}(\varphi, \M(\tilde{w}, s) \Phi_s).
\end{align*}
\end{prop}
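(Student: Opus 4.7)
The argument is a direct but intricate calculation that matches both sides as integral expressions, with the scalar factor identified via the Godement–Jacquet local functional equation. Observe first that, by Howe duality in the (almost) equal-rank setting (Theorems \ref{T:equal} and \ref{T:al-equal}) and Lemma \ref{lem:nonzero}, both $\M(\tilde w', s)\circ \T_s$ and $\T_{-s}\circ(1\otimes \M(\tilde w, s))$ extend to meromorphic families of $\U(V)\times \U(W)$-equivariant maps into vector-valued sections on $\U(W)$, and the space of such equivariant maps is one-dimensional at generic $s$, so the two sides are proportional by a meromorphic scalar. The goal is to compute this scalar.

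I plan to unfold $\M(\tilde w', s)\T_s(\varphi,\Phi_s)(g)$ as an iterated integral using the formula of Lemma \ref{lem:def} and the definition of $\M(\tilde w', s)$, obtaining
\[
 \M(\tilde w', s)\T_s(\varphi,\Phi_s)(g)
 = L(s-s_0+\tfrac{1}{2},\tau)^{-1}\int_{U_Q}\!\int_{U_P\U(V_0)\backslash \U(V)}\!
 \langle \T_{00}(\hat f(\varphi)(\tilde w'^{-1}ugh),\langle\Phi_s(h),\check v\rangle),\check v_0\rangle\, dh\,du
\]
in a strip of $s$ where Fubini's theorem applies. Substituting the decomposition $\tilde w'^{-1}=(-1_{W_0})^{-k}\,m_Q((-1)^{n'}\kappa_W J)^{-1}\,w_Q^{-1}$ and invoking the explicit formulas of \S 6.4 for the Weil representation, I first evaluate the integral over the $\Herm(Y^*, Y)$ part of $U_Q$: the quadratic character $\psi(\tfrac{1}{2}\langle cx,x\rangle)$ produced by $u_Q(c)$ combines with the Fourier kernel $\psi(\varepsilon\langle x,e^*\rangle)$ hidden inside $\hat f$, and Lemma \ref{L:fi} collapses this pair, converting $\hat f(\varphi)$ into $f(\varphi)$. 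The contribution of $w_Q$ introduces the global Fourier transform in $V\otimes Y$ with constant $\gamma_W^{-k}$, the $\Hom(W_0,Y)$ part is absorbed into the Haar measure normalization of Section 6.2, and $m_Q((-1)^{n'}\kappa_W J)(-1_{W_0})^k$ yields the Jacobian $|\kappa_W|^{-k(s+\rho_Q)}$ together with the character values $\omega_\tau((-1)^{n'}\kappa_W)$ and $\chi_V((-1)^{n'}\varepsilon\kappa_W^{-1})^k$.

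After these reductions, the inner $\U(V)$-integral naturally reassembles into the integral defining $\M(\tilde w, s)\Phi_s$: the Fourier transform produced by $w_Q$, viewed in the mixed model $\s = \s(V\otimes Y^*)\otimes \s_0$, is interchanged with the $\s(X^*\otimes W_0)$-factor of $\s_0$ via the action of $w_P$ (contributing $\gamma_V^{-k}$) and produces a $U_P$-integral of $\Phi_s$. Tracking the action of $\tilde w^{-1}=(-1_{V_0})^{-k}\,m_P((-1)^{m'}\kappa_V J)^{-1}\,w_P^{-1}$ then yields the mirror-image constants $|\kappa_V|^{k(s+\rho_P)}$, $\omega_\tau((-1)^{m'-1}\kappa_V^c)$, $\chi_W((-1)^{m'-1}\kappa_V^{-1})^k$, and the cocycle correction $(\chi_V^{-n}\chi_W^m)(\delta)^k$ from the compatibility of the metaplectic splittings over $\GL(X)$, $\GL(Y)$, and $\Sp(V\otimes W)$. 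What remains is a Godement–Jacquet zeta integral against a matrix coefficient of $\tau$, to which one applies the local functional equation $Z(-s+\tfrac{k+1}{2},\hat\phi,\check f)=\gamma(s,\tau,\psi_E)\cdot Z(s+\tfrac{k-1}{2},\phi,f)$; together with the $L$-factor normalization of $\T_{-s}$ provided by Lemma \ref{lem:def} at parameter $-s$, this produces the precise factor $L(s-s_0+\tfrac{1}{2},\tau)^{-1}\cdot L(-s-s_0+\tfrac{1}{2},(\tau^c)^\vee)\cdot\gamma(-s-s_0+\tfrac{1}{2},(\tau^c)^\vee,\psi_E)$, where the swap $\tau\leadsto(\tau^c)^\vee$ reflects the Weyl conjugation implicit in passing from the $\T_s$-side to the $\T_{-s}$-side.

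The main obstacle is the meticulous bookkeeping of constants: the Weil-representation factors $\gamma_V,\gamma_W$; the elements $\kappa_V,\kappa_W,J$ built into the Langlands–Shelstad representatives $\tilde w,\tilde w'$ (cf.\ Lemma \ref{lem:weyl}); the central-character values of $\tau$ that emerge when commuting these representatives past $\tau$; the cocycle $(\chi_V^{-n}\chi_W^m)(\delta)^k$ from the compatibility of the various metaplectic splittings; and the precise coordination between the Haar measures introduced in Section 6.2 (via Lemma \ref{L:fi}), the self-dual measures used in the Weil-representation formulas, and the Godement–Jacquet measure. Particular care is required for the factors $(-1_{V_0})^k$ and $(-1_{W_0})^k$ in $\tilde w,\tilde w'$, since they interact simultaneously with the splitting of the Weil representation, the central character $\omega_\tau$, and the characters $\chi_V,\chi_W$ appearing in the bottom piece of Kudla's filtration. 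A secondary technical point is to justify absolute convergence in a nonempty common strip of $s$ so that the iterated integrals may be interchanged, after which the meromorphic identity extends to all $s$ by analytic continuation.
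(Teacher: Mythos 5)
Your overall strategy matches the paper's proof: unfold $\M(\tilde w', s) \T_s$ as an iterated integral, peel off the $m_Q$, $(-1_{W_0})^k$ (resp.\ $m_P$, $(-1_{V_0})^k$) parts of $\tilde w'$ (resp.\ $\tilde w$) to collect the character constants, use the explicit Weil representation formulas together with Fourier inversion (Lemma~\ref{L:fi}) to convert the $U_Q$-integral of $\hat f(\varphi)$ into a $U_P$-integral of $f(\varphi)$, bring in the Godement--Jacquet functional equation via Lemma~\ref{lem:def2} to produce the $L$- and $\gamma$-factors, and justify the interchanges of integrals with the convergence Lemma~\ref{lem:conv}. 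The paper organizes the computation by first isolating the key identity
\[
\chi_V(-\varepsilon)^k \cdot \gamma_V^k \cdot \int_{U_Q}\hat f(\varphi)(w_Q^{-1}u)\,du
= \chi_W(-1)^k \cdot \gamma_W^k \cdot \int_{U_P} f(\varphi)(w_P u\, m_P(-1_X))\,du
\]
and then proving it by direct computation, but that is a difference of presentation rather than of substance.

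Several details in your bookkeeping are off, and at least one would change the final constant. You have the $\gamma_V$ and $\gamma_W$ attributions reversed: the formula for $\omega(w_Q)$ carries the constant $\gamma_V^{-k}$ (not $\gamma_W^{-k}$), and $\omega_0(w_P)$ carries $\gamma_W^{-k}$; since the answer $(\gamma_V^{-1}\gamma_W)^k$ is not symmetric in $V$ and $W$, carrying through your swapped assignment would invert the constant. The $\Hom(W_0,Y)$-part of $U_Q$ is not ``absorbed into the Haar measure normalization'': after the $\Herm(Y^*,Y)$-integration collapses via Lemma~\ref{L:fi}, the residual $\Hom(W_0,Y)$-integral is matched, through the substitution $y = -b^* e^{**}$, against the $X^*\otimes W_0$-integral arising from $\omega_0(w_P)$; this matching is a genuine step that your sketch elides. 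Finally, the factor $(\chi_V^{-n}\chi_W^m)(\delta)^k$ does not come from a metaplectic-splitting cocycle; it arises from the central-character relation $\omega_{\sigma_0} = \nu \cdot \omega_{\pi_0}$ forced by $\sigma_0 = \Theta_{\psi, \chi_V, \chi_W, V_0, W_0}(\pi_0)$, combined with the central-character values $\omega_{\pi_0}(-1)^k$ and $\omega_{\sigma_0}(-1)^k$ produced when the factors $(-1_{W_0})^k$ and $(-1_{V_0})^k$ of $\tilde w'$ and $\tilde w$ act on the inducing data.
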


\begin{proof}
We may assume that $\Re(s) \gg 0$.
Let $\check{v} \in \V_{\tau^\vee}$ and $\check{v}_0 \in \V_{\pi_0^\vee}$.
Noting that $\det J = 1$, we have by definition
\begin{align*}
 \langle \M(\tilde{w}', s) \T_s(\varphi, \Phi_s)(g), \check{v} \otimes \check{v}_0 \rangle
 & = \omega_\tau((-1)^{n'} \cdot \kappa_W^{-1}) \cdot 
 \chi_V((-1)^{n'} \cdot \kappa_W^{-1})^k
 \cdot |\kappa_W|^{-k(s+\rho_Q)} \cdot \omega_{\pi_0}(-1)^k \\
 & \quad \times \langle \M(w_Q, s) \T_s(\varphi, \Phi_s)(g), \tau^\vee(J) \check{v} \otimes \check{v}_0 \rangle, \\
 \langle \T_{-s}(\varphi, \M(\tilde{w}, s) \Phi_s)(g), \check{v} \otimes \check{v}_0 \rangle
 & = \omega_\tau((-1)^{m'} \cdot (\kappa_V^c)^{-1})
 \cdot \chi_W((-1)^{m'} \cdot \kappa_V)^k
 \cdot |\kappa_V|^{-k(s+\rho_P)} \cdot \omega_{\sigma_0}(-1)^k \\
 & \quad \times \langle \T_{-s}(\varphi, \M(w_P, s) \Phi_s)(g), \tau^\vee(J) \check{v} \otimes \check{v}_0 \rangle,
\end{align*}
where $\omega_{\pi_0}$ and $\omega_{\sigma_0}$ are the central characters of $\pi_0$ and $\sigma_0$ respectively.
Since $\sigma_0 =\Theta_{\psi, \chi_V, \chi_W, V_0, W_0}(\pi_0)$, we know that 
\[
 \omega_{\sigma_0} = \nu \cdot \omega_{\pi_0}, 
\]
where $\nu$ is the character of $\Ker(\N_{E/F})$ defined by 
\[
 \nu(x/x^c) = (\chi_V^{-n_0} \chi_W^{m_0})(x)
\]
for $x \in E^{\times}$.
In particular, we have
\[
 \omega_{\pi_0}(-1) \cdot \omega_{\sigma_0}(-1) = 
 (\chi_V^{-n_0} \chi_W^{m_0})(\delta)
 = (\chi_V^{-n} \chi_W^{m})(\delta) \cdot \chi_V(-1)^k \cdot \chi_W(-1)^k.
\]
Thus it suffices to show that
\begin{align*}
 & L(s-s_0+\tfrac{1}{2}, \tau) \cdot \M(w_Q, s) \T_s(\varphi, \Phi_s) \\
 & = (\chi_V(-\varepsilon) \cdot \gamma_V^{-1} \cdot \gamma_W)^k \cdot \omega_{\tau}(-1) \\
 & \quad \times L(-s-s_0+\tfrac{1}{2}, (\tau^c)^\vee)  \cdot \gamma(-s-s_0+\tfrac{1}{2}, (\tau^c)^\vee, \psi_E) \cdot  \T_{-s}(\varphi, \M(w_P, s) \Phi_s).
\end{align*}
We have
\begin{align*}
 & L(s - s_0 + \tfrac{1}{2}, \tau) \cdot 
 \langle \M(w_Q, s) \T_s(\varphi, \Phi_s)(g), \check{v} \otimes \check{v}_0 \rangle \\
 & = L(s - s_0 + \tfrac{1}{2}, \tau) \cdot 
 \int_{U_Q} \langle \T_s(\varphi, \Phi_s)(w_Q^{-1} u g), \check{v} \otimes \check{v}_0 \rangle \, du \\
 & = \int_{U_Q} \int_{U_P \U(V_0) \backslash \U(V)}
 \langle \T_{00} (\hat{f}(\varphi)(w_Q^{-1} u g h), 
 \langle \Phi_s(h), \check{v} \rangle ), \check{v}_0 \rangle \, dh \, du \\
 & = \int_{U_P \U(V_0) \backslash \U(V)} \int_{U_Q}
 \langle \T_{00} (\hat{f}(\varphi)(w_Q^{-1} u g h), 
 \langle \Phi_s(h), \check{v} \rangle ), \check{v}_0 \rangle \, du \, dh.
\end{align*}
In Lemma \ref{lem:conv}(i) below, we shall show that these integrals are absolutely convergent,
so that this manipulation is justified.
By Lemma \ref{lem:def2}, we have
\begin{align*}
 & L(-s-s_0+\tfrac{1}{2}, (\tau^c)^\vee) \cdot \gamma(-s-s_0+\tfrac{1}{2}, (\tau^c)^\vee, \psi_E) \cdot 
 \langle \T_{-s}(\varphi, \M(w_P, s) \Phi_s)(g), \check{v} \otimes \check{v}_0 \rangle \\
 & = \int_{U_P \U(V_0) \backslash \U(V)}
 \langle \T_{00} (f(\varphi)(gh), \langle \M(w_P, s) \Phi_s(h), \check{v} \rangle ), \check{v}_0 \rangle \, dh \\
 & = \int_{U_P \U(V_0) \backslash \U(V)} \int_{U_P}
 \langle \T_{00} (f(\varphi)(gh), \langle \Phi_s(w_P^{-1} u h), \check{v} \rangle ), \check{v}_0 \rangle \, du \, dh \\
 & = \int_{\U(V_0) \backslash \U(V)}
 \langle \T_{00} (f(\varphi)(gh), \langle \Phi_s(w_P^{-1} h), \check{v} \rangle ), \check{v}_0 \rangle \, dh \\
 & = \int_{\U(V_0) \backslash \U(V)}
 \langle \T_{00} (f(\varphi)(g w_P h), \langle \Phi_s(h), \check{v} \rangle ), \check{v}_0 \rangle \, dh \\
 & = \int_{U_P \U(V_0) \backslash \U(V)} \int_{U_P}
 \langle \T_{00} (f(\varphi)(g w_P u h), \langle \Phi_s(h), \check{v} \rangle ), \check{v}_0 \rangle \, du \, dh \\
 & = \int_{U_P \U(V_0) \backslash \U(V)} \int_{U_P}
 \langle \T_{00} (f(\varphi)(g w_P u m_P(-1_X) h), \langle \Phi_s(m_P(-1_X) h), \check{v} \rangle ), \check{v}_0 \rangle \, du \, dh \\
 & = \omega_{\tau}(-1) \cdot \chi_W(-1)^k \cdot \int_{U_P \U(V_0) \backslash \U(V)} \int_{U_P}
 \langle \T_{00} (f(\varphi)(g w_P u m_P(-1_X) h), \langle \Phi_s(h), \check{v} \rangle ), \check{v}_0 \rangle \, du \, dh.
\end{align*}
In Lemma \ref{lem:conv}(ii) below, we shall show that these integrals are absolutely convergent,
so that this manipulation is justified.
Thus it remains to show that
\begin{equation}
\label{E:key}
 \chi_V(-\varepsilon)^k \cdot \gamma_{V}^k \cdot \int_{U_Q} \hat{f}(\varphi)(w_Q^{-1} u)\, du
 = \chi_W(-1)^k \cdot \gamma_{W}^k \cdot \int_{U_P} f(\varphi)(w_P u m_P(-1_X)) \, du.
\end{equation}

We may assume that $\varphi = \varphi' \otimes \varphi_0$,
where $\varphi' \in \s(V \otimes Y^*)$ and $\varphi_0 \in \s_0$.
We have $w_Q^{-1} = m_Q(-\varepsilon 1_Y) \cdot w_Q$ and 
\begin{align*}
 & \hat{\ff}(\varphi)(w_Q^{-1}) \\
 & = \int_{X \otimes Y^*} (\omega(w_Q^{-1}) \varphi)
 \begin{pmatrix}
  x \\
  0 \\
  0
 \end{pmatrix}
 \psi(\varepsilon \langle x, e^* \rangle ) \, dx \\
 & = \chi_V(-\varepsilon)^k \cdot \int_{X \otimes Y^*} (\omega(w_Q) \varphi)
 \begin{pmatrix}
  -\varepsilon x \\
  0 \\
  0
 \end{pmatrix}
 \psi(\varepsilon \langle x, e^* \rangle ) \, dx \\
 & = \chi_V(-\varepsilon)^k \cdot \int_{X \otimes Y^*} (\omega(w_Q) \varphi)
 \begin{pmatrix}
  x \\
  0 \\
  0
 \end{pmatrix}
 \psi(- \langle x, e^* \rangle ) \, dx \\
 & = \chi_V(-\varepsilon)^k \cdot \gamma_V^{-k} \cdot \int_{X \otimes Y^*}
 \left( \int_{X \otimes Y^*} \int_{V_0 \otimes Y^*} \int_{X^* \otimes Y^*}
 \varphi
 \begin{pmatrix}
  y_1 \\
  y_2 \\
  y_3 
 \end{pmatrix} 
 \psi(-\langle I_Y y_3, x \rangle) \, dy_3 \, dy_2 \, dy_1 \right)  
 \psi(- \langle x, e^* \rangle ) \, dx \\
 & = \chi_V(-\varepsilon)^k \cdot \gamma_V^{-k} \cdot \int_{X \otimes Y^*}
 \left( \int_{X \otimes Y^*} \int_{V_0 \otimes Y^*} \int_{X^* \otimes Y^*}
 \varphi
 \begin{pmatrix}
  y_1 \\
  y_2 \\
  y_3 
 \end{pmatrix} 
 \psi(\langle x, I_Yy_3 \rangle) \, dy_3 \, dy_2 \, dy_1 \right)  
 \psi(- \langle x, e^* \rangle ) \, dx \\
 & = \chi_V(-\varepsilon)^k \cdot \gamma_V^{-k} \cdot \int_{X \otimes Y^*} \int_{V_0 \otimes Y^*} \varphi
 \begin{pmatrix}
  y_1 \\
  y_2 \\
  I_Y^{-1} e^*
 \end{pmatrix}
 \, dy_2 \, dy_1.
\end{align*}
Hence, noting that $I_X^{-1} e = I_Y^{-1} e^* = e^{**}$, we have
\begin{align*}
 & \chi_V(-\varepsilon)^k \cdot \gamma_V^k \cdot \int_{\Herm(Y^*,Y)} \hat{\ff}(\varphi)(w_Q^{-1} u_Q(c)) \, dc \\
 & = \int_{\Herm(Y^*,Y)} \left( \int_{X \otimes Y^*} \int_{V_0 \otimes Y^*} \varphi
 \begin{pmatrix}
  y_1 \\
  y_2 \\
  e^{**}
 \end{pmatrix}
 \psi (\langle c y_1, e^{**} \rangle + \tfrac{1}{2} \langle cy_2, y_2 \rangle)
 \, dy_2 \, dy_1 \right) dc.
\end{align*}
We change the variables
\begin{align*}
 y_1 & = x_1 e^{**} \in X \otimes Y^*, & x_1 & \in \Hom(X^*,X), \\
 y_2 & = x_2 e^{**} \in V_0 \otimes Y^*, & x_2 & \in \Hom(X^*,V_0).
\end{align*}
Then the above integral is equal to 
\begin{align*}
 & \int_{\Herm(Y^*,Y)} \left( \int_{\Hom(X^*,X)} \int_{\Hom(X^*,V_0)} \varphi
 \begin{pmatrix}
  x_1 e^{**} \\
  x_2 e^{**} \\
  e^{**}
 \end{pmatrix}
 \psi (\langle c x_1 e^{**}, e^{**} \rangle + \tfrac{1}{2} \langle c x_2 e^{**}, x_2 e^{**} \rangle)
 \, dx_2 \, dx_1 \right) dc \\
 & = \int_{\Herm(Y^*,Y)} \left( \int_{\Hom(X^*,X)} \int_{\Hom(X^*,V_0)} \varphi
 \begin{pmatrix}
  x_1 e^{**} \\
  x_2 e^{**} \\
  e^{**}
 \end{pmatrix}
 \psi (-\langle x_1 e^{**}, c e^{**} \rangle - \tfrac{1}{2} \langle x_2^* x_2 e^{**}, c e^{**} \rangle)
 \, dx_2 \, dx_1 \right) dc \\
 & = \int_{\Herm(Y^*,Y)} \left( \int_{\Hom(X^*,X)} \int_{\Hom(X^*,V_0)} \varphi
 \begin{pmatrix}
  (x_1 - \tfrac{1}{2} x_2^* x_2) e^{**} \\
  x_2 e^{**} \\
  e^{**}
 \end{pmatrix}
 \psi (-\langle x_1 e^{**}, c e^{**} \rangle) \, dx_2 \, dx_1 \right) dc.
\end{align*}
By Lemma \ref{L:fi}, this integral is equal to
\[
 \int_{\Herm(X^*,X)} \int_{\Hom(X^*,V_0)} \varphi
 \begin{pmatrix}
  (c - \tfrac{1}{2} x_2^* x_2) e^{**} \\
  x_2 e^{**} \\
  e^{**}
 \end{pmatrix}
 dx_2 \, dc.
\]
Hence the left-hand side of \eqref{E:key} is equal to 
\begin{align*}
 & \chi_V(-\varepsilon)^k \cdot \gamma_V^k \cdot \int_{\Hom(W_0,Y)} \int_{\Herm(Y^*,Y)} 
 \hat{f}(\varphi)(w_Q^{-1} u_Q(c) u_Q(b)) \, dc \, db \\
 & = \int_{\Hom(W_0,Y)} \int_{\Herm(X^*,X)} \int_{\Hom(X^*,V_0)} \ev \left(
 (\omega(u_Q(b)) \varphi)
 \begin{pmatrix}
 (c - \tfrac{1}{2} x_2^* x_2) e^{**} \\
  x_2 e^{**} \\
  e^{**}
 \end{pmatrix} \right)
 dx_2 \, dc \, db \\[5pt]
 & = \int_{\Hom(W_0,Y)} \int_{\Herm(X^*,X)} \int_{\Hom(X^*,V_0)} 
 \varphi'
 \begin{pmatrix}
 (c - \tfrac{1}{2} x_2^* x_2) e^{**} \\
  x_2 e^{**} \\
  e^{**}
 \end{pmatrix} \\
 & \quad \times \psi( \tfrac{1}{2} \langle b^* e^{**}, b^* (c - \tfrac{1}{2} x_2^* x_2) e^{**} \rangle)
 \rho_{00}((b^* x_2 e^{**}, 0)) \varphi_0(b^* e^{**}) \, dx_2 \, dc \, db \\[5pt]
 & = \int_{\Hom(W_0,Y)} \int_{\Herm(X^*,X)} \int_{\Hom(X^*,V_0)} 
 \varphi'
 \begin{pmatrix}
 (c - \tfrac{1}{2} x_2^* x_2) e^{**} \\
  x_2 e^{**} \\
  e^{**}
 \end{pmatrix} \\
 & \quad \times \psi(- \tfrac{1}{2} \langle c b^* e^{**}, b^* e^{**} \rangle)
 \rho_{00}((x_2 b^* e^{**}, 0)) \varphi_0(b^* e^{**}) \, dx_2 \, dc \, db.
\end{align*}
Note that $\langle b^* e^{**}, b^* x_2^* x_2 e^{**} \rangle = \langle b b^* e^{**}, x_2^* x_2 e^{**} \rangle = 0$.

On the other hand, the right-hand side of \eqref{E:key} is equal to the product of $\chi_W(-1)^k \cdot \gamma_{W}^k$ and 
\begin{align*}
 & \int_{\Hom(V_0, X)} \int_{\Herm(X^*,X)}
 f(\varphi)(w_P u_P(c') u_P(b') m_P(-1_X)) \, dc' \, db' \\
 & = \int_{\Hom(V_0, X)} \int_{\Herm(X^*,X)} \varphi'
 \begin{pmatrix}
  -(c' + \frac{1}{2}b' b'^*) e^{**} \\
  -b'^* e^{**} \\
  e^{**}
 \end{pmatrix}
 \ev(\omega_0(w_P u_P(c') u_P(b') m_P(-1_X)) \varphi_0) \, dc' \, db'.
\end{align*}
We have
\begin{align*}
 &  \ev(\omega_0(w_P u_P(c') u_P(b') m_P(-1_X)) \varphi_0) \\
 & = \gamma_{W}^{-k} \cdot \int_{X^* \otimes W_0} (\omega_0(u_P(c') u_P(b') m_P(-1_X)) \varphi_0)(y) \, dy \\
 & = \gamma_{W}^{-k} \cdot \int_{X^* \otimes W_0} \psi(\tfrac{1}{2} \langle c'y, y \rangle) 
 (\omega_0(u_P(b') m_P(-1_X)) \varphi_0)(y) \, dy \\
 & = \gamma_{W}^{-k} \cdot \int_{X^* \otimes W_0} \psi(\tfrac{1}{2} \langle c'y, y \rangle)
 \rho_{00}((b'^*y,0)) (\omega_0(m_P(-1_X)) \varphi_0)(y) \, dy \\
 & = \chi_W(-1)^k \cdot \gamma_{W}^{-k} \cdot \int_{X^* \otimes W_0}
 \psi(\tfrac{1}{2} \langle c'y, y \rangle)
 \rho_{00}((b'^*y,0)) \varphi_0(-y) \, dy.
\end{align*}
Changing the variables
\begin{align*}
 b' & = -x_2^* \in \Hom(V_0, X), & x_2 & \in \Hom(X^*, V_0), \\
 c' & = -c \in \Herm(X^*,X), & c & \in \Herm(X^*,X), \\
 y & = - b^* e^{**} \in X^* \otimes W_0, & b & \in \Hom(W_0, Y),
\end{align*}
we see that the equality \eqref{E:key} holds.
This completes the proof.
\end{proof}

Let $\phi_{\tau}$, $\phi_0$, and $\phi_0'$ be the $L$-parameters of $\tau$, $\pi_0$, and $\sigma_0$ respectively.
As a consequence of Proposition \ref{P:io}, we deduce:

\begin{cor} \label{C:io}
For $\varphi \in \s$ and $\Phi_s \in \Ind^{\U(V)}_P(\tau^c_s \chi_W^c \otimes \sigma_0^\vee)$, we have
\[
 \R(w', \tau_s \chi_V \otimes \pi_0) \T_s(\varphi, \Phi_s)  
 = \alpha \cdot \beta(s) \cdot \T_{-s}(\varphi, \R(w, \tau_s^c \chi_W^c \otimes \sigma_0^\vee) \Phi_s),
\]
where
\begin{align*}
 \alpha
 & = ( \gamma_V^{-1} \cdot \gamma_W
 \cdot \chi_V((-1)^{n'} \cdot \varepsilon \cdot \kappa_W^{-1})
 \cdot \chi_W((-1)^{m'-1} \cdot \kappa_V^{-1})
 \cdot (\chi_V^{-n} \chi_W^m)(\delta))^k \\
 & \quad \times \omega_{\tau}((-1)^{m'+n'-1} \cdot \kappa_V^c \kappa_W^{-1}) \cdot \lambda(w, \psi) \cdot \lambda(w',\psi)^{-1}
\end{align*}
and
\begin{align*}
 \beta(s) & = L(s-s_0+\tfrac{1}{2}, \phi_\tau)^{-1}
 \cdot L(-s-s_0+\tfrac{1}{2}, (\phi_\tau^c)^\vee) \cdot \gamma(-s-s_0+\tfrac{1}{2}, (\phi^c_\tau)^\vee, \psi_E) \\
 & \quad \times |\kappa_V \kappa_W^{-1}|^{ks}
 \cdot \gamma(s, \phi_{\tau}^c \otimes \phi_0' \otimes \chi_W^c, \psi_E)^{-1} \cdot \gamma(s, \phi_{\tau} \otimes \phi_0^\vee \otimes \chi_V, \psi_E).
\end{align*}
\end{cor}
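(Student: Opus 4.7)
The plan is to reduce the corollary to Proposition \ref{P:io} by unfolding the definition of the normalized intertwining operators from Section \ref{SS:normalization}. Since
\[
 \R(w, \tau_s^c \chi_W^c \otimes \sigma_0^\vee) = |\kappa_V|^{k\rho_P} \cdot r(w, \tau_s^c \chi_W^c \otimes \sigma_0^\vee)^{-1} \cdot \M(\tilde{w}, s),
\]
and analogously for $\R(w', \tau_s \chi_V \otimes \pi_0)$, and since both normalizing factors are scalars that depend only on the inducing data, they pull through the map $\T_{-s}(\varphi, -)$ by linearity in the second argument. Consequently, the assertion of the corollary is equivalent to the scalar identity
\[
 \alpha \cdot \beta(s) = \frac{|\kappa_W|^{k\rho_Q} \cdot r(w, \tau_s^c \chi_W^c \otimes \sigma_0^\vee)}{|\kappa_V|^{k\rho_P} \cdot r(w', \tau_s \chi_V \otimes \pi_0)} \cdot C(s),
\]
where $C(s)$ denotes the explicit scalar prefactor produced by Proposition \ref{P:io}.

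The verification is then purely bookkeeping. I would expand each $r$-factor using
\[
 r(w, \tau_s \otimes \sigma_0) = \lambda(w, \psi) \cdot \gamma(s, \phi_\tau \otimes \phi_0^\vee, \psi_E)^{-1} \cdot \gamma(2s, \As^{(-1)^m} \circ \phi_\tau, \psi)^{-1}
\]
(and the analogue for $w'$, noting $\dim W = n$ and the roles of $\phi_\tau^c \chi_W^c$, $\phi_0'$, $\phi_\tau \chi_V$, $\phi_0$), and then match terms with $\alpha \cdot \beta(s)$. Under this expansion: the powers of $|\kappa_V|, |\kappa_W|$ appearing in $C(s)$ combine with $|\kappa_V|^{-k\rho_P} |\kappa_W|^{k\rho_Q}$ from the normalizations to give precisely $|\kappa_V \kappa_W^{-1}|^{ks}$, as required by $\beta(s)$; the factor $L(s-s_0+\tfrac{1}{2}, \tau)^{-1} L(-s-s_0+\tfrac{1}{2}, (\tau^c)^\vee) \gamma(-s-s_0+\tfrac{1}{2}, (\tau^c)^\vee, \psi_E)$ is copied from $C(s)$ into $\beta(s)$; the ratio $\lambda(w,\psi)/\lambda(w',\psi)$ supplies the corresponding factor in $\alpha$; and the Hecke $\gamma$-factor ratio
\[
 \frac{\gamma(s, \phi_\tau \chi_V \otimes \phi_0^\vee, \psi_E)}{\gamma(s, \phi_\tau^c \chi_W^c \otimes \phi_0', \psi_E)}
\]
arising from $r(w,\cdot)/r(w',\cdot)$ produces the variable $\gamma$-factors of $\beta(s)$. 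The remaining explicit constants in $C(s)$ (namely $\gamma_V^{\pm k}$, $\gamma_W^{\pm k}$, the $\chi_V, \chi_W$ evaluations, the central character $\omega_\tau$, and the sign $(\chi_V^{-n}\chi_W^m)(\delta)^k$) all match the factor $\alpha$ exactly.

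The main obstacle is handling the ratio of Asai $\gamma$-factors
\[
 \frac{\gamma(2s, \As^{(-1)^m} \circ (\phi_\tau^c \chi_W^c), \psi)}{\gamma(2s, \As^{(-1)^n} \circ (\phi_\tau \chi_V), \psi)},
\]
which has no $s$-dependent counterpart in $\beta(s)$ and must therefore be shown to be independent of $s$, with its constant value absorbed into $\alpha$. This will follow from the standard twist identity for the Asai lift: because $\chi_V \chi_V^c$ and $\chi_W \chi_W^c$ are trivial on $F^\times$ (a consequence of $\chi_V|_{F^\times} = \omega_{E/F}^m$ and $\chi_W|_{F^\times} = \omega_{E/F}^n$), the Asai representations of $\phi_\tau \chi_V$ and $\phi_\tau^c \chi_W^c$ differ only by a character twist of $W_F$ that is trivial, up to the parity discrepancy $\As^{(-1)^m}$ versus $\As^{(-1)^n}$, i.e.~a twist by $\omega_{E/F}^{m-n}$. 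Once this identity of Asai factors is established, the residual constant combines cleanly with the remaining explicit factors to yield exactly $\alpha$, completing the proof.
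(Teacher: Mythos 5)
Your proposal is correct and matches the paper's (extremely terse) proof, which simply records that the corollary follows from Proposition \ref{P:io} together with the two Asai $\gamma$-factor identities you invoke, namely $\gamma(s,\As^{\pm}\circ\phi_{\tau^c},\psi)=\gamma(s,\As^{\pm}\circ\phi_\tau,\psi)$ and the twist rule $\gamma(s,\As^{\pm}\circ\phi_{\tau\chi},\psi)=\gamma(s,\As^{\mp}\circ\phi_{\tau},\psi)$ for a conjugate symplectic $\chi$. One small clarification to your last paragraph: the two Asai $\gamma$-factors cancel exactly, both being equal to $\gamma(2s,\As^{(-1)^{m+n}}\circ\phi_\tau,\psi)$, since the parity discrepancy between $\As^{(-1)^m}$ and $\As^{(-1)^n}$ is precisely compensated by the twists by $\chi_W^c$ and $\chi_V$ (whose restrictions to $F^\times$ are $\omega_{E/F}^n$ and $\omega_{E/F}^m$), so there is no residual constant to absorb into $\alpha$.
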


\begin{proof}
The corollary immediately follows from Proposition \ref{P:io} and the following facts:
\begin{itemize}
\item $\gamma(s, \As^+ \circ \phi_{\tau^c}, \psi) = \gamma(s, \As^+ \circ \phi_\tau, \psi)$;
\item for any conjugate self-dual character $\chi$ of $E^{\times}$, 
\[
 \gamma(s, \As^+ \circ \phi_{\tau \chi}, \psi) =
 \begin{cases}
  \gamma(s, \As^+ \circ \phi_\tau, \psi) & \text{if $\chi|_{F^{\times}} = \1_{F^{\times}}$;} \\
  \gamma(s, \As^- \circ \phi_\tau, \psi) & \text{if $\chi|_{F^{\times}} = \omega_{E/F}$.}
 \end{cases}
\]
\end{itemize}
\end{proof}

\subsection{Convergence of integrals}
\label{s:conv}

To finish the proof of Proposition \ref{P:io}, it remains to show the following convergence of the integrals.

\begin{lem}
\label{lem:conv}
Let $\varphi \in \s$, $\Phi_s \in \Ind^{\U(V)}_P(\tau^c_s \chi_W^c \otimes \sigma_0^\vee)$, $\check{v} \in \V_{\tau^\vee}$, and $\check{v}_0 \in \V_{\pi_0^\vee}$.
Assume that $\Re(s) \gg 0$.
\begin{enumerate}
\item The integral
\begin{equation}
\label{eq:conv1} 
 \int_{U_Q} \int_{U_P \U(V_0) \backslash \U(V)}
 \langle \T_{00} (\hat{f}(\varphi)(w_Q^{-1} u h), 
 \langle \Phi_s(h), \check{v} \rangle ), \check{v}_0 \rangle \, dh \, du 
\end{equation}
is absolutely convergent.
\item The integral
\begin{equation}
\label{eq:conv2} 
 \int_{U_P \U(V_0) \backslash \U(V)} \int_{U_P}
 \langle \T_{00} (f(\varphi)(h), \langle \Phi_s(w_P^{-1} u h), \check{v} \rangle ), \check{v}_0 \rangle \, du \, dh
\end{equation}
is absolutely convergent.
\end{enumerate}
\end{lem}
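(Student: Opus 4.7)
The plan is to reduce both parts to the absolute convergence of Godement--Jacquet zeta integrals for the square-integrable representation $\tau$ (valid for $\Re(s) > (k-1)/2$ by \cite[Proposition 1.3]{gj}), combined with the classical absolute convergence of the unnormalized intertwining operators $\M(\tilde{w},s)$ and $\M(\tilde{w}',s)$ for $\Re(s) \gg 0$ from the theory of Eisenstein series.

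First I fix a special maximal compact subgroup $K \subset \U(V)$ and use the Iwasawa decomposition $\U(V) = U_P M_P K$ to parametrize $U_P \U(V_0) \backslash \U(V)$ by $\GL(X) \times ((K \cap \U(V_0)) \backslash K)$, writing a representative as $h = m_P(a) k$. Then $\Phi_s(m_P(a) k) = |\det a|^{s+\rho_P} \chi_W^c(\det a) \cdot (\tau^c(a) \otimes 1) \Phi_s(k)$, so that pairing with $\check v$ produces a matrix coefficient of $\tau^c$ in the variable $a$, multiplied by the explicit polynomial factor $|\det a|^{s+\rho_P}$. On the other hand, $f(\varphi)(m_P(a) k)$ and $\hat{f}(\varphi)(m_P(a) k)$ are, by the formula for $\omega_0(m_P(a))$, of the form $\chi_W(\det a) \cdot |\det a|^{n_0/2} \cdot \Phi_{\varphi,k}(a^{-1} e)$ (respectively an analogous expression for $\hat{f}$), where $\Phi_{\varphi,k}$ is a Schwartz function on $X \otimes Y^*$ depending continuously on $k$. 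This is precisely the setup needed for Godement--Jacquet.

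For part (ii), I carry out the inner $U_P$-integral first. By definition it equals $\M(\tilde{w}, s) \Phi_s(m_P(a) k)$ up to the trivial factors contributed by $m_P((-1)^{m'} \kappa_V J) (-1_{V_0})^k$; the absolute convergence of this inner integral for $\Re(s) \gg 0$ is a standard result (cf.\ \cite[Lemmas 2.2.3 and 2.3.1]{kmsw}). The outer integral then takes the shape
\[
 \int_K \int_{\GL(X)} \phi_k(a^* e^*) \, \langle \tau^c(a) v_k, \check v \rangle \, |\det a|^{s + s_0 + k/2} \, da \, dk
\]
(up to invertible factors), where $\phi_k \in \s(X^* \otimes Y)$ and $v_k \in \V_\tau$ depend continuously on $k$. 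Since $\tau$ is square-integrable, this is absolutely convergent for $\Re(s) \gg 0$ by \cite[Proposition 1.3]{gj}, and Fubini then justifies \eqref{eq:conv2}.

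For part (i), the new ingredient is the inner $U_Q$-integral. Writing $w_Q^{-1} u = m_Q(-\varepsilon 1_Y) w_Q u$ and exploiting the explicit action of $w_Q$ and of $U_Q$ via the Heisenberg representation (as already unfolded in the calculation leading to \eqref{E:key}), the function $u \mapsto \hat{f}(\varphi)(w_Q^{-1} u \cdot m_P(a) k)$ becomes an iterated Fourier integral whose integrand is dominated by a Schwartz function on a finite-dimensional $F$-vector space (polynomially twisted in $|\det a|$ by a bounded power). Combined with the Iwasawa reduction, the whole integrand in $(u, h)$ is bounded by a product of a Schwartz function on a finite-dimensional space and $|\det a|^{\Re(s) + C}$ for some constant $C$, which again reduces to a Godement--Jacquet zeta integral and converges absolutely for $\Re(s) \gg 0$.

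The main obstacle is the bookkeeping of the explicit Weil-representation formulas at shifted arguments $w_Q^{-1} u h$ and $w_P u m_P(a) k$: one must verify that after unwinding $\omega$ and $\omega_0$ via the formulas of \S 6.3, the joint integrand is indeed Schwartz-dominated in the transverse variables uniformly on compacta in the remaining ones. This amounts to exactly the change-of-variables manipulations already carried out in the proof of \eqref{E:key}; fortunately, here we need only the absolute bound, not the identification of the limit, so Schwartz domination plus Godement--Jacquet finishes both statements once the Iwasawa reduction has been set up.
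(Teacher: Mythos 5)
Your plan is in the right spirit — reduce to Godement--Jacquet convergence plus intertwining-operator convergence — but as written it has a genuine gap in the Fubini/Tonelli step, and the key structural device of the paper's proof is missing.

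The central issue is in part (ii). You compute the inner $U_P$-integral of the \emph{signed} integrand, identify it with $\M(\tilde{w},s)\Phi_s$, and then observe that the outer integral of that result is a Godement--Jacquet zeta integral for $\tau$. But that only shows convergence of the iterated integral of the integrand, not finiteness of the iterated integral of its absolute value; the latter is exactly what Tonelli's theorem requires, and exactly what is being claimed. Knowing ``the inner integral converges for each $h$'' and ``the outer integral of the inner result converges'' does not imply absolute convergence of the double integral. The paper's proof works instead with an honest majorant: using unitarity of $\tau$, $\pi_0$, $\sigma_0$ (and that $\T_{00}$ is equivariant between unitary representations), it bounds $|\langle\Phi_s(h),\check v\rangle|$ by $C\cdot\Psi_t(h)$ where $\Psi_t$ is the $K$-fixed section of the \emph{degenerate} principal series $\Ind_P^{\U(V)}(|\det|^t \otimes \1_{\U(V_0)})$, and bounds $|\langle\T_{00}(f(\varphi)(h'),\,\cdot\,),\check v_0\rangle|$ by $\|\T_{00}(f(\varphi)(h'))\|$. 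The $U_P$-integral of the majorant is then $C\cdot\|\T_{00}(f(\varphi)(h))\|\cdot\M(w_P,t)\Psi_t(h)$, and the outer integral of \emph{that} is a Godement--Jacquet zeta integral for the \emph{trivial} representation of $\GL(X)$ — not for $\tau$. Your zeta integral for $\tau$ is the signed outer integral, not a bound on the iterated absolute-value integral.

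In part (i), the argument is looser still. First, the order of integration is the reverse of what you describe: in \eqref{eq:conv1} the $dh$-integral is inner and the $U_Q$-integral is outer. Second, appealing to ``Schwartz domination'' for the $U_Q$-integral doesn't give what you need. Once you pass to absolute values, the oscillatory Fourier factors $\psi(\cdots)$ disappear, so the claimed Schwartz decay in $u$ is not there. What actually controls the outer $U_Q$-integral is that the function $g\mapsto\hat\Xi_t(g)$ (the inner integral of the majorant) lies in $\Ind^{\U(W)}_Q(|\det|^t\otimes\1_{\U(W_0)})$, so $\int_{U_Q}\hat\Xi_t(w_Q^{-1}u)\,du$ is the standard intertwining-operator integral for a degenerate principal series, convergent for $t\gg 0$. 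Finally, as a smaller point, even your signed computation for part (ii) has a sign slip: after the $w_P$-twist the section transforms by $|\det a|^{-s+\rho_P}$ (not $|\det a|^{s+\rho_P}$), so the exponent you write down, $|\det a|^{s+s_0+k/2}$, cannot be right before a change of variables.
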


\begin{proof}
Put $t = \Re(s) \gg 0$.
Fix a special maximal compact subgroup $K$ of $\U(V)$.
We may assume that
\begin{itemize}
\item $\varphi = \varphi' \otimes \varphi_0$ for some $\varphi' \in \s(V \otimes Y^*)$ and $\varphi_0 \in \s_0$;
\item $\Phi_s|_K$ is independent of $s$;
\item $\Phi_s$ is $K_0$-fixed for some open compact subgroup $K_0$ of $K$;
\item $\supp \Phi_s = P k_0 K_0$ for some $k_0 \in K$;
\item $\Phi_s(k_0) = v \otimes v_0$ for some $v \in \V_{\tau}$ and $v_0 \in \V_{\sigma_0^\vee}$.
\end{itemize}
In particular, there exist maps $v:K \rightarrow \V_{\tau}$ and $v_0 : K \rightarrow \V_{\sigma_0^\vee}$ such that
\[
 \Phi_s(k) = v(k) \otimes v_0(k)
\]
for all $k \in K$.

Recall that $\tau$, $\pi_0$, and $\sigma_0$ are tempered and hence unitarizable.
We can choose invariant Hilbert space norms $\| \cdot \|$ on $\V_{\tau}$ and $\V_{\tau^\vee}$ so that
\[
 | \langle v, \check{v} \rangle | \le \| v \| \| \check{v} \|
\]
for all $v \in \V_{\tau}$ and $\check{v} \in \V_{\tau^\vee}$.
Similarly, we choose invariant Hilbert space norms on $\V_{\pi_0}$, $\V_{\sigma_0}$, and so on.
We may regard $\T_{00}$ as a $\U(V_0) \times \U(W_0)$-equivariant map $\T_{00}: \s_{00}  \rightarrow \V_{\sigma_0} \otimes \V_{\pi_0}$, i.e. 
\[
 \langle \T_{00}(\varphi_{00}), v_0 \otimes \check{v}_0 \rangle 
 = \langle \T_{00}(\varphi_{00}, v_0), \check{v}_0 \rangle
\]
for $\varphi_{00} \in \s_{00}$, $v_0 \in \V_{\sigma_0^\vee}$, and $\check{v}_0 \in \V_{\pi_0^\vee}$.
Then we have $\| \T_{00}(\omega_{00}(g_0h_0)\varphi_{00}) \| = \| \T_{00}(\varphi_{00}) \|$ for $g_0 \in \U(W_0)$ and $h_0 \in \U(V_0)$, and 
\[
 | \langle \T_{00}(\varphi_{00}, v_0), \check{v}_0 \rangle | \le
 \| \T_{00}(\varphi_{00}) \| \| v_0 \| \| \check{v}_0 \|.
\]
Fix $\check{v} \in \V_{\tau^\vee}$ and $\check{v}_0 \in \V_{\pi_0^\vee}$, and put
\[
 C = \| \check{v} \| \| \check{v}_0 \| \max_{k \in K} \| v(k) \| \| v_0(k) \|.
\]

Let $\Psi_t$ be the $K$-fixed element in $\Ind^{\U(V)}_P(|\det|^t \otimes \1_{\U(V_0)})$ such that $\Psi_t(1) = 1$.
Let $\ell$ denote the representation of $\U(V)$ on $\s(V \otimes Y^*)$ defined by $(\ell(h) \varphi')(x) = \varphi'(h^{-1} x)$.
Recall that $\ev : \s_0 \rightarrow \s_{00}$ is the evaluation at $0$.

First, we prove the absolute convergence of \eqref{eq:conv1}.
We have
\[
 \hat{f}(\varphi)(h) = \phi(\ell(h) \varphi')(e^*) \cdot \ev(\omega_0(h) \varphi_0),
\]
where $\phi:\s(V \otimes Y^*) \rightarrow \s(X^* \otimes Y)$ is defined by
\[
 \phi(\varphi')(y)
 = \int_{X \otimes Y^*} \varphi'
 \begin{pmatrix}
  x \\
  0 \\
  0
 \end{pmatrix}
 \psi(\varepsilon \langle x, y \rangle ) \, dx.
\]
Put 
\begin{align*}
 \hat{\xi}_s(g,h) & =
 \langle \T_{00}(\hat{f}(\varphi)(gh), \langle \Phi_s(h), \check{v} \rangle), \check{v}_0 \rangle \\
 & = \chi_W^c(\det a) |\det a|^{s+\rho_P} \langle \tau^c(a) v(k), \check{v} \rangle
 \cdot \langle \T_{00}(\hat{f}(\varphi)(gh), \sigma_0^\vee(h_0) v_0(k)), \check{v}_0 \rangle
\end{align*}
for $g \in \U(W)$, $h = u m_P(a) h_0 k \in \U(V)$, $u \in U_P$, $a \in \GL(X)$, $h_0 \in \U(V_0)$, and $k \in K$.
Then we have
\begin{align*}
 |\hat{\xi}_s(g,h)| & \le 
 |\det a|^{t+\rho_P} \| v(k) \| \| \check{v}\| \cdot 
 \| \T_{00}(\hat{f}(\varphi)(gh)) \| \| v_0(k) \| \| \check{v}_0 \| \\
 & \le C \cdot \Psi_t(h) \cdot \| \T_{00}(\hat{f}(\varphi)(gh)) \|
\end{align*}
and 
\begin{align*}
 \| \T_{00}(\hat{f}(\varphi)(h)) \| & = \| \T_{00}(\hat{f}(\varphi)(m_P(a) k)) \| \\
 & = |\det a|^{k+n_0/2} |\phi(\ell(k) \varphi')(a^* e^*)| \cdot \| \T_{00}(\ev(\omega_0(k) \varphi_0)) \|.
\end{align*}
Hence we have
\begin{align*}
 \int_{U_P \U(V_0) \backslash \U(V)} |\hat{\xi}_s(g,h)| \, dh 
 & \le C \cdot \int_{U_P \U(V_0) \backslash \U(V)} \Psi_t(h) \| \T_{00}(\hat{f}(\varphi)(gh)) \| \, dh \\
 & = C \cdot \int_{\GL(X)} \int_K |\det a|^{t-\rho_P} \| \T_{00}(\hat{f}(\varphi)(g m_P(a)k)) \| \, dk \, da \\
 & < \infty
\end{align*}
since the last integral is the zeta integral of Godement--Jacquet associated to the trivial representation of $\GL(X)$.
Put
\[
 \hat{\Xi}_t(g) = 
 C \cdot \int_{U_P \U(V_0) \backslash \U(V)} \Psi_t(h) \| \T_{00}(\hat{f}(\varphi)(gh)) \| \, dh.
\]
Then we have
\[
 \hat{\Xi}_t(u m_Q(a) g_0 g) = |\det a|^{t+\rho_Q} \hat{\Xi}_t(g)
\]
for $u \in U_Q$, $a \in \GL(Y)$, $g_0 \in \U(W_0)$, and $g \in \U(W)$, 
i.e.~$\hat{\Xi}_t \in \Ind^{\U(W)}_Q(|\det|^t \otimes \1_{\U(W_0)})$.
Hence we have
\[
 \int_{U_Q} \int_{U_P \U(V_0) \backslash \U(V)} |\hat{\xi}_s(w_Q^{-1} u, h)| \, dh \, du 
 \le \int_{U_Q} \hat{\Xi}_t(w_Q^{-1} u) \, du < \infty.
\]

Next, we prove the absolute convergence of \eqref{eq:conv2}.
We have
\[
 f(\varphi)(h) = \hat{\phi}(\ell(h) \varphi')(e) \cdot \ev(\omega_0(h) \varphi_0),
\]
where $\hat{\phi}:\s(V \otimes Y^*) \rightarrow \s(X \otimes Y^*)$ is defined by
\[
 \hat{\phi}(\varphi')(x) = \varphi'
 \begin{pmatrix}
  x \\
  0 \\
  0
 \end{pmatrix}.
\]
Put
\begin{align*}
 \xi_s(h,h')
 & = \langle \T_{00} (f(\varphi)(h'), \langle \Phi_s(h), \check{v} \rangle ), \check{v}_0 \rangle \\
 & = \chi_W^c(\det a) |\det a|^{s+\rho_P} \langle \tau^c(a) v(k), \check{v} \rangle
 \cdot \langle \T_{00}(f(\varphi)(h'), \sigma_0^\vee(h_0) v_0(k)), \check{v}_0 \rangle
\end{align*}
for $h = u m_P(a) h_0 k, h' \in \U(V)$, $u \in U_P$, $a \in \GL(X)$, $h_0 \in \U(V_0)$, and $k \in K$.
Then we have
\begin{align*}
 |\xi_s(h,h')|
 & \le |\det a|^{t+\rho_P}  \| v(k) \| \| \check{v} \|
 \cdot \| \T_{00}(f(\varphi)(h')) \| \| v_0(k) \| \| \check{v}_0 \| \\
 & \le C \cdot \Psi_t(h) \cdot \| \T_{00}(f(\varphi)(h')) \|.
\end{align*}
Hence we have
\[
 \int_{U_P} |\xi_s(w_P^{-1} u h, h')| \, du
 \le C \cdot \| \T_{00}(f(\varphi)(h')) \| \cdot \int_{U_P} \Psi_t(w_P^{-1} u h) \, du < \infty.
\]
Put
\[
 \Xi_t(h) = C \cdot \| \T_{00}(f(\varphi)(h)) \| \cdot \M(w_P,t) \Psi_t(h),
\]
where 
\[
 \M(w_P,t) \Psi_t(h) =  \int_{U_P} \Psi_t(w_P^{-1} u h) \, du.
\]
Then we have
\[
 \Xi_t(u m_P(a) h_0h) = C \cdot |\det a|^{-t+\rho_P+n_0/2} |\hat{\phi}(\ell(h) \varphi')(a^{-1} e)| \cdot \| \T_{00}(\ev(\omega_0(h) \varphi_0))\| \cdot \M(w_P,t) \Psi_t(h) 
\]
for $u \in U_P$, $a \in \GL(X)$, $h_0 \in \U(V_0)$, and $h \in \U(V)$.
Hence, putting 
\[
 C' = C \cdot \max_{k \in K} \| \ev(\omega_0(k) \varphi_0)\|
 \cdot \M(w_P,t) \Psi_t(1),
\]
we have
\begin{align*}
 \int_{U_P \U(V_0) \backslash \U(V)} \int_{U_P} |\xi_s(w_P^{-1} u h, h)| \, du  \, dh
 & \le \int_{U_P \U(V_0) \backslash \U(V)} \Xi_t(h) \, dh \\
 & \le C' \cdot \int_{\GL(X)} \int_K
 |\det a|^{-t-\rho_P + n_0/2}
 |\hat{\phi}(\ell(k) \varphi')(a^{-1} e)| \, dk \, da \\
 & < \infty
\end{align*}
since the last integral is the zeta integral of Godement--Jacquet associated to the trivial representation of $\GL(X)$.
\end{proof}

\subsection{Completion of the proof}

Now assume that $\varepsilon = +1$ and $m = n+1$.
Let $\phi$ be a tempered but non-square-integrable $L$-parameter for $\U(W_n^\pm)$.
Since $\phi$ is not square-integrable, we can write
\[
 \phi = (\phi_\tau \otimes \chi_V) \oplus \phi_0 \oplus
 ((\phi_\tau \otimes \chi_V)^c)^\vee
\]
for some irreducible (unitary) square-integrable representation $\tau$ of $\GL_k(E)$ and tempered $L$-parameter $\phi_0$ for $\U(W_{n_0}^\pm)$,
where $k$ is a positive integer and $n_0 = n-2k$.
Fix $\epsilon' = \pm 1$, and set $W = W_n^{\epsilon'}$ and $W_0 = W_{n_0}^{\epsilon'}$.
Let $\pi = \pi(\eta) \in \Pi_{\phi}$ be an irreducible tempered representation of $\U(W)$ with associated character $\eta \in \Irr(S_{\phi})$.
Then $\pi$ is an irreducible constituent of $\Ind^{\U(W)}_Q(\tau \chi_V \otimes \pi_0)$ for some irreducible tempered representation $\pi_0 = \pi_0(\eta_0) \in \Pi_{\phi_0}$ of $\U(W_0)$ with associated character $\eta_0 \in \Irr(S_{\phi_0})$ such that
\[
 \eta|_{S_{\phi_0}} = \eta_0. 
\]

Fix $\epsilon = \pm 1$, and set $V = V_{n+1}^{\epsilon}$ and $V_0 = V_{n_0+1}^{\epsilon}$.
Suppose that $\sigma := \Theta_{\psi, V, W}(\pi) \ne 0$.
By the argument as in \cite[pp.~1674--1676]{gs}, we see that $\sigma_0 := \Theta_{\psi,V_0, W_0}(\pi_0) \ne 0$ and $\sigma$ is an irreducible constituent of $\Ind^{\U(V)}_P(\tau \chi_W \otimes \sigma_0)$.
This implies that $\sigma^\vee$ is an irreducible constituent of $\Ind^{\U(V)}_P(\tau^c \chi_W^c \otimes \sigma_0^\vee)$.
By Theorem \ref{T:al-equal}, $\sigma  = \sigma(\eta') \in \Pi_{\phi'}$ and $\sigma_0 = \sigma_0(\eta_0') \in \Pi_{\phi_0'}$ are irreducible tempered representations of $\U(V)$ and $\U(V_0)$ respectively, with $L$-parameters
\[
 \phi' = (\phi \otimes \chi_V^{-1} \chi_W) \oplus \chi_W \quad \text{and} \quad
 \phi'_0 = (\phi_0 \otimes \chi_V^{-1} \chi_W) \oplus \chi_W, 
\]
and associated characters $\eta' \in \Irr(S_{\phi'})$ and $\eta_0' \in \Irr(S_{\phi_0'})$ such that
\[
 \eta'|_{S_{\phi_0'}} = \eta_0'.
\]
We need to show that $\eta'|_{S_{\phi}} = \eta$.

Consider a commutative diagram
\[
 \xymatrix{
  S_{\phi} \ar@{->}[r] & S_{\phi'} \\
  S_{\phi_0} \ar@{->}[u] \ar@{->}[r] & S_{\phi_0'} \ar@{->}[u]}
\]
of natural embeddings.
Since $n_0 < n$, we know that (P2)$_{n_0}$ holds by assumption, so that 
\[
 \eta_0'|_{S_{\phi_0}} = \eta_0.
\]
Hence, we conclude that 
\[
 \eta'|_{S_{\phi_0}} = (\eta'|_{S_{\phi_0'}})|_{S_{\phi_0}}
 = \eta'_0|_{S_{\phi_0}} = \eta_0 = \eta|_{S_{\phi_0}}.
\]
In particular, if $S_{\phi_0} = S_{\phi}$, then $\eta'|_{S_\phi} = \eta$ as desired.

Finally, we assume that $S_{\phi_0} \ne S_{\phi}$, which is the case if and only if $\phi_\tau$ is conjugate orthogonal and $\phi_\tau \otimes \chi_V$ is not contained in $\phi_0$.
Then the component group $S_{\phi}$ is of the form
\[
 S_{\phi} = S_{\phi_0} \times (\ZZ /2 \ZZ) a_1, 
\]
where the extra copy of $\ZZ / 2 \ZZ$ arises from the summand $\phi_\tau \otimes \chi_V$ in $\phi$.
Since we already know that $\eta'|_{S_{\phi_0}} = \eta|_{S_{\phi_0}}$, it suffices to show that $\eta'(a_1) = \eta(a_1)$.
To see this, we recall the $\U(V) \times \U(W)$-equivariant map
\[
 \T_0 : \omega \otimes \Ind^{\U(V)}_P(\tau^c \chi_W^c \otimes \sigma_0^\vee)
 \longrightarrow \Ind^{\U(W)}_Q(\tau \chi_V \otimes \pi_0).
\]
Since $\T_0(\varphi, \Phi) \in \pi$ for $\varphi \in \s$ and $\Phi \in \sigma^\vee$, it follows by \eqref{eq:lir}, Lemma \ref{lem:nonzero}, and Corollary \ref{C:io} that
\[
 \epsilon(W)^k \cdot \eta(a_1)
 = \alpha \cdot \beta(0) \cdot \epsilon(V)^k \cdot \eta_{\sigma^\vee}(a_1),
\]
where $\alpha$ and $\beta(s)$ are as in Corollary \ref{C:io}, 
and $\eta_{\sigma^\vee} \in \Irr(S_{(\phi')^\vee})$ is the irreducible character associated to $\sigma^\vee$.
But we know that
\[
 \eta_{\sigma^\vee}(a_1) = \eta'(a_1)
 \times 
 \begin{cases}
  1 & \text{if $n$ is even;} \\
  \omega_{E/F}(-1)^k & \text{if $n$ is odd.}
 \end{cases}
\]
Thus it remains to show that
\[
 \epsilon(V)^k \cdot \epsilon(W)^k \cdot \alpha \cdot \beta(0) = 
 \begin{cases}
  1 & \text{if $n$ is even;} \\
  \omega_{E/F}(-1)^k & \text{if $n$ is odd.}
 \end{cases}
\]

First, we compute $\epsilon(V)^k \cdot \epsilon(W)^k \cdot \alpha$ when $n$ is even.
In this case, we see that $\gamma_V = \epsilon(V) \cdot \lambda(E/F, \psi)$ and $\gamma_W = \epsilon(W) \cdot \chi_W(\delta)^{-1}$.
Hence we have
\begin{align*}
 \epsilon(V)^k \cdot \epsilon(W)^k \cdot \alpha
 & = ( \lambda(E/F, \psi)^{-1} \cdot \chi_W(\delta)^{-1}
 \cdot \chi_V(-1)^{n'} \cdot \chi_W(-1)^{n'}
 \cdot (\chi_V^{-n} \chi_W^{n+1})(\delta))^k \\
 & \quad \times \omega_{\tau}(-1)^{2n'} \cdot
 \lambda(E/F, \psi)^{(k+1)k/2} \cdot \lambda(E/F, \psi)^{-(k-1)k/2} \\
 & = 1.
\end{align*}
Next, we compute $\epsilon(V)^k \cdot \epsilon(W)^k \cdot \alpha$ when $n$ is odd.
In this case, we see that $\gamma_V = \epsilon(V)$ and $\gamma_W = \epsilon(W) \cdot \chi_W(\delta)^{-1} \cdot \lambda(E/F, \psi)$.
Hence we have
\begin{align*}
 \epsilon(V)^k \cdot \epsilon(W)^k \cdot \alpha
 & = ( \chi_W(\delta)^{-1} \cdot \lambda(E/F, \psi)
 \cdot \chi_V((-1)^{n'-1} \cdot \delta^{-1})
 \cdot \chi_W((-1)^{n'-1} \cdot \delta^{-1})
 \cdot (\chi_V^{-n} \chi_W^{n+1})(\delta))^k \\
 & \quad \times \omega_{\tau}(-1)^{2n'-1} \cdot \lambda(E/F, \psi)^{(k-1)k/2} \cdot \lambda(E/F, \psi)^{-(k+1)k/2} \\
 & = \omega_{E/F}(-1)^k \cdot \omega_\tau(-1) \\
 & = \omega_{E/F}(-1)^k,
\end{align*}
where the last equality follows because $\omega_\tau|_{F^{\times}} = \1_{F^{\times}}$.

Finally, we compute $\beta(0)$.
Noting that $s_0 = \frac{1}{2}$, $(\phi_\tau^c)^\vee = \phi_\tau$, and $\phi'_0 = (\phi_0 \otimes \chi_V^{-1} \chi_W) \oplus \chi_W$, we see that
\begin{align*}
 \beta(s) & = L(s, \phi_\tau)^{-1}
 \cdot L(-s, \phi_\tau) \cdot \gamma(-s, \phi_\tau, \psi_E) \cdot \gamma(s, \phi_{\tau}, \psi_E)^{-1} \\
 & = \frac{\epsilon(-s, \phi_\tau, \psi_E)}{\epsilon(s, \phi_\tau, \psi_E)}
 \cdot \frac{L(1+s, \phi_\tau^\vee)}{L(1-s, \phi_\tau^\vee)}.
\end{align*}
Since $\tau$ is square-integrable, $L(s, \phi_\tau^\vee)$ is holomorphic and nonzero at $s=1$, and hence
\[
 \beta(0) = 1.
\]
Thus, we have shown the desired formula for $\epsilon(V)^k \cdot \epsilon(W)^k \cdot \alpha \cdot \beta(0)$ and completed the proof of Theorem \ref{T:ind}.

\begin{rem}
Using Theorem \ref{T:equal} (instead of Theorem \ref{T:al-equal}) and the above argument, one can also prove the analog of Theorem \ref{T:ind} for (P1).
Indeed, this can be reduced to the computation of $\epsilon(V)^k \cdot \epsilon(W)^k \cdot \alpha \cdot \beta(0)$
when $\varepsilon = +1$, $m=n$, and $\phi_\tau$ is conjugate symplectic,
in which case one sees that 
\[
 \epsilon(V)^k \cdot \epsilon(W)^k \cdot \alpha = 
 \begin{cases}
  \omega_{\tau}(\delta) \cdot \omega_{E/F}(-1)^k & \text{if $n$ is even;} \\
  \omega_{\tau}(\delta) & \text{if $n$ is odd,}
 \end{cases}
\]
and 
\[
 \beta(0) = \epsilon(\tfrac{1}{2}, \phi_\tau, \psi_E)
\]
as desired.
\end{rem}

\section{\textbf{Generic case}}
\label{s:generic}

So far, we have verified the Fourier--Jacobi case (FJ) of the Gross--Prasad conjecture for tempered $L$-parameters for $\U(W_n) \times \U(W_n)$.
As in the proof of \cite[Theorem 19.1]{ggp1}, this implies (FJ) for tempered $L$-parameters for $\U(W_n) \times \U(W_{n+2k})$ with $k>0$.
In this section, we extend (FJ) to the case of generic $L$-parameters.

\subsection{Generic $L$-parameters}

Let $V$ be an $n$-dimensional $\varepsilon$-Hermitian space.
Recall that an $L$-parameter $\phi$ for $\U(V)$ is generic if, by definition, its associated $L$-packet $\Pi_{\phi}$ contains generic representations (i.e.~those which possess some Whittaker models).
In Proposition \ref{p:gp-rallis} below, we shall show that $\phi$ is generic if and only if its adjoint $L$-factor $L(s, \Ad \circ \phi) =  L(s, \As^{(-1)^n} \circ \phi)$ is holomorphic at $s=1$.

Let $\phi$ be an $L$-parameter for $\U(V)$, so that we may write
\[
 \phi = \rho \oplus \phi_0 \oplus (\rho^c)^{\vee} \quad \text{with} \quad
 \rho = \bigoplus_{i=1}^r \rho_i |\cdot|^{s_i},
\]
where 
\begin{itemize}
 \item $\rho_i$ is a $k_i$-dimensional tempered representation of $\WD_E$,
 \item $s_i$ is a real number such that $s_1 > \cdots > s_r > 0$,
 \item $\phi_0$ is a tempered $L$-parameter for $\U(V_0)$, where $V_0$ is the $\varepsilon$-Hermitian space of dimension $n-2(k_1+\cdots+k_r)$ such that $\epsilon(V_0) = \epsilon(V)$.
\end{itemize}
As mentioned in \S \ref{SS:llc}, by the construction of the local Langlands correspondence, the representations in the Vogan $L$-packet $\Pi_{\phi}$ are given by
\begin{equation}
\label{E:LQ}
 \text{the unique irreducible quotient of the standard module $\Ind \bigl( \bigl( \bigotimes_{i=1}^r \tau_i |\cdot|^{s_i} \bigr) \otimes \pi_0 \bigr)$}
\end{equation}
for $\pi_0 \in \Pi_{\phi_0}$, where $\Ind$ is the appropriate parabolic induction and $\tau_i$ is the irreducible tempered representation of $\GL_{k_i}(E)$ associated to $\rho_i$.
If $\phi$ is generic, then we have the following result of Heiermann \cite{hei}, which extends a result of M{\oe}glin--Waldspurger \cite[Corollaire 2.14]{mw} for special orthogonal groups and symplectic groups.

\begin{prop}
\label{prop:heiermann}
Let $\phi$ be a generic $L$-parameter for $\U(V)$.
Then the standard modules in \eqref{E:LQ} are all irreducible,
so that the $L$-packet $\Pi_{\phi}$ consists of standard modules.
\end{prop}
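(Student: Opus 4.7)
The plan is to follow the strategy of M{\oe}glin--Waldspurger \cite{mw} for special orthogonal and symplectic groups, as extended to unitary groups by Heiermann \cite{hei}. The essential input is the equivalence (Proposition \ref{p:gp-rallis}, which is referenced just before the statement) between genericity of $\phi$ and holomorphy at $s=1$ of the adjoint $L$-factor $L(s, \Ad\circ\phi) = L(s, \As^{(-1)^n}\circ\phi)$.

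First I would expand $L(s,\Ad\circ\phi)$ using the decomposition $\phi=\rho\oplus\phi_0\oplus(\rho^c)^{\vee}$ with $\rho=\bigoplus_i\rho_i|\cdot|^{s_i}$ and the standard identities for the Asai of a direct sum. This produces a product whose factors are $L(s,\Ad\circ\phi_0)$, twists of the form $L(s+s_i,\rho_i\otimes\phi_0^{\vee})$ and $L(s+s_i+s_j,\rho_i\otimes(\rho_j^c)^{\vee})$, and Asai-type pieces $L(2s_i,\As^{\pm}\circ\rho_i)$. Holomorphy of the full product at $s=1$ then forces simultaneously (a) that $L(s,\Ad\circ\phi_0)$ itself is holomorphic at $s=1$, so $\phi_0$ is generic, and (b) a list of non-containment conditions on the $\rho_i|\cdot|^{s_i}$'s that rule out the ``bad'' exponents where standard modules become reducible. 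Combining (a) with the generic $L$-packet conjecture for tempered parameters of unitary groups (known via Mok \cite{mok}) and Rodier's heredity of Whittaker models, one obtains a generic representation in $\Pi_\phi$ as the Langlands quotient of the standard module $\Ind(\tau_1|\cdot|^{s_1}\otimes\cdots\otimes\tau_r|\cdot|^{s_r}\otimes\pi_0^{\mathrm{gen}})$ for some generic tempered $\pi_0^{\mathrm{gen}}\in\Pi_{\phi_0}$.

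The main obstacle is then to conclude that \emph{every} standard module in \eqref{E:LQ}, not only the one with generic inducing data, is irreducible. I would invoke the Casselman--Shahidi irreducibility criterion, which says that a standard module is irreducible if and only if its Langlands quotient is generic, and then use Heiermann's analysis of reducibility points in terms of Harish-Chandra's $\mu$-function. The key observation is that the $\mu$-function depends only on the $L$-parameters of the inducing data, not on the particular choice of a representation within the tempered packet $\Pi_{\phi_0}$. Consequently the conditions in (b) — which assert that no pole of the Plancherel density crosses the exponents $(s_1,\ldots,s_r)$ — hold uniformly for every $\pi_0\in\Pi_{\phi_0}$, so the normalized intertwining operators realizing the standard modules are holomorphic and nonzero across the whole packet, forcing all the standard modules in \eqref{E:LQ} to be simultaneously irreducible. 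Carrying out this uniform translation between $L$-function holomorphy and reducibility across an $L$-packet is the technical heart of \cite{hei}, and is where most of the real work lies.
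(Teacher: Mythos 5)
The paper gives no proof of this proposition: it is stated as a result of Heiermann \cite{hei}, which transports the argument of M{\oe}glin--Waldspurger \cite[Corollaire 2.14]{mw} from orthogonal/symplectic groups to unitary groups, with Proposition \ref{p:gp-rallis} (proved in Appendix B of this paper) supplying the equivalence between genericity of $\phi$ and holomorphy of the adjoint $L$-factor at $s=1$. Your sketch reconstructs the right structure for the cited argument: translate genericity into holomorphy of $L(s,\Ad\circ\phi)$, factor this as a product involving $L(s,\Ad\circ\phi_0)$, Rankin--Selberg pieces $\phi_\tau\otimes\phi_0^\vee$ and $\phi_\tau\otimes(\phi_\tau^c)^\vee$, and Asai pieces of the $\rho_i$, deduce genericity of $\phi_0$ and non-vanishing constraints on the normalizing $\gamma$-factors at the relevant exponents, get one irreducible standard module from the generic member, and spread irreducibility across the packet. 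One imprecision should be corrected, though. The ``standard module conjecture'' (due to Heiermann--Mui\'c, building on Casselman--Shahidi) applies \emph{only} when the inducing tempered datum is generic; it says nothing directly about the standard modules built from non-generic $\pi_0\in\Pi_{\phi_0}$, since those have non-generic Langlands quotients by construction. So the step you phrase as ``invoke the Casselman--Shahidi criterion, \emph{and then} use the $\mu$-function'' is really a single step: irreducibility of a standard module at a given point of the positive chamber is detected by the holomorphy of the normalized (long) intertwining operator, whose normalizing factor is built from $\gamma$-factors of the inducing $L$-parameters (this is exactly the Plancherel-measure identity \eqref{eq:pl-gamma} recorded in Appendix A). Since these factors see only $\phi_\tau$ and $\phi_0$, not the particular $\pi_0\in\Pi_{\phi_0}$, holomorphy (equivalently, irreducibility) is a packet-invariant condition; the generic member certifies it holds, and all standard modules in \eqref{E:LQ} are then irreducible simultaneously. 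This is the load-bearing observation, and you do name it, but it should not be presented as subordinate to the standard module conjecture --- it is the mechanism that lets you \emph{use} that conjecture for the whole packet rather than just for the generic member.
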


\subsection{Local theta correspondence}

Proposition \ref{prop:heiermann} has consequences for the local theta correspondence.
Let $V$ be an $m$-dimensional Hermitian space and $W$ an $n$-dimensional skew-Hermitian space.
Consider the theta correspondence for $\U(V) \times \U(W)$ relative to a pair of characters $(\chi_V, \chi_W)$.
Let $\phi$ be an $L$-parameter for $\U(W)$ and $\pi$ a representation of $\U(W)$ in $\Pi_\phi$.
If $m = n$, then by Theorem \ref{T:equal}, we have $\theta_{\psi,V,W}(\pi) \in \Pi_{\theta(\phi)}$ (if nonzero) with $\theta(\phi)  = \phi  \otimes \chi_V^{-1} \chi_W$, so that $L(s, \Ad \circ \theta(\phi)) =  L(s, \Ad \circ \phi)$.
Thus $\theta(\phi)$ is generic if and only if $\phi$ is.
On the other hand, if $m=n+1$, then by Theorem \ref{T:al-equal}, we have $\theta_{\psi,V,W}(\pi) \in \Pi_{\theta(\phi)}$ (if nonzero) with
\[  \theta(\phi)  = (\phi \otimes \chi_V^{-1} \chi_W) \oplus \chi_W. \]
In this case, it is possible that $\theta(\phi)$ is nongeneric even if $\phi$ is. 
More precisely, since
\[  L(s, \Ad \circ \theta(\phi)) =  L(s, \Ad \circ \phi)  \cdot  L(s,  \phi \otimes \chi_V^{-1}) \cdot L(s, \omega_{E/F}), \] 
$\theta(\phi)$ is generic if and only if $\phi$ is generic and does not contain $\chi_V |\cdot|^{\pm \frac{k+1}{2}} \boxtimes \mathrm{Sym}^{k-1}$ for any positive integer $k$, where $\mathrm{Sym}^{k-1}$ is the unique $k$-dimensional irreducible representation of $\SL_2(\CC)$.
Hence we see that for all but finitely many choices of $\chi_V$ (depending on $\phi$), $\theta(\phi)$ is generic if $\phi$ is.

\begin{prop}
\label{P:generic-theta}
Let $\phi$ be an $L$-parameter for $\U(W)$ and $\pi$ a representation of $\U(W)$ in $\Pi_\phi$.
Then we have:
\begin{enumerate}
\item Assume that $m=n$.
If $\phi$ is generic (so that $\theta(\phi)$ is also generic), then $\Theta_{\psi,V,W}(\pi) = \theta_{\psi,V,W}(\pi)$.
\item Assume that $m=n+1$.
If $\phi$ is generic and does not contain $\chi_V |\cdot|^{\pm \frac{k+1}{2}} \boxtimes \mathrm{Sym}^{k-1}$ for any positive integer $k$ (so that $\theta(\phi)$ is also generic), then $\Theta_{\psi,V,W}(\pi) = \theta_{\psi,V,W}(\pi)$.
\end{enumerate}
\end{prop}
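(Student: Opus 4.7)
\bigskip

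\noindent\textbf{Proof proposal.}
The plan is to reduce the assertion to the tempered case, for which $\Theta_{\psi,V,W}(\pi) = \theta_{\psi,V,W}(\pi)$ is already contained in Theorems \ref{T:equal}(v) and \ref{T:al-equal}(iii). Write $\phi$ in its Langlands decomposition
\[
 \phi = \rho \oplus \phi_0 \oplus (\rho^c)^\vee, \qquad \rho = \bigoplus_{i=1}^r \rho_i |\cdot|^{s_i},
\]
with $\phi_0$ a tempered $L$-parameter for a skew-Hermitian unitary group $\U(W_0)$ of strictly smaller rank, $\rho_i$ a $k_i$-dimensional tempered representation of $\WD_E$, and $s_1 > \cdots > s_r > 0$. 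Since $\phi$ is generic, Proposition \ref{prop:heiermann} identifies any $\pi \in \Pi_\phi$ with the irreducible standard module
\[
 \pi = \Ind_Q^{\U(W)}\bigl(\tau_1|\cdot|^{s_1} \otimes \cdots \otimes \tau_r|\cdot|^{s_r} \otimes \pi_0\bigr),
\]
where $\tau_i$ is the tempered representation of $\GL_{k_i}(E)$ corresponding to $\rho_i$, $\pi_0 \in \Pi_{\phi_0}$ is tempered, and $Q \subset \U(W)$ is the standard parabolic with Levi $\prod_i \GL_{k_i}(E) \times \U(W_0)$.

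The second ingredient is the compatibility of the local theta correspondence with parabolic induction, obtained from Kudla's filtration of the Jacquet module $R_Q(\omega_{\psi, V, W})$ of the Weil representation. Let $P \subset \U(V)$ be the matching parabolic with Levi $\prod_i \GL_{k_i}(E) \times \U(V_0)$. Combining Kudla's filtration with Frobenius reciprocity produces a nonzero $\U(V)$-equivariant map
\[
 \Theta_{\psi, V, W}(\pi) \longrightarrow \Pi' := \Ind_P^{\U(V)}\bigl((\tau_1|\cdot|^{s_1}\chi_V^{-1}\chi_W) \otimes \cdots \otimes (\tau_r|\cdot|^{s_r}\chi_V^{-1}\chi_W) \otimes \Theta_{\psi, V_0, W_0}(\pi_0)\bigr),
\]
and the deeper pieces of the filtration—which involve theta lifts to unitary groups of strictly smaller rank—contribute no further quotients, by virtue of the strict positivity of the exponents $s_i$ together with the temperedness of $\pi_0$ and the conservation relation. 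In particular, any irreducible quotient of $\Theta_{\psi,V,W}(\pi)$ is a quotient of $\Pi'$.

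Because $\phi_0$ is tempered, Theorems \ref{T:equal}(v) and \ref{T:al-equal}(iii) imply that $\Theta_{\psi, V_0, W_0}(\pi_0) = \theta_{\psi, V_0, W_0}(\pi_0)$ is irreducible and tempered, with $L$-parameter $\theta(\phi_0)$. Consequently $\Pi'$ is a standard module with $L$-parameter $\theta(\phi)$. The hypothesis in part (i) (resp.\ part (ii)) is precisely what ensures that $\theta(\phi)$ is itself generic: in part (i), $\theta(\phi) = \phi \otimes \chi_V^{-1}\chi_W$ is merely a conjugate orthogonal twist of $\phi$ and has the same adjoint $L$-factor; in part (ii), the avoidance of $\chi_V|\cdot|^{\pm(k+1)/2} \otimes \mathrm{Sym}^{k-1}$ is exactly the condition for the extra factors appearing in $L(s, \Ad \circ \theta(\phi))$ to remain holomorphic at $s=1$. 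A second application of Proposition \ref{prop:heiermann} then gives that $\Pi'$ is irreducible, so the map above is an isomorphism; in particular $\Theta_{\psi,V,W}(\pi)$ is irreducible and equals $\theta_{\psi,V,W}(\pi)$.

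The principal obstacle is the precise justification of Kudla's filtration in the unitary setting and the verification that its lower pieces do not contribute additional constituents to $\Theta_{\psi,V,W}(\pi)$. This requires analyzing Kudla's filtration together with weak $(\mathrm{P1})$ and weak $(\mathrm{P2})$ (Proposition \ref{P:1.1}), much as in the arguments of \cite{gi}: the key point is that any extraneous constituent would force an $L$-parameter incompatible with $\theta(\phi)$, since the positivity of the $s_i$ and temperedness of $\theta(\phi_0)$ pin down the possible Langlands data. Once this vanishing is established, the irreducibility of $\Pi'$ coming from genericity completes the argument.
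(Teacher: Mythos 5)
Your overall strategy matches the paper's: reduce to the tempered case, invoke Heiermann's result (Proposition~\ref{prop:heiermann}) on both sides to identify the relevant representations as irreducible standard modules, and use the compatibility of theta lifting with parabolic induction. However, there is a genuine gap in the crucial step.

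The map produced by Kudla's filtration goes in the wrong direction in your write-up. What the paper needs, and what is supplied by \cite[Proposition C.4(ii)]{gi}, is that $\Theta_{\psi,V,W}(\pi)$ is a \emph{quotient} of the standard module $\Pi'$, i.e.\ there is a surjection $\Pi' \twoheadrightarrow \Theta_{\psi,V,W}(\pi)$. From this and the irreducibility of $\Pi'$ one concludes immediately that $\Theta_{\psi,V,W}(\pi) \cong \Pi'$ is irreducible. You instead assert a nonzero $\U(V)$-equivariant map $\Theta_{\psi,V,W}(\pi) \to \Pi'$. With $\Pi'$ irreducible, such a map is merely a surjection from $\Theta_{\psi,V,W}(\pi)$ onto $\Pi'$, which shows $\Pi'$ is a quotient of $\Theta_{\psi,V,W}(\pi)$; it says nothing about the other constituents of $\Theta_{\psi,V,W}(\pi)$, and so cannot yield that $\Theta_{\psi,V,W}(\pi)$ itself is irreducible. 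Your subsequent claim that ``any irreducible quotient of $\Theta_{\psi,V,W}(\pi)$ is a quotient of $\Pi'$'' at best pins down the maximal semisimple quotient $\theta_{\psi,V,W}(\pi)$, but leaves open the possibility of extra Jordan--H\"older constituents in $\Theta_{\psi,V,W}(\pi)$; declaring ``the map above is an isomorphism'' is unjustified.

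Secondarily, you attempt to re-derive the vanishing of the contributions from the lower pieces of Kudla's filtration by appealing to positivity of the $s_i$, temperedness, and the conservation relation, but this is left at the level of a sketch. The paper avoids this entirely by citing \cite[Proposition C.4(ii)]{gi}, which already packages the filtration analysis for all the dual pairs under consideration; if you wish to re-prove it, the argument requires the careful bookkeeping carried out in Appendix~C of \cite{gi}, not just the heuristic that ``any extraneous constituent would force an $L$-parameter incompatible with $\theta(\phi)$.'' Fixing the direction of the quotient map and replacing the sketch with the appropriate citation brings your proof in line with the paper's.
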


\begin{proof}
We shall give the proof of (ii) since the proof of (i) is similar.
We may assume that $\Theta_{\psi,V,W}(\pi) \ne 0$.
If $\phi$ is tempered, then $\Theta_{\psi,V,W}(\pi)$ is irreducible and tempered by \cite[Proposition C.4(i)]{gi}.
In general, by Proposition \ref{prop:heiermann}, $\pi$ is a standard module of the form $\Ind ((\bigotimes_{i=1}^r \tau_i |\cdot|^{s_i}) \otimes \pi_0)$ as in \eqref{E:LQ}.
Then by \cite[Proposition C.4(ii)]{gi}, $\Theta_{\psi,V,W}(\pi)$ is a quotient of the standard module
\[ \Ind \bigl(  \bigl(  \bigotimes_{i=1}^r \tau_i \chi_V^{-1} \chi_W |\cdot|^{s_i} \bigr) \otimes \Theta_{\psi,V_0,W_0}(\pi_0) \bigr). \]
Since $\theta(\phi)$ is generic as well, Proposition \ref{prop:heiermann} implies that this standard module is irreducible, so that $\Theta_{\psi,V,W}(\pi)$ is irreducible.
\end{proof}

\subsection{(B) for generic $L$-parameters}

For special orthogonal groups, M{\oe}glin--Waldspurger \cite{mw} extended the Bessel case (B) of the Gross--Prasad conjecture from tempered $L$-parameters to generic $L$-parameters.
We carry out the analogous extension for unitary groups.

\begin{prop}
\label{P:generic-B}
The statement (B) holds for all generic $L$-parameters for $\U(V_n) \times \U(V_{n+2k+1})$.
\end{prop}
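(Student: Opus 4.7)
The plan is to extend the proof of (B) from tempered to generic $L$-parameters by adapting, in the unitary setting, the reduction argument of M{\oe}glin--Waldspurger \cite{mw} for special orthogonal groups. Given a generic $\phi = \phi^{\diamondsuit} \times \phi^{\heartsuit}$ for $\U(V_n) \times \U(V_{n+2k+1})$, first write the Langlands decompositions
\[
 \phi^{\diamondsuit} = \rho^{\diamondsuit} \oplus \phi_0^{\diamondsuit} \oplus ((\rho^{\diamondsuit})^c)^\vee, \qquad
 \phi^{\heartsuit} = \rho^{\heartsuit} \oplus \phi_0^{\heartsuit} \oplus ((\rho^{\heartsuit})^c)^\vee,
\]
with $\phi_0 = \phi_0^{\diamondsuit} \times \phi_0^{\heartsuit}$ tempered and $\rho^{\diamondsuit}, \rho^{\heartsuit}$ direct sums of essentially tempered representations twisted by $|\cdot|^{s_i}$ with $s_i > 0$. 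By Proposition \ref{prop:heiermann}, every $\pi(\eta) \in \Pi_{\phi}$ is an irreducible standard module $\Ind_P^{G_{n,k}^\epsilon}(\tau \otimes \pi_0(\eta_0))$, where the Levi of $P$ is of the form $\GL \times G_{n_0,k}^\epsilon$ and $\tau$ is an essentially tempered representation of $\GL$; moreover, the natural map $S_{\phi_0} \to S_\phi$ is an isomorphism, so $\eta \leftrightarrow \eta_0$ canonically.

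The heart of the proof is a reduction of Hom spaces
\[
 \Hom_{\Delta H_n^\epsilon}(\pi(\eta), \CC) \cong \Hom_{\Delta H_{n_0}^\epsilon}(\pi_0(\eta_0), \CC),
\]
established via the geometric lemma applied to the restriction of $\pi(\eta)$ along the Bessel subgroup. One analyzes the $\Delta H_n^\epsilon$-orbits on $G_{n,k}^\epsilon / P$; the open orbit produces the Jacquet module of $\pi_0(\eta_0)$ and hence contributes the desired Hom space, while the non-open orbits contribute subquotients from which all Bessel functionals must vanish. The vanishing on non-open orbits is forced by a strict inequality on the exponents $s_i > 0$ in $\tau$ versus the tempered asymptotics afforded by $\pi_0$ and the Bessel character, exactly as in \cite[\S 1.3]{mw}; the orbit combinatorics and stabilizer computations for unitary Bessel subgroups proceed in complete parallel to the orthogonal case, and much of this analysis is already present in the work of Beuzart-Plessis \cite{bp1}, \cite{bp2}, \cite{bp3}.

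Once this reduction is in hand, (B) for the tempered $\phi_0$ yields nonvanishing precisely when $\eta_0 = \eta_0^{\spadesuit}$, and it remains to verify the compatibility $\eta^{\spadesuit}|_{S_{\phi_0}} = \eta_0^{\spadesuit}$ under the canonical identification $S_{\phi_0} \cong S_\phi$. This is a direct computation from the recipe in \S \ref{SS:eta}: by multiplicativity of local $\epsilon$-factors, the contribution of $\rho^{\diamondsuit} \oplus ((\rho^{\diamondsuit})^c)^\vee$ to $\epsilon(\tfrac{1}{2}, \phi^{\diamondsuit} \otimes \phi_j^{\heartsuit}, \psi^E_{-2})$ factors through a conjugate self-dual virtual representation whose root number at $\tfrac{1}{2}$ is trivial, and similarly for the $a_i$ variables.

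The main obstacle is the vanishing statement on non-open orbits in the second paragraph: although the method is entirely analogous to M{\oe}glin--Waldspurger, it requires a careful uniform enumeration of double cosets $H_n^\epsilon \backslash G_{n,k}^\epsilon / P$ and their stabilizers in the unitary Bessel setting for arbitrary codimension $2k+1$. All other steps---Heiermann's result, Beuzart-Plessis's tempered (B), and the multiplicativity check---are essentially bookkeeping.
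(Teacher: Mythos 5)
Your high-level outline (standard modules via Heiermann, a multiplicity reduction $m(\pi,\pi')=m(\pi_0,\pi_0')$, then compatibility of the distinguished character via multiplicativity of $\epsilon$-factors) matches the paper's proof, which follows \cite[\S 3]{mw}; the first and third steps are fine as you sketch them. But your account of the multiplicity reduction, which you rightly call the heart of the argument, conflates two distinct lemmas of M{\oe}glin--Waldspurger and drops the hardest ingredient altogether. An exponent-dominance argument (their Lemme 1.4, which becomes step (i) of the proof of Proposition~\ref{prop:mult}) gives only the inequality $m(\pi,\sigma)\le m(\pi,\sigma_0)$; the \emph{vanishing} of non-open orbit contributions in the Bernstein--Zelevinsky filtration requires the inducing datum on the $\GL$-factor to be \emph{supercuspidal} (their Lemme 1.5, i.e.\ \cite[Theorem 15.1]{ggp1}), a cuspidality argument, not an exponent one. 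Since the $\tau_i$ here are merely tempered, neither tool applies directly, and one must run the nontrivial induction of \cite[\S 1.6]{mw} interleaving both.

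More seriously, even granting the upper bound $m(\pi,\pi')\le m(\pi_0,\pi_0')$, your plan provides no mechanism for the reverse inequality. Since $\Hom(\,\cdot\,,\CC)$ is left-exact, the geometric lemma filtration yields at best an upper bound, never an isomorphism of Hom spaces. The inequality $m(\pi,\pi')\ge m(\pi_0,\pi_0')$ is proved in the paper (as in \cite[Lemmes 1.7--1.8]{mw}) by an entirely different device: one constructs, by meromorphic continuation of absolutely convergent integrals of matrix coefficients, a family of $\Delta(\U(V_n)\times\U(V_n))$-equivariant functionals $\mathcal{L}_{z,z'}$, then invokes Beuzart-Plessis's Plancherel-theoretic results \cite[Th\'eor\`eme 14.3.1, Proposition 15.2.1, Proposition 15.3.1]{bp3} to show that this family is not identically zero whenever $m(\pi_0,\pi_0')\ne 0$; the leading term at $(z,z')=((s_i),(s'_j))$ supplies a nonzero Bessel functional, and \cite{agrs} (via \cite[Corollary 15.3]{ggp1}) finishes. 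This analytic input is missing from your proposal and cannot be extracted from the orbit stratification alone.
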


To prove Proposition \ref{P:generic-B}, we adapt the proof of M{\oe}glin--Waldspurger \cite{mw} to the case of unitary groups.
For any (not necessarily irreducible) smooth representations $\pi$ and $\pi'$ of $\U(V_n)$ and $\U(V_{n+2k+1})$ respectively, we write $m(\pi, \pi')$ or $m(\pi', \pi)$ for 
\[
 \dim_\CC \Hom_H(\pi \otimes \pi', \nu)
\]
with the subgroup $H$ of $\U(V_n) \times \U(V_{n+2k+1})$ and the character $\nu$ of $H$ as in \cite[\S 12]{ggp1}.
Then as explained in \cite[\S 3]{mw}, Proposition \ref{P:generic-B} follows from (B) for all tempered $L$-parameters (which was proved by Beuzart-Plessis \cite{bp1}, \cite{bp2}, \cite{bp3}), together with Proposition \ref{prop:heiermann} and the following proposition:

\begin{prop}
\label{prop:mult}
Let $\pi = \Ind ((\bigotimes_{i=1}^r \tau_i |\cdot|^{s_i}) \otimes \pi_0)$ be a smooth representation of $\U(V_n)$, where 
\begin{itemize}
 \item $\tau_i$ is an irreducible tempered representation of $\GL_{k_i}(E)$,
 \item $s_i$ is a real number such that $s_1 \ge \cdots \ge s_r \ge 0$,
 \item $\pi_0$ is an irreducible tempered representation of $\U(V_{n-2(k_1+\dots+k_r)})$.
\end{itemize}
Likewise, let $\pi' = \Ind((\bigotimes_{j=1}^{r'} \tau'_j |\cdot|^{s'_j}) \otimes \pi_0')$ be a smooth representation of $\U(V_{n+2k+1})$ with analogous data $\tau_j'$, $k_j'$, $s_j'$, $\pi_0'$.
Then we have
\[
 m(\pi, \pi') = m(\pi_0, \pi'_0).
\]
\end{prop}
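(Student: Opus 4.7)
The plan is to adapt the argument of M\oe{}glin--Waldspurger \cite{mw}, which proves the analogous statement for special orthogonal and symplectic groups, to the unitary setting. By symmetry and induction on $r + r'$, one can reduce to the case where only one side is non-tempered, say $r' = 0$ (so $\pi' = \pi_0'$ is tempered), and $r = 1$; the general assertion then follows by iterated application. Thus the main task is to prove that for $\pi = \Ind_P^{\U(V_n)}(\tau|\cdot|^s \otimes \pi_0)$ with $\tau$ an irreducible tempered representation of some $\GL_{k_1}(E)$, $s \ge 0$, and $\pi_0$ irreducible tempered, one has $m(\pi, \pi_0') = m(\pi_0, \pi_0')$.

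To analyze $\Hom_H(\pi \otimes \pi_0', \nu)$, I would apply Frobenius reciprocity together with the geometric lemma for the double coset space $P \backslash \U(V_n) / H''$, where $H''$ is the projection of the Bessel subgroup $H$ to $\U(V_n)$ (in the Bessel setup $H$ is a semidirect product of $\U(V_n)$ with a unipotent subgroup of $\U(V_{n+2k+1})$). This produces a filtration of $\pi|_{H''}$ whose successive quotients are parametrized by $H''$-orbits on the partial flag variety; dualizing and composing with restriction to $\pi_0'$ yields an expression for $m(\pi, \pi_0')$ as a sum of contributions, one per orbit, each of which is a Hom space involving Jacquet modules of $\tau|\cdot|^s \otimes \pi_0$ paired against Jacquet modules of $\pi_0'$ along corresponding parabolics.

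The crux is then to show that all orbits except the open one contribute zero. For each non-open orbit, the corresponding contribution is a Hom space in which an exponent of the form $|\cdot|^{-s'}$ with $s' \ge s \ge 0$ (coming from a Jacquet module of the induced representation on the $\U(V_n)$-side) must match an exponent of a Jacquet module of the tempered representation $\pi_0'$ on the $\U(V_{n+2k+1})$-side. Casselman's temperedness criterion forces all exponents of $\pi_0'$'s Jacquet modules to lie in a strictly negative cone of the real parameter space, so the matching forces $s' \le 0$; combined with $s' \ge 0$, one extracts a contradiction from each non-open orbit by exploiting the particular combinatorics of the $H''$-orbits, exactly as in \cite[\S 1]{mw}. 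The open orbit, after unwinding definitions, yields the multiplicity $m(\pi_0, \pi_0')$.

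The main obstacle is the orbit-by-orbit exponent analysis in Step~3: while the general philosophy parallels \cite{mw}, the $P \backslash G / H''$ double cosets and the associated unipotent integrations must be indexed and handled explicitly in the unitary setting, including tracking the character $\nu$ through each orbit contribution. Once this combinatorial bookkeeping is matched to the orthogonal/symplectic case, the vanishing and the identification of the open orbit contribution go through with only notational changes, giving the desired equality.
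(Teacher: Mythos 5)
The proposal has a real gap. You envision deriving the equality $m(\pi,\pi') = m(\pi_0,\pi_0')$ entirely from a geometric-lemma filtration of $\pi|_{H''}$ (or rather of $\pi\otimes\pi'$ restricted to the Bessel subgroup), by killing all non-open-orbit contributions via Casselman-type exponent estimates and reading off the open orbit. This machinery, applied carefully as in the key lemmas of M\oe{}glin--Waldspurger, does give the inequality $m(\pi,\pi') \le m(\pi_0,\pi_0')$: the functor $\Hom_H(\,\cdot\,,\nu)$ is only left exact, so the filtration yields an upper bound, not an identity, and the long exact sequences do not collapse just because the non-open graded pieces have vanishing $\Hom$. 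The paper's proof accordingly treats the two inequalities separately. For $\le$ it uses a chain of lemmas ((i)--(iii) in the sketch) modeled on \cite[Lemme 1.4, Lemme 1.5, \S 1.6]{mw}, with an induction not on $r+r'$ but on $l = \sum_{s_i\ne 0} k_i + \sum_{s'_j\ne 0} k'_j$; crucially, the base case $l=0$ already requires Beuzart-Plessis' integral formula from \cite[\S\S 14--15]{bp3}, not just exponent combinatorics, and the supercuspidal-avoidance reduction (ii) relies on \cite[Theorem 15.1]{ggp1}.

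The genuinely missing idea in your proposal is the lower bound $m(\pi,\pi') \ge m(\pi_0,\pi_0')$. This cannot be extracted from the filtration: you must produce a nonzero element of $\Hom_H(\pi\otimes\pi',\nu)$. The paper does so by meromorphically continuing a family $\mathcal{L}_{z,z'}$ of $\Delta(\U(V_n)\times\U(V_n))$-equivariant functionals built from integrals of matrix coefficients of $\pi_z\otimes\pi'_{z'}$, and by invoking Beuzart-Plessis' Plancherel-type results \cite[Th\'eor\`eme 14.3.1, Propositions 15.2.1, 15.3.1]{bp3} to show the family is not identically zero when $m(\pi_0,\pi_0')\ne 0$; specializing the leading term at $(z,z')=(s,s')$ then yields a nonzero functional. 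Without this analytic input your argument would, at best, bound $m(\pi,\pi')$ from above. In short: the $\le$ part of your strategy is broadly on the right track but needs the precise lemmas of \cite{mw} and the tempered integral formula of \cite{bp3} as the base case, while the $\ge$ part requires an entirely separate construction that your proposal omits.
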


\begin{proof}
Since the proof is similar to that of \cite[Proposition 1.3]{mw}, we shall only give a sketch of the proof.
First, we prove that $m(\pi, \pi') \le m(\pi_0, \pi'_0)$.
\begin{enumerate}
 \item Let $\sigma = \Ind(\tau_0 |\cdot|^{s_0} \otimes \sigma_0)$ be a smooth representation of $\U(V_{n+1})$, where
\begin{itemize}
 \item $\tau_0$ is an irreducible (unitary) square-integrable representation of $\GL_{k_0}(E)$,
 \item $s_0$ is a real number, 
 \item $\sigma_0$ is a smooth representation of $\U(V_{n-2k_0+1})$ of finite length.
\end{itemize}
Assume that $s_0 \ge s_1$ (which is interpreted as $s_0 \ge 0$ when $r=0$).
Then as in \cite[Lemme 1.4]{mw}, we have $m(\pi, \sigma) \le m(\pi, \sigma_0)$.
\item Let $\sigma$ be as in (i).
Assume that
\begin{itemize}
 \item $\tau_0$ is supercuspidal;
 \item if a representation $\tau_\sharp \otimes \pi_\sharp$ with
\begin{itemize}
 \item an irreducible smooth representation $\tau_\sharp$ of a general linear group;
 \item an irreducible smooth representation $\pi_\sharp$ of a general linear group or a unitary group
\end{itemize}
intervenes in a Jacquet module of $\tau_i^\vee$, $\tau_i^c$, or $\pi_0^\vee$ as a subquotient, then $\tau_0 |\cdot|^s$ does not intervene in the supercuspidal support of $\tau_\sharp$ for any $s \in \RR$.
\end{itemize}
Then by \cite[Theorem 15.1]{ggp1} (see also \cite[Lemme 1.5]{mw}), we have $m(\pi, \sigma) = m(\pi, \sigma_0)$.
\item To prove $m(\pi, \pi') \le m(\pi_0, \pi'_0)$ in general, we may assume that $\tau_i$, $\tau_j'$ are square-integrable for all $i$, $j$.
As in \cite[\S 1.6]{mw}, we argue by induction on
\[
 l := \sum_{\substack{1 \le i \le r \\ s_i \ne 0}} k_i + \sum_{\substack{1 \le j \le r' \\ s'_j \ne 0}} k'_j.
\]
If $l=0$, then it follows by \cite[\S\S 14--15]{bp3} combined with (ii) that $m(\pi, \pi') = m(\pi_0, \pi'_0)$.
Suppose that $l \ne 0$.
\begin{enumerate}
 \item If $k=0$ and $s_1' \ge s_1$ (in particular $r'\ge 1$), then by (i), we have $m(\pi, \pi') \le m(\pi, \pi'')$, where $\pi'' = \Ind((\bigotimes_{j=2}^{r'} \tau'_j |\cdot|^{s'_j}) \otimes \pi_0')$.
 By induction hypothesis, we have $m(\pi, \pi'') \le m(\pi_0, \pi_0')$.
 \item If $s_1 \ge s_1'$ (in particular $r \ge 1$), then we can reduce to (a) by using (ii).
 \item If $s_1' \ge s_1$ (in particular $r' \ge 1$), then we can reduce to (b) by using (ii).
\end{enumerate}
This proves the assertion (see \cite[\S 1.6]{mw} for details).
\end{enumerate}

Next, we prove that $m(\pi, \pi') \ge m(\pi_0, \pi'_0)$.
By (ii), we may assume that $k=0$.
If $m(\pi_0, \pi_0') = 0$, then there is nothing to prove.
If $m(\pi_0, \pi_0') \ne 0$, then by \cite{agrs}, \cite[Corollary 15.3]{ggp1}, it suffices to show that $m(\pi, \pi') \ge 1$.
Put
\[
 \pi_z = \Ind ((\bigotimes_{i=1}^r \tau_i |\cdot|^{z_i}) \otimes \pi_0)
 \quad \text{and} \quad
 \pi'_{z'} = \Ind ((\bigotimes_{j=1}^{r'} \tau'_j |\cdot|^{z'_j}) \otimes \pi_0')
\]
for $z = (z_1, \dots, z_r) \in \CC^r$ and $z' = (z'_1, \dots, z'_{r'}) \in \CC^{r'}$.
As in \cite[Lemme 1.7]{mw}, we can define a $\Delta(\U(V_n) \times \U(V_n))$-equivariant map
\[
 \mathcal{L}_{z,z'} : \pi_z \otimes (\pi_z)^\vee \otimes \pi'_{z'} \otimes (\pi'_{z'})^\vee \longrightarrow \CC
\]
by (meromorphic continuation of) an integral of matrix coefficients, which is absolutely convergent for $(z, z')$ near $(\sqrt{-1} \RR)^r \times (\sqrt{-1} \RR)^{r'}$.
Since $m(\pi_0, \pi_0') \ne 0$, it follows by \cite[Th\'eor\`eme 14.3.1, Proposition 15.2.1, Proposition 15.3.1]{bp3} that the map $(z, z') \mapsto \mathcal{L}_{z, z'}$ is not identically zero.
In particular, the leading term of $\mathcal{L}_{z,z'}$ at $z = (s_1, \dots, s_r)$ and $z' = (s_1', \dots, s'_{r'})$ is nonzero and hence $m(\pi, \pi') \ge 1$ (see \cite[\S 1.8]{mw} for details).
This completes the proof.
\end{proof}

\subsection{(FJ) for generic $L$-parameters}

In view of Propositions \ref{P:generic-theta} and \ref{P:generic-B}, one may repeat the see-saw argument in \S \ref{s:see-saw} for generic $L$-parameters, using (P1) and (P2) (which were shown for all $L$-parameters) to prove:

\begin{prop}
\label{P:FJ-generic}
The statement (FJ) holds for all generic $L$-parameters for $\U(W_n) \times \U(W_n)$.
\end{prop}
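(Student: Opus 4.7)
The plan is to replay the see-saw argument of Section~\ref{s:see-saw} nearly verbatim, substituting the three generic-case analogues of the tempered inputs that are now available: the generic Bessel case (Proposition~\ref{P:generic-B}), the equality of big and small theta lifts for generic parameters (Proposition~\ref{P:generic-theta}), and the conjectures (P1), (P2), which Theorem~\ref{t:main} established for \emph{all} $L$-parameters, not only the tempered ones.

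Concretely, given generic $L$-parameters $\phi^{\diamondsuit}, \phi^{\heartsuit}$ for $\U(W_n^{\pm})$ and representations $\pi^{\diamondsuit} = \pi(\eta^{\diamondsuit}) \in \Pi^{\epsilon'}_{\phi^{\diamondsuit}}$, $\pi^{\heartsuit} = \pi(\eta^{\heartsuit}) \in \Pi^{\epsilon'}_{\phi^{\heartsuit}}$ realising a nonzero Fourier--Jacobi period, I would split into the even and odd cases as in \S\ref{s:see-saw} and select an irreducible $\sigma$ on $\U(V_n^{\epsilon})$ whose equal-rank theta lift to $\U(W_n^{\epsilon'})$ realises the appropriate contragredient of $\pi^{\diamondsuit}$. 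Existence of such a $\sigma$, with generic $L$-parameter $(\phi^{\diamondsuit})^{\vee}$, is guaranteed by Theorem~\ref{T:equal}(iv) combined with Proposition~\ref{P:generic-theta}(i). The see-saw identity then converts the Fourier--Jacobi nonvanishing into a Bessel nonvanishing for $(\tau, \sigma)$ with $\tau := \Theta_{\psi, V_{n+1}^{\epsilon}, W_n^{\epsilon'}}(\pi^{\heartsuit})$; under the hypothesis of Proposition~\ref{P:generic-theta}(ii), $\tau$ is irreducible with generic $L$-parameter $\phi_\tau = (\phi^{\heartsuit} \otimes \chi^{-1}) \oplus \chi^n$ and Vogan character $\eta_\tau$. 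Conjecture (P2) gives $\eta_\tau|_{S_{\phi^{\heartsuit}}} = \eta^{\heartsuit}$, while Proposition~\ref{P:generic-B} applied to $(\tau, \sigma)$ identifies each $\eta_\tau(b_j)$ with the root number $\epsilon(\tfrac{1}{2}, \phi^{\diamondsuit} \otimes \phi^{\heartsuit}_j \otimes \chi^{-1}, \psi^E)$ defining $\eta^{\clubsuit}(b_j)$. Hence $\eta^{\heartsuit}$ and $\eta^{\clubsuit}$ coincide on $S_{\phi^{\heartsuit}}$; exchanging the two factors gives the same for $\eta^{\diamondsuit}$, and together $\eta^{\diamondsuit} \otimes \eta^{\heartsuit} = \eta^{\clubsuit}$. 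The existence direction is obtained by running the see-saw backwards, starting from a Bessel pair furnished by Proposition~\ref{P:generic-B}.

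The main obstacle I anticipate lies in two places. The first is the exceptional case of Proposition~\ref{P:generic-theta}(ii): if $\phi^{\heartsuit}$ contains a summand $\chi|\cdot|^{\pm(k+1)/2} \boxtimes \mathrm{Sym}^{k-1}$ for some $k \ge 1$, then $\phi_\tau$ fails to be generic and $\tau$ may fail to be irreducible, so $\eta_\tau$ is not literally defined. I would handle this by working with the parabolically induced representation on $\U(V_{n+1}^{\epsilon})$ whose Langlands quotient captures the relevant Bessel-distinguished constituent, using Proposition~\ref{prop:heiermann} to control its structure on the Bessel target and transferring the root-number identity through the tempered piece where (P2) and the tempered case of (B) apply. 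The second is that the unitarity shortcut from \S\ref{s:see-saw}, which converted $\Hom(\pi^{\diamondsuit} \otimes \pi^{\heartsuit}, \omega)$ into a $\Hom$ involving $(\pi^{\diamondsuit})^{\vee}$, now has to be replaced with the Hom-duality for admissible smooth representations together with the identification of $\omega^{\vee}$ as a Weil representation with suitably inverted character data; this is routine but should be spelled out. Since the desired identity is purely a statement about characters of the component group $S_{\phi^{\heartsuit}} \subset S_{\phi_\tau}$ that is insensitive to whether $\tau$ is fully irreducible, no genuinely new local input beyond (P2), generic (B), and Proposition~\ref{prop:heiermann} should be required.
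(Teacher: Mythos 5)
Your overall strategy is correct and matches the paper: rerun the see-saw argument of \S\ref{s:see-saw} using Proposition~\ref{P:generic-B}, Proposition~\ref{P:generic-theta}, and the all-parameter versions of (P1) and (P2). You also correctly identify that the possible non-genericity of $\theta(\phi^{\heartsuit})$ (equivalently, reducibility of the big theta lift $\tau$ on the almost-equal-rank side) is exactly where the tempered argument breaks down. Where you diverge from the paper is in the resolution of this exceptional case. Your proposal --- analysing the Langlands quotient on $\U(V_{n+1}^{\epsilon})$, invoking Proposition~\ref{prop:heiermann}, and ``transferring the root-number identity through the tempered piece'' --- is substantially heavier than what the paper does, and it is also not clearly correct: once $\tau$ is reducible, $\eta_{\tau}$ is literally undefined and Proposition~\ref{P:generic-B} cannot be applied to it, so the claim that the desired identity is ``insensitive to whether $\tau$ is fully irreducible'' would require a genuine further argument rather than a routine verification.

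The paper's fix is much simpler and you appear to have missed the degree of freedom that makes it work. The character $\chi_V$ appearing in the almost-equal-rank theta in the see-saw is \emph{not} pinned down by the Fourier--Jacobi datum $\chi$: only the conjugate-symplectic character on the one-dimensional Hermitian line is forced to be a power of $\chi$, and the pair $(\chi_{V_n},\chi_W)$ (hence $\chi_{V_{n+1}} = \chi_{V_n}\cdot\chi^{(-1)^n}$) can be twisted by an arbitrary common conjugate-orthogonal character $\mu$ of $E^{\times}$. This twist cancels in the relevant product $\chi_{V_n}^{-2}\chi_W^{2}\chi^{-(-1)^n}$ that governs the Bessel root number, so the end computation of $\eta^{\clubsuit}$ is unaffected, yet it shifts $\chi_{V_{n+1}}$ away from the finitely many characters for which $\theta(\phi^{\heartsuit})$ fails to be generic. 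Since $\phi^{\heartsuit}$ contains only finitely many summands of the form $\chi_{V}|\cdot|^{\pm(k+1)/2}\boxtimes\mathrm{Sym}^{k-1}$, a suitable choice of $\mu$ makes the hypothesis of Proposition~\ref{P:generic-theta}(ii) hold throughout, and the see-saw then goes through verbatim. The fact that you phrase the exceptional condition in terms of the fixed $\chi$ rather than a variable $\chi_V$ is the symptom of the gap: with the observation above, your proposed Langlands-quotient detour is simply unnecessary. (Your second point about replacing the unitarity shortcut by a contragredient/MVW-type argument is a fair and minor observation, and does not affect the overall correctness.)
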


Here, in repeating the see-saw argument, one may choose a character $\chi_V$ so that the condition of Proposition \ref{P:generic-theta}(ii) holds.
Finally, Proposition \ref{P:FJ-generic} together with \cite[Theorem 19.1]{ggp1} implies:

\begin{cor}
The statement (FJ) holds for all generic $L$-parameters for $\U(W_n) \times \U(W_{n+2k})$.
\end{cor}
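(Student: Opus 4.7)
The plan is to deduce the corollary from the combination of Proposition \ref{P:FJ-generic} (which handles the equal rank case for all generic $L$-parameters) and the reduction result \cite[Theorem 19.1]{ggp1}, so that no new computation is really needed — the whole point is to \emph{recognize} that all the required ingredients are now in place. Fix a generic $L$-parameter $\phi = \phi^{\diamondsuit} \times \phi^{\heartsuit}$ for $\U(W_n) \times \U(W_{n+2k})$ with $k>0$, and let $(\pi^{\diamondsuit}, \pi^{\heartsuit}) \in \Pi_{\phi^{\diamondsuit}}^{\epsilon'} \times \Pi_{\phi^{\heartsuit}}^{\epsilon'}$.

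The strategy of \cite[Theorem 19.1]{ggp1} is to decompose $\phi^{\heartsuit}$ (by extracting conjugate self-dual summands and their duals, together with non-conjugate self-dual pieces $\rho \oplus (\rho^c)^\vee$) so that $\pi^{\heartsuit}$ is realized as the (unique, by Proposition \ref{prop:heiermann}) irreducible quotient of a standard module parabolically induced from a Levi of the form $\GL_{k_1}(E) \times \cdots \times \GL_{k_r}(E) \times \U(W_n^{\epsilon'})$ inside $\U(W_{n+2k}^{\epsilon'})$, with $\sum k_i = k$ and inducing data $\tau_1 |\cdot|^{s_1} \otimes \cdots \otimes \tau_r |\cdot|^{s_r} \otimes \pi^{\heartsuit}_0$ for some $\pi_0^{\heartsuit} \in \Pi_{\phi_0^{\heartsuit}}^{\epsilon'}$. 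A Frobenius reciprocity / Jacquet module computation (the unitary analogue of \cite[\S 1]{mw}, which is exactly what Proposition \ref{prop:mult} carries out in the Bessel case) identifies the Fourier--Jacobi Hom space
\[
 \Hom_{\Delta \U(W_n^{\epsilon'})}(\pi^{\diamondsuit} \otimes \pi^{\heartsuit}, \omega_{\psi,\chi, W_n^{\epsilon'}})
\]
with the equal-rank Hom space
\[
 \Hom_{\Delta \U(W_n^{\epsilon'})}(\pi^{\diamondsuit} \otimes \pi^{\heartsuit}_0, \omega_{\psi,\chi, W_n^{\epsilon'}}),
\]
so that the nonvanishing of the former is equivalent to the nonvanishing of the latter.

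To conclude, one checks that the distinguished character recipe is compatible with this reduction: namely, the pieces $\rho$ and $(\rho^c)^\vee$ do not contribute to $S_{\phi^{\heartsuit}}$, while the conjugate self-dual summands of $\phi^{\heartsuit}$ coming from tempered $\tau_i$ contribute to $S_{\phi^{\heartsuit}}$ in exactly the same way as to $S_{\phi^{\heartsuit}_0}$. Moreover, the $\epsilon$-factors defining $\eta^{\clubsuit}$ for $\phi$ restrict to the corresponding $\epsilon$-factors for $\phi^{\diamondsuit} \times \phi_0^{\heartsuit}$, since the extra factors attached to $\rho \oplus (\rho^c)^\vee$ in the definition of $\eta^{\clubsuit}$ are of the form $\epsilon(\tfrac{1}{2}, \phi^{\diamondsuit}_i \otimes (\rho \oplus (\rho^c)^\vee) \otimes \chi^{-1}, \psi^E)$, which equals $1$ by the standard conjugate-duality argument. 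Applying Proposition \ref{P:FJ-generic} to the pair $(\pi^{\diamondsuit}, \pi^{\heartsuit}_0)$ then yields the desired statement for $(\pi^{\diamondsuit}, \pi^{\heartsuit})$.

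The main obstacle I anticipate is not a conceptual one but a bookkeeping one: verifying that the reduction in \cite[Theorem 19.1]{ggp1}, which in that reference is proved conditionally on $(\text{FJ})$ for equal rank pairs, goes through unchanged for generic (rather than merely tempered) parameters. This however is exactly where Proposition \ref{prop:heiermann} does the work — the standard modules that appear in the parabolic induction are irreducible — and the extension of the multiplicity one / Jacquet module argument from tempered to generic $L$-parameters runs in parallel with the Bessel case already treated in Proposition \ref{prop:mult}. Thus the corollary reduces entirely to results already established in the paper and in \cite{ggp1}.
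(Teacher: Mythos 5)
Your proposal is correct and takes essentially the same approach as the paper: the corollary follows by combining Proposition~\ref{P:FJ-generic} (the equal-rank case) with \cite[Theorem 19.1]{ggp1}, and the paper's own proof is literally just that one-line citation. One small clarification: you flag as the ``main obstacle'' the need to verify that the reduction in \cite[Theorem 19.1]{ggp1} extends from tempered to generic parameters, but as noted earlier in this paper's introduction, \cite[Theorem 19.1]{ggp1} is already stated for generic $L$-parameters, so no further verification is required; your additional unpacking of its internal mechanism (decomposition of $\phi^{\heartsuit}$, irreducibility of standard modules via Proposition~\ref{prop:heiermann}, compatibility of $\eta^{\clubsuit}$ with the reduction) is accurate but goes beyond what the paper itself spells out.
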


\appendix

\section{\textbf{Addendum to \cite{gi}}}

In this appendix, we elaborate on some results of \cite[Appendix C]{gi} which are used in the proof of Theorem \ref{T:al-equal}.
In particular,
\begin{itemize}
\item we fill in some missing details in the proof of \cite[Proposition C.1(ii)]{gi} and streamline its proof by exploiting the recently established Howe duality \cite{gt1}, \cite{gt2};
\item we extend some results of Mui\'c \cite[Lemma 4.2 and Theorem 5.1(i)]{muic} (used in the proof of \cite[Proposition C.1(ii)]{gi}), which were written only for symplectic-orthogonal dual pairs, to cover all dual pairs considered in \cite{gi}, streamlining some of his proofs in the process.
\end{itemize}

\subsection{The issues}

Let us be more precise.
We freely use the notation of \cite[\S C.1]{gi}.

Let $\pi$ be an irreducible square-integrable representation of $G(W)$ such that $\sigma_0 := \Theta_{\tilde{V},W,\Chi,\psi}(\pi) \ne 0$.
By the bullet point on \cite[p.~645]{gi}, together with the Howe duality, $\sigma_0$ is irreducible and square-integrable.
Then we showed that
\begin{enumerate}
 \item any irreducible subquotient of $\Theta_{V,W,\Chi,\psi}(\pi)$ is tempered in the first bullet point on \cite[p.~646]{gi};
 \item $\sigma := \theta_{V,W,\Chi,\psi}(\pi)$ is an irreducible constituent of $I^{H(V)}_{Q(Y_1)}(\chi_W \otimes \sigma_0)$ in the the second bullet point on \cite[p.~646]{gi},
\end{enumerate}
and claimed that
\begin{enumerate}[resume]
\item any irreducible subquotient of $\Theta_{V,W,\Chi,\psi}(\pi)$ is not square-integrable in the third bullet point on \cite[p.~646]{gi};
\item any irreducible subquotient of $\Theta_{V,W,\Chi,\psi}(\pi)$ is a subrepresentation of $I^{H(V)}_{Q(Y_1)}(\chi_W \otimes \sigma_0')$ for some irreducible smooth representation $\sigma_0'$ of $H(\tilde{V})$ in the fourth bullet point on \cite[p.~646]{gi}.
\end{enumerate}
However, in the third and fourth bullet points on \cite[p.~646]{gi}, we have used results of Mui\'c \cite[Lemma 4.2 and Theorem 5.1(i)]{muic}, which were written only for symplectic-orthogonal dual pairs.
Moreover, we have not given the proof of (iv): we have simply asserted that it is true as if it is obvious (which it is not).
Thus, we need to give the details of the proof of (iii) and (iv), as well as that of the results of Mui\'c for all dual pairs considered in \cite{gi}.

\subsection{Proof of (iii)}

First, we address (iii).
Our original argument in \cite{gi} used \cite[Lemma 4.2 and Theorem 5.1(i)]{muic}, which we state and prove in Lemma \ref{lem:theta-scsupp} and Corollary \ref{C:muicsq} below.
Here, we give a more streamlined argument using the recently established Howe duality \cite{gt1}, \cite{gt2}.

Let $\sigma'$ be an irreducible subquotient of $\Theta_{V,W,\Chi,\psi}(\pi)$.
Suppose that $\sigma'$ is square-integrable.
Since $\Theta_{V,W,\Chi,\psi}(\pi)$ is of finite length and tempered by (i), it follows by \cite[Corollaire III.7.2]{w0} that $\sigma'$ is in fact a quotient of $\Theta_{V,W,\Chi,\psi}(\pi)$.
Hence we must have $\sigma' \cong \sigma$ by the Howe duality.
But $\sigma$ is not square-integrable by (ii), which is a contradiction.
This completes the proof of (iii).

\subsection{Proof of  \cite[Lemma 4.2]{muic}}

For the proof of (iv), we will need the following result of Mui\'c \cite[Lemma 4.2]{muic}.

\begin{lem}[Mui\'c]
\label{lem:theta-scsupp}
Let $G(W) \times H(V)$ be an arbitrary reductive dual pair as in \cite[\S 3]{gi}.
Let $\pi$ be an irreducible smooth representation of $G(W)$.
Then all irreducible subquotients of $\Theta_{V,W,\Chi,\psi}(\pi)$ have the same supercuspidal support.
\end{lem}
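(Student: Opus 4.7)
The plan is to follow the proof of Mui\'c \cite[Lemma 4.2]{muic}, which was written in the symplectic-orthogonal setting, and verify that it extends to the unitary-unitary and symplectic-metaplectic dual pairs considered in \cite{gi}. The only input to his argument that depends on the choice of dual pair is Kudla's filtration \cite{kudla} of the Jacquet modules of the Weil representation; this filtration takes the same shape for all type-I dual pairs and is recorded uniformly in \cite[Lemma C.2]{gi}. The remaining ingredients---Casselman's subrepresentation theorem, second adjointness, exactness of the Jacquet functor, and the compatibility of the Jacquet functor with the formation of $\pi$-coinvariants on the Weil representation---are case-independent.

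Let $\sigma'$ be any irreducible subquotient of $\Theta(\pi) := \Theta_{V,W,\Chi,\psi}(\pi)$, and let $(M, \rho)$ represent its supercuspidal support in $H(V)$, so that $\sigma'$ embeds into $\Ind^{H(V)}_{P}(\rho)$ for a standard parabolic $P$ with Levi $M$. By second adjointness, $\rho$ appears as a subrepresentation of the normalized Jacquet module $R_{\bar P}(\sigma')$; by exactness of the Jacquet functor, it then appears as an irreducible subquotient of $R_{\bar P}(\Theta(\pi))$. Iteratively applying Kudla's filtration to $R_{\bar P}(\omega_{V,W,\Chi,\psi})$ and passing to $\pi$-coinvariants, one obtains a filtration of $R_{\bar P}(\Theta(\pi))$ in the Grothendieck group whose successive quotients are induced from representations of the shape $\bigotimes_i \chi_i |\cdot|^{s_i} \otimes \Theta_{V_0,W_0,\Chi,\psi}(\pi_\alpha)$, where $\pi_\alpha$ ranges over the irreducible constituents of iterated Jacquet modules of $\pi$ along parabolic subgroups of $G(W)$ of matching type, and the characters $\chi_i$ and exponents $s_i$ are completely determined by $\Chi, \psi$ and the position in the filtration. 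Reading off supercuspidal subquotients and tracking the twists, every such $\rho$ lies---up to $W$-conjugacy in $H(V)$---in a fixed orbit determined by the supercuspidal support of $\pi$; in particular, this orbit is independent of the choice of $\sigma'$.

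The main obstacle is the explicit bookkeeping required to check that the twisting characters $\chi_i$ and exponents $s_i$ produced at the various stages of the iterated Kudla filtration combine to give supercuspidal data lying in a single Weyl orbit in $H(V)$. This verification was carried out by Mui\'c \cite[\S 4]{muic} for the orthogonal case; in the unitary and metaplectic settings it requires only notational modifications, together with a routine accounting of the Galois structure of the characters $\chi_V, \chi_W$ in the unitary case.
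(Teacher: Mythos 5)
Your proposal follows Mui\'c's original route --- iterate Kudla's filtration on the Jacquet modules of the Weil representation, pass to $\pi$-coinvariants, and read off the supercuspidal data --- and you assert that the only case-dependent input, the filtration itself, is already available uniformly in \cite[Lemma C.2]{gi}. That route would work in principle, but the paper deliberately avoids it. The paper's argument is much shorter and more conceptual: since $\Theta_{V,W,\Chi,\psi}(\pi)$ has finite length, the Bernstein center decomposes it as a direct sum $\sigma_1 \oplus \cdots \oplus \sigma_r$ in which each summand $\sigma_i$ is ``primary,'' i.e.\ all of its irreducible subquotients share a single supercuspidal support $\supp\sigma_i$, and these supports are pairwise distinct. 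Each nonzero $\sigma_i$ admits a nonzero irreducible quotient, hence so does $\Theta(\pi)$ through $\sigma_i$; but Kudla's supercuspidal support theorem (already available for all these dual pairs --- see \cite[Proposition 5.2]{gi}) says that every irreducible quotient of $\Theta(\pi)$ has supercuspidal support determined solely by that of $\pi$. Since the $\supp\sigma_i$ are distinct, this forces $r=1$. In other words, the paper converts a statement about all subquotients to one about quotients only, for which Kudla's theorem is a ready-made black box; the ``explicit bookkeeping'' that you flag as the main obstacle and defer as routine is exactly the labor the paper is designed to bypass --- the appendix explicitly advertises ``streamlining some of his proofs.'' Your proof is thus correct in outline but does not reproduce the paper's argument, and as written it still leaves unverified the case-by-case tracking of twisting characters and exponents which the paper's approach eliminates entirely.
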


\begin{proof}
We may assume that $\Theta_{V,W,\Chi,\psi}(\pi) \ne 0$.
Since $\Theta_{V,W,\Chi,\psi}(\pi)$ is of finite length, it follows by the theory of the Bernstein center \cite{bernstein} that
\[
 \Theta_{V,W,\Chi,\psi}(\pi) = \sigma_1 \oplus \dots \oplus \sigma_r
\]
for some smooth representations $\sigma_i$ of $H(V)$ of finite length such that
\begin{itemize}
 \item for each $i$, all irreducible subquotients of $\sigma_i$ have the same supercuspidal support, say, $\supp \sigma_i$;
 \item if $i \ne j$, then $\supp \sigma_i \ne \supp \sigma_j$.
\end{itemize}
Of course, if we were willing to appeal to the Howe duality, then it would follow immediately that $r=1$, so that the lemma is proved.
However, we may appeal to an older result of Kudla.
Namely, Kudla's supercuspidal support theorem \cite{kudla} (see also \cite[Proposition 5.2]{gi} and the references therein) says that the supercuspidal support of $\theta_{V,W,\Chi,\psi}(\pi)$ is determined by that of $\pi$.
Hence we must have $r=1$.
\end{proof}

\subsection{Plancherel measures}

To prove (iv), we will also need the following property of Plancherel measures.
We freely use the convention of \cite[Appendix B]{gi}.

\begin{lem}
\label{lem:plancherel}
Let $G(W)$ be an arbitrary classical group as in \cite[\S 2]{gi}.
Let $\pi$ be an irreducible tempered representation of $G(W)$ such that
\[
 \pi \subset I^{G(W)}_P(\tau_1 \otimes \dots \otimes \tau_r \otimes \pi_0),
\]
where $P$ is a parabolic subgroup of $G(W)$ with Levi component $\GL_{k_1}(E) \times \dots \times \GL_{k_r}(E) \times G(W_0)$, $\tau_i$ is an irreducible (unitary) square-integrable representation of $\GL_{k_i}(E)$, and $\pi_0$ is an irreducible square-integrable representation of $G(W_0)$.
Let $\tau$ be an irreducible (unitary) square-integrable representation of $\GL_k(E)$ and put
\[
 \mathcal{I}(\tau) = \{ i \, | \, \tau_i \cong \tau \}.
\]
Then we have
\[
 \ord_{s=0} \mu(\tau_s \otimes \pi) = 2 \cdot \# \mathcal{I}(\tau) + 2 \cdot \# \mathcal{I}((\tau^c)^\vee) + \ord_{s=0} \mu(\tau_s \otimes \pi_0).
\]
Moreover, we have
\[
 \ord_{s=0} \mu(\tau_s \otimes \pi_0) = 
 \begin{cases}
  \text{$0$ or $2$} & \text{if $(\tau^c)^\vee \cong \tau$;} \\
  0 & \text{if $(\tau^c)^\vee \ncong \tau$.} 
 \end{cases}
\]
\end{lem}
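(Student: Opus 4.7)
The approach will be to apply the multiplicativity of Plancherel measures, which factors $\mu(\tau_s \otimes \pi)$ as a product of rank-one Plancherel measures indexed by the positive roots of the ambient larger classical group outside the Levi $\GL_k(E) \times G(W)$. Starting from the inclusion $\pi \hookrightarrow I^{G(W)}_P(\tau_1 \otimes \cdots \otimes \tau_r \otimes \pi_0)$ and inducing further by $\tau_s$, I expect to obtain
\[
 \mu(\tau_s \otimes \pi) = \prod_{i=1}^r \bigl( \mu_{GL}(\tau_s, \tau_i) \cdot \mu_{GL}(\tau_s, (\tau_i^c)^\vee) \bigr) \cdot \mu(\tau_s \otimes \pi_0),
\]
where the first factor in each pair corresponds to the root $e_0 - e_i$ (crossing $\tau_s$ past $\tau_i$), and the second to $e_0 + e_i$ (crossing through the central $G(W_0)$-block, which twists $\tau_i$ into $(\tau_i^c)^\vee$).

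I will then analyze each GL-GL factor. By the Langlands--Shahidi formula, $\mu_{GL}(\tau_s, \tau_i)$ equals, up to an invertible holomorphic function near $s=0$, the product
\[
 \gamma(s, \tau \times \tau_i^\vee, \psi_E) \cdot \gamma(-s, \tau^\vee \times \tau_i, \psi_E).
\]
For unitary square-integrable $\tau$ and $\tau_i$, the Rankin--Selberg $L$-function $L(s, \tau \times \tau_i^\vee)$ has a simple pole at $s=0$ precisely when $\tau \cong \tau_i$ and is otherwise holomorphic and nonvanishing there; hence each $\gamma$-factor has a simple zero at $s=0$ exactly in the isomorphic case. The product consequently contributes $2$ to $\ord_{s=0}$ precisely when $i \in \mathcal{I}(\tau)$. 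The parallel argument with $\tau_i$ replaced by $(\tau_i^c)^\vee$ yields the $\mathcal{I}((\tau^c)^\vee)$-contribution, and summing over $i$ gives the first assertion.

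For the second assertion, I will read off the decomposition of $\mu(\tau_s \otimes \pi_0)$ from the normalizing factor $r(w, \tau_s \otimes \pi_0)$ recorded in \S \ref{SS:normalization}: up to an invertible holomorphic function near $s=0$,
\[
 \mu(\tau_s \otimes \pi_0) = \gamma(s, \phi_\tau \otimes \phi_{\pi_0}^\vee, \psi_E) \, \gamma(-s, \phi_\tau^\vee \otimes \phi_{\pi_0}, \psi_E) \, \gamma(2s, \As^{\pm} \circ \phi_\tau, \psi) \, \gamma(-2s, \As^{\pm} \circ \phi_\tau^\vee, \psi),
\]
with the Asai sign determined by the parity of $\dim W$. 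Since $\pi_0$ is square-integrable (so $\phi_{\pi_0}$ contains no GL-type component of the form $\phi_\tau$) and $\tau$ is a representation of a general linear group, the first pair of Rankin--Selberg $\gamma$-factors is holomorphic and nonzero at $s=0$. The Asai pair is likewise regular at $s=0$ unless $L(s, \As^\pm \circ \phi_\tau)$ has a pole at $s=1$, which by the classification of poles of Asai $L$-functions occurs precisely when $\phi_\tau$ is conjugate self-dual of the sign matching the Asai twist, i.e.\ exactly when $(\tau^c)^\vee \cong \tau$; in that case one gets an order $2$ zero, and otherwise $0$. The main obstacle will be the sign bookkeeping in this final step -- one must check that the Asai sign appearing in the rank-one Plancherel formula matches the conjugate-duality sign of $\phi_\tau$ corresponding to $(\tau^c)^\vee \cong \tau$ -- but this should fall out of the normalizing conventions in \S \ref{SS:normalization}, where the relevant $\gamma$-factors are explicitly exhibited.
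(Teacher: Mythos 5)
Your proof of the first assertion is essentially identical to the paper's: the same multiplicativity factorization $\mu(\tau_s\otimes\pi)=\bigl(\prod_i\mu(\tau_s\otimes\tau_i)\mu(\tau_s\otimes(\tau_i^c)^\vee)\bigr)\mu(\tau_s\otimes\pi_0)$ and the same Rankin--Selberg $\gamma$-factor computation for the $\GL\times\GL$ pieces, which contribute order $2$ at $s=0$ precisely when $\tau\cong\tau_i$. That part is fine.

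For the second assertion you take a genuinely different route from the paper. The paper simply cites Waldspurger's harmonic-analysis results \cite[Corollaire IV.1.2, Proposition IV.2.2]{w0}, which are statements about Plancherel measures attached to square-integrable representations and require no appeal to the local Langlands correspondence. You instead try to deduce the order of vanishing from the Langlands--Shahidi $\gamma$-factor expression
\[
 \mu(\tau_s\otimes\pi_0)\ \doteq\ \gamma(s,\phi_\tau\otimes\phi_{\pi_0}^\vee,\psi_E)\,\gamma(-s,\phi_\tau^\vee\otimes\phi_{\pi_0},\bar\psi_E)\,\gamma(2s,R\circ\phi_\tau,\psi)\,\gamma(-2s,R\circ\phi_\tau^\vee,\bar\psi).
\]
This could be made to work, but as written there is a concrete error that is not just deferred bookkeeping. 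You assert that because $\pi_0$ is square-integrable, ``$\phi_{\pi_0}$ contains no GL-type component of the form $\phi_\tau$'' and hence the Rankin--Selberg pair is holomorphic and nonvanishing at $s=0$. This is false: $\phi_{\pi_0}$ is a multiplicity-free direct sum of irreducible conjugate self-dual representations of sign $(-1)^{\dim W_0-1}$, and nothing prevents $\phi_\tau$ itself from being one of those summands. When $\phi_\tau\subset\phi_{\pi_0}$, each of $L(s,\phi_\tau\otimes\phi_{\pi_0}^\vee)$ and $L(s,\phi_\tau^\vee\otimes\phi_{\pi_0})$ has a simple pole at $s=0$, so the Rankin--Selberg pair contributes order $2$ --- and indeed this is exactly the case in which Waldspurger's \cite[Corollaire IV.1.2]{w0} records the value $2$ (equivalently, $I(\tau\otimes\pi_0)$ is reducible). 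The lemma's bound of at most $2$ is then saved not by the Rankin--Selberg pair being trivial, but by the sign constraint: if $\phi_\tau$ occurs in $\phi_{\pi_0}$ it has sign $(-1)^{\dim W_0-1}$, whereas $L(0,R\circ\phi_\tau)$ with $R=\As^{(-1)^{\dim W_0}}$ has a pole only when $\phi_\tau$ has sign $(-1)^{\dim W_0}$; so the Rankin--Selberg and Asai pairs never contribute simultaneously. This interplay is the content you need, and it is precisely what you relegated to ``the main obstacle will be the sign bookkeeping'' at the end --- but the incorrect claim about the Rankin--Selberg factor means the argument, as stated, would compute the answer wrongly in exactly the case that produces the interesting value $2$. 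There is also a minor slip: the zero of $\gamma(2s,R\circ\phi_\tau,\psi)$ at $s=0$ comes from a pole of $L(s,R\circ\phi_\tau)$ at $s=0$, not at $s=1$.
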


\begin{proof}
By the multiplicativity of Plancherel measures (see \cite[\S B.5]{gi}), we have
\[
 \mu(\tau_s \otimes \pi)
 = \left( \prod_{i=1}^r \mu(\tau_s \otimes \tau_i) \cdot \mu(\tau_s \otimes (\tau_i^c)^\vee) \right)
 \cdot \mu(\tau_s \otimes \pi_0).
\]
For any irreducible (unitary) square-integrable representation $\tau'$ of $\GL_{k'}(E)$, we have
\[
 \mu(\tau_s \otimes \tau') = \gamma(s, \tau \times (\tau')^\vee, \psi_E) \cdot \gamma(-s, \tau^\vee \times \tau', \bar{\psi}_E)
\]
and hence
\[
 \ord_{s=0} \mu(\tau_s \otimes \tau') = 
 \begin{cases}
  2 & \text{if $\tau \cong \tau'$;} \\
  0 & \text{if $\tau \ncong \tau'$,}
 \end{cases}
\]
which reflects the triviality of $R$-groups for general linear groups.
This proves the first assertion.
The second assertion follows from \cite[Corollaire IV.1.2]{w0} if $(\tau^c)^\vee \cong \tau$ and \cite[Proposition IV.2.2]{w0} if $(\tau^c)^\vee \ncong \tau$.
\end{proof}

\subsection{Proof of (iv)}

Now we prove (iv).
Let $\sigma'$ be an irreducible subquotient of $\Theta_{V,W,\Chi,\psi}(\pi)$.
By (i) and (iii), we have
\[
 \sigma' \subset I^{H(V)}_Q(\tau_1 \otimes \dots \otimes \tau_r \otimes \sigma_0')
\]
for some $r \ge 1$ and irreducible square-integrable representations $\tau_i$ and $\sigma_0'$ of $\GL_{k_i}(E)$ and $H(V_0)$ respectively, where $Q$ is a parabolic subgroup of $H(V)$ with Levi component $\GL_{k_1}(E) \times \dots \times \GL_{k_r}(E) \times H(V_0)$.
We need to show that $\tau_i = \chi_W$ for some $i$.

By Lemma \ref{lem:theta-scsupp} and the multiplicativity of Plancherel measures, we have
\[
 \mu((\chi_W)_s \otimes \sigma') = \mu((\chi_W)_s \otimes \sigma).
\]
By (ii) and Lemma \ref{lem:plancherel}, the right-hand side has a zero at $s=0$ of order at least $4$.
Hence, by Lemma \ref{lem:plancherel} again, we must have $\tau_i = \chi_W$ for some $i$.
This completes the proof of (iv).

\begin{rem}
In the proof of (iii) and (iv), we have used some results of Waldspurger \cite{w0}, which were written only for connected reductive linear algebraic groups.
However, it is straightforward to extend them to the cases of (disconnected) orthogonal groups and (nonlinear) metaplectic groups.
\end{rem}

\subsection{Proof of \cite[Theorem 5.1(i)]{muic}}

As we noted above, we have used \cite[Theorem 5.1(i)]{muic} besides \cite[Lemma 4.2]{muic} in our original argument in \cite{gi}.
Although it is not necessary for the proof of (iii) and (iv) (because of the use of the Howe duality), we shall give a proof here.
In fact, we prove the following more general result by refining the argument in the proof of (iv).

\begin{lem}
\label{L:plansc}
Let $G(W)$ be an arbitrary classical group as in \cite[\S 2]{gi}.
Let $\pi$ be an irreducible tempered representation of $G(W)$ such that
\[
 \pi \subset I^{G(W)}_P(\tau_1 \otimes \dots \otimes \tau_r \otimes \pi_0),
\]
where $P$ is a parabolic subgroup of $G(W)$ with Levi component $\GL_{k_1}(E) \times \dots \times \GL_{k_r}(E) \times G(W_0)$, $\tau_i$ is an irreducible (unitary) square-integrable representation of $\GL_{k_i}(E)$, and $\pi_0$ is an irreducible square-integrable representation of $G(W_0)$.
Likewise, let $\pi'$ be an irreducible tempered representation of $G(W)$ such that
\[
 \pi' \subset I^{G(W)}_{P'}(\tau'_1 \otimes \dots \otimes \tau'_{r'} \otimes \pi_0')
\]
with analogous data $P'$, $r'$, $\tau'_i$, $\pi'_0$.
Assume that
\[
 \mu(\tau_s \otimes \pi) = \mu(\tau_s \otimes \pi')
\]
for all irreducible (unitary) square-integrable representations $\tau$ of $\GL_k(E)$ for all $k \ge 1$.
Then we have $r=r'$ and 
\[
 \{ \tau_1, \dots, \tau_r, (\tau_1^c)^\vee, \dots, (\tau_r^c)^\vee \} = \{ \tau_1', \dots, \tau_r', ((\tau_1')^c)^\vee, \dots, ((\tau_r')^c)^\vee \}
\]
as multi-sets.
Moreover, we have
\[ 
 \mu(\tau_s \otimes \pi_0) = \mu(\tau_s \otimes \pi_0')
\]
for all irreducible (unitary) square-integrable representations $\tau$ of $\GL_k(E)$ for all $k \ge 1$.
\end{lem}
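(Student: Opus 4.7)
The plan is to extract the multi-set $\{\tau_1,\dots,\tau_r,(\tau_1^c)^\vee,\dots,(\tau_r^c)^\vee\}$ from the family of orders of vanishing of $\mu(\tau_s \otimes \pi)$ at $s=0$, using Lemma \ref{lem:plancherel} as the main input. The strategy is thus: first, for every irreducible unitary square-integrable representation $\tau$ of $\GL_k(E)$ (for varying $k$), compare the orders of vanishing $\ord_{s=0}\mu(\tau_s\otimes\pi) = \ord_{s=0}\mu(\tau_s\otimes\pi')$ using the explicit formula from Lemma \ref{lem:plancherel}; second, deduce equality of multi-sets; third, cancel the GL contributions from the multiplicativity of Plancherel measures to recover the final equality for $\pi_0$ and $\pi_0'$.

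For the second step, I would split into two cases according to whether $(\tau^c)^\vee \cong \tau$ or not, since Lemma \ref{lem:plancherel} shows the contribution $\ord_{s=0}\mu(\tau_s\otimes\pi_0)$ is $0$ in the non-conjugate-self-dual case and lies in $\{0,2\}$ in the conjugate-self-dual case. In the non-conjugate-self-dual case, the equality of orders immediately reads
\[
 \#\mathcal{I}(\tau)+\#\mathcal{I}((\tau^c)^\vee) = \#\mathcal{I}'(\tau)+\#\mathcal{I}'((\tau^c)^\vee),
\]
which is precisely the statement that the multiplicity of $\tau$ in the combined multi-set for $\pi$ equals that for $\pi'$. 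In the conjugate-self-dual case, we have $\#\mathcal{I}(\tau) = \#\mathcal{I}((\tau^c)^\vee)$, so the equality becomes
\[
 4\#\mathcal{I}(\tau) + 2\epsilon = 4\#\mathcal{I}'(\tau)+2\epsilon'
\]
for some $\epsilon,\epsilon'\in\{0,1\}$; since $2(\epsilon'-\epsilon)\in\{-2,0,2\}$ must be divisible by $4$, we force $\epsilon=\epsilon'$ and $\#\mathcal{I}(\tau)=\#\mathcal{I}'(\tau)$. In both cases the multiplicity of $\tau$ in the multi-set coincides for $\pi$ and $\pi'$. Taking $\tau$ to range over all irreducible unitary square-integrable representations of all $\GL_k(E)$, we obtain equality of the full multi-sets (and in particular $r=r'$ by counting total size).

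For the third step, I would use the multiplicativity of Plancherel measures (\cite[\S B.5]{gi}):
\[
 \mu(\tau_s\otimes\pi) = \mu(\tau_s\otimes\pi_0)\cdot\prod_{i=1}^r \mu(\tau_s\otimes\tau_i)\,\mu(\tau_s\otimes(\tau_i^c)^\vee),
\]
and similarly for $\pi'$. Because the multi-sets $\{\tau_i,(\tau_i^c)^\vee\}_{i}$ and $\{\tau_j',((\tau_j')^c)^\vee\}_{j}$ now coincide, the GL products agree as meromorphic functions of $s$; since those products are not identically zero, they can be cancelled, yielding $\mu(\tau_s\otimes\pi_0)=\mu(\tau_s\otimes\pi_0')$.

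The main obstacle is the conjugate-self-dual case in the second step: a priori the order-of-vanishing comparison only gives an equation modulo the unknown $\{0,2\}$ contribution from $\pi_0$ and $\pi_0'$, and one must exploit the divisibility by $4$ of the GL contribution to decouple the two terms. This numerical trick is what allows one to avoid invoking any finer information about the tempered $R$-group of $G(W_0)$ beyond what is recorded in \cite[Corollaire IV.1.2, Proposition IV.2.2]{w0}.
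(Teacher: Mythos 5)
Your proposal is correct and follows essentially the same route as the paper: reduce to comparing the counts $\#\mathcal{I}(\tau)+\#\mathcal{I}((\tau^c)^\vee)$ via Lemma \ref{lem:plancherel}, handle the conjugate-self-dual case by the divisibility-by-four trick, and then cancel the $\GL$ contributions via multiplicativity to get the statement for $\pi_0$ and $\pi_0'$. The only cosmetic difference is that the paper notes at the outset that the second assertion follows from the first plus multiplicativity, whereas you present it as a final cancellation step.
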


\begin{proof}
Note that the second assertion is an immediate consequence of the first assertion and the multiplicativity of Plancherel measures.
To prove the first assertion, it suffices to show that
\begin{equation}
\label{eq:plansc}
 \# \mathcal{I}(\tau) + \# \mathcal{I}((\tau^c)^\vee) = \# \mathcal{I}'(\tau) + \# \mathcal{I}'((\tau^c)^\vee) 
\end{equation}
for any irreducible (unitary) square-integrable representation $\tau$ of $\GL_k(E)$, where $\mathcal{I}(\tau) = \{ i \, | \, \tau_i \cong \tau \}$ and $\mathcal{I}'(\tau) = \{ i \, | \, \tau_i' \cong \tau \}$.
If $(\tau^c)^\vee \cong \tau$, then by Lemma \ref{lem:plancherel}, we have
\[
 4 \cdot \# \mathcal{I}(\tau) + \alpha = 4 \cdot \# \mathcal{I}'(\tau) + \alpha'
\]
for some $0 \le \alpha, \alpha' \le 2$.
This forces $\# \mathcal{I}(\tau) = \# \mathcal{I}'(\tau)$, so that \eqref{eq:plansc} holds.
If $(\tau^c)^\vee \ncong \tau$, then \eqref{eq:plansc} is a direct consequence of Lemma \ref{lem:plancherel}.
This completes the proof.
\end{proof}

The following corollary (which is \cite[Theorem 5.1(i)]{muic}) is now immediate:

\begin{cor}[Mui\'c]  \label{C:muicsq}
Suppose that $\pi$ and $\pi'$ are irreducible tempered representations of $G(W)$ which have the same supercuspidal support.
If $\pi$ is square-integrable, then so is $\pi'$.
\end{cor}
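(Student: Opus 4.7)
The plan is to verify the hypothesis of Lemma \ref{L:plansc} from the assumption that $\pi$ and $\pi'$ share the same supercuspidal support, and then read off the conclusion almost for free.

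The crucial input I would first establish is that, for irreducible tempered representations of $G(W)$ and any fixed irreducible (unitary) square-integrable $\tau$ of $\GL_k(E)$, the Plancherel measure $\mu(\tau_s \otimes \pi)$ depends only on the supercuspidal support of $\pi$. By the multiplicativity formula recalled in \cite[\S B.5]{gi} and used in Lemma \ref{lem:plancherel}, one has
\[ \mu(\tau_s \otimes \pi) = \left( \prod_{i=1}^r \mu(\tau_s \otimes \tau_i) \cdot \mu(\tau_s \otimes (\tau_i^c)^\vee) \right) \cdot \mu(\tau_s \otimes \pi_0), \]
and each factor involving a $\tau_i$ is a product of $\gamma$-factors of pairs of representations of general linear groups, which depend only on the $L$-parameters and hence only on the supercuspidal supports of $\tau$ and $\tau_i$. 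This reduces the claim to the statement that $\mu(\tau_s \otimes \pi_0)$ depends only on the supercuspidal support of the discrete series $\pi_0$, which is a standard property of the Harish-Chandra $\mu$-function --- it is a rational function of the inertial parameter of the underlying cuspidal datum.

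Granting this, the hypothesis $\mu(\tau_s \otimes \pi) = \mu(\tau_s \otimes \pi')$ of Lemma \ref{L:plansc} holds for every irreducible square-integrable $\tau$ of $\GL_k(E)$, for every $k \ge 1$. The lemma then yields $r = r'$ together with the equality of multi-sets
\[ \{ \tau_1, \dots, \tau_r, (\tau_1^c)^\vee, \dots, (\tau_r^c)^\vee \} = \{ \tau'_1, \dots, \tau'_{r'}, ((\tau'_1)^c)^\vee, \dots, ((\tau'_{r'})^c)^\vee \}. \]
If $\pi$ is square-integrable, we may take $r = 0$ and $\pi_0 = \pi$, so the left-hand multi-set is empty; consequently $r' = 0$, whence $\pi' = \pi'_0$ is square-integrable, as required.

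The main obstacle is making the first step precise: justifying that $\mu(\tau_s \otimes \pi)$ really depends only on the supercuspidal support of the tempered representation $\pi$, uniformly across all the classical groups (including the disconnected orthogonal and metaplectic cases) considered in \cite{gi}. For the $\GL$-factors this is a direct consequence of the explicit formula $\mu(\tau_s \otimes \tau') = \gamma(s, \tau \times (\tau')^\vee, \psi_E) \cdot \gamma(-s, \tau^\vee \times \tau', \bar{\psi}_E)$, which manifestly depends only on supercuspidal support; for the factor $\mu(\tau_s \otimes \pi_0)$ attached to the classical-group datum one appeals to the Langlands--Shahidi expression for $\mu$ as a ratio of $\gamma$-factors of supercuspidal constituents. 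Once this point is settled, the rest of the proof of the corollary reduces to invoking Lemma \ref{L:plansc}, as outlined above.
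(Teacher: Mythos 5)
Your proof is correct and takes essentially the same approach as the paper's: show that $\mu(\tau_s \otimes \pi) = \mu(\tau_s \otimes \pi')$ for all irreducible square-integrable $\tau$ via the multiplicativity of Plancherel measures (you unpack this by reducing to supercuspidal constituents and invoking the inertial invariance of the Harish-Chandra $\mu$-function, which the paper leaves implicit in the phrase ``the multiplicativity of Plancherel measures implies''), and then read off the conclusion from Lemma~\ref{L:plansc}.
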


\begin{proof}
If $\pi$ and $\pi'$ have the same supercuspidal support, then the multiplicativity of Plancherel measures implies that
\[ \mu(\tau_s \otimes \pi) = \mu(\tau_s \otimes \pi') \]
for all irreducible (unitary) square-integrable representations $\tau$ of $\GL_k(E)$ for all $k \ge 1$.
The assertion then follows from Lemma \ref{L:plansc}. 
\end{proof}

\subsection{Some variant}

Finally, admitting the local Langlands correspondence, we shall state a variant of Lemma \ref{L:plansc} in terms of $L$-parameters.

Let $G(W)$ be an arbitrary classical group as in \cite[\S 2]{gi}.
To each irreducible tempered representation $\pi$ of $G(W)$, the local Langlands correspondence assigns an $L$-parameter $\phi$, which we regard as a semisimple representation of $\WD_E$ as described in \cite[\S 8]{ggp1}.
Moreover, for any irreducible tempered representation $\tau$ of $\GL_k(E)$ with associated $L$-parameter $\phi_\tau$, Langlands' conjecture on Plancherel measures \cite[Appendix II]{l} says that
\begin{equation}
\label{eq:pl-gamma}
 \mu(\tau_s \otimes \pi) = \gamma(s, \phi_\tau \otimes \phi^\vee, \psi_E) \cdot \gamma(-s, \phi_\tau^\vee \otimes \phi, \bar{\psi}_E) \cdot \gamma(2s, R \circ \phi_\tau, \psi) \cdot \gamma(-2s, R \circ \phi_\tau^\vee, \bar{\psi}),
\end{equation}
where
\[
 R = 
 \begin{cases}
  \mathrm{Sym}^2 & \text{if $G(W)$ is odd orthogonal or metaplectic;} \\
  \wedge^2 & \text{if $G(W)$ is even orthogonal or symplectic;} \\
  \As^+ & \text{if $G(W)$ is even unitary;} \\
  \As^- & \text{if $G(W)$ is odd unitary.}
 \end{cases}
\]
(We note here that $\mathrm{Asai}$ in the bottom of \cite[p.~650]{gi} is a typo.)
In fact, \eqref{eq:pl-gamma} immediately follows from \cite[Proposition 2.3.1]{a}, \cite[Proposition 3.3.1]{mok}, \cite[Lemmas 2.2.3]{kmsw} (together with induction in stages) for classical groups considered there.
(See also \S \ref{SS:normalization} in the case of unitary groups.)
In other words, recalling the definitions of of Plancherel measures and normalized intertwining operators, we see that \eqref{eq:pl-gamma} is a consequence of a property of normalized intertwining operators.
Also, in the case of metaplectic groups, \eqref{eq:pl-gamma} follows from the case of odd orthogonal groups combined with \cite[Proposition 10.1]{gs}.

\begin{lem}
Let $\pi$ and $\pi'$ be irreducible tempered representations of $G(W)$ with associated $L$-parameters $\phi$ and $\phi'$ respectively.
Assume that
\[
 \mu(\tau_s \otimes \pi) = \mu(\tau_s \otimes \pi') 
\]
for all irreducible (unitary) square-integrable representations $\tau$ of $\GL_k(E)$ for all $k \ge 1$.
Then we have
\[
 \phi = \phi'.
\]
\end{lem}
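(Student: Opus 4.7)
The plan is to extract from the Plancherel identity a clean equality of local $\gamma$-factors between $\phi$ and $\phi'$ (for every twist by a tempered irreducible representation of $\WD_E$) and then identify $\phi$ with $\phi'$ by a pole-order count at $s=0$.

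First I would substitute the formula \eqref{eq:pl-gamma} on both sides of $\mu(\tau_s \otimes \pi) = \mu(\tau_s \otimes \pi')$ and cancel the factors $\gamma(\pm 2s, R \circ \phi_\tau^{(\vee)}, \psi^{(\vee)})$, which depend only on $\phi_\tau$. Applying the standard functional equation $\gamma(s, \rho, \psi) \gamma(1 - s, \rho^\vee, \bar\psi) = 1$ with $(s, \rho, \psi) \mapsto (-s, \phi_\tau^\vee \otimes \phi, \bar\psi_E)$ gives $\gamma(-s, \phi_\tau^\vee \otimes \phi, \bar\psi_E) = \gamma(s+1, \phi_\tau \otimes \phi^\vee, \psi_E)^{-1}$, and likewise for $\phi'$. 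The hypothesis then becomes
\[
\frac{\gamma(s, \phi_\tau \otimes \phi^\vee, \psi_E)}{\gamma(s+1, \phi_\tau \otimes \phi^\vee, \psi_E)} = \frac{\gamma(s, \phi_\tau \otimes (\phi')^\vee, \psi_E)}{\gamma(s+1, \phi_\tau \otimes (\phi')^\vee, \psi_E)}
\]
for every irreducible (unitary) square-integrable $\tau$ of $\GL_k(E)$. Setting $G_\tau(s) := \gamma(s, \phi_\tau \otimes \phi^\vee, \psi_E)/\gamma(s, \phi_\tau \otimes (\phi')^\vee, \psi_E)$, this says $G_\tau(s) = G_\tau(s+1)$. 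Since $G_\tau$ is rational in $X := q_E^{-s}$, any zero or pole at $X_0 \in \CC^\times$ would propagate along the infinite orbit $\{q_E^{-n} X_0 : n \in \ZZ\}$; hence $G_\tau$ has no zeros or poles in $\CC^\times$, so $G_\tau(s) = c_\tau \cdot q_E^{-N_\tau s}$, and periodicity forces $N_\tau = 0$. Thus $G_\tau$ is a nonzero constant.

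Next I would compare orders of vanishing at $s=0$. Write $\phi = \bigoplus_i m_i \phi_i$ and $\phi' = \bigoplus_i m_i' \phi_i$ with pairwise inequivalent irreducible tempered $\phi_i$. A Clebsch--Gordan decomposition of $\mathrm{Sym}^{a-1} \otimes \mathrm{Sym}^{a_i-1}$ on the $\SL_2(\CC)$-factor, combined with Schur's lemma for $W_E$, shows that $L(s, \phi_\tau \otimes \phi_i^\vee)$ is holomorphic and nonvanishing on $\Re(s) > 0$ and has a simple pole at $s=0$ iff $\phi_\tau \cong \phi_i$. Consequently $L(s, \phi_\tau \otimes \phi^\vee)$ has a pole at $s=0$ of order exactly $m_{\phi_\tau}(\phi) := \sum_i m_i \cdot \delta_{\phi_\tau, \phi_i}$, while $\epsilon(s, \phi_\tau \otimes \phi^\vee, \psi_E)$ is a nonzero monomial and $L(1-s, \phi_\tau^\vee \otimes \phi)$ is holomorphic and nonzero at $s=0$. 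The formula $\gamma(s, \rho, \psi) = \epsilon(s, \rho, \psi) L(1-s, \rho^\vee)/L(s, \rho)$ therefore gives that $\gamma(s, \phi_\tau \otimes \phi^\vee, \psi_E)$ vanishes at $s=0$ to order exactly $m_{\phi_\tau}(\phi)$, and similarly for $\phi'$. Combining with $G_\tau \in \CC^\times$ yields $m_{\phi_\tau}(\phi) = m_{\phi_\tau}(\phi')$ for every irreducible tempered $\phi_\tau$. Since the irreducible square-integrable $\tau$ exhaust all $k$-dimensional irreducible tempered $\phi_\tau$ of $\WD_E$ (for all $k \ge 1$) via the local Langlands correspondence for general linear groups, this multiplicity matching forces $\phi = \phi'$.

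The main obstacle is the pole-order computation for $L(s, \phi_\tau \otimes \phi_i^\vee)$: one must verify that when $\phi_\tau = \rho \boxtimes \mathrm{Sym}^{a-1}$ and $\phi_i = \rho_i \boxtimes \mathrm{Sym}^{a_i-1}$, the trivial character of $W_E$ appears in $(\rho \otimes \rho_i^\vee) \boxtimes (\mathrm{Sym}^{a-1} \otimes \mathrm{Sym}^{a_i-1})$ with nonzero multiplicity at $\mathrm{Sym}^0$ iff $\rho \cong \rho_i$ and $a = a_i$, i.e.\ iff $\phi_\tau \cong \phi_i$, with multiplicity one. Once this is settled, the $\epsilon$-factors and the remaining $L$-factor do not interfere at $s=0$, and the pole-order argument proceeds cleanly.
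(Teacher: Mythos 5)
Your argument is correct, and the core step is the same as the paper's: both read off the multiplicity of each irreducible tempered $\phi_\tau$ in $\phi$ and $\phi'$ by comparing orders of zero of $\gamma$-factors at $s=0$ (the paper cites \cite[Lemma 12.3]{gs} for this multiplicity count, which you spell out directly via Clebsch--Gordan and Schur's lemma). The detour through the functional equation, the periodicity $G_\tau(s)=G_\tau(s+1)$, and rationality in $q_E^{-s}$ is valid but unnecessary: after cancelling the $R\circ\phi_\tau$ factors from \eqref{eq:pl-gamma}, the paper simply compares the order of zero at $s=0$ of $\gamma(s, \phi_\tau \otimes \phi^\vee, \psi_E) \cdot \gamma(-s, \phi_\tau^\vee \otimes \phi, \bar{\psi}_E)$ with that of the analogous product for $\phi'$, which are already $2m$ and $2m'$ with $m,m'$ the respective multiplicities of $\phi_\tau$, so $m=m'$ follows immediately.
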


\begin{proof}
For any irreducible (unitary) square-integrable representation $\tau$ of $\GL_k(E)$ with associated $L$-parameter $\phi_\tau$, we have
\[
 \gamma(s, \phi_\tau \otimes \phi^\vee, \psi_E) \cdot \gamma(-s, \phi_\tau^\vee \otimes \phi, \bar{\psi}_E) 
 = \gamma(s, \phi_\tau \otimes (\phi')^\vee, \psi_E) \cdot \gamma(-s, \phi_\tau^\vee \otimes \phi', \bar{\psi}_E)
\]
by assumption and \eqref{eq:pl-gamma}.
Comparing the orders of zero at $s=0$, we see that the multiplicities of $\phi_\tau$ in $\phi$ and $\phi'$ are equal (see also \cite[Lemma 12.3]{gs}).
This completes the proof.
\end{proof}

\section{\textbf{Generic $L$-packets and adjoint $L$-factors}}

In this appendix, we prove a conjecture of Gross--Prasad and Rallis \cite[Conjecture 2.6]{gp1} under a certain working hypothesis.

\subsection{Notation}

Let $G$ be a connected reductive algebraic group defined and quasi-split over $F$.
Fix a Borel subgroup $B$ of $G$ over $F$ and a maximal torus $T$ in $B$ over $F$.
Let $N$ be the unipotent radical of $B$, so that $B = T N$.
If $P$ is a parabolic subgroup of $G$ over $F$, we say that $P$ is standard (relative to $B$) if $P \supset B$.
If $P$ is a standard parabolic subgroup of $G$ over $F$, then we have a Levi decomposition $P = MU$, where $M$ is the unique Levi component of $P$ such that $M \supset T$ and $U$ is the unipotent radical of $P$.
We call $M$ a standard Levi subgroup of $G$.
Let $W^M = \Norm_M(T)/T$ be the Weyl group of $M$ and $w^M_0$ the longest element in $W^M$.
Put
\[
 \aa_M^* = \Rat(M) \otimes_\ZZ \RR, \quad \aa_M = \Hom_\ZZ(\Rat(M), \RR),
\]
where $\Rat(M)$ is the group of algebraic characters of $M$ defined over $F$.
We write $\langle \cdot, \cdot \rangle : \aa_M^* \times \aa_M \rightarrow \RR$ for the natural pairing.
Let $\aa^*_{M,\CC} = \aa_M^* \otimes_\RR \CC$ be the complexification of $\aa_M^*$.
Let $A_M$ be the split component of the center of $M$ and $\Sigma(P)$ the set of reduced roots of $A_M$ in $P$.
We may regard $\Sigma(P)$ as a subset of $\aa_M^* \cong \Rat(A_M) \otimes_\ZZ \RR$.
For $\alpha \in \Sigma(P)$, let $\alpha^\vee \in \aa_M$ denote its corresponding coroot.
Put
\[
 (\aa_M^*)^+ = \{ \lambda \in \aa_M^{*} \, | \, \langle \lambda, \alpha^\vee \rangle > 0 \ \text{for all} \ \alpha \in \Sigma(P) \}.
\]
We define a homomorphism $H_M : M \rightarrow \aa_M$ by requiring that
\[
 |\chi(m)|_F = q^{-\langle \chi, H_M(m) \rangle}
\]
for all $\chi \in \Rat(M)$ and $m \in M$, where $q$ is the cardinality of the residue field of $F$.

Let $\pi$ be an irreducible smooth representation of $M$.
For $\lambda \in \aa^*_{M,\CC}$, we define a representation $\pi_\lambda$ of $M$ by $\pi_\lambda(m) = q^{- \langle \lambda, H_M(m) \rangle} \pi(m)$.
We write
\[
 I^G_P(\pi_\lambda) := \Ind^G_P(\pi_\lambda)
\]
for the induced representation of $G$.
If $\pi$ is tempered and $\Re(\lambda) \in (\aa^*_M)^+$, then $I^G_P(\pi_\lambda)$ has a unique irreducible quotient $J^G_P(\pi_\lambda)$.

Let $\widehat{M}$ be the dual group of $M$ and ${}^L M = \widehat{M} \rtimes W_F$ the $L$-group of $M$.
Let $Z(\widehat{M})$ be the center of $\widehat{M}$.
We write $\iota_M : {}^L M \hookrightarrow {}^L G$ for the natural embedding.
If $\phi: \WD_F \rightarrow {}^L M$ is an $L$-parameter, we say that $\phi$ is tempered if the projection of $\phi(W_F)$ to $\widehat{M}$ is bounded.
For $\lambda \in \aa^*_{M,\CC}$, we define an $L$-parameter $\phi_\lambda : \WD_F \rightarrow {}^L M$ by $\phi_\lambda = a_\lambda \cdot \phi$, where $a_\lambda \in Z^1(W_F, Z(\widehat{M}))$ is a $1$-cocycle which determines the character $m \mapsto q^{- \langle \lambda, H_M(m) \rangle}$ of $M$.

\subsection{Hypothesis}
\label{ss:gpr-hyp}

In this appendix, we admit the local Langlands correspondence for any standard Levi subgroup $M$ of $G$:
\[
 \Irr(M) = \bigsqcup_{\phi} \Pi_{\phi},
\]
where the disjoint union on the right-hand side runs over all equivalence classes of $L$-parameters $\phi$ for $M$ and $\Pi_{\phi}$ is a finite set of representations of $M$, the so-called $L$-packet.
More precisely, we will use the following properties of the local Langlands correspondence:
\begin{enumerate}
 \item $\pi \in \Pi_\phi$ is tempered if and only if $\phi$ is tempered.
 \item $\Pi_{\phi_\lambda} = \{ \pi_\lambda \, | \, \pi \in \Pi_\phi \}$ for $\lambda \in \aa_{M,\CC}^*$.
 \item If $\phi$ is an $L$-parameter for $G$, then replacing $\phi$ by its $\widehat{G}$-conjugate if necessary, we can write 
 \[
  \phi = \iota_M \circ (\phi_M)_{\lambda_0},
 \]
 where 
 \begin{itemize}
  \item $M$ is a standard Levi subgroup of $G$, 
  \item $\phi_M$ is a tempered $L$-parameter for $M$, 
  \item $\lambda_0 \in (\aa_M^*)^+$.
 \end{itemize}
 Then we have
 \[
  \Pi_{\phi} = \{ J^G_P(\pi_{\lambda_0}) \, | \, \pi \in \Pi_{\phi_M} \},
 \]
 where $P$ is the standard parabolic subgroup of $G$ with Levi component $M$.
 Note that $\pi \in \Pi_{\phi_M}$ is tempered by (i) and $\pi_{\lambda_0}$ has $L$-parameter $(\phi_M)_{\lambda_0}$ by (ii).
 \item If $\phi$ is a tempered $L$-parameter for $M$, then for any generic character $\psi_{N_M}$ of $N_M := N \cap M$, $\Pi_\phi$ contains a $(N_M, \psi_{N_M})$-generic representation $\pi$ of $M$ (see \cite[Conjecture 9.4]{shahidi}).
 Moreover, we have
\[
 \gamma^{\Sh}(s, \pi_\lambda, r_M, \psi) = \gamma(s, r_M \circ \phi_\lambda, \psi),
\]
where the left-hand side is Shahidi's $\gamma$-factor \cite{shahidi} and $r_M$ is the adjoint representation of ${}^L M$ on $\Lie({}^L U)$.
In fact, we only need the equality up to an invertible function.
\end{enumerate}

The above hypothesis is known to hold for general linear groups by \cite{ht, he, scholze} and for classical groups by \cite{a, mok}.

\subsection{A conjecture of Gross--Prasad and Rallis}

If $\phi$ is an $L$-parameter for $G$, we say that $\phi$ is generic if its associated $L$-packet $\Pi_\phi$ contains a $(N, \psi_N)$-generic representation of $G$ for some generic character $\psi_N$ of $N$.

\begin{prop}
\label{p:gp-rallis}
Let $\phi$ be an $L$-parameter for $G$.
Then, under the hypothesis in \S \ref{ss:gpr-hyp}, $\phi$ is generic if and only if $L(s, \Ad \circ \phi)$ is holomorphic at $s=1$.
Here, $\Ad$ is the adjoint representation of ${}^L G$ on its Lie algebra $\Lie({}^L G)$.
\end{prop}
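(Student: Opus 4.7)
The plan is to reduce the question to a statement about the standard modules arising from Langlands classification, then to invoke the Standard Module Conjecture together with Shahidi's theory of adjoint $L$-functions. By hypothesis (iii), I write $\phi = \iota_M \circ (\phi_M)_{\lambda_0}$ with $\phi_M$ a tempered $L$-parameter for a standard Levi $M$ of $G$ (with standard parabolic $P = MU$) and $\lambda_0 \in (\aa_M^*)^+$, so that $\Pi_\phi = \{ J^G_P(\pi_{\lambda_0}) : \pi \in \Pi_{\phi_M} \}$. I will proceed by induction on the semisimple rank of $G$.

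First, I will reformulate genericity of $\phi$ as an irreducibility statement. Fix a generic character $\psi_N$ of $N$. By Rodier's theorem on the heredity of Whittaker models under parabolic induction, any $(N, \psi_N)$-generic irreducible subquotient of $I^G_P(\pi_{\lambda_0})$ forces $\pi$ to be $(N_M, \psi_N|_{N_M})$-generic, where $N_M = N \cap M$; moreover, such a subquotient is unique and occurs as a subrepresentation. By hypothesis (iv), a generic $\pi^{\mathrm{gen}} \in \Pi_{\phi_M}$ exists. Hence $\phi$ is $(N, \psi_N)$-generic if and only if the generic subrepresentation of $I^G_P(\pi^{\mathrm{gen}}_{\lambda_0})$ coincides with the Langlands quotient $J^G_P(\pi^{\mathrm{gen}}_{\lambda_0})$, that is, if and only if $I^G_P(\pi^{\mathrm{gen}}_{\lambda_0})$ is irreducible.

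Next, I will factor the adjoint $L$-function. As ${}^L M$-modules, $\Lie({}^L G) = \Lie({}^L M) \oplus \Lie(\hat U) \oplus \Lie(\hat U^-)$, giving $\Ad \circ \iota_M = \Ad_M \oplus r_M \oplus r_M^\vee$. Since $a_{\lambda_0}$ takes values in $Z(\hat M)$ and therefore acts trivially on $\Lie(\hat M)$, we obtain
\[
L(s, \Ad \circ \phi) = L(s, \Ad_M \circ \phi_M) \cdot L(s, r_M \circ (\phi_M)_{\lambda_0}) \cdot L(s, r_M^\vee \circ (\phi_M)_{\lambda_0}).
\]
The first factor is holomorphic at $s = 1$ by the inductive hypothesis applied to $M$ (since $\phi_M$ is tempered and hence generic by (iv)). Decomposing $r_M = \bigoplus_i r_{M,i}$ by $A_M$-weights yields $L(s, r_{M,i} \circ (\phi_M)_{\lambda_0}) = L(s + c_i, r_{M,i} \circ \phi_M)$ with positive shifts $c_i > 0$; since $\phi_M$ is tempered, each such factor is holomorphic at $s = 1$. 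Consequently, $L(s, \Ad \circ \phi)$ is holomorphic at $s = 1$ if and only if $L(s, r_M^\vee \circ (\phi_M)_{\lambda_0})$ is.

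Finally, I will identify standard-module irreducibility with the holomorphy of the remaining $L$-factor at $s=1$. This is provided by the Standard Module Conjecture, now a theorem for quasi-split reductive groups thanks to Casselman--Shahidi, Heiermann--Opdam, Heiermann, and Heiermann--Mui\'c. Using hypothesis (iv) to identify Shahidi's $\gamma$-factors with Artin $\gamma$-factors, the Langlands--Shahidi analysis of normalized intertwining operators yields the equivalence between irreducibility of $I^G_P(\pi^{\mathrm{gen}}_{\lambda_0})$ and holomorphy of $L(s, r_M^\vee \circ (\phi_M)_{\lambda_0})$ at $s = 1$. The hardest step will be this last identification, which packages the substantial analytic content of the Langlands--Shahidi method; a secondary delicacy is the tempered base case of the induction, which depends on Shahidi's adjoint $L$-function theorem for tempered generic parameters.
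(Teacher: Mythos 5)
Your outline shares the paper's skeleton — the Langlands-type decomposition $\phi = \iota_M \circ (\phi_M)_{\lambda_0}$, the factorization $L(s, \Ad \circ \phi) = L(s, r_M \circ (\phi_M)_{\lambda_0}) \cdot L(s, \Ad_M \circ \phi_M) \cdot L(s, r_M^\vee \circ (\phi_M)_{\lambda_0})$, and the reduction to the $r_M^\vee$ factor — but takes a genuinely different route for the key equivalence, and that route has a gap.

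First, a minor point: the induction is unnecessary. Since $\phi_M$ is tempered, $\Ad_M \circ \phi_M$ has bounded image, so $L(s, \Ad_M \circ \phi_M)$ is holomorphic and nonzero for $\Re(s) > 0$ by temperedness alone. Similarly, with $\lambda_0 \in (\aa_M^*)^+$, the shifts in $L(s, r_M \circ (\phi_M)_{\lambda_0})$ push the argument further into the right half-plane, so it too is holomorphic and nonzero at $s=1$. Your ``base case of the induction'' is superfluous; the tempered contribution is handled directly, as in the paper.

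The real issue is your final step. You pass from genericity of $J^G_P(\pi^{\mathrm{gen}}_{\lambda_0})$ to irreducibility of the standard module via the Standard Module Conjecture, and then assert that ``the Standard Module Conjecture \ldots yields the equivalence between irreducibility of $I^G_P(\pi^{\mathrm{gen}}_{\lambda_0})$ and holomorphy of $L(s, r_M^\vee \circ (\phi_M)_{\lambda_0})$ at $s = 1$.'' But this conflates two distinct statements. The Standard Module Conjecture says that the Langlands quotient is generic if and only if the standard module is irreducible; it does \emph{not} give a characterization of irreducibility in terms of holomorphy of a single adjoint $L$-factor. The missing link — from irreducibility (or directly from genericity of $J$) to the $L$-factor condition — requires the Langlands--Shahidi local coefficient analysis: the Jacquet integral Whittaker functional $\Lambda(\pi_\lambda)$, the unnormalized intertwining operator $\M(\tilde{w},\pi_\lambda)$, and the relation $\Lambda(\pi_\lambda) = C(\tilde{w},\pi_\lambda) \cdot \Lambda(w(\pi_\lambda)) \circ \M(\tilde{w},\pi_\lambda)$. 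This gives directly that $J^G_P(\pi_{\lambda_0})$ is generic iff the local coefficient $C(\tilde{w},\pi_\lambda)$ is holomorphic at $\lambda_0$, i.e.~iff $\gamma^{\Sh}(0, \pi_\lambda, r_M, \psi)$ is holomorphic there. This is exactly what the paper invokes (its Lemma~\ref{l:gen-key}, following Heiermann--Mui\'c), and it lets one bypass the irreducibility discussion entirely. Your proposal recognizes that this analytic input is needed (``the Langlands--Shahidi analysis of normalized intertwining operators''), but misattributes it to the Standard Module Conjecture and does not spell out the argument. As written, the crucial equivalence is asserted rather than established.
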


\subsection{Proof of Proposition \ref{p:gp-rallis}}

Fix an $L$-parameter $\phi$ for $G$ and write $\phi = \iota_M \circ (\phi_M)_{\lambda_0}$ as in (iii).
Then by (iii), $\phi$ is generic if and only if $J^G_P(\pi_{\lambda_0})$ is $(N, \psi_N)$-generic for some $\pi \in \Pi_{\phi_M}$ and some generic character $\psi_N$ of $N$, in which case $\pi$ is necessarily $(N_M, \psi_N|_{N_M})$-generic by a result of Rodier \cite{rodier}, \cite[Corollary 1.7]{cs}.
Here, we have also used the fact that for any element $w$ in $W^G$, there exists a representative $\tilde{w}$ of $w$ (depending on $\psi_N$) such that $\psi_N$ is compatible with $\tilde{w}$ (see \cite[\S 2]{shahidi2}, \cite[\S 1.2]{cpss}).
Now we invoke the following result of Heiermann--Mui\'c \cite[Proposition 1.3]{hm}.

\begin{lem}
\label{l:gen-key}
Let $\psi_N$ be a generic character of $N$ and $\pi$ an irreducible tempered $(N_M, \psi_N|_{N_M})$-generic representation of $M$.
Then $J^G_P(\pi_{\lambda_0})$ is $(N, \psi_N)$-generic if and only if $\gamma^{\Sh}(0, \pi_\lambda, r_M, \psi)$ is holomorphic at $\lambda=\lambda_0$.
\end{lem}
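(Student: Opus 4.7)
The plan is to deduce the lemma from Shahidi's theory of local coefficients, combined with a Casselman--Shahidi-type realization of the Langlands quotient as the image of an intertwining operator to the opposite parabolic. Let $w_0 = w_0^G (w_0^M)^{-1}$ be the Weyl element sending $P$ to its opposite parabolic $\bar P = M \bar U$, and consider the standard unnormalized intertwining operator
\[
 A(\lambda, \pi, w_0) : I^G_P(\pi_\lambda) \longrightarrow I^G_{\bar P}(w_0 \pi_\lambda),
\]
defined in the usual cone by a convergent integral and meromorphically continued in $\lambda$. Since $\pi$ is tempered and $(N_M, \psi_N|_{N_M})$-generic, uniqueness of Whittaker models equips both $I^G_P(\pi_\lambda)$ and $I^G_{\bar P}(w_0 \pi_\lambda)$ with canonical $\psi_N$-Whittaker functionals $\Lambda_\lambda$ and $\Lambda'_{w_0 \lambda}$ (realized by meromorphically continued Jacquet integrals), linked by Shahidi's local coefficient $C_{\psi_N}(\lambda, \pi)$ through
\[
 \Lambda_\lambda = C_{\psi_N}(\lambda, \pi) \cdot \bigl( \Lambda'_{w_0 \lambda} \circ A(\lambda, \pi, w_0) \bigr).
\]

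First I would invoke Shahidi's main theorem, applied to the decomposition of $r_M$ into its irreducible components along reduced roots in $\Sigma(P)$, to identify
\[
 C_{\psi_N}(\lambda, \pi)^{-1} = \gamma^{\Sh}(0, \pi_\lambda, r_M, \psi)
\]
up to an invertible holomorphic factor. This translates the holomorphy of $\gamma^{\Sh}(0, \pi_\lambda, r_M, \psi)$ at $\lambda_0$ into the statement that $C_{\psi_N}(\lambda_0, \pi)$ is finite and nonzero.

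Next I would realize $J^G_P(\pi_{\lambda_0})$ in terms of the intertwining operator. Since $\lambda_0 \in (\aa_M^*)^+$ and $\pi$ is tempered, a deformation argument factors $A(\lambda, \pi, w_0) = f(\lambda) \cdot A^{\circ}(\lambda)$ near $\lambda_0$, where $f$ is a meromorphic scalar that collects every pole and zero forced by reducibility of the induced representation and $A^{\circ}$ is holomorphic and nonzero at $\lambda_0$. By the Langlands classification, the image of $A^{\circ}(\lambda_0)$ is canonically $J^G_P(\pi_{\lambda_0})$ and $\ker A^{\circ}(\lambda_0)$ is the unique maximal proper subrepresentation of $I^G_P(\pi_{\lambda_0})$; consequently $\Lambda_{\lambda_0}$ descends to $J^G_P(\pi_{\lambda_0})$ (and gives genericity of the quotient) iff it vanishes on $\ker A^{\circ}(\lambda_0)$.

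Combining these, the functional equation above presents $\Lambda_{\lambda_0}$ as the scalar $C_{\psi_N}(\lambda_0, \pi) f(\lambda_0)$ times $\Lambda'_{w_0 \lambda_0} \circ A^{\circ}(\lambda_0)$; since $\Lambda_{\lambda_0}$ is nonzero by Jacquet's theory and the composition $\Lambda'_{w_0 \lambda_0} \circ A^{\circ}(\lambda_0)$ is a nonzero functional whose kernel is precisely $\ker A^{\circ}(\lambda_0)$, the functional $\Lambda_{\lambda_0}$ vanishes on $\ker A^{\circ}(\lambda_0)$ exactly when $C_{\psi_N}(\lambda_0, \pi) f(\lambda_0)$ is finite and nonzero. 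The final bookkeeping is to show that the poles and zeros of $C_{\psi_N}$ and of $f$ conspire so that $C_{\psi_N}(\lambda_0, \pi) f(\lambda_0)$ is finite and nonzero if and only if $\gamma^{\Sh}(0, \pi_\lambda, r_M, \psi)$ is holomorphic at $\lambda_0$. The main obstacle is precisely this coupled analysis: identifying $\ker A^{\circ}(\lambda_0)$ with the kernel of the Langlands surjection in the interior of the positive chamber, and matching the meromorphic behavior of $f$ (controlled by reducibility, i.e.~by $R$-groups and Plancherel measures) with that of $C_{\psi_N}$ (controlled by the $\gamma$-factors attached to the constituents of $r_M$). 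This is the technical core of the Heiermann--Mui\'c argument and rests on careful Jacquet-module computations at the boundary $\lambda=\lambda_0$.
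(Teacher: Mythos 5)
Your overall framework (Shahidi's local coefficient and the functional equation for the Jacquet-integral Whittaker functional) is the right one and matches the paper's. However, the proposal contains a conceptual gap and an unjustified claim that the paper's argument avoids.

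First, the factorization $A(\lambda,\pi,w_0) = f(\lambda) \cdot A^{\circ}(\lambda)$ with $f$ ``collecting'' poles and zeros is unnecessary and misleading. Because $\pi$ is tempered and $\lambda_0 \in (\aa_M^*)^+$, the unnormalized intertwining integral $\M(\tilde w, \pi_{\lambda_0})$ is \emph{absolutely convergent} (the paper cites \cite[Proposition IV.2.1]{w0}), hence holomorphic and nonzero at $\lambda_0$, and by the Langlands classification its image is already exactly $J^G_P(\pi_{\lambda_0})$. There are no poles or zeros of $A$ at $\lambda_0$ to absorb into $f$, and consequently the ``coupled analysis of $f$ against $C_{\psi_N}$'' that you identify as the technical core is a phantom obstacle.

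Second, the assertion that $\Lambda'_{w_0\lambda_0} \circ A^{\circ}(\lambda_0)$ is a nonzero functional with kernel precisely $\ker A^{\circ}(\lambda_0)$ is not justified, and is what must be proven (in one direction). Whether this composition is nonzero is \emph{precisely} the question of whether $J^G_P(\pi_{\lambda_0})$ is generic. The paper resolves this cleanly: since $\Hom_N(\, \cdot\,, \psi_N)$ is an exact functor, and since by Rodier's theorem $\dim \Hom_N(I^G_P(\pi_{\lambda_0}), \psi_N) = 1$ with basis $\Lambda(\pi_{\lambda_0})$, genericity of $J^G_P(\pi_{\lambda_0})$ is equivalent to the nonvanishing of the restriction of $\Lambda(w(\pi_{\lambda_0}))$ to the image of $\M(\tilde w, \pi_{\lambda_0})$. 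The functional equation then converts this into holomorphy of $C(\tilde w, \pi_\lambda)$ at $\lambda_0$ (noting $\Lambda(\pi_{\lambda_0}) \ne 0$), and $C(\tilde w, \pi_\lambda) = \gamma^{\Sh}(0,\pi_\lambda,r_M,\psi)$ up to an invertible function by Shahidi's definition. You should replace the factorization step and the kernel claim by these two ingredients: Waldspurger's convergence result for the unnormalized operator in the interior of the positive chamber, and the exactness of the Whittaker functor together with Rodier's multiplicity-one result for the full induced representation.
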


\begin{proof}
Since the assertion in \cite[Proposition 1.3]{hm} is slightly different, we include a proof for the convenience of the reader.
We realize the representation $I^G_P(\pi_\lambda)$ by using the unique (up to a scalar) Whittaker functional on $\pi$ with respect to $(N_M, \psi_N|_{N_M})$.
Then we can define a Whittaker functional
\[
 \Lambda(\pi_\lambda) : I^G_P(\pi_\lambda) \longrightarrow \CC
\]
with respect to $(N,\psi_N)$ by (holomorphic continuation of) the Jacquet integral (see \cite[Proposition 3.1]{shahidi-certain}).
By \cite{rodier}, \cite[Corollary 1.7]{cs}, $\Lambda(\pi_\lambda)$ is a basis of $\Hom_N(I^G_P(\pi_\lambda), \psi_N)$ for all $\lambda \in \aa_{M,\CC}^*$.
Put $w = w^G_0 w^M_0$ and choose its representative $\tilde{w}$ so that $\psi_N$ is compatible with $\tilde{w}$.
As in \S \ref{SS:normalization}, we can define an unnormalized intertwining operator
\[
 \M(\tilde{w}, \pi_\lambda) : I^G_P(\pi_\lambda) \longrightarrow I^G_{w(P)}(w(\pi_\lambda))
\]
by (meromorphic continuation of) an integral which is absolutely convergent for $\Re(\lambda) \in (\aa^*_M)^+$ (see \cite[Proposition IV.2.1]{w0}), where $w(P)$ is the standard parabolic subgroup of $G$ with Levi component $w M w^{-1}$.
Then we have
\begin{equation}
\label{eq:local-coeff}
 \Lambda(\pi_\lambda) = C(\tilde{w}, \pi_\lambda) \cdot \Lambda(w(\pi_\lambda)) \circ \M(\tilde{w}, \pi_\lambda) 
\end{equation}
for some meromorphic function $C(\tilde{w}, \pi_\lambda)$, the so-called local coefficient.
Here, $C(\tilde{w}, \pi_\lambda)$ depends on the choice of Haar measures in the definitions of $\Lambda(\pi_\lambda)$, $\Lambda(w(\pi_\lambda))$, $\M(\tilde{w}, \pi_\lambda)$, but we ignore the normalization of Haar measures since it does not affect the proof.
Since $J^G_P(\pi_{\lambda_0})$ is isomorphic to the image of $\M(\tilde{w}, \pi_{\lambda_0})$ and the functor $\Hom_N(\, \cdot \, , \psi_N)$ is exact, $J^G_P(\pi_{\lambda_0})$ is $(N, \psi_N)$-generic if and only if the restriction of $\Lambda(w(\pi_{\lambda_0}))$ to the image of $\M(\tilde{w}, \pi_{\lambda_0})$ is nonzero.
By \eqref{eq:local-coeff}, this condition is equivalent to the holomorphy of $C(\tilde{w}, \pi_\lambda)$ at $\lambda = \lambda_0$.
On the other hand, by the definition of Shahidi's $\gamma$-factor, we have
\[
 C(\tilde{w}, \pi_\lambda) = \gamma^{\Sh}(0, \pi_\lambda, r_M, \psi)
\]
up to an invertible function.
(Note that the convention in \cite{shahidi} is different from ours: the homomorphism $H_M$ is normalized so that $|\chi(m)|_F = q^{\langle \chi, H_M(m) \rangle}$ in \cite{shahidi}.
This is why we have $\gamma^{\Sh}(0, \pi_\lambda, r_M, \psi)$ on the right-hand side rather than $\gamma^{\Sh}(0, \pi_\lambda, r_M^\vee, \bar{\psi})$.)
This completes the proof.
\end{proof}

Now it follows by Lemma \ref{l:gen-key} combined with (iv) that $\phi$ is generic if and only if
\begin{equation}
\label{eq:L(1)/L(0)}
 \frac{L(1, r_M^\vee \circ (\phi_M)_\lambda)}{L(0, r_M \circ (\phi_M)_\lambda)} 
\end{equation}
is holomorphic at $\lambda = \lambda_0$.
We consider the analytic property of \eqref{eq:L(1)/L(0)}.
For $\alpha \in \Sigma(P)$, let $A_\alpha$ be the identity component of $\Ker(\alpha)$, $M_\alpha$ the centralizer of $A_\alpha$ in $G$, and $U_\alpha$ the root subgroup associated to $\alpha$.
Then $M_\alpha$ is a Levi subgroup of $G$ (but not necessarily a Levi component of a standard parabolic subgroup of $G$) and $M U_\alpha$ is a maximal parabolic subgroup of $M_\alpha$.
We may regard $\aa_{M_\alpha}$ as a subspace of $\aa_M$.
Put
\[
 (\aa_{M}^{M_\alpha})^* = \{ \lambda \in \aa_M^* \, | \, \langle \lambda, H \rangle = 0 \ \text{for all} \ H \in \aa_{M_\alpha} \}.
\]
For $\lambda \in \aa_{M,\CC}^*$, let $\lambda^{M_\alpha}$ denote its orthogonal projection to $(\aa_M^{M_\alpha})^* \otimes_\RR \CC$.
We can write
\[
 \lambda^{M_\alpha} = s_\alpha \cdot \varpi_\alpha 
\]
for some $s_\alpha = s_\alpha(\lambda) \in \CC$, where $\varpi_\alpha \in (\aa_{M}^{M_\alpha})^*$ is the unique element such that $\langle \varpi_\alpha, \alpha^\vee \rangle = 1$.
Then we have
\[
 \eqref{eq:L(1)/L(0)} = 
 \prod_{\alpha \in \Sigma(P)} \frac{L(1 - s_\alpha, r_\alpha^\vee \circ \phi_M)}{L(s_\alpha, r_\alpha \circ \phi_M)},
\]
where $r_\alpha$ is the adjoint representation of ${}^L M$ on $\Lie({}^L U_\alpha)$.
Note that $L(s, r_\alpha \circ \phi_M)$ is holomorphic and nonzero for $\Re(s) > 0$ since $\phi_M$ is tempered.
If we put $s_{0,\alpha} = s_\alpha(\lambda_0) \in \RR$, then we have $s_{0,\alpha} > 0$ for all $\alpha \in \Sigma(P)$ since $\lambda_0 \in (\aa^*_M)^+$.
Hence \eqref{eq:L(1)/L(0)} is holomorphic at $\lambda = \lambda_0$ if and only if
\[
 \prod_{\alpha \in \Sigma(P)} L(1 - s_\alpha - s_{0,\alpha}, r_\alpha^\vee \circ \phi_M)
\]
is holomorphic at $\lambda = 0$.
Since the $L$-factors have no zeros, this condition is equivalent to the holomorphy of $L(s - s_{0,\alpha}, r_\alpha^\vee \circ \phi_M)$ at $s=1$ for all $\alpha \in \Sigma(P)$, which in turn is equivalent to the holomorphy of
\[
 L(s, r_M^\vee \circ (\phi_M)_{\lambda_0}) = \prod_{\alpha \in \Sigma(P)}  L(s - s_{0,\alpha}, r_\alpha^\vee \circ \phi_M)  
\]
at $s=1$.
Thus, we have shown that $\phi$ is generic if and only if $L(s, r_M^\vee \circ (\phi_M)_{\lambda_0})$ is holomorphic at $s=1$.

On the other hand, we have
\[
 L(s, \Ad \circ \phi) = 
 L(s, r_M \circ (\phi_M)_{\lambda_0}) \cdot L(s, \Ad_M \circ (\phi_M)_{\lambda_0}) \cdot L(s, r_M^\vee \circ (\phi_M)_{\lambda_0}),
\]
where $\Ad_M$ is the adjoint representation of ${}^L M$ on $\Lie({}^L M)$.
Since $\phi_M$ is tempered and $s_{0,\alpha} > 0$ for all $\alpha \in \Sigma(P)$,
\[
 L(s, r_M \circ (\phi_M)_{\lambda_0}) = \prod_{\alpha \in \Sigma(P)}  L(s + s_{0,\alpha}, r_\alpha \circ \phi_M)
\]
and $L(s, \Ad_M \circ (\phi_M)_{\lambda_0}) = L(s, \Ad_M \circ \phi_M)$ are holomorphic and nonzero for $\Re(s) > 0$.
Hence $L(s, \Ad \circ \phi)$ is holomorphic at $s=1$ if and only if $L(s, r_M^\vee \circ (\phi_M)_{\lambda_0})$ is holomorphic at $s=1$.
This completes the proof of Proposition \ref{p:gp-rallis}.

\begin{rem}
If $G$ is a classical group, then one has the following variant of Proposition \ref{p:gp-rallis} which does not rely on the local Langlands correspondence.
Fix a generic character $\psi_N$ of $N$.
If $\pi$ is an irreducible $(N,\psi_N)$-generic representation of $G$, let $\Pi$ be its functorial lift to the general linear group established in \cite{ckpss01, ckpss04, kk04, kk05, cpss11} (see \cite[Definition 7.1]{ckpss04} for the precise definition in the case when $G$ is split over $F$).
Put 
\[
 L^{\Sh}(s, \pi, {\Ad}) := L^{\Sh}(s, \Pi, R),
\]
where the right-hand side is Shahidi's $L$-factor \cite{shahidi} and 
\[
 R = 
 \begin{cases}
  \mathrm{Sym}^2 & \text{if $G$ is odd special orthogonal;} \\
  \wedge^2 & \text{if $G$ is even special orthogonal or symplectic;} \\
  \As^+ & \text{if $G$ is even unitary;} \\
  \As^- & \text{if $G$ is odd unitary.}
 \end{cases}
\]
If $\pi$ is tempered, then so is $\Pi$ (see \cite[Proposition 7.4]{ckpss04} when $G$ is split over $F$ and \cite[Proposition 8.6]{kk05} when $G$ is even unitary) and hence $L^{\Sh}(s, \pi, {\Ad})$ is holomorphic and nonzero for $\Re(s)>0$ (see \cite[Proposition 7.2]{shahidi}).
If we admit the local Langlands correspondence, then by \cite{he2}, we have $L^{\Sh}(s, \pi, {\Ad}) = L(s, {\Ad} \circ \phi)$, where $\phi$ is the $L$-parameter of $\pi$.

Now let $P$ be a standard parabolic subgroup of $G$ with Levi component $M$ and $\pi$ an irreducible tempered $(N_M, \psi_N|_{N_M})$-generic representation of $M$.
For any $\lambda \in \aa_{M,\CC}^*$, one has the $L$-factor $L^{\Sh}(s, I^G_P(\pi_\lambda), {\Ad})$ as above since the set of $\lambda$ such that $I^G_P(\pi_\lambda)$ is irreducible and $(N,\psi_N)$-generic is Zariski dense in $\aa_{M,\CC}^*$.
Then by the above argument (together with the multiplicativity), one can show that for $\lambda_0 \in (\aa_M^*)^+$, $J^G_P(\pi_{\lambda_0})$ is $(N, \psi_N)$-generic if and only if $L^{\Sh}(s, I^G_P(\pi_{\lambda_0}), {\Ad})$ is holomorphic at $s=1$.
\end{rem}

\end{document}